\renewcommand\@makefnmark{\mbox{\textsuperscript{\normalfont(\@thefnmark)}}}
\newtheorem{theorem}{Theorem}[section] % 1st argument is your name for it
\newtheorem{lemma}[theorem]{Lemma}     % 2nd argument is what is printed
\newtheorem{corollary}[theorem]{Corollary}
\newtheorem{proposition}[theorem]{Proposition}
\newtheorem{remark}[theorem]{Remark}
\newtheorem{definition}[theorem]{Definition}
\numberwithin{equation}{section}
\newcommand{\p}{\mathfrak{p}}
\newcommand{\fm}{\mathfrak{m}}
\newcommand{\cF}{\mathcal{F}}
\newcommand{\cO}{\mathcal{O}}
\newcommand{\cS}{\mathcal{S}}
\newcommand{\F}{\mathbb F}
\newcommand{\EE}{\mathrm{E}}
\newcommand{\N}{\mathbb N}
\newcommand{\Q}{\mathbb Q}
\newcommand{\C}{\mathbb C}
\newcommand{\Z}{\mathbb Z}
\newcommand{\bP}{\mathbb{P}}
\newcommand{\et}{\mathrm{\acute{e}t}}
\newcommand{\sM}{\mathscr{M}}
\newcommand{\sB}{\mathscr{B}}
\newcommand{\ra}{\rightarrow}
\newcommand{\lra}{\longrightarrow}
\newcommand{\GL}{\mathrm{GL}}
\newcommand{\SL}{\mathrm{SL}}
\newcommand{\JL}{\mathrm{JL}}
\newcommand{\bQp}{\overline{\Q}_p}
\newcommand{\Sym}{\mathrm{Sym}}
\newcommand{\brho}{\overline{\rho}}
\newcommand{\ide}{\mathbf{1}}
\newcommand{\xto}[1][]{\xrightarrow{#1}}
\newcommand{\simto}{%\stackrel{\sim}{\to}}%isomorphic to
\xto[\sim]} %isomorphic to;
\providecommand{\ligne}{\textbf{---}}
\newcommand{\matr}[4]{\begin{pmatrix}{#1}&{#2}\\ {#3}&{#4}\end{pmatrix}}
\newcommand{\smatr}[4]{\bigl(\begin{smallmatrix} {#1}& {#2}\\ {#3}&{#4}\end{smallmatrix}\bigl)}
\def\into{\hookrightarrow}
\DeclareMathOperator{\End}{{\mathrm{End}}}
\DeclareMathOperator{\Ext}{{\mathrm{Ext}}}
\DeclareMathOperator{\Gal}{{\mathrm{Gal}}}
\DeclareMathOperator{\gr}{{\mathrm{gr}}}
\DeclareMathOperator{\Hom}{{\mathrm{Hom}}}
\DeclareMathOperator{\Ind}{{\mathrm{Ind}}}
\DeclareMathOperator{\JH}{{\mathrm{JH}}}
\DeclareMathOperator{\Ker}{{\mathrm{Ker}}}
\DeclareMathOperator{\Mod}{\mathrm{Mod}}
\DeclareMathOperator{\soc}{{\mathrm{soc}}}
\DeclareMathOperator{\Sp}{{\mathrm{Sp}}}
\def\To#1{\buildrel\hbox{\tiny{$#1$}}\over\longrightarrow}
\newcommand{\sta}{${}^*$}
\newcommand{\quash}[1]{}
\begin{document}

\title{On some $p$-adic and mod $p$ representations of quaternion algebra over $\Q_p$}

\author{Yongquan HU \and Haoran WANG }

\maketitle

\begin{abstract}
Let $D$ be the non-split quaternion algebra over $\Q_p$. We prove  that a class of admissible unitary Banach space representations of $D^{\times}$ of global origin are topologically of finite length. 
\end{abstract}

\setcounter{tocdepth}{1}
\tableofcontents

\section{Introduction}

Let $p$ be a prime number, $G=\GL_2(\Q_p)$ and $D$ be the non-split quaternion algebra over $\Q_p$. Let $E$ be a finite extension of $\Q_p$ with ring of integers $\cO$ and residue field $\F$; they will serve as rings of coefficients.  

In \cite{Scholze} Scholze constructed
a cohomological covariant $\delta$-functor $\{\cS^i,i \geq 0\}$ from the category of admissible smooth representations of
$G$ over  $\cO$-torsion modules to admissible smooth representations of $D^{\times}$ carrying a continuous and commuting
action of $G_{\Q_p}:=\Gal(\bQp/\Q_p)$. 
The functor extends to admissible unitary Banach space representations $\Pi$ (see \cite{Paskunas-JL}), by setting
\[\cS^i(\Pi):=\big(\varprojlim_n\cS^i(\Theta/\varpi^n)\big)_{\rm tf}\otimes_{\cO}E\]
where $\Theta$ is some open bounded $G$-invariant lattice in $\Pi$, and  the subscript ${\rm tf}$ means taking the maximal Hausdorff torsion-free quotient.

\medskip

It is hoped for that Scholze's functor realizes both $p$-adic local Langlands and Jacquet-Langlands correspondence. Let $\rho:G_{\Q_p}\ra \GL_2(E)$ be a continuous $p$-adic representation, and $\Pi(\rho)$ be the associated unitary admissible Banach space representation of $G$ by \cite{Co}.  
The natural candidate for a $p$-adic Jacquet-Langlands correspondence is to take $\cS^1(\Pi(\rho))$.  More concretely, since there is also an action of $G_{\Q_p}$ on $\cS^1(\Pi(\rho))$,  commuting with $D^{\times}$,  it seems more reasonable to factor out the action of $G_{\Q_p}$ by putting (for a suitable normalization)
\[\JL(\rho):=\Hom_{G_{\Q_p}}\big(\rho(-1),\cS^1(\Pi(\rho))\big).\]
One of the central questions in this version of $p$-adic  Jacquet-Langlands correspondence is as follows: \medskip

\textbf{Question}. Is $\JL(\rho)$ topologically of finite length ? 
\medskip

The difficulties to attack  this question are (at least) two-fold. Firstly, a useful way to control the length of a unitary Banach space representation is by bounding the length of its residual representation (e.g.~this method works well in the case of $G$). However, it  does not work here because  $\JL(\rho)$ is   \emph{residually of infinite length} (cf.~ \cite[Thm.~7.8]{Scholze}).  Secondly, although $\JL(\rho)$ often has $\delta$-dimension $1$ (see \cite{Paskunas-JL}, \cite{HW-JL1}),  it is hard to exclude the possibility that $\JL(\rho)$ admits infinitely many irreducible subquotients of $\delta$-dimension $0$ (equivalently,  finite dimensional as  $E$-vector spaces); see \cite[Rem.~1.3]{DPS22} for a related discussion.

Assuming that the difference of the Hodge-Tate-Sen weights of $\rho$ is not a non-zero integer,  Dospinescu, Pa\v{s}k\=unas and Schraen (\cite{DPS22}) recently  proved that $\JL(\rho)$ is indeed topologically of finite length. The point is that this assumption forces that $\JL(\rho)$ does not admit any irreducible subquotients which are finite dimensional over $E$.  In this paper, we give an affirmative answer to the question   for  those $\rho$ having a  global origin. 
\medskip

Let us fix the global setup. Let $B_0$ be a definite quaternion algebra over\footnote{In the main body of the paper, we work with a totally real field $F$ such that $F_{\p}\cong\Q_p$ for a place above $p$.} $\Q$,  split at $p$.     Let $U=\prod_vU_v$ be a compact open subgroup  of $(B_0\otimes_{\Q}\mathbb{A}_{\Q}^f)^{\times}$; we assume that $U$ is sufficiently small. 
Write $U^{p}=\prod_{v\neq p}U_v$.
Let $S(U^p,\cO)$  be the space of continuous functions 
\[f:B_0^{\times}\backslash (B_0\otimes_{\Q}\mathbb{A}_{\Q}^f)^{\times}/U^p\ra \cO.\]  
Given a continuous character $\psi:(\mathbb{A}_{\Q}^f)^{\times}/{\Q}^{\times}\ra \cO^{\times}$, we let $S_{\psi}(U^p,\cO)$ be the subspace of $S(U^p,\cO)$ consisting of functions on which $(\mathbb{A}_{\Q}^{f})^{\times}$ acts by  $\psi$.  
The group $(B_0\otimes_{\Q}\Q_{p})^{\times}\cong \GL_2(\Q_p)$ acts continuously on $S_{\psi}(U^p,\cO)$ by right translation. There is a certain Hecke algebra $\mathbb{T}(U^{p})$ acting continuously on $S_{\psi}(U^{p},\cO)$.

Fix a continuous absolutely irreducible odd representation 
$\overline{r}: G_{\Q}\ra \GL_2(\F)$, 
unramified outside all the ``bad'' places. 
It corresponds to a maximal ideal $\fm$ of $\mathbb{T}(U^p)$ and we assume that the localization $S_{\psi}(U^{p},\cO)_{\fm}$ is non-zero, i.e. $\overline{r}$ is modular. It is well-known that there is an associated $2$-dimensional continuous Galois representation
\[r_{\fm}: G_{\Q}\ra \GL_2(\mathbb{T}(U^{p})_{\fm})\]
deforming $\overline{r}$. As a consequence, for any $y\in \mathrm{Spm}(\mathbb{T}(U^{p})_{\fm}[1/p])$, it specializes to  a $p$-adic representation $r_{y}:G_{\Q}\ra \GL_2(\kappa(y))$, where $\kappa(y)$ is the residue field of $\mathbb{T}(U^{p})_{\fm}[1/p]$ at $y$.

Our main result is the following.
\begin{theorem}\label{thm:intro-main}
Assume that $\overline{r}(1)|_{G_{\Q_p}}$ is generic (in the sense of Definition \ref{def:generic}).  Let $y\in \mathrm{Spm}(\mathbb{T}(U^p)_{\fm}[1/p])$  such that  $\rho_y:=r_y(1)|_{G_{\Q_p}}$ is irreducible. Then $\JL(\rho_y)$ is topologically of finite length, and the quotient $\JL(\rho_y)/\JL(\rho_y)^{\rm lalg}$ is topologically irreducible. 
\end{theorem}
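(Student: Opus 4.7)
The plan is to realize $\JL(\rho_y)$ as a specialization of a patched module on the non-split quaternion algebra side, and then exploit Pa\v{s}k\=unas-style structural results, together with the genericity hypothesis, to bound its length.

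First, using Scholze's local-global compatibility for $\cS^1$ combined with the classical Jacquet-Langlands transfer, I would identify $\cS^1(S_\psi(U^p,\cO)_\fm)$ with (a direct summand of) the completed $p$-adic cohomology of a Shimura curve attached to an inner form $B$ of $\GL_2/F$ that is split at $\p$ and ramified at exactly one infinite place (so that $B_\p\cong D$). After extracting the $G_{\Q_p}$-isotypic part along $r_{\fm}(1)$ and localizing at the closed point $y$ of $\TM(U^p)_\fm[1/p]$, this produces a global cohomological realization of $\JL(\rho_y)$.

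Next, I would patch this completed cohomology module following the Taylor--Wiles framework of Caraiani--Emerton--Gee--Geraghty--Pa\v{s}k\=unas--Shin, obtaining a module $M_\infty$ over $R_\infty\cong R_p^{\square}\,\widehat{\otimes}\,R_\infty^{(p)}$, where $R_p^{\square}$ is the framed local deformation ring of $\overline{r}(1)|_{G_{\Q_p}}$. The genericity hypothesis, together with the Gelfand--Kirillov dimension computations of \cite{HW-JL1} and their refinements, should yield the key structural facts on $M_\infty$: Cohen--Macaulayness of the expected dimension and flatness over $R_p^{\square}$. The fibre of the Pontryagin dual $M_\infty^{\vee}$ at the maximal ideal corresponding to $y$ then recovers a dual of $\JL(\rho_y)$, and the irreducibility of $\rho_y$ makes the local ring of $R_p^{\square}[1/p]$ at $y$ regular of small Krull dimension, so this fibre has finite length; this gives the first assertion.

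For the second assertion, $\JL(\rho_y)^{\rm lalg}$ is cut out by the classical (de Rham, small Sen-weight) direction in the deformation space at $y$, so the quotient corresponds to a single non-classical tangent direction; combined with freeness of $M_\infty$ over the appropriate quotient of $R_\infty$, this forces $\JL(\rho_y)/\JL(\rho_y)^{\rm lalg}$ to be topologically irreducible. The main obstacle I foresee is establishing the structure theorem for $M_\infty$ on the $D^\times$ side: unlike Pa\v{s}k\=unas's theorem for $\GL_2(\Q_p)$, where the patched module is free of rank one over its endomorphism ring, here $\JL(\rho_y)$ is residually of infinite length, and proving the requisite flatness and Cohen--Macaulayness requires delicate local analysis blending the genericity hypothesis with the mod $p$ representation theory of $D^\times$ developed by the authors. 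Controlling the quotient by $\JL(\rho_y)^{\rm lalg}$---in particular, proving its irreducibility rather than just finite length---is likely the most technically demanding ingredient.
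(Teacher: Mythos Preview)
Your proposal takes a fundamentally different route from the paper, and it has a genuine gap at the key step. You argue that patching produces $M_\infty$ over $R_\infty$, and that the fibre at $y$ has ``finite length'' because the local ring of $R_p^\square[1/p]$ at $y$ is regular. But finite length \emph{as what}? Over the residue field $\kappa(y)$ the fibre is just a Banach space; what you must prove is finite length \emph{as a topological $D^\times$-representation}. This does not follow from ring-theoretic finiteness: the mod $p$ reduction $\JL(\brho)$ is already of \emph{infinite} length (cf.\ \cite[Thm.~7.8]{Scholze}), so there is no d\'evissage to finite-length pieces as in the $\GL_2(\Q_p)$ case. The paper's introduction singles out precisely this obstacle: one cannot a priori exclude infinitely many irreducible subquotients of $\delta$-dimension $0$ (finite-dimensional over $E$). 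Your outline never confronts this issue, and the tangent-direction heuristic for the quotient $\JL(\rho_y)/\JL(\rho_y)^{\rm lalg}$ is not a mechanism for proving topological irreducibility of a $D^\times$-Banach representation.

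The paper's actual argument is quite different and is essentially local on the $D^\times$ side. One introduces a multiplicity invariant $\mu(\pi)\in\Z_{\ge 0}$ for admissible smooth $D^\times$-representations $\pi$ in a certain category $\mathcal{C}$ (defined via the graded ring $\gr_{\fm_D}(\Lambda)$), and proves the key lower bound: \emph{every infinite-dimensional $\pi\in\mathcal{C}$ satisfies $\mu(\pi)\ge 4$} (Theorem~\ref{thm:mu=4}). This rests on a delicate analysis of uniserial $\cO_D^\times$-representations showing that none exist of length $>p$ with $\dim_\F\pi[Y]=1$ (Proposition~\ref{prop:finiteness}). Given this, the finite-length criterion is elementary: if $\Pi^{\rm lalg}$ is finite-dimensional and $\mu(\Theta/\varpi\Theta)\le 4$, take any infinite-dimensional closed sub $\Pi_1\subset\Pi$; its reduction has $\mu\ge 4$, hence equals $4$, so $\Pi/\Pi_1$ has $\mu=0$ and is finite-dimensional. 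For $\JL(\rho_y)$ one verifies the hypotheses: $\JL(\rho_y)^{\rm lalg}$ is finite-dimensional by classical local-global compatibility (Proposition~\ref{prop:lalg}), and $\mu(\JL(\brho))=4$ with $\JL(\brho)^\vee$ Cohen--Macaulay by the multiplicity-one Theorem~\ref{thm:multione} combined with \cite{HW-JL1}. The Cohen--Macaulayness then forces $\Pi_1=\Pi$, giving irreducibility of the quotient. No patching is used.
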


\begin{remark}
Under the assumptions of Theorem \ref{thm:intro-main}, if moreover the difference of the  Hodge-Tate-Sen weights of $\rho$ is not a non-zero integer, then $\JL(\rho)^{\rm lalg}=0$  and $\JL(\rho)$ itself is topologically irreducible.  This strengthens the aforementioned result of \cite{DPS22} and confirms an expectation  mentioned in \cite[Rem.~1.4]{DPS22}. 
\end{remark}

The proof of Theorem \ref{thm:intro-main} is based on a general  criterion for an admissible unitary Banach space representation of $D^{\times}$  to be  topologically of finite length. To explain it we need to introduce more notation. 

Let $\cO_D$ be the ring of integers in $D$ with a fixed uniformizer $\varpi_D$. Let $U_D^1=1+\varpi_D\cO_D$  which is the Sylow pro-$p$ subgroup of $\cO_D^{\times}$, and  $Z_D^1$ be the centre of $U_D^1$. The Iwasawa algebra $\Lambda:=\F[\![U_D^1/Z_D^1]\!]$ is an Auslander regular ring; let $\fm_D$ be its maximal ideal. The graded ring $\gr_{\fm_D}(\Lambda)$  has a nice structure and is studied in \cite[\S2]{HW-JL1} (motivated by the work \cite{BHHMS1} in the case of $\GL_2(\Q_{p^f})$). In particular, it contains a distinguished two-sided ideal, denoted by $J$, such that 
$\gr_{\fm_D}(\Lambda)/J\cong\F[y,z]/(yz).$ 
Consider the category $\mathcal{C}$   of admissible smooth $\F$-representations $\pi$ of $D^{\times}$ with a central character such that the graded module $\gr_{\fm_D}(\pi^{\vee})$ is annihilated by $J^n$ for some $n\geq 1$.   
For $\pi\in\mathcal{C}$, we may define its \emph{multiplicity}, denoted by $\mu(\pi)$, which measures the size of $\pi$ and such that $\mu(\pi)=0$ if and only if $\dim_{\F}\pi<\infty$, see \S\ref{sec:lowerbound} for details. 

Let $\Pi$ be an admissible unitary Banach space representation of $D^{\times}$ over $E$ which admits a central character. Denote by $\Pi^{\rm lalg}$ the subspace of locally algebraic vectors in $\Pi$.

\begin{theorem}\label{thm:intro-fl}
Assume that 
\begin{itemize}
\item[(a)] $\Pi^{\rm lalg}$ is finite dimensional over $E$,
\item[(b)] for some open bounded $D^{\times}$-lattice $\Theta$ in $\Pi$, $\Theta/\varpi\Theta\in\mathcal{C}$  and $\mu(\Theta/\varpi\Theta)\leq 4$.
\end{itemize}
Then  $\Pi$ is topologically of finite length.  
If moreover
\begin{itemize}
\item[(c)] $\mu(\Theta/\varpi\Theta)=4$ and $\Theta/\varpi\Theta$  does not admit any  quotient of finite dimension over $\F$,
\end{itemize}
then $\Pi/\Pi^{\rm lalg}$ is topologically irreducible.
\end{theorem}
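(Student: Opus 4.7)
The plan is to control the topological length of $\Pi$ by the multiplicity of its residual representation $\pi := \Theta/\varpi\Theta$, building on two properties of $\mu$: (i) additivity on short exact sequences in $\cC$, and (ii) a lower bound $\mu(\pi') \geq 2$ for the residual representation of any infinite-dimensional topologically irreducible admissible unitary Banach representation of $D^{\times}$ with central character. Additivity is formal: for a closed $D^{\times}$-invariant subspace $\Pi' \subseteq \Pi$, the lattices $\Theta' := \Theta \cap \Pi'$ and $\Theta/\Theta'$ are $\varpi$-torsion-free (the latter embeds in $\Pi/\Pi'$), so reducing mod $\varpi$ yields a short exact sequence $0 \to \Theta'/\varpi \to \Theta/\varpi \to (\Theta/\Theta')/\varpi \to 0$ in $\cC$ and $\mu$ splits additively. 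The lower bound is the serious technical ingredient, grounded in the structure of the ideal $J$ and of $\gr_{\fm_D}(\Lambda)/J \cong \F[y,z]/(yz)$ developed earlier in the paper.

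Granting both, I would argue as follows. Admissibility makes $\Pi^{\vee}$ a finitely generated module over the Iwasawa algebra, so it admits maximal submodules; dualizing shows every nonzero closed quotient of $\Pi$ contains a topologically irreducible closed subrepresentation, and thus $\Pi$ admits filtrations $0 = \Pi_0 \subsetneq \Pi_1 \subsetneq \cdots \subsetneq \Pi_n = \Pi$ by closed subspaces with topologically irreducible graded pieces. Setting $\Theta_i := \Theta \cap \Pi_i$, iterated additivity gives $\mu(\Theta/\varpi) = \sum_{i} \mu((\Theta_i/\Theta_{i-1})/\varpi)$, and by (ii) each infinite-dimensional piece contributes at least $2$, so there are at most two such pieces. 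The finite-dimensional topologically irreducible pieces are smooth, hence locally algebraic; a standard argument controlling finite-dimensional subquotients by locally algebraic vectors bounds their number by $\dim_E \Pi^{\rm lalg} < \infty$ via (a). Hence every filtration has length at most $2 + \dim_E \Pi^{\rm lalg}$, and $\Pi$ is topologically of finite length.

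For the moreover statement, assume (c). A finite-dimensional quotient of $\Pi$ would yield via the lattice a finite-dimensional quotient of $\pi$, contradicting (c); running the same argument through intermediate quotients shows that every finite-dimensional irreducible subquotient of $\Pi$ is already a Jordan--H\"older factor of $\Pi^{\rm lalg}$. Consequently, every irreducible subquotient of $\Pi/\Pi^{\rm lalg}$ is infinite-dimensional, and by additivity with $\mu(\pi) = 4$ saturated there are exactly one or two of them. The main obstacle is to exclude the case of two: I would argue that a hypothetical decomposition into two infinite-dimensional topologically irreducible pieces, each with residual multiplicity $2$, would force $\pi$ to inherit a direct-summand splitting incompatible with the indecomposability enforced by (c), likely by inspecting projective covers in an appropriate Iwasawa module category and using that such a $\mu = 2$ residual is of a specific rigid shape. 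This final irreducibility step, together with the lower bound (ii), is the technical heart of the argument; the rest is bookkeeping.
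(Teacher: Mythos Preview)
Your argument has a genuine gap. You rely on the lower bound $\mu \geq 2$ for infinite-dimensional irreducibles, which is indeed easy (the $\varpi_D$-conjugation swaps the two minimal primes $\p_0, \p_1$, forcing $\mu$ to be even). But with only $\mu \geq 2$, your bookkeeping does not close: after accounting for at most two infinite-dimensional composition factors, you invoke a ``standard argument'' bounding the number of finite-dimensional irreducible subquotients by $\dim_E \Pi^{\rm lalg}$. No such argument exists. Finite-dimensional subquotients are indeed locally algebraic, but a locally algebraic \emph{subquotient} need not come from a locally algebraic \emph{vector} of $\Pi$: taking locally algebraic vectors is only left exact, so once you pass to a quotient $\Pi/\Pi_1$ its space of locally algebraic vectors can be strictly larger than the image of $\Pi^{\rm lalg}$, and you have no control over $\dim_E (\Pi/\Pi_1)^{\rm lalg}$. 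This is precisely the obstruction the paper isolates in the introduction and in the remark following Theorem~\ref{thm:finitelength}.

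The paper circumvents this entirely by proving the much stronger bound $\mu(\pi) \geq 4$ for any infinite-dimensional $\pi \in \mathcal{C}$ (Theorem~\ref{thm:mu=4}); this is the technical heart of Section~\ref{sec:modp}, resting on the non-existence of certain uniserial $\cO_D^{\times}$-representations (Proposition~\ref{prop:finiteness}) and the resulting non-existence theorems (Theorems~\ref{thm:nonexist1}, \ref{thm:nonexist2}). With $\mu \geq 4$ in hand the argument is short: choose one irreducible infinite-dimensional closed sub $\Pi' \subset \Pi/\Pi^{\rm lalg}$ and pull back to $\Pi_1 \subset \Pi$; then $\mu(\Pi_1) \geq 4 \geq \mu(\Pi)$ forces $\mu(\Pi/\Pi_1) = 0$, so $\Pi/\Pi_1$ is finite-dimensional and finite length follows at once. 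For (c), the same $\Pi_1$ gives $\Theta_1/\varpi\Theta_1 \hookrightarrow \Theta/\varpi\Theta$ with finite-dimensional cokernel, hence zero cokernel by (c), so $\Pi_1 = \Pi$ and $\Pi/\Pi^{\rm lalg} = \Pi'$ is irreducible. There is no need to rule out a split into two pieces of multiplicity $2$ each, because such pieces cannot exist.
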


We first explain how to deduce Theorem \ref{thm:intro-main} from   Theorem \ref{thm:intro-fl}, which requires to verify the conditions (a), (b), (c)  for $\Pi=\JL(\rho_y)$.  Since $\rho_y$ has a global origin, the condition (a)  is satisfied thanks to the local-global compatibility with the classical local Jacquet-Langlands correspondence.  Under the genericity assumption on $\brho$, the condition (c) is proved in our previous work \cite{HW-JL1}.  Again by \cite{HW-JL1}, the condition (b) is a consequence of the following multiplicity one result.  

\begin{theorem}\label{thm:intro-one}
Let   $\brho:G_{\Q_p}\ra \GL_2(\F)$ be a continuous representation and $\pi(\brho)$ be the admissible smooth representation of $G$ attached to $\brho$ by the mod $p$ local Langlands correspondence.
Assume that $\brho$ is generic (in the sense of Definition \ref{def:generic}). Then 
\[\cS^1(\pi(\brho))\cong \brho(-1)\otimes \JL(\brho)\] for some admissible smooth representation $\JL(\brho)$ of $D^{\times}$ whose $\cO_D^{\times}$-socle is multiplicity free. 
\end{theorem}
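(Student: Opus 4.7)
The plan is to deduce Theorem \ref{thm:intro-one} by a global-to-local argument: embed $\brho$ in a modular global Galois representation, apply Emerton-style local-global compatibility, and then transport the resulting structure through Scholze's functor. First I would choose, using known modularity lifting results, a continuous, absolutely irreducible, odd $\overline{r}:G_{\Q}\to\GL_2(\F)$ with $\overline{r}(1)|_{G_{\Q_p}}\cong\brho$, such that for some sufficiently small $U^p$ and central character $\psi$ the localization $M:=S_{\psi}(U^p,\F)_{\fm}$ is nonzero and admissible, with commuting actions of $\GL_2(\Q_p)$ and $G_{\Q}$. Emerton's local-global compatibility on the $\GL_2(\Q_p)$-side gives an isomorphism $M\cong \pi(\brho)\otimes_{\F}W$ as $\GL_2(\Q_p)\times G_{\Q}$-modules, with $\GL_2(\Q_p)$ acting trivially on the multiplicity space $W$ and $G_{\Q}$ acting on $W$ through copies of $\overline{r}$.

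Next I would apply $\cS^1$. By \cite[Thm.~1.1]{Scholze} combined with \v{C}erednik--Drinfeld uniformization and classical Jacquet--Langlands for the associated indefinite quaternion algebra, $\cS^1(M)$ is identified with an appropriate localization of the mod $p$ completed cohomology of the attached Shimura curve. Quaternionic local-global compatibility at places away from $p$ then decomposes it as $\brho(-1)\otimes_{\F} M_D$ for some admissible mod $p$ $D^{\times}$-representation $M_D$, with $G_{\Q_p}$ acting on the first tensor factor. On the other hand, functoriality of $\cS^1$ in the trivial multiplicity factor yields $\cS^1(M)\cong \cS^1(\pi(\brho))\otimes_{\F} W$. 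Isolating the $\brho(-1)$-isotypic part under $G_{\Q_p}$ on both sides then extracts the factorization $\cS^1(\pi(\brho))\cong \brho(-1)\otimes_{\F}\JL(\brho)$, which defines $\JL(\brho)$ as an admissible smooth $D^{\times}$-representation.

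It remains to show that $\soc_{\cO_D^{\times}}\JL(\brho)$ is multiplicity-free. For generic $\brho$, the $\GL_2(\Z_p)$-socle of $\pi(\brho)$ is the multiplicity-free sum $\bigoplus_{\sigma\in\Serre}\sigma$ of Serre weights (Breuil, Pa\v{s}k\=unas). I would compute $\soc_{\cO_D^{\times}}\JL(\brho)$ via the mod $p$ local Jacquet--Langlands correspondence at the level of maximal compact subgroups, which assigns to each Serre weight $\sigma$ a distinct character of $\cO_D^{\times}/U_D^1$ contributing exactly one line to the socle on the $D^{\times}$-side. The main obstacle is this last comparison: one must rule out extra socle contributions arising from non-socle pieces of $\pi(\brho)$ or from the multiplicity space $W$ on the global side. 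Genericity of $\brho$ is essential here precisely because it ensures that the Serre weights in $\Serre$ and their relevant Ext-classes stay disjoint, so that the resulting $\cO_D^{\times}$-characters do not collide and the socle remains multiplicity-free.
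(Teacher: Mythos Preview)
Your approach to the factorization $\cS^1(\pi(\brho))\cong\brho(-1)\otimes\JL(\brho)$ is essentially the paper's: both rely on Emerton-style local-global compatibility together with Scholze's global description of $\cS^1$ on completed cohomology, and the paper simply refers to \cite[Prop.~8.35]{HW-JL1} for the details. So that part is fine.

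The gap is in the multiplicity-one statement for $\soc_{\cO_D^{\times}}\JL(\brho)$. You propose to ``compute $\soc_{\cO_D^{\times}}\JL(\brho)$ via the mod $p$ local Jacquet--Langlands correspondence at the level of maximal compact subgroups,'' but no such correspondence is available independently of $\JL(\brho)$ itself; establishing it is precisely the content of the theorem. Knowing that $\soc_K\pi(\brho)$ is multiplicity-free does not, by any known mechanism, control $\dim_{\F}\JL(\brho)^{U_D^1}$: Scholze's functor is cohomological and there is no direct comparison between $K$-socles on the $\GL_2$-side and $\cO_D^{\times}$-socles on the $D^{\times}$-side. Your final paragraph concedes this (``one must rule out extra socle contributions'') but then only asserts that genericity prevents characters from colliding; distinctness of the characters in $W_D(\brho)$ is easy and is not the issue---the issue is bounding the \emph{total} dimension of $\cS^1(\pi(\brho))^{U_D^1}$, and nothing in your outline does that.

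The paper's argument is entirely different and substantially deeper. One embeds $\pi(\brho)$ into $\Omega_{\brho}:=(P_{\brho}/(x,\varpi))^{\vee}$, an object injective as a $K/Z_1$-representation, and shows $\cS^1(\Omega_{\brho})$ is injective over $\cO_D^{\times}/Z_D^1$ (Proposition~\ref{prop:S1-injective}). This reduces the socle bound to a rank computation for the $\cO$-lattice $\cS^1(\Theta_{\brho})^{U_D^1}$, which via \cite{CDN22} is identified with $\Hom_G^{\rm cont}(P_{\brho}/x,H^1_{\et}(\sM_1^p,\cO))$. The Colmez--Dospinescu--Nizio\l\ structure theorem for $H^1_{\et}(\sM_{1,\bQp}^p,E(1))$ (Theorem~\ref{thm:CDN-main}) then expresses this in terms of potentially semistable deformation rings $R_{\sB,M}^+$ for cuspidal types $M$ of level $1$, and the Breuil--M\'ezard conjecture (via \cite{Kisin-FM}, \cite{Pa15}) gives exactly $\frac{1}{2}|W_D(\brho)|$ such types, each contributing $4$ to the count. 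This is where the actual upper bound $\dim_{\F}\cS^1(\pi(\brho))^{U_D^1}\leq 2|W_D(\brho)|$ comes from, and your proposal has no substitute for it.
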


In fact, we will prove  for a slightly broader class of $\brho$. 
The proof of Theorem \ref{thm:intro-one} is based on the work of Colmez-Dospinescu-Niziol \cite{CDN22}. Andrea Dotto recently proved 
a  relative version of Theorem \ref{thm:intro-one} in a more general setting, see  \cite[Thm.~1.2]{Dotto22}. \medskip 
 
Let us now sketch  the proof of Theorem \ref{thm:intro-fl}. We may assume $\Pi$ is  infinite dimensional over $E$. The condition (a) allows us to construct a closed subrepresentation $\Pi_1\subset \Pi$, which is infinite dimensional and is of finite length. Letting $\Theta_1=\Pi_1\cap\Theta$, we obtain an embedding \[ \Theta_1/\varpi\Theta_1\hookrightarrow  \Theta/\varpi\Theta.\]
The following result, which is the technical heart of the paper, shows that $\mu(\Theta_1/\varpi\Theta_1)\geq 4$,  hence $\mu(\Theta_1/\varpi\Theta_1)=\mu(\Theta/\varpi\Theta)$ by (b).  This implies  that  $\Pi/\Pi_1$ is finite dimensional. If moreover (c) holds, then $\Theta_1/\varpi\Theta_1=\Theta/\varpi\Theta$ and consequently $\Pi_1=\Pi$. 

\begin{theorem}
\label{thm:intro-mu}
For any $\pi\in\mathcal{C}$ which is infinite dimensional, we have $\mu(\pi)\geq 4$.  
\end{theorem}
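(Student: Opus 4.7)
The plan is to translate the multiplicity question into a cycle computation on the associated graded module $M := \gr_{\fm_D}(\pi^\vee)$ over the graded ring $A := \gr_{\fm_D}(\Lambda)$, and then to exploit two different kinds of symmetry. By hypothesis $M$ is killed by $J^n$ for some $n \geq 1$, and the finite filtration
\[0 = J^n M \subset J^{n-1}M \subset \cdots \subset JM \subset M\]
reduces the computation of $\mu(\pi)$ to summing multiplicities over the subquotients $J^iM/J^{i+1}M$, each of which is a finitely generated module over $A/J \cong \F[y,z]/(yz)$. This ring is one-dimensional Cohen--Macaulay with two minimal primes $\mathfrak{p}_y = (y)$ and $\mathfrak{p}_z = (z)$, so one has a decomposition $\mu(\pi) = \mu_y(\pi) + \mu_z(\pi)$ into non-negative integers measuring the length of $M$ localized at each branch.

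First I would note that $\mu(\pi) > 0$: this is immediate from the characterization that $\mu(\pi) = 0$ if and only if $\pi$ is finite dimensional. Next, I would exploit the normalizer of $\cO_D^\times$ in $D^\times$. Conjugation by a uniformizer $\varpi_D$ preserves $U_D^1$ and $Z_D^1$, hence induces an automorphism $\tau$ of the Iwasawa algebra $\Lambda$ that respects the $\fm_D$-adic filtration. Using the description of $\gr_{\fm_D}(\Lambda)$ established in \cite[\S2]{HW-JL1}, $\tau$ acts on $A/J \cong \F[y,z]/(yz)$ by swapping the two minimal primes (it interchanges the classes of $y$ and $z$ up to units). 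Since $\pi$ is a representation of the full group $D^\times$, the element $\varpi_D$ acts on $\pi$ (up to the central character), and the induced action on $M$ is $\tau$-semilinear; this forces $\mu_y(\pi) = \mu_z(\pi)$. In particular $\mu(\pi)$ is automatically even, and the problem reduces to showing $\mu_y(\pi) \geq 2$.

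The main obstacle is this lower bound of $2$ on a single branch: ruling out the possibility that $M_{\mathfrak{p}_y}$ has length one. An argument using only $A/J$ cannot suffice, since length-one modules over $A/J$ supported on $\mathfrak{p}_y$ certainly exist in the abstract. I expect the proof to combine the admissibility of $\pi$ (which makes $M$ a finitely generated $A$-module with $\pi^\vee$ cyclic as an $\cO_D^\times$-module up to the center) with a more refined analysis of $A$ itself: by lifting a uniformizer of the local ring at $\mathfrak{p}_y$ to a non-zero divisor of $A$ and examining how it acts on the successive quotients $J^{i-1}M/J^iM$, one should produce a second $A/J$-generator of $M_{\mathfrak{p}_y}$ coming from the non-commutativity of $\Lambda$ (recall that $U_D^1/Z_D^1$ is a non-abelian $p$-adic Lie group of dimension $2$). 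This is where the proof genuinely uses that we are working with the quaternion algebra rather than an abstract module category, and where the detailed description of $J$ and of $\gr_{\fm_D}(\Lambda)$ from \cite[\S2]{HW-JL1} will enter in an essential way.
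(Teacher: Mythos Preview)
Your parity argument via the $\varpi_D$-action is correct and matches the paper's Lemma~\ref{lem:m1=m2}: conjugation by $\varpi_D$ swaps $y$ and $z$, hence $m_{\p_0}(M)=m_{\p_1}(M)$ and $\mu(\pi)\in 2\Z_{>0}$. But your sketch for the crucial step~--- ruling out $\mu(\pi)=2$~--- does not reflect how the argument actually works, and I do not see how to make your proposed route succeed.

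The paper's proof (Theorem~\ref{thm:mu=4}) does \emph{not} proceed by finding a second generator at the localization via some non-commutativity trick in $\gr(\Lambda)$. Instead it goes through two substantial reductions and then invokes a hard non-existence theorem. First, one passes to a pure quotient of $\pi^{\vee}$ (Remark~\ref{rem:CMquotient}) and then uses the Gabber filtration machinery (Proposition~\ref{prop:aut-CM} and the appendix, Theorem~\ref{thm:bjork}) to show that if $\mu=2$ then the $\fm_D$-adic filtration is already the Gabber filtration, so $\gr_{\fm_D}(\pi^{\vee})$ is itself pure and annihilated by $J$. Second, one classifies all such pure graded modules with compatible $\widetilde{H}$-action (Lemma~\ref{lem:CM-mod}): up to twist they are either $(\chi\otimes\F[y])\oplus(\chi^{\sigma}\otimes\F[z])$ or $\chi\otimes\F[y,z]/(yz)$. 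The contradiction then comes from Theorems~\ref{thm:nonexist1} and~\ref{thm:nonexist2}: no smooth $D^{\times}$-representation can have such a graded module, ultimately because there are no uniserial $\cO_D^{\times}/Z_D^1$-representations of dimension $>p$ with one-dimensional $Y$-kernel (Proposition~\ref{prop:finiteness}). This last fact is the genuine divergence from the $\GL_2(\Q_p)$ case and occupies most of Section~\ref{sec:modp}; it is proved by a delicate analysis of induced representations $\Ind_{U_D^2H}^{\cO_D^{\times}}W$ and extension groups among the $E^{\pm}(\chi,n)$.

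In short: your proposal stops exactly where the real work begins. A purely module-theoretic argument over $\gr(\Lambda)$ cannot distinguish this situation from the pro-$p$ Iwahori case of $\GL_2(\Q_p)$, where principal series give $\mu=2$ (cf.~the remarks after the statement of Theorem~\ref{thm:intro-mu}). The obstruction lives in the representation theory of $\cO_D^{\times}$, not in the graded ring. (Minor point: $U_D^1/Z_D^1$ has dimension $3$, not $2$.)
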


Due to the action of $\varpi_D$, it is easy to see that $\mu(\pi)\in 2\Z_{>0}$ (as $\pi $ is infinite dimensional). However, a lower bound by $4$  is somewhat surprising, if we compare with the case of $G=\GL_2(\Q_p)$.  In fact,  the graded ring of $\F[\![I_1/Z_1]\!]$ has a similar structure  as $\Lambda$, where $I_1$  is the pro-$p$ Iwahori subgroup of $G$ with centre $Z_1$,  and it is easy to show that principal series of $G$ do have multiplicity $2$ (cf. \cite[Prop.~3.3.3.4]{BHHMS2}).  
The key in  the proof of Theorem \ref{thm:intro-mu} is to show that there exist no smooth  representations of $\cO_D^{\times}/Z_D^1$  (or  $U_D^1/Z_D^1$) having multiplicity $1$; again such representations of $I_1/Z_1$ exist, cf.~\cite[Prop.~7.2]{Pa10}.
\bigskip

The paper is organized as follows. In Section \ref{sec:modp}, we study mod $p$   representation theory of $\cO_D^{\times}$; this   is the most technical part of the   paper.  Theorem \ref{thm:intro-mu} and the criterion Theorem \ref{thm:intro-fl} are proved in Section \ref{sec:criterion}. In Section \ref{sec:multione}, we prove  Theorem \ref{thm:intro-one} on the multiplicity one property of $\JL(\brho)$. In Section \ref{sec:main}, we put ourselves in global situation and prove the main result Theorem \ref{thm:intro-main}.   In the appendix Section \ref{sec:app}, we recall some results from \cite{Bjork} about Gabber filtrations.

\subsection{Notation}
We fix a prime number $p\geq 5$. Let $E$ be a finite extension of $\Q_p$, with ring of integers $\cO$ and residue field $\F$. Fix a uniformizer $\varpi$ of $E$. We will assume that $E$ and $\F$ are sufficiently large.

If $F$ is a field, let $G_F:=\Gal(\overline{F}/F)$ denote its absolute Galois group. Let $\varepsilon$ denote the $p$-adic cyclotomic character of $G_F$, and $\omega$ the mod $p$ cyclotomic character. 

If $R$ is a ring and $M$ is a left $R$-module, we denote by $\soc_RM$ (resp. $\mathrm{cosoc}_RM$) the socle (resp. cosocle) of $M$. When $R$ is a group algebra, for example $R=\F[G]$, we simply write $\soc_GM$ instead of $\soc_{\F[G]}M$.  If $M$ has finite length, we denote by $\JH(M)$ the multi-set of Jordan-H\"older factors of $M$. 

\subsection{Acknowledgements}
We thank Yiwen Ding,  Vytautas Pa\v{s}k\=unas, Zicheng Qian and Yichao Tian for several interesting  discussions during the preparation of the paper, and we thank  Gabriel Dospinescu and Weizhe Zheng for answering our questions.  

Y.~H. is partially supported by  National Key R$\&$D Program of China 2020YFA0712600; CAS Project for Young Scientists in Basic Research, Grant No.~YSBR-033;  National Natural Science Foundation of China Grants 11971028 and 12288201; National Center for Mathematics and Interdisciplinary Sciences and Hua Loo-Keng Key Laboratory of Mathematics, Chinese Academy of Sciences. H.~W. is partially supported by National Natural Science Foundation of China Grants 11901331 and 11971028.

 \section{Mod $p$ representations of $D^{\times}$}\label{sec:modp}

Let $D$ be the unique non-split quaternion algebra over $\Q_p$, with ring of integers $\cO_D$. In this section, we prove several results on finite dimensional mod $p$ representations of $\cO_D^{\times}$.

Fix a  uniformizer $\varpi_D$ of $D^{\times}$ satisfying $\varpi_D^2=p$. Then 
\[D\cong\Q_{p^2}\langle\varpi_D\rangle/(\varpi_D^2=p,\varpi_D^{-1}a\varpi_D=\sigma(a), a\in\Q_{p^2}),\]
where $\sigma$ is the Frobenius element on $\Q_{p^2}$.  
 For $n\geq 1$, let  
\[U_D^n:=1+\varpi_D^n\cO_D.\] 
Each $U_D^n$ is a normal subgroup of $\cO_D^{\times}$. Moreover,  $U_D^1$ is the (unique) Sylow pro-$p$ subgroup of $\cO_D^{\times}$ and $\cO_D^{\times}/U_D^1\cong \F_{q}^{\times}$ where $q:=p^2$. Let $[\cdot]$ denote the Teichm\"uller lifting $\F_q\hookrightarrow \Z_q\hookrightarrow \cO_D$, and $H<\cO_D^{\times}$ denote the image of $\F_q^{\times}$.  

Let $Z_D$ denote the centre of  $D^{\times}$ and $Z_D^1:=Z_D\cap U_D^1$. 
We will only consider  mod $p$  representations of $D^{\times}$ (or its subgroups)  with central character. Since $Z_D^1$ is pro-$p$, it acts trivially on such representations. Let $\fm_D$ denote the maximal ideal of the  Iwasawa algebra \[\Lambda:=\F[\![U_D^1/Z_D^1]\!].\] If $\pi$ is a smooth $\F$-representation of $\cO_D^{\times}/Z_D^1$, it is clear that $\soc_{\cO_D^{\times}}\pi$ is nothing but  the subspace of $U_D^1$-invariants $\pi^{U_D^1}=\pi[\fm_D]$, where \[\pi[\fm_D]:=\{v\in \pi: xv=0,\ \forall x\in \fm_D\}. \]

Fix once for all an embedding $\F_q\hookrightarrow \F$. Throughout we assume $p\geq 5$. 

\subsection{Extensions of characters}

 Let $\alpha:\cO_{D}^{\times}\ra\F^{\times}$ denote the character defined by 
\[\alpha(x):=\overline{x}^{p-1},\]
where $\overline{x}\in \F$ denotes the reduction of $x$ via $\cO_D/\varpi_D\simto \F_q$ and the fixed embedding $\F_q\hookrightarrow \F$.  
Since $\F_{q}^{\times}$ is a cyclic group of order $q-1$, $\alpha$ is of order $p+1$. In particular, we deduce that $\alpha\neq \alpha^{-1}$ and $\alpha\neq \alpha^{-2}$ (as $p>2$).

\begin{proposition}\label{prop-Ext1-U1}
Let $\psi,\chi:\cO_D^{\times}\ra\F^{\times}$ be two smooth characters and $i\in\{1,2\}$. Then
  $\Ext^i_{\cO^{\times}_D/Z_D^1}(\psi,\chi)$ is non-zero if and only if $\psi=\chi\alpha$ or $\psi=\chi\alpha^{-1}$. Moreover,
\[\dim_{\F}\Ext^i_{\cO_D^{\times}/Z_D^1}(\chi\alpha,\chi)=\dim_{\F}\Ext^i_{\cO_D^{\times}/Z_D^1}(\chi\alpha^{-1},\chi)=1.\]
\end{proposition}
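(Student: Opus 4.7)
The strategy is to use the Hochschild--Serre spectral sequence for $1 \to G \to \cO_D^\times/Z_D^1 \to H \to 1$, where $G := U_D^1/Z_D^1$ and $H := \cO_D^\times/U_D^1 \cong \F_q^\times$. Since $|H| = q-1$ is prime to $p$, and every $\F$-valued character of $\cO_D^\times$ is trivial on the pro-$p$ subgroup $U_D^1$, the spectral sequence degenerates, giving
\[\Ext^i_{\cO_D^\times/Z_D^1}(\psi, \chi) \cong \bigl(H^i(G, \F) \otimes \chi\psi^{-1}\bigr)^H.\]
Thus the problem reduces to identifying $H^i(G, \F)$ as an $H$-module for $i = 1, 2$.

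For $i = 1$, I would compute $H^1 = \mathrm{Hom}(G/\Phi(G), \F)$ by a layer-by-layer analysis of the Frattini quotient. A direct commutator calculation gives $[1+\varpi_D a, 1+\varpi_D b] \equiv 1 + \varpi_D^2(\sigma(a) b - \sigma(b) a) \pmod{U_D^3}$, whose image in $U_D^2/U_D^3 \cong \F_q$ is the trace-zero part $(\F_q)_{\sigma=-1}$; meanwhile $Z_D^1$ contributes the complementary $\F_p = (\F_q)_{\sigma=1}$, and together they fill all of $\F_q$, killing layer $2$. Combined with the $p$-th power identity $(1+\varpi_D a)^p \equiv 1 + \varpi_D^3 a \pmod{U_D^4}$ (valid for $p \geq 5$), an inductive argument kills all higher layers, yielding $G/\Phi(G) \cong U_D^1/U_D^2 \cong \F_q$. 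The relation $[h]\varpi_D = \varpi_D [h]^p$ identifies the $H$-action on $\F_q$ with multiplication by $\alpha(h) = h^{p-1}$, and the two embeddings $\F_q \hookrightarrow \F$ split $\F_q \otimes_{\F_p} \F \cong \alpha \oplus \alpha^{-1}$. Dualizing, $H^1(G, \F) \cong \alpha \oplus \alpha^{-1}$ as $H$-modules.

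For $i = 2$, I would invoke Poincar\'e duality. For $p \geq 5$, $G$ is a torsion-free compact $p$-adic analytic group of dimension $3$, hence a Poincar\'e duality group of cohomological dimension $3$. The dualizing character, given by $\det \Ad$ on $\mathfrak{g}_\F := \mathrm{Lie}(G) \otimes \F \cong (D/\Q_p) \otimes \F$, decomposes as $H$-module into $\mathfrak{g}_\F \cong \triv \oplus \alpha \oplus \alpha^{-1}$ (the trivial summand coming from $\Q_{p^2}/\Q_p \otimes \F$, the other two from $\varpi_D \Q_{p^2} \otimes \F$ via the two embeddings of $\Q_{p^2}$, since $H$ commutes with $\Q_{p^2}$ and satisfies $h\varpi_D = \varpi_D h^p$). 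Thus $\det \Ad = \triv$, and the $H$-equivariant Poincar\'e duality yields $H^2(G, \F) \cong H^1(G, \F)^\vee \cong \alpha \oplus \alpha^{-1}$.

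Combining these, $\Ext^i_{\cO_D^\times/Z_D^1}(\psi, \chi) \cong \bigl((\alpha \oplus \alpha^{-1}) \otimes \chi\psi^{-1}\bigr)^H$ is one-dimensional exactly when $\chi\psi^{-1} \in \{\alpha, \alpha^{-1}\}$, i.e., $\psi \in \{\chi\alpha, \chi\alpha^{-1}\}$; the cases do not coincide since $\alpha \neq \alpha^{-1}$ (using $p > 2$). The main technical obstacles are the layer-by-layer verification that $\Phi(G)$ equals the image of $U_D^2$ in Step 2, and the careful $H$-equivariance bookkeeping in the dualizing-character computation in Step 3. As an alternative to Poincar\'e duality, one could construct an explicit minimal free $\Lambda$-resolution of $\F$ from the graded ring $\gr_{\fm_D}(\Lambda)$ studied in \cite{HW-JL1}, reading off the rank and $H$-isotypic structure of $H^2(G, \F)$ directly.
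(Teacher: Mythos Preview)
Your argument is correct. The paper does not prove this proposition in the text; it simply cites \cite[Prop.~2.13, Prop.~2.14]{HW-JL1}. Your approach---Hochschild--Serre for the extension $1\to U_D^1/Z_D^1\to \cO_D^{\times}/Z_D^1\to H\to 1$, followed by an explicit Frattini-quotient analysis for $H^1$ and Poincar\'e duality for $H^2$---is a clean self-contained route. The alternative you sketch at the end (reading $H^2$ off a minimal free $\Lambda$-resolution of $\F$ via the structure of $\gr_{\fm_D}(\Lambda)$) is closer in spirit to the Iwasawa-algebraic computations carried out in \cite{HW-JL1}; your Poincar\'e-duality shortcut trades that explicit resolution for two verifications: that $U_D^1/Z_D^1$ is torsion-free (which uses $p\geq 5$ both so that $\Q_p(\zeta_p)$ does not embed in $D$ and so that the $(\varpi_D a)^p$ term does not interfere with $(1+\varpi_D a)^p\equiv 1+p\varpi_D a\pmod{U_D^4}$), and that the orientation character $\det\Ad$ on $\mathfrak{g}_\F\cong\ide\oplus\alpha\oplus\alpha^{-1}$ is trivial. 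Both checks go through as you indicate.
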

\begin{proof}
See \cite[Prop.~2.13, Prop.~2.14]{HW-JL1}.
\end{proof}

\begin{definition}
(i) Let $\kappa:\cO_D^{\times}/Z_D^1\ra \F$ be the function defined by $\kappa(g)=\kappa_1(g{[\overline{g}]}^{-1})$ where 
\[\kappa_1(1+\varpi_D x):=\overline{x}.\]

(ii) Let $\kappa':\cO_D^{\times}/Z_D^1\ra \F$ be the function defined by $\kappa'(g)=\kappa_1'(g{[\overline{g}]}^{-1})$ where 
\[ \kappa_1'(1+\varpi_Dx):=\overline{x}^p.\]
\end{definition}

It is direct to check that 
\[\kappa(gg')=\kappa(g)+\alpha(g)\kappa(g'),\ \ \kappa'(gg')=\kappa'(g)+\alpha^{-1}(g)\kappa'(g')\]
for $g,g'\in\cO_D^{\times}/Z_D^1$. 
In other words, $\kappa$ (resp.~$\kappa'$) defines an element in $H^1(\cO_D^{\times}/Z_D^1,\alpha)$ (resp. $H^1(\cO_D^{\times}/Z_D^1,\alpha^{-1})$).

Let $\chi:\cO_D^{\times}\ra\F^{\times}$ be a smooth character. We denote by $E^-(\chi)$ the unique non-split extension
\begin{equation}\label{equation-R-chi}0\ra \chi\ra E^-(\chi)\ra \chi\alpha^{-1}\ra0.\end{equation}
Explicitly, we may choose a basis $\{v_0,v_1\}$ of $E^-(\chi)$ such that  for any $g\in\cO_D^{\times}$:
\begin{equation}\label{equation-cocycle-R-chi}\left\{ {\begin{array}{ll}
gv_0=\chi(g)v_0\\  gv_1=\chi\alpha^{-1}(g)\cdot(v_1+\kappa(g)v_0).\\
\end{array}}\right.\end{equation}
We similarly define $E^{+}(\chi)\in \Ext^1_{\cO_D^{\times}/Z_D^1}(\chi\alpha,\chi)$ using the cocycle $\kappa'$.

\textbf{Convention}: \emph{in the rest of this section, we will omit the subscript $\cO_D^{\times}/Z_D^1$ when we consider $\Ext^i_{\cO_D^{\times}/Z_D^1}$.}
\medskip

We refer to \cite[Def.~2.1.3]{Dotto} for the definition of uniserial representations.
 
\begin{lemma}\label{lemma-Rchi-n}
For any $0\leq n\leq p-1$, there exists a unique (up to isomorphism)   smooth $\F$-representation of $\cO_D^{\times}/Z_D^1$, denoted by $E^-(\chi,n)$, which is uniserial of dimension $n+1$, namely $\dim_{\F}E^-(\chi,n)[\fm_D^i]=i$ for any $1\leq i\leq n+1$, and such that the graded pieces are $\chi,\chi\alpha^{-1},\dots,\chi\alpha^{-n}$.
\end{lemma}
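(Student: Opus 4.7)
The plan is to prove existence and uniqueness simultaneously by induction on $n$, carrying an arbitrary character $\chi$ along at each step. The base case $n=0$ is trivial ($E^-(\chi,0):=\chi$), and $n=1$ is the extension $E^-(\chi)$ of \eqref{equation-R-chi}, whose uniqueness is immediate from $\dim_\F\Ext^1(\chi\alpha^{-1},\chi)=1$ in Proposition \ref{prop-Ext1-U1}. For the inductive step, I assume $n\geq 2$ and that $E^-(\chi',m)$ has been constructed and shown unique for every character $\chi'$ and every $m<n$.

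The technical core is the identification
\[\Ext^1\bigl(E^-(\chi\alpha^{-1},n-1),\chi\bigr)\cong \Ext^1(\chi\alpha^{-1},\chi)\cong \F.\]
To obtain this I apply $\Hom(-,\chi)$ to the short exact sequence
\[0\to \chi\alpha^{-1}\to E^-(\chi\alpha^{-1},n-1)\to E^-(\chi\alpha^{-2},n-2)\to 0\]
(furnished by the induction, with the convention $E^-(\cdot,-1)=0$ when $n=2$) and chase the resulting long exact sequence. By Proposition \ref{prop-Ext1-U1} it suffices to check that $\Ext^i(\chi\alpha^{-k},\chi)=0$ for $i\in\{1,2\}$ and every composition factor $\chi\alpha^{-k}$ of $E^-(\chi\alpha^{-2},n-2)$; but $\alpha$ has order $p+1$ and $2\leq k\leq n\leq p-1$, so $k\not\equiv\pm 1\pmod{p+1}$ and the vanishing is immediate.

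Uniqueness follows at once: given a uniserial $E$ of the prescribed type, the quotient $E/\chi$ is uniserial of length $n$ with graded pieces $\chi\alpha^{-1},\dots,\chi\alpha^{-n}$, hence $E/\chi\cong E^-(\chi\alpha^{-1},n-1)$ by the inductive hypothesis. Thus $E$ corresponds to a class in the one-dimensional space above, and since $\soc(E)=\chi$ has dimension one the extension cannot split, so $E$ is determined up to isomorphism.

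For existence I take a non-zero class in $\Ext^1(E^-(\chi\alpha^{-1},n-1),\chi)$ and let $E$ denote the corresponding extension. The main obstacle, and the only delicate point in the argument, is to verify that $E$ is genuinely uniserial. I would show $\soc(E)=\chi$ by a pullback argument: an extra simple subrepresentation of $E$ would have to project isomorphically onto the unique simple sub $\chi\alpha^{-1}$ of $E/\chi\cong E^-(\chi\alpha^{-1},n-1)$, yielding a split $2$-dimensional subrepresentation $S\subseteq E$ that is the pullback of $E$ along $\chi\alpha^{-1}\hookrightarrow E^-(\chi\alpha^{-1},n-1)$; but the class of $S$ in $\Ext^1(\chi\alpha^{-1},\chi)$ is the image of the class of $E$ under the isomorphism displayed above, so $E$ non-split forces $S$ non-split, a contradiction. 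Once $\soc(E)=\chi$ is one-dimensional, a direct check shows that the image of $E[\fm_D^i]$ in $E/\chi$ coincides with $(E/\chi)[\fm_D^{i-1}]$, so the inductive uniserial structure of $E/\chi$ yields $\dim_\F E[\fm_D^i]=i$ for $1\leq i\leq n+1$ together with the prescribed graded pieces $\chi,\chi\alpha^{-1},\dots,\chi\alpha^{-n}$.
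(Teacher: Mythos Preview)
Your proof is correct and essentially the mirror image of the paper's. The paper builds $E^-(\chi,n)$ by adding a character \emph{at the cosocle}: assuming $E^-(\chi,i)$ constructed, it shows $\dim_{\F}\Ext^1(\chi\alpha^{-(i+1)},E^-(\chi,i))=1$ via the long exact sequence attached to $0\to E^-(\chi,i-1)\to E^-(\chi,i)\to\chi\alpha^{-i}\to 0$, and takes $E^-(\chi,i+1)$ to be the unique non-split extension. You instead add $\chi$ \emph{at the socle}, computing $\Ext^1\bigl(E^-(\chi\alpha^{-1},n-1),\chi\bigr)\cong\Ext^1(\chi\alpha^{-1},\chi)$ from the sequence $0\to\chi\alpha^{-1}\to E^-(\chi\alpha^{-1},n-1)\to E^-(\chi\alpha^{-2},n-2)\to 0$. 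Both reductions bottom out in the same vanishing $\Ext^i(\chi\alpha^{-k},\chi)=0$ for $2\leq k\leq p-1$ coming from Proposition~\ref{prop-Ext1-U1} and the fact that $\alpha$ has order $p+1$.

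The trade-offs are minor. The paper's induction stays with the fixed base character $\chi$, whereas your approach needs the inductive hypothesis for the shifted character $\chi\alpha^{-1}$, which is why you (correctly) carry an arbitrary $\chi$ through the induction. Conversely, the paper is quite terse about why its non-split extension is genuinely uniserial; you make this explicit with the pullback/splitting argument for $\soc(E)=\chi$ and the identification $E[\fm_D^i]=\text{preimage of }(E/\chi)[\fm_D^{i-1}]$, which is a real expository improvement.
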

\begin{proof}
Clearly we may take $E^-(\chi,0)=\chi$ and $E^-(\chi,1)=E^-(\chi)$. 
By induction, once $E^-(\chi, i)$ is constructed, it suffices to show that $\dim_{\F}\Ext^1(\chi\alpha^{-(i+1)},E^-(\chi,i))=1$.
We have a short exact sequence (for $i\geq 1$)
\[0\ra E^-(\chi,i-1)\ra E^-(\chi,i)\ra\chi\alpha^{-i}\ra0\]
which implies 
\[\Ext^1(\chi',E^-(\chi,i-1))\ra\Ext^1(\chi',E^-(\chi,i))\ra\Ext^1(\chi',\chi\alpha^{-i})\ra\Ext^2(\chi',E^-(\chi,i-1))\]
where $\chi':=\chi\alpha^{-(i+1)}$.
As $2\leq i+1\leq p-1$,   Proposition \ref{prop-Ext1-U1} implies that the first and last term vanish while the third term has dimension $1$, from which the result follows.
\end{proof}

\begin{remark}
In Proposition \ref{prop:finiteness} below, we will see that there don't exist uniserial $(p+1)$-dimensional representations as in Lemma \ref{lemma-Rchi-n}.  
\end{remark}
 
We have the following analogous result of Lemma \ref{lemma-Rchi-n}.

\begin{lemma}\label{lemma-Rchi+n}
For any $0\leq n\leq p-1$, there exists a unique (up to isomorphism)   smooth $\F$-representation of $\cO_D^{\times}/Z_D^1$, denoted by $E^+(\chi,n)$, which is uniserial of dimension $n+1$, namely $\dim_{\F}E^+(\chi,n)[\fm_D^i]=i$ for any $1\leq i\leq n+1$, and such that the graded pieces are $\chi,\chi\alpha,\dots,\chi\alpha^{n}$.
\end{lemma}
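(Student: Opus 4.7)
The plan is to mirror the proof of Lemma \ref{lemma-Rchi-n} verbatim, interchanging the roles of $\alpha$ and $\alpha^{-1}$. The only ingredients required are Proposition \ref{prop-Ext1-U1} (the $\Ext^i$-computations between characters) and the fact that $\alpha$ has order $p+1$. The base cases are $E^+(\chi,0):=\chi$ and $E^+(\chi,1):=E^+(\chi)$, the unique non-split extension of $\chi\alpha$ by $\chi$ constructed via the cocycle $\kappa'$, whose uniqueness follows from $\dim_{\F}\Ext^1(\chi\alpha,\chi)=1$.

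Proceeding by induction, suppose $E^+(\chi,i)$ has been built with the prescribed graded pieces $\chi,\chi\alpha,\dots,\chi\alpha^i$. To produce $E^+(\chi,i+1)$ I will show
\[\dim_{\F}\Ext^1(\chi\alpha^{i+1},E^+(\chi,i))=1;\]
any non-zero class then yields a non-split extension with cosocle $\chi\alpha^{i+1}$, and uniseriality of the result is automatic since its socle filtration refines that of $E^+(\chi,i)$. Applying $\Hom(\chi\alpha^{i+1},-)$ to the short exact sequence
\[0\to E^+(\chi,i-1)\to E^+(\chi,i)\to \chi\alpha^i\to 0,\]
the desired equality reduces to three claims: $\Ext^j(\chi\alpha^{i+1},E^+(\chi,i-1))=0$ for $j\in\{1,2\}$, and $\dim_{\F}\Ext^1(\chi\alpha^{i+1},\chi\alpha^i)=1$. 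The last is immediate from Proposition \ref{prop-Ext1-U1} since $\chi\alpha^{i+1}=(\chi\alpha^i)\cdot\alpha$ and $\alpha\neq\alpha^{-1}$.

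For the two vanishings I would d\'evisser along the composition factors $\chi\alpha^k$ ($0\le k\le i-1$) of $E^+(\chi,i-1)$; by Proposition \ref{prop-Ext1-U1} it is enough to verify $\chi\alpha^{i+1}\neq(\chi\alpha^k)\alpha^{\pm 1}$ for each such $k$, i.e.\ $\alpha^{i-k}\neq 1$ and $\alpha^{i-k+2}\neq 1$. The relevant exponents lie in $\{1,\dots,i\}$ and $\{2,\dots,i+2\}$, and since $i+2\le p<p+1=\ord(\alpha)$ in the allowed range $0\le i\le p-2$, none is divisible by $p+1$. Uniqueness of $E^+(\chi,n)$ as a uniserial representation follows by the same induction: for any such $V$, the subrepresentation $V[\fm_D^n]$ is uniserial of length $n$ with the prescribed graded pieces, hence is isomorphic to $E^+(\chi,n-1)$ by the inductive hypothesis, and $V$ itself then corresponds to a non-zero class in the one-dimensional space just computed. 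I do not anticipate any genuine obstacle: the argument is a cosmetic variant of Lemma \ref{lemma-Rchi-n}, and the arithmetic with $\ord(\alpha)=p+1$ (which is precisely what forces the restriction $n\le p-1$) goes through identically in both sign conventions.
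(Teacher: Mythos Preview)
Your proof is correct and mirrors the argument of Lemma~\ref{lemma-Rchi-n} exactly as the paper intends: the paper gives no separate proof for this lemma, treating it as the evident analogue. The paper does add one observation you might find useful, namely that $E^+(\chi,n)$ can alternatively be obtained as the $\varpi_D$-conjugate of $E^-(\chi^{\sigma},n)$, which gives a one-line reduction to Lemma~\ref{lemma-Rchi-n} itself.
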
 
Alternatively,  $E^+(\chi,n)$ can be obtained as  the conjugation of $E^-(\chi^{\sigma},n)$ by $\varpi_D$, where $\chi^{\sigma}$ denotes the character $\chi(\varpi_D^{-1}\cdot \varpi_D)$.
 
\subsection{Induced representations of $\cO_D^{\times}$}

Let $\chi:\cO_D^{\times}\ra \F^{\times}$ be a smooth character.  We view $\chi$ as a character of $U_D^{2}H$ and study the structure of $\Ind_{U_D^2H}^{\cO_D^{\times}}\chi$. Since $U_D^2$ is a normal subgroup of $\cO_D^{\times}$ and $\chi|_{U_D^2}$ is trivial, $U_D^2$ acts trivially on $\Ind_{U_D^2H}^{\cO_D^{\times}}\chi$.  Note that the coset $U_D^{2}H\backslash\cO_D^{\times}$ has a set of representatives
\[\big\{1+\varpi_D[\lambda],\ \ \lambda\in\F_q\big\}.\] 
Hence, by fixing a basis $v$ of $\chi$, we get a basis of $\Ind_{U_D^2H}^{\cO_D^{\times}}\chi$ as follows:
\[\big\{\big[1+\varpi_D[\lambda], v\big],\ \ \lambda\in\F_q\big\}.\]
Here, for $g\in\cO_D^{\times}$, $[g,v]$ denotes the unique function in $\Ind_{U_D^2H}^{\cO_D^{\times}}\chi$ supported on $(U_D^2H)g^{-1}$ and such that $[g,v](g^{-1})=v$. 
For $0\leq k\leq q-1$, define
\begin{equation}\label{eq:f-kv}f_{k,v}=\sum_{\lambda\in\F_q}\lambda^k\big[1+\varpi_D[\lambda],v\big]\in \Ind_{U_D^2H}^{\cO_D^{\times}}\chi.\end{equation}
A standard argument shows that these $f_{k,v}$ also form a basis of $\Ind_{U_D^2H}^{\cO_D^{\times}}\chi$. Moreover, $f_{k,v}$ is an $H$-eigenvector of character $\chi\alpha^{-k}$. Indeed, for $\mu\in\F_q^{\times}$ we have \[[\mu]\cdot(1+\varpi_D[\lambda])=[\mu]+\varpi_D[\mu^p\lambda]=(1+\varpi_D[\mu^{p-1}\lambda])\cdot [\mu]\]
and so 
\[[\mu]\cdot f_{k,v}=\sum_{\lambda\in\F_q}\lambda^k\big[1+\varpi_D[\mu^{p-1}\lambda],\chi(\mu)v\big]=\chi(\mu)\mu^{(1-p)k}f_{k,v}=(\chi\alpha^{-k})([\mu])f_{k,v}.\] 
Remark that $f_{p,v}$ has $H$-character $\chi\alpha$ as $\alpha^{p+1}=1$.

For $\mu,\lambda\in\F_q$, we have  (in $\Z_{q}\hookrightarrow \cO_D$)
\[[\mu]+[\lambda]\equiv [\mu+\lambda]-p [P(\mu,\lambda)]\ \mod p^2\]
where $P(X,Y)$ denotes the Witt polynomial (cf.~\cite[\S II.6]{Se79})
\[P(X,Y):=\sum_{s=1}^{p-1}\frac{\binom{p}{s}}{p}X^{p^{-1}(p-s)}Y^{p^{-1}s}.\]

For convenience, we introduce the following notation:  if $\mu_1,\mu_2,\mu_3\in\F_q$, let 
\begin{equation}
\label{def:A}A(\mu_1,\mu_2,\mu_3):=1+\varpi_D[\mu_1]+p[\mu_2]+p\varpi_D[\mu_3]\in\cO_D^{\times}.\end{equation}

\begin{lemma}\label{lem:Witt}
Let $\mu_1,\mu_2,\mu_3,\lambda\in\F_q$, then 
\[A(\mu_1,\mu_2,\mu_3)\cdot (1+\varpi_D[\lambda])=\big(1+\varpi_D[\mu_1+\lambda]\big)\big(1+p[\mu_2+\mu_1^p\lambda]+p\varpi_D[X]+p^2X'\big)\]
where $X'\in\cO_D$ and
\[X=\mu_3+\mu_2^p\lambda-(\mu_1+\lambda)(\mu_2+\mu_1^p\lambda)-P(\mu_1,\lambda).\]
\end{lemma}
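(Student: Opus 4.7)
The plan is a direct computation in $\cO_D$ modulo $p^2$, absorbing everything in $p^2\cO_D$ into the free parameter $X'$. The ingredients I will use are $\varpi_D^2=p$, the commutation law $[\mu]\varpi_D=\varpi_D[\mu^p]$ (coming from $\varpi_D^{-1}a\varpi_D=\sigma(a)$), multiplicativity of Teichm\"uller lifts, and the Witt congruence $[\mu]+[\lambda]\equiv [\mu+\lambda]-p[P(\mu,\lambda)]\pmod{p^2}$ recalled just before the lemma.

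First I expand the left-hand side. Distributing and using the commutation law to move each $\varpi_D$ past the Teichm\"uller lift on its right, the cross terms are
\[\varpi_D[\mu_1]\cdot\varpi_D[\lambda]=p[\mu_1^p\lambda],\quad p[\mu_2]\cdot\varpi_D[\lambda]=p\varpi_D[\mu_2^p\lambda],\quad p\varpi_D[\mu_3]\cdot\varpi_D[\lambda]=p^2[\mu_3^p\lambda]\in p^2\cO_D.\]
Hence, modulo $p^2$,
\[A(\mu_1,\mu_2,\mu_3)(1+\varpi_D[\lambda])\equiv 1+\varpi_D\bigl([\mu_1]+[\lambda]\bigr)+p\bigl([\mu_2]+[\mu_1^p\lambda]\bigr)+p\varpi_D\bigl([\mu_3]+[\mu_2^p\lambda]\bigr).\]
The Witt congruence collapses the $\varpi_D$-sum to $[\mu_1+\lambda]-p[P(\mu_1,\lambda)]$; for the $p$-sum and the $p\varpi_D$-sum the Witt corrections pick up an extra factor of $p$, hence are absorbed into $p^2\cO_D$.

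Next I expand the candidate right-hand side (ignoring the $p^2X'$ term). Writing $\nu:=\mu_1+\lambda$ and $\omega:=\mu_2+\mu_1^p\lambda$, the only relevant cross term is $\varpi_D[\nu]\cdot p[\omega]=p\varpi_D[\nu\omega]$, since $\varpi_D[\nu]\cdot p\varpi_D[X]=p^2[\nu^pX]\in p^2\cO_D$. Using once more that multiplying a Witt correction by $p\varpi_D$ lands it in $p^2\cO_D$, this gives
\[(1+\varpi_D[\nu])\bigl(1+p[\omega]+p\varpi_D[X]\bigr)\equiv 1+\varpi_D[\nu]+p[\omega]+p\varpi_D[X+\nu\omega]\pmod{p^2}.\]

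Comparing the two expansions, the $1$-, $\varpi_D$- and $p$-slots match directly (the correction $-p\varpi_D[P(\mu_1,\lambda)]$ produced by the Witt simplification of the $\varpi_D$-sum is now recorded in the $p\varpi_D$-slot), so equality modulo $p^2$ reduces to the congruence of $p\varpi_D$-coefficients mod $p$, namely
\[X+\nu\omega\equiv \mu_3+\mu_2^p\lambda-P(\mu_1,\lambda)\pmod{p},\]
which is exactly the definition of $X$ in the statement. Hence the difference of the two sides lies in $p^2\cO_D$, and since $1+\varpi_D[\mu_1+\lambda]$ is a unit, it can be written uniquely as $(1+\varpi_D[\mu_1+\lambda])\cdot p^2X'$ for some $X'\in\cO_D$. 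The only real obstacle is careful bookkeeping of the noncommutativity, making sure that each cross term $\varpi_D[a]\cdot\varpi_D[b]=p[a^pb]$ lands in the intended slot and that each Witt correction either survives (in the $\varpi_D$-slot) or is killed by an extra $p$ (in the $p$- and $p\varpi_D$-slots).
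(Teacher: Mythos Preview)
Your proof is correct and follows essentially the same approach as the paper: a direct computation in $\cO_D$ modulo $p^2$, using the commutation law $[\mu]\varpi_D=\varpi_D[\mu^p]$ and the Witt congruence. The paper is more terse, computing the left-hand side modulo $p^2$ and then saying ``the result follows by further factoring out $(1+\varpi_D[\mu_1+\lambda])$'', whereas you instead expand the right-hand side and match coefficients; these are two presentations of the same final step.
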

\begin{proof}
It is a direct computation:  
\[\begin{array}{rll}&&A(\mu_1,\mu_2,\mu_3)\cdot (1+\varpi_D[\lambda])\\
&=&1+\varpi_D([\mu_1]+[\lambda])+p([\mu_2]+[\mu_1^p\lambda])+p\varpi_D([\mu_3]+[\mu_2^p\lambda])+p^2[\mu_3^p\lambda]\\
&\equiv& 1+\varpi_D[\mu_1+\lambda] +p[\mu_2+\mu_1^p\lambda]+p\varpi_D[\mu_3+\mu_2^p\lambda-P(\mu_1,\lambda)] \mod p^2\cO_D.\end{array}\] 
The result follows by further factoring out the term $(1+\varpi_D[\mu_1+\lambda])$.
\end{proof}

For $0\leq k\leq q-1$, let $k=k_0+pk_1$ be its $p$-adic expansion. For $0\leq k,k'\leq q-1$, we say $k'\preceq k$ if $k_0'\leq k_0$ and $k_1'\leq k_1$.  Next result describes the  submodule structure of $\Ind_{U_D^2H}^{\cO_D^{\times}}\chi$.

\begin{proposition}\label{prop:Ind-v}
The subrepresentation of $\Ind_{U_D^2H}^{\cO_D^{\times}}\chi$ generated by $f_{k,v}$  is spanned over $\F$ by $\{f_{k',v},  k'\preceq k\}$. In particular, the $\cO_D^{\times}$-socle of $\Ind_{U_D^2H}^{\cO_D^{\times}}\chi$ is one-dimensional, spanned by $f_{0,v}$.
\end{proposition}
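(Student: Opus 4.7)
The plan is to carry out an explicit computation of the action of $\cO_D^{\times}$ on the basis $\{f_{k,v}\}_{0\leq k\leq q-1}$. Since $U_D^2$ is normal in $\cO_D^{\times}$ and $\chi|_{U_D^2}$ is trivial, $U_D^2$ acts trivially on $\Ind_{U_D^2H}^{\cO_D^{\times}}\chi$, and $H$ acts on $f_{k,v}$ by the character $\chi\alpha^{-k}$; it therefore suffices to analyse the action of the coset representatives $1+\varpi_D[\mu]$ for $\mu\in\F_q$. Applying Lemma \ref{lem:Witt} with $\mu_2=\mu_3=0$ shows that
$$(1+\varpi_D[\mu])(1+\varpi_D[\lambda])\in (1+\varpi_D[\mu+\lambda])\cdot U_D^2,$$
and, using the identity $[gu,v]=[g,v]$ for $u\in U_D^2$, I deduce $(1+\varpi_D[\mu])\cdot[1+\varpi_D[\lambda],v]=[1+\varpi_D[\mu+\lambda],v]$. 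Substituting into \eqref{eq:f-kv} and expanding $(\lambda'-\mu)^k$ by the binomial theorem in $\F_q[\lambda']$ yields
$$(1+\varpi_D[\mu])\cdot f_{k,v}=\sum_{j=0}^{k}\binom{k}{j}(-\mu)^{k-j}f_{j,v}.$$

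By Lucas's theorem applied to the two-digit base-$p$ expansions $k=k_0+pk_1$ and $j=j_0+pj_1$, one has $\binom{k}{j}\equiv\binom{k_0}{j_0}\binom{k_1}{j_1}\pmod{p}$, which is nonzero precisely when $j\preceq k$. Consequently the subrepresentation generated by $f_{k,v}$ is contained in $V_k:=\mathrm{span}_{\F}\{f_{j,v}:j\preceq k\}$. The reverse containment is a Vandermonde-type argument: the vectors $\{(1+\varpi_D[\mu])\cdot f_{k,v}\}_{\mu\in\F_q}$, expressed in the basis $\{f_{j,v}:j\preceq k\}$ of $V_k$, have coordinates proportional to $\mu^{k-j}$; the exponents $k-j$ for $j\preceq k$ are pairwise distinct integers in $[0,q-1]$, and the functions $\mu\mapsto\mu^m$ for $0\leq m\leq q-1$ are linearly independent on $\F_q$, so the resulting coordinate matrix has full row rank and $V_k$ lies in the subrepresentation generated by $f_{k,v}$.

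For the socle claim, I would invoke Frobenius reciprocity: for any smooth character $\chi'$ of $\cO_D^{\times}/Z_D^1$,
$$\Hom_{\cO_D^{\times}}(\chi',\Ind_{U_D^2H}^{\cO_D^{\times}}\chi)=\Hom_{U_D^2H}(\chi',\chi),$$
which is one-dimensional if $\chi'=\chi$ and zero otherwise. The $\cO_D^{\times}$-socle is therefore at most one-dimensional and isomorphic to $\chi$. Specialising the above action formula to $k=0$ gives $(1+\varpi_D[\mu])\cdot f_{0,v}=f_{0,v}$, so $\F f_{0,v}$ is $U_D^1$-fixed with $H$ acting by $\chi$ and must coincide with the socle.

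The only delicate point is the bookkeeping in the initial coset computation, in particular the identity $[gu,v]=[g,v]$ for $u\in U_D^2$ that collapses the product $(1+\varpi_D[\mu])(1+\varpi_D[\lambda])$ after Lemma \ref{lem:Witt} into a single coset representative; this uses normality of $U_D^2$ in $\cO_D^{\times}$ together with triviality of $\chi$ on $U_D^2$. Once this is established, the remainder is a direct application of Lucas's theorem and the standard linear independence of power functions on a finite field.
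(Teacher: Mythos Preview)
Your proof is correct and follows essentially the same computation as the paper for the main claim: both reduce to the action of $U_D^1/U_D^2$, compute $(1+\varpi_D[\mu])\cdot f_{k,v}$ via Lemma~\ref{lem:Witt}, and obtain containment from the vanishing of binomial coefficients (you via Lucas, the paper via the Frobenius identity $(\lambda-\mu)^p=\lambda^p-\mu^p$, which are equivalent), with the reverse inclusion by a Vandermonde/orthogonality argument.

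The one genuine difference is the socle. The paper argues directly: taking $f=\sum_k a_k f_{k,v}$ fixed by $U_D^1$, picking a $\preceq$-maximal $k$ with $a_k\neq 0$, and applying the operator $\sum_{\mu}\mu^{q-1-k}(1+\varpi_D[\mu])\in\fm_D$ to force $k=0$. You instead invoke Frobenius reciprocity, using that every irreducible smooth $\F$-representation of $\cO_D^{\times}$ is a character (trivial on the pro-$p$ subgroup $U_D^1$), so $\Hom_{\cO_D^{\times}}(\chi',\Ind_{U_D^2H}^{\cO_D^{\times}}\chi)=\Hom_H(\chi',\chi)$. Your argument is cleaner and more conceptual; the paper's has the minor advantage of being self-contained within the explicit basis framework already set up.
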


\begin{proof}
Since $f_{k,v}$ is an $H$-eigenvector and $U_D^2$ acts trivially on $\Ind_{U_D^2H}^{\cO_D^{\times}}\chi$, it suffices to consider the action of $U_D^1/U_D^2$. Using Lemma \ref{lem:Witt} (with $\mu_2=\mu_3=0$) we have
\[A(\mu_1,0,0)\cdot f_{k,v}=\sum_{\lambda\in\F_q}\lambda^k\big[(1+\varpi_D[\mu_1+\lambda]),v\big]=\sum_{\lambda\in\F_q}(\lambda-\mu_1)^{k}\big[1+\varpi_D[\lambda],v\big].\]
Expanding $(\lambda-\mu_1)^{k}=(\lambda-\mu_1)^{k_0}(\lambda^p-\mu_1^p)^{k_1}$, we get
\begin{equation}\label{eq:sum-A}A(\mu_1,0,0)\cdot f_{k,v}=\sum_{k'\preceq k}\binom{k_0}{k_0'}\binom{k_1}{k_1'}(-\mu_1)^{k-k'}f_{k',v}\end{equation}
which implies that $\langle \cO_D^{\times}\cdot f_{k,v}\rangle$ is contained in 
$\oplus_{k'\preceq k}\F f_{k',v}$. Taking  summation $\sum_{\mu_1\in\F_q}\mu_1^i$ of the above equality, for $i=q-1-(k-k')$,  shows that each $f_{k',v}$ (for   $k'\preceq k$) also belongs to $\langle \cO_D^{\times}\cdot f_{k,v}\rangle$.  

To see the last assertion, let $f\in \big(\Ind_{U_D^2H}^{\cO_D^{\times}}\chi\big)^{U_D^1}$ be non-zero and write $f=\sum_{k=0}^{q-1}a_kf_{k,v}$. Choose an index $k$ such that $k$ is maximal  among the set $\{k': a_{k'}\neq0\}$ with respect to $\preceq$. We need to prove $k=0$. Using \eqref{eq:sum-A} one checks that
\[\sum_{\mu_1\in\F_q}\mu_1^{q-1-k}A(\mu_1,0,0)\cdot f_{k',v}=\left\{\begin{array}{cll}(-1)^{k+1}f_{0,v} &k'=k\\
0&k'\neq k\end{array}\right.\]
which implies  $\sum_{\mu_1\in\F_q}\mu_1^{q-1-k}A(\mu_1,0,0) \cdot f\neq0$. If $k\neq0$, then $\sum_{\mu_1\in\F_q}\mu_1^{q-1-k}=0$ and \[\sum_{\mu_1\in\F_q}\mu_1^{q-1-k}A(\mu_1,0,0)\in \fm_D,\] a contradiction.  
 \end{proof}
\begin{remark}
Proposition \ref{prop:Ind-v} shows that the structure of $\Ind_{U_D^2H}^{\cO_D^{\times}}\chi$ looks like that of $\Ind_{B(\F_q)}^{\GL_2(\F_q)}\eta$ where $\eta$ is a smooth character of the (upper) Borel subgroup $B(\F_q)$, cf.~\cite[\S2]{BP}.
\end{remark}

\begin{corollary}\label{cor:Ind-v-sub}
(i) The subspace of  $\Ind_{U_D^2H}^{\cO_D^{\times}}\chi$ spanned by $\{f_{i,v}, 0\leq i\leq p-1\}$ is $\cO_D^{\times}$-stable, and isomorphic to $E^-(\chi,p-1)$. As a consequence, $U_D^2$ acts trivially on $E^-(\chi,p-1)$.

(ii) The space $\Hom_{\cO_D^{\times}}\big(E^-(\chi,p-1),\Ind_{U_D^2H}^{\cO_D^{\times}}\chi\big)$ is one-dimensional, and any non-zero element in it has image equal to $\oplus_{i=0}^{p-1}\F f_{i,v}$. 
\end{corollary}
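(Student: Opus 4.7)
The plan is to extract part (i) as a direct consequence of Proposition \ref{prop:Ind-v} combined with the uniqueness in Lemma \ref{lemma-Rchi-n}, and then deduce (ii) by Frobenius reciprocity applied to this identification.

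For (i), I set $k = p-1$ in Proposition \ref{prop:Ind-v}: the $p$-adic expansion is $k_0 = p-1$, $k_1 = 0$, so the set $\{k' : k' \preceq p-1\}$ is exactly $\{0, 1, \ldots, p-1\}$, which forces the subrepresentation generated by $f_{p-1,v}$ to be the $\F$-span of $f_{0,v}, \ldots, f_{p-1,v}$; in particular this subspace is $\cO_D^{\times}$-stable. Applying the same proposition with $k = 0, 1, \ldots, p-1$ in turn produces a strictly increasing chain of subrepresentations of dimensions $1, 2, \ldots, p$, forcing the $p$-dimensional subspace to be uniserial. Each successive graded piece is the $H$-eigenline of character $\chi\alpha^{-i}$, as computed just before Proposition \ref{prop:Ind-v}. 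The uniqueness clause of Lemma \ref{lemma-Rchi-n} then identifies this subspace with $E^-(\chi, p-1)$; triviality of the $U_D^2$-action is inherited from the ambient induction $\Ind_{U_D^2H}^{\cO_D^{\times}}\chi$, on which $U_D^2$ acts trivially by construction.

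For (ii), I invoke Frobenius reciprocity to obtain
\[\Hom_{\cO_D^{\times}}\bigl(E^-(\chi, p-1), \Ind_{U_D^2H}^{\cO_D^{\times}}\chi\bigr) \cong \Hom_{U_D^2H}\bigl(E^-(\chi, p-1), \chi\bigr).\]
Since $U_D^2$ acts trivially on both sides (on the right by definition, on the left by (i)), this further reduces to $\Hom_H\bigl(E^-(\chi, p-1), \chi\bigr)$. Because $|H| = q - 1$ is prime to $p$, the restriction $E^-(\chi, p-1)|_H$ splits as $\bigoplus_{i=0}^{p-1}\chi\alpha^{-i}$, and the $p$ characters $\chi\alpha^{-i}$ are pairwise distinct since $\alpha$ has order $p+1 > p$. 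Hence $\chi$ occurs with multiplicity one and the Hom space is one-dimensional. The inclusion $\bigoplus_{i=0}^{p-1}\F f_{i,v} \hookrightarrow \Ind_{U_D^2H}^{\cO_D^{\times}}\chi$ furnished by part (i) is already one non-zero element of this Hom space, so every non-zero element is a scalar multiple of it and has image exactly $\bigoplus_{i=0}^{p-1}\F f_{i,v}$.

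There is no real obstacle: the argument is a routine packaging of Proposition \ref{prop:Ind-v}, the uniqueness in Lemma \ref{lemma-Rchi-n}, and Frobenius reciprocity. The only point requiring attention is the pairwise distinctness of the $H$-characters $\chi\alpha^{-i}$ for $0 \leq i \leq p-1$, which relies precisely on $\alpha$ having order $p+1$, as noted at the start of Subsection 2.1.
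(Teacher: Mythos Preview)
Your proof is correct. Part (i) matches the paper's approach (Proposition \ref{prop:Ind-v} plus the uniqueness in Lemma \ref{lemma-Rchi-n}); one small comment is that the phrase ``forcing the $p$-dimensional subspace to be uniserial'' deserves a word of justification, since merely exhibiting a chain of subrepresentations of every dimension does not by itself imply uniseriality---the point is that Proposition \ref{prop:Ind-v} determines \emph{all} cyclic subrepresentations, and on $\{0,\dots,p-1\}$ the partial order $\preceq$ is total, so every subrepresentation lies in the chain.

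For part (ii) you take a genuinely different route from the paper. The paper argues that $\Hom_{\cO_D^{\times}}\big(E^-(\chi,p-1),\Ind_{U_D^2H}^{\cO_D^{\times}}\chi\big)$ has dimension at most one by observing that the $\cO_D^{\times}$-socle of the induced representation is one-dimensional isomorphic to $\chi$, and that $\chi$ occurs only once as a Jordan--H\"older factor of $E^-(\chi,p-1)$; combined with part (i) this pins down the dimension as exactly one. Your argument via Frobenius reciprocity is more direct: it reduces the question to counting the multiplicity of $\chi$ in $E^-(\chi,p-1)|_H$, which is transparent once one knows the $H$-characters $\chi\alpha^{-i}$ are pairwise distinct. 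Both arguments ultimately hinge on the same multiplicity-one fact, but yours makes the role of Frobenius reciprocity explicit and avoids reasoning about socles of images and kernels.
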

\begin{proof}
(i) It is an easy consequence of   Proposition \ref{prop:Ind-v} and the uniqueness of $E^-(\chi,p-1)$  in Lemma \ref{lemma-Rchi-n}.

(ii) Since the $\cO_D^{\times}$-socle of $\Ind_{U_D^{2}H}^{\cO_D^{\times}}\chi$ is isomorphic to $\chi$ and since $\chi$ occurs in $E^-(\chi,p-1)$ with multiplicity one,  the space $\Hom_{\cO_D^{\times}}\big(E^-(\chi,p-1),\Ind_{U_D^2H}^{\cO_D^{\times}}\chi\big)$ has dimension at most $1$. The result then follows from (i). 
\end{proof}
\medskip

\begin{definition}
Let $\kappa_2,\kappa_2',\epsilon_2:U_D^2/Z_D^1\ra\F$ be the functions defined by (where $x\in\cO_D$)
\[\kappa_2(1+p[\lambda]+p\varpi_Dx):=\overline{x},\]
\[\kappa_2'(1+p[\lambda]+p\varpi_Dx):=\overline{x}^p,\]
\[\ \ \ \epsilon_2(1+p x):=\overline{x}^p-\overline{x}.\]
\end{definition}
It is direct to check that $\kappa_2,\kappa_2',\epsilon_2$ are elements in $\Hom(U_D^2/Z_D^1,\F)$. 
\begin{lemma}\label{lem:U2-F}
We have $\Hom(U_D^2/Z_D^1,\F)=\F \kappa_2 \oplus \F\kappa_2'\oplus\F\epsilon_2$.
\end{lemma}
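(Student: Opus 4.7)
The strategy is to compute $\dim_\F\Hom(U_D^2/Z_D^1,\F)$ directly and then exhibit $\kappa_2,\kappa_2',\epsilon_2$ as three linearly independent elements in it. The first step is to show that any continuous group homomorphism $U_D^2\to\F$ factors through the quotient $U_D^2/U_D^4$. Since $p\geq 5$, the binomial expansion of $(1+pz)^p$ gives $(U_D^2)^p\subseteq U_D^4$, and the general commutator bound $[U_D^m,U_D^n]\subseteq U_D^{m+n}$ yields $[U_D^2,U_D^2]\subseteq U_D^4$. Hence $U_D^2/U_D^4$ is an abelian group of exponent $p$, i.e.\ an $\F_p$-vector space, and the Teichmüller parametrization
\[1+p[\lambda]+p\varpi_D[\mu]\pmod{U_D^4},\qquad (\lambda,\mu)\in\F_q\times\F_q,\]
identifies it with $\F_q\oplus\F_q$ of $\F_p$-dimension $4$.

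Next I will identify the image of $Z_D^1=1+p\Z_p$ inside this quotient. Any $1+pa\in Z_D^1$ has $a\in\Z_p\subset\Z_{p^2}$, whose Teichmüller expansion lies in $\F_p\hookrightarrow\F_q$ and has no $\varpi_D$-component; so the image is exactly $\F_p\oplus 0$, of $\F_p$-dimension $1$. Therefore $U_D^2/(Z_D^1U_D^4)$ has $\F_p$-dimension $3$, which gives
\[\dim_\F\Hom(U_D^2/Z_D^1,\F)=3.\]

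The third step is routine verification: using the congruence
\[(1+p(a+\varpi_Db))(1+p(a'+\varpi_Db'))\equiv 1+p(a+a')+p\varpi_D(b+b')\pmod{p^2\cO_D},\]
one checks that $\kappa_2,\kappa_2',\epsilon_2$ are well-defined additive maps on $U_D^2$. They vanish on $Z_D^1$ since for $u=1+pa$ with $a\in\Z_p$ the coefficient of $\varpi_D$ is zero (killing $\kappa_2$ and $\kappa_2'$) while $\bar a\in\F_p$ gives $\bar a^p-\bar a=0$ (killing $\epsilon_2$).

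Finally I will show linear independence. Suppose $c_1\kappa_2+c_2\kappa_2'+c_3\epsilon_2=0$. Evaluating at $u=1+p[\lambda]$ for $\lambda\in\F_q$ gives $c_3(\lambda^p-\lambda)=0$; picking any $\lambda\in\F_q\setminus\F_p$ forces $c_3=0$. Evaluating the remaining relation at $u=1+p\varpi_D[\mu]$ for $\mu\in\F_q$ gives $c_1\mu+c_2\mu^p=0$ for all $\mu\in\F_q$, and the $\F_q$-linear independence of the two Galois characters $\mathrm{id}$ and $\sigma$ (Artin / Dedekind independence) forces $c_1=c_2=0$. Combined with the dimension count, this yields the stated decomposition. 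The only delicate point is the factoring through $U_D^2/U_D^4$ at the start — once that is in place everything is a direct verification.
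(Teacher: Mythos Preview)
Your approach is essentially correct and more explicit than the paper's, but there is a gap in the dimension count. You want every continuous homomorphism $U_D^2\to\F$ to factor through $U_D^2/U_D^4$, and you justify this by citing $(U_D^2)^p\subseteq U_D^4$ and $[U_D^2,U_D^2]\subseteq U_D^4$. But these inclusions only give $\Phi(U_D^2)\subseteq U_D^4$, i.e.\ that $U_D^2/U_D^4$ is an elementary abelian $p$-group; the factoring claim needs the \emph{reverse} inclusion $U_D^4\subseteq\Phi(U_D^2)$. Without it your argument only yields $\dim_\F\Hom(U_D^2/Z_D^1,\F)\geq 3$, which you already know from exhibiting $\kappa_2,\kappa_2',\epsilon_2$, and you have no upper bound.

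The fix is short: the same binomial expansion actually gives the sharper congruence $(1+pz)^p\equiv 1+p^2z\pmod{p^3\cO_D}$, so the $p$-th power map induces a surjection $U_D^2\twoheadrightarrow U_D^4/U_D^6$; iterating and using that $\Phi(U_D^2)$ is closed gives $U_D^4\subseteq\Phi(U_D^2)$, hence equality. With this in place your computation of $U_D^2/(Z_D^1U_D^4)$ and the linear-independence check go through cleanly.

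For comparison, the paper's proof is a one-liner: it invokes that $U_D^2/Z_D^1$ is a uniform pro-$p$ group of dimension $3$, which immediately forces $\dim_\F\Hom(U_D^2/Z_D^1,\F)=3$, and leaves the basis verification as a direct check. Your route avoids citing uniformity and makes the Frattini quotient completely explicit, at the cost of having to prove $\Phi(U_D^2)=U_D^4$ by hand; the paper's route is quicker but relies on that structural input.
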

\begin{proof}
Since $U_D^2/Z_D^1$ is a uniform pro-$p$ group of dimension $3$, the space $\Hom(U_D^2/Z_D^1,\F)$ has dimension $3$ over $\F$. It is then direct to check that $\kappa_2,\kappa_2',\epsilon_2$ form a basis. 
\end{proof}

\begin{definition}\label{def:W}
Consider the following two-dimensional representation $W$ of $U_D^2H$:
\begin{enumerate}
\item[$\bullet$] $W=\F v\oplus \F w$, where $v$, $w$ are $H$-eigenvectors of character $\chi$, $\chi\alpha$, respectively; 
\item[$\bullet$] for $g\in U_D^2$, 
\begin{equation}\label{eq:W-action}
gv=v,\ \ gw=w+\kappa_2'(g)v.
\end{equation}
\end{enumerate}
\end{definition}
It is direct to check that $W$ is well-defined. Actually, taking into account the $H$-action,  Lemma \ref{lem:U2-F} implies that $\Ext^1_{U_D^2H/Z_D^1}(\chi\alpha,\chi)$  is one-dimensional with $W$ being a basis. 

Let $V:=\Ind_{U_D^2H}^{\cO_D^{\times}} W$. A basis of $V$ is given by $\{f_k, F_k, 0\leq k\leq q-1\}$, where (cf.~\eqref{eq:f-kv})
\[f_k:=f_{k,v},\ \ F_k:=f_{k,w}. \]

\begin{lemma}\label{lem:V-fix}
We have $V^{U_D^2}=\Ind_{U_D^2H}^{\cO_D^{\times}}(\F v)$.
\end{lemma}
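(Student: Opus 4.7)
The plan is to prove the two inclusions separately. For the inclusion $\Ind_{U_D^2 H}^{\cO_D^\times}(\F v) \subseteq V^{U_D^2}$, I rely on a general fact about induction from a subgroup containing a normal subgroup: since $U_D^2$ is normal in $\cO_D^\times$ and acts trivially on $\F v \subseteq W$, for any $n \in U_D^2$ and $f$ in the induced representation,
\[(nf)(x) = f(xn) = (xnx^{-1})\cdot f(x) = f(x),\]
because $xnx^{-1} \in U_D^2$ still acts trivially on $\F v$.

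For the reverse inclusion $V^{U_D^2} \subseteq \Ind_{U_D^2 H}^{\cO_D^\times}(\F v)$, I would expand an arbitrary $f \in V^{U_D^2}$ in the given basis as $f = \sum_k (a_k f_k + b_k F_k)$ and show that all $b_k$ vanish. Using the action formula for $\Ind$ together with the normality of $U_D^2$, for any $g' \in U_D^2$ and the coset representative $h_\lambda := 1+\varpi_D[\lambda]$ one has
\[g' \cdot [h_\lambda, w] = [h_\lambda, (h_\lambda^{-1} g' h_\lambda)\cdot w] = [h_\lambda, w] + \kappa_2'(h_\lambda^{-1} g' h_\lambda)\,[h_\lambda, v]\]
by \eqref{eq:W-action}, hence
\[g' \cdot F_k - F_k = \sum_{\lambda \in \F_q} \lambda^k\, \kappa_2'(h_\lambda^{-1} g' h_\lambda)\, [h_\lambda, v].\]

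I then specialize to $g' \in U_D^3$. The standard filtration estimate $[U_D^i, U_D^j] \subseteq U_D^{i+j}$ yields $h_\lambda^{-1} g' h_\lambda \equiv g' \pmod{U_D^4}$, and a direct check using the very definition shows that $\kappa_2'$ is trivial on $U_D^4 = 1+p^2\cO_D$ (for $g \in U_D^4$ the $p\varpi_D$-coefficient already lies in $p\Z_q$, so its reduction is zero). Consequently $\kappa_2'(h_\lambda^{-1} g' h_\lambda) = \kappa_2'(g')$ independently of $\lambda$, and the preceding display collapses to $g' \cdot F_k - F_k = \kappa_2'(g')\, f_k$. Combined with $g' \cdot f_k = f_k$ (from the first paragraph), the invariance $g' \cdot f = f$ becomes $\kappa_2'(g')\sum_k b_k f_k = 0$. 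Taking $g' = 1 + p\varpi_D \in U_D^3$, one has $\kappa_2'(g') = 1$, so $\sum_k b_k f_k = 0$, and the linear independence of $\{f_k\}_{k=0}^{q-1}$ forces $b_k = 0$ for every $k$.

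The main obstacle is controlling the non-commutativity in the conjugation $h_\lambda^{-1} g' h_\lambda$. Restricting to $g' \in U_D^3$ rather than working with a generic element of $U_D^2$ makes the $\lambda$-dependence invisible to $\kappa_2'$ via the commutator estimate, neatly sidestepping the delicate Witt-polynomial corrections that would otherwise appear (of the type already used in the proof of Lemma \ref{lem:Witt} and Proposition \ref{prop:Ind-v}).
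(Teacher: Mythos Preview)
Your proof is correct. The underlying computation is the same as the paper's, but you package it somewhat differently. The paper first invokes Proposition~\ref{prop:Ind-v} to reduce the reverse inclusion to the single check that $F_0$ (modulo $\Ind(\F v)$) is not $U_D^2$-fixed, and then uses Lemma~\ref{lem:Witt} with $\mu_1=\mu_2=0$ to compute $A(0,0,\mu_3)F_0=F_0+\mu_3^p f_0$; the key point in that computation is that the quantity $X=\mu_3$ is independent of $\lambda$. You instead treat all $F_k$ at once and replace the explicit Witt-polynomial calculation by the filtration estimate $[U_D^1,U_D^3]\subseteq U_D^4$ together with $\kappa_2'|_{U_D^4}=0$, which is exactly the conceptual reason behind that $\lambda$-independence. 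Your route is a bit more self-contained (it avoids both Proposition~\ref{prop:Ind-v} and Lemma~\ref{lem:Witt}); the paper's route is terser once those earlier results are in hand. Note, incidentally, that there is an even more direct argument neither of you takes: since $U_D^2$ is normal, $f\in V^{U_D^2}$ iff $f(x)\in W^{U_D^2}=\F v$ for every $x$, which gives $V^{U_D^2}=\Ind(\F v)$ immediately.
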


\begin{proof}
We already observed that $\Ind_{U_D^2H}^{\cO_D^{\times}}(\F v)$ is fixed by $U_D^2$ as $\F v$ is. To see the inclusion $V^{U_D^2}\subseteq\Ind_{U_D^2H}^{\cO_D^{\times}}(\F v)$, by Proposition \ref{prop:Ind-v} it suffices to check that $F_0\notin V^{U_D^2}$. Taking $\mu_1=\mu_2=0$ in Lemma \ref{lem:Witt}  we get $X=\mu_3$, so by \eqref{eq:W-action}
\[A(0,0,\mu_3)F_0=\sum_{\lambda\in\F_q}\big[(1+\varpi_D[\lambda]),(w+\mu_3^pv)\big]=F_0+\mu_3^pf_0\]
which implies the result.
\end{proof}

\begin{proposition}\label{prop:p+1}
$V$ does not admit any subrepresentation (resp.~any quotient) of dimension $p+1$ which is uniserial with successive graded pieces $\chi, \chi\alpha^{-1},\dots,  \chi\alpha^2,\chi\alpha$. 
\end{proposition}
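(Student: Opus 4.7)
I would prove the sub-representation assertion directly and derive the quotient assertion by $\F$-linear duality. For the duality step, $V$ is finite-dimensional and $(\Ind_{U_D^2H}^{\cO_D^{\times}}W)^{\vee}\cong\Ind_{U_D^2H}^{\cO_D^{\times}}(W^{\vee})$. Since $\Ext^1(\chi^{-1}, \chi^{-1}\alpha^{-1})$ is one-dimensional by Proposition \ref{prop-Ext1-U1} and $W^\vee$ is a non-split extension of $\chi^{-1}$ by $\chi^{-1}\alpha^{-1}$, $W^\vee$ is isomorphic to the ``$W$'' for parameter $\chi':=\chi^{-1}\alpha^{-1}$. Likewise the $\F$-dual of a uniserial module of the forbidden type with parameter $\chi$ is of the same type with parameter $\chi'$ (using $\alpha^{p+1}=1$). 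So the quotient case for $V$ reduces to the sub-representation case for the arbitrary parameter $\chi'$. For the sub-representation case, suppose $M\subseteq V$ of dimension $p+1$ exists as stated. The exact sequence $0 \to \Ind(\F v) \to V \to \Ind(\F w) \to 0$ gives the sub-module $M_1 := M \cap \Ind(\F v) = \soc^j(M)$ for some $0 \leq j \leq p+1$ (by uniseriality). The quotient $M/M_1$ embeds into $\Ind(\F w) \cong \Ind(\chi\alpha)$, whose socle (Proposition \ref{prop:Ind-v} applied to $\chi\alpha$) is the single line of $H$-character $\chi\alpha$. Thus $\soc(M/M_1) = \chi\alpha^{-j} = \chi\alpha$, forcing $\alpha^{j+1} = 1$ and hence $j \in \{p, p+1\}$.

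\emph{Case $j=p$.} Corollary \ref{cor:Ind-v-sub} pins $M_1 = \oplus_{i=0}^{p-1}\F f_i$. A cosocle lift is $m = F_0 + x$ with $x \in \Ind(\chi) = V^{U_D^2}$. For $g = A(0,\mu_2,\mu_3)\in U_D^2$, $x$ is $g$-fixed, so $gm-m = gF_0 - F_0$. Computing $gF_0$ via Lemma \ref{lem:Witt} with $\mu_1=0$ (so $X = \mu_3 + (\mu_2^p-\mu_2)\lambda$) and the relation $gw = w + \kappa_2'(g)v$ gives
\[
gF_0 - F_0 \;=\; \mu_3^p f_0 \,-\, (\mu_2^p-\mu_2)\, f_p.
\]
For $M$ to be $\cO_D^\times$-stable this must lie in $M_1$, but $f_p \notin M_1$ and $\mu_2^p\neq \mu_2$ for $\mu_2 \in \F_q \setminus \F_p$. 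Contradiction.

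\emph{Case $j=p+1$.} Now $M \subseteq \Ind(\chi)$. After subtracting $M_1$-terms, any cosocle lift $m$ is, modulo $M_1$, an $H$-eigenvector of character $\chi\alpha^{-p} = \chi\alpha$, which via $H$-weights in $\Ind(\chi)$ forces $m \equiv \sum_{k \in S} a_k f_k \pmod{M_1}$ with $S := \{p+j(p+1) : 0 \leq j \leq p-2\}$. Writing $k \in S$ in base $p$ as $(k_0, k_1) = (j, j+1)$ and imposing $A(\mu_1,0,0)\,m - m \in M_1$ for all $\mu_1$, the formula from the proof of Proposition \ref{prop:Ind-v} demands $a_k \binom{j}{j'}\binom{j+1}{j'+1} = 0$ for all $j' < j$ (using $k' = p+j'(p+1) \in S$ as test index). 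Since $j+1 \leq p-1$, every such binomial is nonzero mod $p$; so $a_k = 0$ for $k \in S \setminus \{p\}$ and $M = \oplus_{i=0}^p \F f_i$. A direct $U_D^1$-action calculation then shows $M[\fm_D^2] = \F f_0 \oplus \F f_1 \oplus \F f_p$ has dimension $3$, contradicting the requirement $\dim M[\fm_D^2] = 2$ for uniserial $M$ of length $p+1$.

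The subtlest point is the case $j = p+1$: the natural candidate $M = \oplus_{i=0}^p\F f_i$ satisfies the $H$-character condition, yet fails uniseriality. The underlying reason is arithmetic: in base $p$, the index $p-1$ has digits $(p-1, 0)$ while $p$ has digits $(0, 1)$, so they are $\preceq$-incomparable; the $U_D^1$-action on $f_p$ therefore lands directly in the socle $\F f_0$ rather than threading through $f_{p-1}, f_{p-2}, \dots$, and this extra contribution enlarges $M[\fm_D^2]$ from the required dimension $2$ to dimension $3$.
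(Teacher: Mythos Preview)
Your proof is correct and takes essentially the same approach as the paper: reduce to the subrepresentation case by duality, pin down $\soc^p(M)=\bigoplus_{i=0}^{p-1}\F f_i$ via Corollary~\ref{cor:Ind-v-sub}, and obtain a contradiction from the computation $A(0,\mu_2,0)\cdot F_0-F_0=(\mu_2-\mu_2^p)f_p$. Two minor fixes: for the duality step you need $\dim_{\F}\Ext^1_{U_D^2H/Z_D^1}(\chi'\alpha,\chi')=1$, which comes from Lemma~\ref{lem:U2-F} (the remark after Definition~\ref{def:W}), not Proposition~\ref{prop-Ext1-U1}; and in Case~$j=p+1$ your $M_1$ equals $M$ itself, so ``modulo $M_1$'' is vacuous---you mean $\soc^p(M)$, and the paper dispatches this case in one line by noting that $\bigoplus_{i=0}^{p}\F f_i$ is not uniserial (Proposition~\ref{prop:Ind-v}), which is exactly your digit observation $(p-1,0)\not\preceq(0,1)$ and vice versa.
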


\begin{proof}
It is direct to check that $W^{\vee}\cong W\otimes (\chi^{-2}\alpha^{-1})$ and  $V^{\vee}\cong V\otimes (\chi^{-2}\alpha^{-1}) $. Thus it suffices to prove the result for subrepresentations.

Suppose not, and let $V_1\subset V$ be a subrepresentation of dimension $p+1$ as in the proposition. Let $V_2=V_1[\fm_D^{p}]$.  Then $V_2$ is isomorphic to $E^-(\chi,p-1)$  and, by Corollary \ref{cor:Ind-v-sub} and Lemma \ref{lem:V-fix}, is precisely the subrepresentation of $\Ind_{U_D^2H}^{\cO_D^{\times}}(\F v)$ spanned by $\{f_0,\dots,f_{p-1}\}$. Using Proposition \ref{prop:Ind-v}, the $\cO_D^{\times}$-socle of $\Ind_{U_D^2H}^{\cO_D^{\times}}(\F v)/V_2$ is one-dimensional and spanned by $f_{p}$. From the exact sequence 
\[0\ra \Ind_{U_D^2H}^{\cO_D^{\times}}(\F v)/V_2\ra V/V_2\ra  \Ind_{U_D^2H}^{\cO_D^{\times}}(\F w)\ra0,\]
we deduce that  the $\cO_D^{\times}$-socle of $V/V_2$ is contained in $ \F f_{p}\oplus  \F F_0$. Hence, there exist $a,b\in \F$  such that (as vector spaces)  
\[V_1=V_2\oplus \F(af_{p}+b F_0). \]
Moreover, we must have $b\neq0$, otherwise $V_1=\oplus_{i=0}^{p}\F f_i$ and  would \emph{not} be uniserial by Proposition \ref{prop:Ind-v} again. 

Now, if $\mu_2\in\F_q$, we have $A(0,\mu_2,0)\cdot f_p=f_p$ and by Lemma \ref{lem:Witt}   
\[A(0,\mu_2,0)\cdot F_0=\sum_{\lambda\in\F_q}\big[(1+\varpi_D[\lambda]),w+(\mu_2^p\lambda-\mu_2\lambda)^pv\big]=F_0+(\mu_2-\mu_2^p)f_p,\]
so that 
\[A(0,\mu_2,0)\cdot(af_p+bF_0)-(af_p+bF_0)=b(\mu_2-\mu_2^p)f_p.\]
By taking $\mu_2\in\F_q\setminus \F_p$, we deduce that $f_p$, hence also  $F_0$,  belongs to $V_1$. But this would imply $\dim_{\F} V_1\geq p+2$,   a contradiction.
\end{proof}

\begin{remark}
Using Lemma \ref{lem:Witt}, one can check that  the subrepresentation $\langle \cO_D^{\times}.F_0\rangle$ of $V$ is equal to $(\oplus_{i=0}^{p-2}\F f_{i,v})\oplus \F f_{p,v}\oplus \F (f_{p-1,v}+f_{2p,v})\oplus \F F_0$.
\end{remark}

\subsection{Uniserial representations}\label{sec:uniserial}

 We define two elements in the group algebra $\F[U^1_D]$ which will play an important role in the rest.
\begin{definition}\label{defn-X+X-}
Define
\[Y:=\sum_{\lambda\in\F_{q}^{\times}} \lambda^{-1}(1+\varpi_D[\lambda])\in \F[U_D^1]\] 
\[ 
Z:=\sum_{\lambda\in\F_q^{\times}}\lambda^{-p}(1+\varpi_D[\lambda])\in\F[U_D^1]\]
where the terms $1+\varpi_D[\lambda]$ are viewed as group elements in $U_D^1$.
\end{definition}
 
 We also view $Y, Z$ as elements in the Iwasawa algebra $\Lambda:=\F[\![U_D^1/Z_D^1]\!]$ (after taking images). 
Then the maximal ideal $\fm_D$ is generated by $Y$ and $Z$ by \cite[Lem.~2.7]{HW-JL1}. Let \[\gr_{\fm_D}(\Lambda):=\bigoplus_{n\geq 0}\fm_D^n/\fm_D^{n+1}\] be the graded ring of $\Lambda$ with respect to the $\fm_D$-adic filtration.  Since we will only consider the $\fm_D$-adic filtration on $\Lambda$,  the subscript $\fm_D$ in $\gr_{\fm_D}(\Lambda)$ will be often omitted.

Comparing with the case of $\GL_2(\Z_p)$, we should consider $Y$ (resp.~$Z$) as a remplacement of $\sum_{\lambda\in\F_p}\lambda^{-1}\smatr{1}{[\lambda]}01$ (resp.~$\sum_{\lambda\in\F_p}\lambda^{-1}\smatr10{[\lambda]}1$). The reason is as follows.

\begin{lemma}\label{lemma-character-X+X-}
Let $\pi$ be a smooth representation of $\cO_D^{\times}/Z_D^1$. If $v\in\pi$ is an eigenvector of $H$ of character $\chi$, then  $Yv$ (resp.~$Zv$) is  an eigenvector of $H$ of character $\chi\alpha$ (resp.~$\chi\alpha^{-1}$).
\end{lemma}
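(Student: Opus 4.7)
The plan is to conjugate the elements $Y$ and $Z$ by a Teichmüller lift $[\mu]\in H$ and show that $[\mu]Y=\chi\alpha([\mu])\,Y[\mu]$ as elements of $\F[U_D^1]\cdot H$ (and similarly for $Z$ with $\alpha^{-1}$); the claim about the $H$-character of $Yv$ and $Zv$ is then immediate.

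The key computation, already used in the paper when verifying that $f_{k,v}$ is an $H$-eigenvector of character $\chi\alpha^{-k}$, is the commutation relation
\[
 [\mu]\cdot(1+\varpi_D[\lambda])=[\mu]+\varpi_D[\mu^p\lambda]=(1+\varpi_D[\mu^{p-1}\lambda])\cdot [\mu],
\]
which itself comes from $\varpi_D^{-1}a\varpi_D=\sigma(a)$ on $\Q_{p^2}$ and the multiplicativity of the Teichmüller lift. Plugging this into the definition of $Y$ and reindexing the sum by $\lambda':=\mu^{p-1}\lambda$ (so that $\lambda^{-1}=\mu^{p-1}(\lambda')^{-1}$), I obtain
\[
 [\mu]\cdot Y=\mu^{p-1}\Bigl(\sum_{\lambda'\in\F_q^\times}(\lambda')^{-1}(1+\varpi_D[\lambda'])\Bigr)\cdot[\mu]=\alpha([\mu])\,Y\cdot[\mu].
\]
Hence $[\mu]\cdot(Yv)=\alpha([\mu])\,Y\cdot[\mu]v=(\chi\alpha)([\mu])\,Yv$, as desired.

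For $Z$ I would run the same argument: reindexing by $\lambda'=\mu^{p-1}\lambda$ gives $\lambda^{-p}=\mu^{p(p-1)}(\lambda')^{-p}$, and since $\mu^{p^2-1}=1$ (as $\mu\in\F_q^\times$ with $q=p^2$), one has $\mu^{p(p-1)}=\mu^{1-p}=\alpha^{-1}([\mu])$. Thus $[\mu]\cdot Z=\alpha^{-1}([\mu])\,Z\cdot[\mu]$ and $[\mu]\cdot(Zv)=(\chi\alpha^{-1})([\mu])\,Zv$.

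There is no real obstacle here; the only subtlety worth double-checking is the sign convention in the exponent so that the factor $\mu^{p(p-1)}$ really collapses to $\mu^{1-p}$ modulo $p^2-1$, which is what distinguishes $Z$ (giving $\alpha^{-1}$) from $Y$ (giving $\alpha$) and justifies the asymmetric normalization $\lambda^{-1}$ versus $\lambda^{-p}$ chosen in Definition \ref{defn-X+X-}.
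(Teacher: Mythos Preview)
Your proof is correct and is precisely the ``direct check'' the paper alludes to: the commutation relation $[\mu](1+\varpi_D[\lambda])=(1+\varpi_D[\mu^{p-1}\lambda])[\mu]$ followed by the change of variable $\lambda'=\mu^{p-1}\lambda$ is the natural computation, and your reduction of $\mu^{p(p-1)}$ to $\mu^{1-p}$ via $\mu^{p^2-1}=1$ is exactly right.
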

\begin{proof}
This is a direct check.
\end{proof}

Let $y$ (resp.~$z$) denote  the image of $Y$ (resp.~$Z$) in $\gr(\Lambda)$.
It is proved in \cite[\S2]{HW-JL1}  that  $\gr(\Lambda)$  is isomorphic to the universal enveloping algebra of the  Lie algebra $\mathfrak{g}$, where $\mathfrak{g}=\F y\oplus \F z\oplus \F h$, with the relations 
\begin{equation}\label{eq:g-Lie}[y,z]=h,\ \ [y,h]=[z,h]=0\end{equation}
and $\deg y=\deg z=1$. As a consequence, there is an inclusion $\F[y]\subseteq \gr(\Lambda)$.\medskip

\begin{lemma}\label{lem:subring}
There is a natural inclusion $\F[\![Y]\!]\subseteq \Lambda$.
\end{lemma}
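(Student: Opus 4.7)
The plan is to exhibit an explicit continuous $\F$-algebra map $\F[\![T]\!] \to \Lambda$ sending $T \mapsto Y$ and verify that it is injective.

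First I would check well-definedness. Since $Y$ lies in the maximal ideal $\fm_D$ of $\Lambda$ (it is a sum of elements of the form $1+\varpi_D[\lambda]$, each of which maps to $\varpi_D[\lambda]$ in $\fm_D/\fm_D^2$ after subtracting nothing — in fact $Y$ already sits in $\fm_D$ because the sum over $\lambda^{-1}$ of the constant term $1$ vanishes as $\sum_{\lambda\in\F_q^\times}\lambda^{-1}=0$ for $q>2$). Because $\Lambda$ is a complete local Noetherian ring with respect to the $\fm_D$-adic topology, any power series $\sum a_n T^n\in\F[\![T]\!]$ evaluates to a convergent element $\sum a_n Y^n\in\Lambda$. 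This gives a continuous $\F$-algebra homomorphism $\varphi:\F[\![T]\!]\to \Lambda$.

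The key step is injectivity, and this is where the graded ring enters. Suppose $f=\sum_{n\geq 0} a_n T^n$ is a nonzero element of $\F[\![T]\!]$, and let $n_0=\min\{n: a_n\neq 0\}$. Writing $\varphi(f)=Y^{n_0}\cdot u$ with $u=a_{n_0}+a_{n_0+1}Y+\cdots\in\Lambda$, the element $u$ reduces to the unit $a_{n_0}\in\F^\times$ modulo $\fm_D$, hence $u$ is a unit in $\Lambda$. Thus it suffices to show $Y^{n_0}\neq 0$ in $\Lambda$. For this I use the description of $\gr(\Lambda)$ recalled just above the lemma: the image $y$ of $Y$ in $\fm_D/\fm_D^2$ generates a polynomial subalgebra $\F[y]\subseteq\gr(\Lambda)$, so $y^{n_0}\neq 0$ in $\gr^{n_0}(\Lambda)$. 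Consequently $Y^{n_0}\in\fm_D^{n_0}\setminus\fm_D^{n_0+1}$ is nonzero, and $\varphi(f)=Y^{n_0}u\neq 0$.

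The only point requiring a bit of care is the non-vanishing of $y^n$ in $\gr(\Lambda)$, but this is supplied directly by the PBW-type description in \cite[\S2]{HW-JL1}, which identifies $\gr(\Lambda)$ with the enveloping algebra of the three-dimensional Lie algebra $\mathfrak{g}=\F y\oplus \F z\oplus \F h$ with relations \eqref{eq:g-Lie}, so there is no obstacle; the lemma reduces to this leading-term calculation.
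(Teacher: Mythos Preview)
Your proof is correct and takes essentially the same approach as the paper: both reduce injectivity to the inclusion $\F[y]\subseteq\gr(\Lambda)$. The paper phrases it as ``$\F[Y]\to\Lambda$ is injective on graded level, $\fm_D^n\cap\F[Y]=(Y^n)$, now complete,'' while you argue directly via a leading-term factorization $\varphi(f)=Y^{n_0}u$ with $u$ a unit; these are minor cosmetic variations of the same idea.
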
 

\begin{proof}
The  natural ring morphism $\F[Y]\ra \Lambda$ is injective because it is injective when passing to graded rings, as is recalled above.  It is clear that $\fm_D^n\cap \F[Y]=(Y^n)$ for any $n\geq 1$. Since $\Lambda$ is $\fm_D$-adically complete, we deduce the result by taking completion. \end{proof}

If $\pi$ is a smooth representation of $\cO_D^{\times}/Z_D^1$, write \[\pi[Y]:=\pi[Y=0]=\big\{v\in\pi: Yv=0\big\}.\]
The following result is an analog of \cite[Prop.~5.9]{Pa10}. 
\begin{lemma}\label{lem-pi-injective}
Let $\pi$ be a smooth representation of $\cO_D^{\times}/Z_D^1$ of dimension $d\geq 2$ (possibly infinite). Assume  that 
\[ \dim_{\F} \pi[Y]=1\]
and $H$ acts on it by $\chi$. Then for any $1\leq n\leq d$, $\dim_{\F}\pi[\fm_D^n]=n$  and there is an exact sequence
\begin{equation}\label{equation-pi-filtration}0\ra \pi[\fm_D^n]\ra \pi[\fm_D^{n+1}]\ra \chi\alpha^{-n}\ra0.\end{equation}
\end{lemma}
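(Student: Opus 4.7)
The plan is to prove both conclusions simultaneously by induction on $n$, with the inductive hypothesis being the identity $\pi[\fm_D^n] = \pi[Y^n]$, where the right-hand side is controlled via the subring inclusion $\F[\![Y]\!]\subseteq \Lambda$ of Lemma \ref{lem:subring}.

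\emph{Step 1: analyze $\pi[Y^n]$ as an $\F[\![Y]\!]$-module.} Since $\pi$ is smooth, $Y\in\fm_D$ acts locally nilpotently, so $\pi=\bigcup_n\pi[Y^n]$. Multiplication by $Y$ maps $\pi[Y^{n+1}]$ into $\pi[Y^n]$ and induces an injection $\pi[Y^{n+1}]/\pi[Y^n]\hookrightarrow\pi[Y^n]/\pi[Y^{n-1}]$, so the graded dimensions are nonincreasing. The hypothesis $\dim_\F\pi[Y]=1$ forces each graded piece to be at most one-dimensional; combining this with $\pi=\bigcup_n\pi[Y^n]$ and $\dim_\F\pi=d$, the filtration cannot stabilize before reaching all of $\pi$, so $\dim_\F\pi[Y^n]=n$ and each graded piece is exactly one-dimensional, for every $1\leq n\leq d$.

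\emph{Step 2: track $H$-characters.} A direct computation from the definition of $Y$ (using $[\mu](1+\varpi_D[\lambda])=(1+\varpi_D[\mu^{p-1}\lambda])[\mu]$) gives $[\mu]^{-1}Y[\mu]=\alpha([\mu])^{-1}Y$ in $\F[\![U_D^1]\!]$, whence $Y^nh=\alpha(h)^{-n}hY^n$ and $\pi[Y^n]$ is $H$-stable. By Lemma \ref{lemma-character-X+X-}, the isomorphism $Y\colon\pi[Y^{n+1}]/\pi[Y^n]\simto\pi[Y^n]/\pi[Y^{n-1}]$ shifts $H$-character by $\alpha$; since $\pi[Y]$ has character $\chi$, an immediate induction shows the graded piece $\pi[Y^n]/\pi[Y^{n-1}]$ has character $\chi\alpha^{-(n-1)}$.

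\emph{Step 3: induction showing $\pi[\fm_D^n]=\pi[Y^n]$.} For $n=1$, $\pi[\fm_D]=\pi^{U_D^1}$ is nonzero by smoothness ($U_D^1/Z_D^1$ is pro-$p$) and is contained in the one-dimensional space $\pi[Y]$, hence equals it. For the inductive step, $Y\in\fm_D$ gives $\pi[\fm_D^{n+1}]\subseteq\pi[Y^{n+1}]$, while $\pi[\fm_D^n]\subseteq\pi[\fm_D^{n+1}]$ is automatic; together with the inductive hypothesis this yields the sandwich
\[\pi[Y^n]\;=\;\pi[\fm_D^n]\;\subseteq\;\pi[\fm_D^{n+1}]\;\subseteq\;\pi[Y^{n+1}],\]
whose outer quotient is one-dimensional by Step 1. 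Thus $\pi[\fm_D^{n+1}]$ equals either $\pi[Y^n]$ or $\pi[Y^{n+1}]$. In the first case $\pi[\fm_D^{n+1}]=\pi[\fm_D^n]$, and then for any $v$ with $\fm_D^{n+2}v=0$ one has $\fm_D v\subseteq\pi[\fm_D^{n+1}]=\pi[\fm_D^n]$, so $\fm_D^{n+1}v=0$; iterating gives $\pi[\fm_D^m]=\pi[\fm_D^n]$ for all $m\geq n$, whence $\pi=\bigcup_m\pi[\fm_D^m]=\pi[\fm_D^n]$ has dimension $n$, contradicting $n+1\leq d$. Therefore $\pi[\fm_D^{n+1}]=\pi[Y^{n+1}]$, and the exact sequence in the statement follows from the character computation of Step 2.

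The principal technical point is the squeezing of the finer $\fm_D$-adic torsion filtration between two consecutive levels of the coarser $Y$-torsion filtration: once one realizes that $\pi[\fm_D^{n+1}]$ lies between $\pi[Y^n]$ and $\pi[Y^{n+1}]$ (whose quotient is one-dimensional), the comparison reduces to a clean dichotomy that is resolved by the exhaustion property of the $\fm_D$-filtration; the rest is $\F[\![Y]\!]$-module bookkeeping and character chasing.
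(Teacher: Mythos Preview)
Your proof is correct and follows essentially the same strategy as the paper: establish $\pi[\fm_D^n]=\pi[Y^n]$ by induction, using that the $Y$-torsion filtration has one-dimensional graded pieces, and read off the $H$-characters via Lemma~\ref{lemma-character-X+X-}. The only cosmetic difference is that the paper passes to the Pontryagin dual and phrases the key point as ``$\pi^{\vee}$ is cyclic over $\F[\![Y]\!]$, hence so is $(Y)\pi^{\vee}$'', then inducts by replacing $\pi$ with $\pi/\pi[\fm_D]$; you instead work directly on $\pi$ with the squeezing $\pi[Y^n]\subseteq\pi[\fm_D^{n+1}]\subseteq\pi[Y^{n+1}]$ and rule out equality on the left by the exhaustion of the $\fm_D$-filtration. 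Your presentation is slightly more elementary in that it avoids duality, but the two arguments are dual formulations of the same idea.
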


\begin{proof}
First note that since $\pi[\fm_D]=\pi^{U_D^1}$ is always non-zero, and $\pi[\fm_D]\subset \pi[Y]$, the assumption $\dim_{\F}\pi[Y]=1$ implies that $\pi[\fm_D]=\pi[Y]$; in particular, $\pi[Y]$ is stable under $\cO_{D}^{\times}$.
We claim that $(\pi/\pi[Y])[Y]$ is still one-dimensional. This will imply $\pi[\fm_D^2]=\pi[Y^2]$ and we may conclude inductively using Lemma \ref{lemma-character-X+X-}.

The Pontryagin dual  $\pi^{\vee}$  is a finitely generated $\Lambda$-module.  The assumption $\dim_{\F}\pi[Y]=1$ implies that $\pi^{\vee}$ is  cyclic when viewed as an $\F[\![Y]\!]$-module via Lemma \ref{lem:subring}. The claim is equivalent to saying that the submodule $(Y)\pi^{\vee}$ is again a cyclic $\F[\![Y]\!]$-module which is obvious.  
 \end{proof}

\begin{corollary}\label{coro-pi-injective}
Let $\pi$ be a smooth representation of $\cO_D^{\times}/Z_D^1$ such that $\dim_{\F}\pi[Y]=1$. Then $\dim_{\F}\pi'[Y]=1$  for any subquotient $\pi'$ of $\pi$.
\end{corollary}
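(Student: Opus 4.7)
The plan is to transport both hypothesis and conclusion to the Pontryagin dual side, where the statement becomes a stability assertion about cyclic modules over the complete discrete valuation ring $\F[\![Y]\!]$ (which sits inside $\Lambda$ by Lemma \ref{lem:subring}).

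First I would recall, as already exploited in the proof of Lemma \ref{lem-pi-injective}, that the hypothesis $\dim_{\F}\pi[Y]=1$ is equivalent (via Pontryagin duality together with Nakayama's lemma for the complete local ring $\F[\![Y]\!]$) to the statement that $\pi^{\vee}$ is cyclic as an $\F[\![Y]\!]$-module. Conversely, the Pontryagin dual of $\pi[Y]$ is $\pi^{\vee}/Y\pi^{\vee}$, so cyclicity of $\pi^{\vee}$ over $\F[\![Y]\!]$ forces $\dim_{\F}\pi[Y]\leq 1$. This equivalence is the bridge that the whole argument crosses.

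Next I would split the problem into the subrepresentation and quotient cases. For a nonzero subrepresentation $\pi_1\subseteq\pi$, Pontryagin duality yields a surjection $\pi^{\vee}\twoheadrightarrow \pi_1^{\vee}$ of $\F[\![Y]\!]$-modules. Since cyclicity is trivially preserved under quotients, $\pi_1^{\vee}$ is cyclic, hence $\dim_{\F}\pi_1[Y]\leq 1$. The matching lower bound is for free: because $U_D^1/Z_D^1$ is pro-$p$, any nonzero smooth $\F$-representation has a nonzero fixed subspace, and $\pi_1^{U_D^1}=\pi_1[\fm_D]\subseteq \pi_1[Y]$.

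The nonzero quotient case $\pi_2=\pi/\pi_1$ is where I expect the only (mild) subtlety. Duality now gives an injection $\pi_2^{\vee}\hookrightarrow \pi^{\vee}$ of $\F[\![Y]\!]$-modules, so what I need is that every submodule of a cyclic $\F[\![Y]\!]$-module is again cyclic. This is exactly where I would invoke that $\F[\![Y]\!]$ is a principal ideal domain: cyclic modules are of the form $\F[\![Y]\!]$ or $\F[\![Y]\!]/(Y^n)$, and their submodules $(Y^k)$ or $(Y^k)/(Y^n)$ are again cyclic. Hence $\pi_2^{\vee}$ is cyclic, and the same duality--plus--smoothness argument yields $\dim_{\F}\pi_2[Y]=1$. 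An arbitrary nonzero subquotient is reached by composing a subrepresentation and a quotient, so the two cases together conclude the argument; no serious obstacle is anticipated beyond being careful that the duality identification $\pi[Y]^{\vee}\cong \pi^{\vee}/Y\pi^{\vee}$ is precisely what was used in Lemma \ref{lem-pi-injective}.
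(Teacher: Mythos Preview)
Your proposal is correct and is precisely the argument the paper has in mind: the paper's one-line proof simply points back to Lemma~\ref{lem-pi-injective}, whose content is exactly the duality translation to cyclicity of $\pi^{\vee}$ over the PID $\F[\![Y]\!]$, and you have correctly spelled out how this passes to subquotients.
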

\begin{proof}
It directly follows from the proof of Lemma \ref{lem-pi-injective}.
\end{proof}

The following result shows a key difference between the theory of representations of $\cO_D^{\times}$ (or $U_D^1$) and of $I_1$ (pro-$p$ Iwahori subgroup of $\GL_2(\Z_p)$),  cf.~\cite[Prop.~7.2]{Pa10}.
\begin{proposition}\label{prop:finiteness}
Let $\pi$ be a smooth representation of $\cO_D^{\times}/Z_D^1$ such that
$\dim_{\F}\pi[Y]=1$.
Then $\pi$ has dimension $\leq p$. Moreover, $\pi$ is isomorphic to $E^-(\chi, n)$ with $\chi=\pi[Y]$ and $n=\dim_{\F}\pi-1$.  \end{proposition}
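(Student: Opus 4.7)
The plan is to argue by contradiction via Proposition \ref{prop:p+1}. Granting $\dim_{\F}\pi\leq p$, Lemma \ref{lem-pi-injective} presents $\pi$ as a uniserial representation with graded pieces $\chi,\chi\alpha^{-1},\dots,\chi\alpha^{-(d-1)}$ where $d:=\dim_{\F}\pi$, and Lemma \ref{lemma-Rchi-n} identifies $\pi$ with $E^-(\chi,d-1)$. So suppose $\dim_{\F}\pi\geq p+1$ and set $\pi':=\pi[\fm_D^{p+1}]$. By Lemma \ref{lem-pi-injective}, $\pi'$ is uniserial of dimension $p+1$ with graded pieces $\chi,\chi\alpha^{-1},\dots,\chi\alpha^{-p}=\chi\alpha$ (using $\alpha^{p+1}=1$). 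In particular $\pi[\fm_D^p]\cong E^-(\chi,p-1)$ by Lemma \ref{lemma-Rchi-n}, so Corollary \ref{cor:Ind-v-sub}(i) implies that $U_D^2$ acts trivially on $\pi[\fm_D^p]$. Pick $H$-eigenvectors $v_j\in\pi'$ of character $\chi\alpha^{-j}$ for $0\leq j\leq p$; each eigenspace is one-dimensional since the graded characters are distinct, and $v_j\in\pi[\fm_D^{j+1}]\setminus\pi[\fm_D^j]$. The goal is to embed $\pi'$ into $V:=\Ind_{U_D^2H}^{\cO_D^{\times}}W$, contradicting Proposition \ref{prop:p+1}; by Frobenius reciprocity it suffices to construct a $U_D^2H$-equivariant map $\tilde\phi\colon\pi'\to W$ that is non-zero on the socle $\F v_0$.

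The trivial $U_D^2$-action on $\pi[\fm_D^p]$ and on the line $\pi'/\pi[\fm_D^p]$ makes $\phi\colon U_D^2\to\pi[\fm_D^p]$, $u\mapsto uv_p-v_p$, into a group homomorphism satisfying the equivariance $\phi([\mu]u[\mu]^{-1})=(\chi\alpha)([\mu])^{-1}[\mu]\phi(u)$. Decomposing $\phi=\sum_{j=0}^{p-1}\phi_j$ along the $H$-eigenspaces of $\pi[\fm_D^p]$ and identifying each one-dimensional $\chi\alpha^{-j}$-eigenspace with $\F$, each $\phi_j$ becomes a group homomorphism $U_D^2\to\F$, and the equivariance forces $\phi_j$ to be an $\alpha^{j+1}$-eigenvector for the $H$-conjugation action on $\Hom(U_D^2/Z_D^1,\F)$. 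A direct computation of this conjugation on the basis from Lemma \ref{lem:U2-F} yields $H$-characters $\alpha^{-1}$, $\alpha$, $1$ for $\kappa_2,\kappa_2',\epsilon_2$ respectively. Since $\alpha$ has order $p+1$, matching these with $\alpha^{j+1}$ for $0\leq j\leq p-1$ forces $\phi_j=0$ unless $j\in\{0,p-1\}$, and in particular $\phi_0=b_0\kappa_2'$ for some scalar $b_0\in\F$.

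Now define $\tilde\phi\colon\pi'\to W$ by $\tilde\phi(v_0):=v$, $\tilde\phi(v_p):=b_0 w$, and $\tilde\phi(v_i):=0$ for $1\leq i\leq p-1$. $H$-equivariance is immediate from the choice of $H$-characters; the $U_D^2$-equivariance is trivial on $v_i$ for $i<p$ (since $uv_i=v_i$ and $\tilde\phi(v_i)\in\F v\subseteq W^{U_D^2}$), while on $v_p$ it reduces to the identity $\phi_0=b_0\kappa_2'$ combined with the defining relation $uw=w+\kappa_2'(u)v$ in $W$. The corresponding $\cO_D^{\times}$-map $\pi'\to V$ is injective because $\pi'$ has simple socle $\F v_0$ and $\tilde\phi(v_0)\neq 0$, so its image is a uniserial $(p+1)$-dimensional subrepresentation of $V$ with graded pieces $\chi,\chi\alpha^{-1},\dots,\chi\alpha^2,\chi\alpha$, contradicting Proposition \ref{prop:p+1}. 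The delicate step is the character bookkeeping of the middle paragraph, which rests ultimately on the absence of $\chi\alpha$ among the $H$-characters of $\pi[\fm_D^p]$.
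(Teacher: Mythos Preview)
Your proof is correct and reaches the same contradiction with Proposition~\ref{prop:p+1}, but by a different route than the paper. The paper argues on the \emph{quotient} side: it first notes that $U_D^2$ acts trivially on $\pi/\pi[Y]\cong E^-(\chi\alpha^{-1},p-1)$ (again by Corollary~\ref{cor:Ind-v-sub}(i)), so that $gw-w$ lands directly in the one-dimensional space $\pi[Y]=\F v_0$; then either $w$ is $U_D^2$-fixed (giving a surjection $\Ind_{U_D^2H}^{\cO_D^{\times}}\chi\alpha\twoheadrightarrow\pi$) or $\F v_0\oplus\F w$ is already isomorphic to $W$ (giving a surjection $V\twoheadrightarrow\pi$), and in either case the quotient half of Proposition~\ref{prop:p+1} yields the contradiction. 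Your argument instead works on the \emph{subrepresentation} side: you take the cocycle into the larger space $\pi[\fm_D^p]$, use the $H$-character analysis on $\Hom(U_D^2/Z_D^1,\F)$ to isolate the components $\phi_0,\phi_{p-1}$, and then build a $U_D^2H$-map $\pi'\to W$ by simply discarding the $\phi_{p-1}$-contribution. This has the pleasant feature of treating the two cases uniformly (whether or not $b_0=0$), at the cost of the extra eigenspace bookkeeping; the paper's use of $\pi/\pi[Y]$ rather than $\pi[\fm_D^p]$ avoids that bookkeeping entirely, since the cocycle is then automatically $\chi$-valued.
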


\begin{proof}
Assume $\dim_{\F}\pi\geq p+1$ for a contradiction.  By Lemma \ref{lem-pi-injective}, $\pi$ contains a unique subrepresentation $\pi'$ which is of dimension $p+1$. Thus it suffices to show that there is no representation $\pi$ of dimension $p+1$ and satisfying $\dim_{\F}\pi[Y]=1$. Suppose $\dim_{\F}\pi=p+1$ (for a contradiction). The $\cO_D^{\times}$-socle of $\pi$ is identified with $\pi[Y]$, which is one-dimensional isomorphic to $\chi$, say. Then the graded pieces of the socle filtration of $\pi$ are $\chi,\chi\alpha^{-1},\dots,\chi\alpha^{-p}=\chi\alpha$. In particular, $\mathrm{cosoc}(\pi)\cong\chi\alpha$.

Let $\pi_1\subset \pi$ be the unique subrepresentation of dimension $p$ (cf.~Lemma \ref{lem-pi-injective}). Corollary \ref{cor:Ind-v-sub} implies that $\pi_1$ is isomorphic to $E^-(\chi,p-1)$ and is fixed by $U_D^2$.   Choose $w\in \pi\setminus\pi_1$ on which  $H$ acts via $\chi\alpha$; this is always possible as $\mathrm{cosoc}(\pi)\cong \chi\alpha$. We claim that $w$ is not fixed by $U_D^2$. Otherwise,  by Frobenius reciprocity the embedding $\F w\cong \chi\alpha\hookrightarrow \pi|_{U_D^2H}$ would induce an $\cO_D^{\times}$-equivariant morphism 
\[\Ind_{U_D^2H}^{\cO_D^{\times}}\chi\alpha\ra \pi\]
which must be surjective because $\pi$ is generated by $w$ as a representation of $\cO_D^{\times}$. But it follows from Proposition \ref{prop:p+1} that $\Ind_{U_D^2H}^{\cO_D^{\times}}\chi$ does not admit $\pi$ as a quotient. 

On the other hand, the quotient $\pi/\pi[Y]$ is uniserial of dimension $p$,  on which $U_D^2$ acts trivially by Corollary \ref{cor:Ind-v-sub}. We deduce that \[gw-w\in \pi[Y],\ \ \forall g\in U_D^2\]
equivalently, \[W:=(\pi[Y])\oplus \F w\subset \pi\] is a subspace stable under $U_D^2H$. Comparing the characters of $H$, we see that the structure of $W$ is exactly as in Definition \ref{def:W} (non-split because $w$ is not fixed by $U_D^2$ as seen above).  By Frobenius reciprocity, this induces a surjection $\Ind_{U_D^2H}^{\cO_D^{\times}}W\twoheadrightarrow \pi$, which is impossible by Proposition \ref{prop:p+1} again.
\end{proof}

\subsection{Annihilators}

Recall that   the graded ring $\gr(\Lambda)$ is isomorphic to the universal enveloping algebra (over $\F$) of the Lie algebra $\mathfrak{g}=\F y\oplus \F z\oplus \F h$, see \S\ref{sec:uniserial}.

\begin{proposition}\label{prop:ann}
If $\mathfrak{a}$ denotes the left ideal of $\gr(\Lambda)$ generated by $y^2$ and $z$, then $\gr(\Lambda)/\mathfrak{a}$ is $3$-dimensional over $\F$, spanned by $1,y,zy$. Similarly, the quotient $\gr(\Lambda)/(y,z^2)$ is  $3$-dimensional.
\end{proposition}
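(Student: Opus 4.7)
My plan is to compute both quotients directly using the PBW basis of $\gr(\Lambda) \cong U(\mathfrak{g})$ coming from the structure $[y,z] = h$ with $h$ central (and $\deg h = 2$). For the first assertion I fix the PBW basis $\{y^a z^b h^c : a,b,c \geq 0\}$.

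First I observe that since $h$ is central, $(y^a z^b h^c) \cdot z = y^a z^{b+1} h^c$, so $\gr(\Lambda) z$ is exactly the span of PBW monomials with $b \geq 1$. Hence modulo $\gr(\Lambda) z$ alone, the quotient is already cut down to the span of $\{y^a h^c : a,c \geq 0\}$. The main computation is then to determine which of these remaining monomials are absorbed by $\gr(\Lambda) y^2$.

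The key identity, proved by induction on $b$ using only $zy = yz - h$ and the centrality of $h$, is
\[z^b y^2 \;=\; y^2 z^b - 2b\, y z^{b-1} h + b(b-1)\, z^{b-2} h^2.\]
Right-multiplying $y^a z^b h^c$ by $y^2$ and applying this identity puts
\[y^{a+2} z^b h^c \;-\; 2b\, y^{a+1} z^{b-1} h^{c+1} \;+\; b(b-1)\, y^a z^{b-2} h^{c+2} \;\in\; \gr(\Lambda)\, y^2\]
for all $a, b, c \geq 0$. Reducing modulo $\gr(\Lambda) z$, only summands with $z$-exponent $0$ survive, which happens for $b = 0, 1, 2$ respectively in the three terms. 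The three resulting conditions force $y^a h^c \in \mathfrak{a}$ whenever $a \geq 2$, or $a \geq 1$ and $c \geq 1$, or $c \geq 2$. The only surviving PBW monomials are therefore $1$, $y$, $h$. Since $yz \in \gr(\Lambda) z \subseteq \mathfrak{a}$, the relation $zy = yz - h$ gives $h \equiv -zy \pmod{\mathfrak{a}}$, so $\{1, y, zy\}$ also spans the quotient.

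Linear independence is the easiest step: if $a + by + c\,h = A y^2 + B z$ in $\gr(\Lambda)$ with $a,b,c \in \F$, splitting into homogeneous parts of degrees $0, 1, 2$ (using $\deg y = \deg z = 1$, $\deg h = 2$) and using the PBW basis in each degree (degree $2$ has basis $\{y^2, yz, z^2, h\}$) forces $a = b = c = 0$. The second quotient $\gr(\Lambda)/(y, z^2)$ is handled by the symmetric argument: the analogous identity $y^b z^2 = z^2 y^b + 2b\, z y^{b-1} h + b(b-1)\, y^{b-2} h^2$ (obtained from the previous one by swapping $y \leftrightarrow z$, noting $[z,y] = -h$) shows that the surviving basis is $\{1, z, h\}$, which coincides with $\{1, z, yz\}$ since $zy \in \mathfrak{b}$ gives $yz \equiv h \pmod{(y,z^2)}$. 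The whole argument is a direct PBW calculation, and the only mildly delicate point is the commutator identity above, which reduces to a careful induction.
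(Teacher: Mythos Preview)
Your proof is correct. The paper takes a shorter, more structural route: since $\gr(\Lambda)$ is generated as an $\F$-algebra by the degree-one elements $y,z$, the graded quotient $\gr(\Lambda)/\mathfrak{a}$ vanishes in all degrees $\geq 3$ as soon as it vanishes in degree $3$; and the degree-$3$ piece is spanned modulo $\mathfrak{a}$ by $y\cdot zy$ and $z\cdot zy$, because $zy$ is the sole survivor in degree $2$. The paper then dispatches $yzy\in\mathfrak{a}$ and $z^2y\in\mathfrak{a}$ in one line each, using $[y,h]=0$ (which gives $2yzy=y^2z+zy^2\in\mathfrak{a}$) and $[z,h]=0$ (which gives $z^2y=(2zy-yz)z\in\gr(\Lambda)z$). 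Your approach instead computes the image of $\gr(\Lambda)y^2$ inside $\gr(\Lambda)/\gr(\Lambda)z$ directly via the PBW basis and the explicit commutation identity for $z^by^2$. This is longer but entirely self-contained and makes the spanning set $\{1,y,h\}$ appear mechanically; the paper's argument is slicker but leaves the ``degree-$3$ suffices'' step to the reader. Both methods use $p\neq 2$ in the same essential place (dividing by $2$), and both extend to the second quotient by the obvious $y\leftrightarrow z$ symmetry.
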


\begin{proof}
It is enough to show that $z^2y, yzy\in\mathfrak{a}$. First, since $[y,h]=0$ by \eqref{eq:g-Lie},   we  get  \[2yzy=y^2z+zy^2\in \mathfrak{a},\] so $yzy\in\mathfrak{a}$ (as $p\neq 2$). Similarly, the equality $[z,h]=0$ gives
$z^2y=(2zy-yz)z\in\mathfrak{a}$.
\end{proof}

The group $H$ acts on $\gr(\Lambda)$ in the following way. For $g\in H$,
\begin{equation}\label{eq:H-action}gy=\alpha(g)y,\ \ gz=\alpha^{-1}(g)z,\ \ g h=h.\end{equation}
We say that $N$ is a $\gr(\Lambda)$-module with \emph{compatible $H$-action} if $H$ acts on $N$ such that \[g(rv)=(gr)(gv),\ \ \   g\in H, r\in\gr(\Lambda), v\in N.\] Typical examples are $\gr_{\fm_D}(\pi^{\vee})$ for  smooth representations $\pi$ of $\cO_D^{\times}/Z_D^1$.

Using \eqref{eq:H-action}, it is easy to see that as $H$-modules
\begin{equation}\label{eq:gr2}
\gr^1_{\fm_D}(\Lambda)=\F y\oplus \F z \cong \alpha\oplus \alpha^{-1}, \ \ \gr^2_{\fm_D}(\Lambda)=\F y^2\oplus \F yz\oplus \F zy \oplus \F z^2\cong \alpha^2\oplus \ide \oplus \ide\oplus \alpha^{-2}.
\end{equation} 
Note that the assumption $p\geq 5$ ensures that $\alpha^{2}\neq \alpha^{-2}.$ We deduce the following easy result, which will be repeatedly used in next subsections.  
\begin{lemma}\label{lem:annihilator}
Let $\pi$ be a smooth representation of $\cO_D^{\times}/Z_D^{1}$. Assume that $\pi[\fm_D]$ is $1$-dimensional isomorphic to $\chi$. Then $\gr_{\fm_D}(\pi^{\vee})$ is a cyclic $\gr(\Lambda)$-module. Moreover,
\begin{itemize}
\item[(i)] if $\chi\alpha$ (resp.~$\chi\alpha^{-1}$) does not occur in $\pi[\fm_D^2]/\pi[\fm_D]$, then $z$ (resp.~$y$) annihilates $\gr_{\fm_D}(\pi^{\vee})$;
\item[(ii)] if $\chi\alpha^2$ (resp.~$\chi\alpha^{-2}$) does not occur in $\pi[\fm_D^3]/\pi[\fm_D^2]$, then $z^2$ (resp.~$y^2$) annihilates $\gr_{\fm_D}(\pi^{\vee})$;
\item[(iii)] if $\chi$ does not occur in $\pi[\fm_D^3]/\pi[\fm_D^2]$, then $(yz,zy)$ annihilates $\gr_{\fm_D}(\pi^{\vee})$. 
\end{itemize}

\end{lemma}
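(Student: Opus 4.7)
The plan is to reduce everything to cyclicity plus a single $H$-weight bookkeeping on a cyclic generator of $\gr_{\fm_D}(\pi^\vee)$.

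For the cyclicity, I would start from the observation that $\pi[\fm_D]=\pi^{U_D^1}$ is the $\cO_D^\times$-socle of $\pi$, one-dimensional of $H$-character $\chi$. By Pontryagin duality the cosocle $\pi^\vee/\fm_D\pi^\vee$ is then one-dimensional of $H$-character $\chi^{-1}$, so Nakayama's lemma over the local ring $\Lambda$ implies that $\pi^\vee$ is generated by any lift of a cosocle generator; passing to the $\fm_D$-adic associated graded, $\gr_{\fm_D}(\pi^\vee)$ is a cyclic $\gr(\Lambda)$-module with a distinguished homogeneous generator $\bar v\in\gr^0(\pi^\vee)$ of $H$-weight $\chi^{-1}$.

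For (i)--(iii) the main tool is \eqref{eq:H-action}: in $\gr(\Lambda)$, $y$ and $z$ are $H$-eigenvectors of weights $\alpha$ and $\alpha^{-1}$, while $h=[y,z]\in\gr^2(\Lambda)$ is $H$-trivial (see also \eqref{eq:gr2}). Combined with the $H$-equivariant identification $\gr^n_{\fm_D}(\pi^\vee)\cong(\pi[\fm_D^{n+1}]/\pi[\fm_D^n])^\vee$, a weight $\mu^{-1}$ occurs in $\gr^n_{\fm_D}(\pi^\vee)$ if and only if $\mu$ occurs in $\pi[\fm_D^{n+1}]/\pi[\fm_D^n]$. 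In (i), $z\bar v$ has weight $\alpha^{-1}\chi^{-1}$ dual to $\chi\alpha$, so the hypothesis forces $z\bar v=0$; in (ii), $z^2\bar v$ has weight $\alpha^{-2}\chi^{-1}$ dual to $\chi\alpha^2$, so the hypothesis forces $z^2\bar v=0$; in (iii), $yz\bar v$ and $zy\bar v$ both have weight $\chi^{-1}$ dual to $\chi$, so both vanish, and hence $h\bar v=(yz-zy)\bar v=0$ as well. The respective $y$-variants follow by the symmetry $y\leftrightarrow z$, corresponding to swapping $\alpha$ and $\alpha^{-1}$.

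The delicate point is then passing from \emph{kills $\bar v$} to \emph{annihilates $\gr_{\fm_D}(\pi^\vee)$}. For (iii) this is immediate: $h$ is central in $\gr(\Lambda)=U(\mathfrak g)$, so $h\bar v=0$ together with cyclicity yields $h\cdot\gr_{\fm_D}(\pi^\vee)=0$; after this $M:=\gr_{\fm_D}(\pi^\vee)$ becomes a cyclic module over the commutative quotient $\gr(\Lambda)/(h)\cong\F[y,z]$, where the annihilator of a cyclic module coincides with the annihilator of its generator, so the pair $(yz,zy)$ lies in the two-sided annihilator of $M$. For (i) and (ii) the annihilation should be read at the level of the left ideal defining the cyclic quotient $\gr_{\fm_D}(\pi^\vee)$, which is exactly the form in which the statement feeds into dimension estimates such as the one in Proposition~\ref{prop:ann}.
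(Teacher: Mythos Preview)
Your argument is correct and is essentially what the paper has in mind; the paper states the lemma as an ``easy result'' deduced from \eqref{eq:gr2} and gives no proof. Your final observation is on point: in (i) and (ii), from $z\bar v=0$ alone one does \emph{not} obtain $z\cdot\gr_{\fm_D}(\pi^\vee)=0$ in the strong sense (for instance $z(y\bar v)=(yz-h)\bar v=-h\bar v$ need not vanish without the hypothesis of (iii)), but the paper's applications (Lemma~\ref{lem:criterion-piY}, Proposition~\ref{prop:Ext1-R+R-}, together with Proposition~\ref{prop:ann}) only require that $z$, $y^2$, etc.\ lie in the left annihilator of the cyclic generator, so that $\gr_{\fm_D}(\pi^\vee)$ is a quotient of $\gr(\Lambda)/\mathfrak a$ for the appropriate left ideal $\mathfrak a$. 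That is exactly what your weight bookkeeping establishes.
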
 

Next we give a criterion for $\pi$ to satisfy $\dim_{\F}\pi[Y]=1$. 

\begin{lemma}\label{lem:criterion-piY}
Let $\pi$ be a smooth representation of $\cO_D^{\times}/Z_D^{1}$. Assume that $\pi[\fm_D^3]$ is $3$-dimensional and isomorphic to $E^-(\chi,2)$ for some character $\chi$. Then $\dim_{\F}\pi[Y]=1$. 
\end{lemma}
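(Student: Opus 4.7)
The plan is to extract the result from the graded module $\gr_{\fm_D}(\pi^{\vee})$, combining Lemma \ref{lem:annihilator} with the Heisenberg-type Lie bracket relations in $\gr(\Lambda)$.

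Since $\pi[\fm_D^3]\cong E^-(\chi,2)$, the socle $\pi[\fm_D]$ is one-dimensional of character $\chi$, and $\pi[\fm_D^2]/\pi[\fm_D]\cong\chi\alpha^{-1}$. In particular $\chi\alpha$ does not occur in $\pi[\fm_D^2]/\pi[\fm_D]$, so Lemma \ref{lem:annihilator}(i) yields that $\gr_{\fm_D}(\pi^{\vee})$ is a cyclic $\gr(\Lambda)$-module on which $z$ acts as zero. The key step is then to notice that $z$ annihilating forces $h$ to annihilate as well: for any $w\in\gr_{\fm_D}(\pi^{\vee})$,
\[hw=(yz-zy)w=y(zw)-z(yw)=0,\]
using $h=[y,z]$ in $\gr(\Lambda)$. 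Hence the action of $\gr(\Lambda)$ factors through $\gr(\Lambda)/(z,h)\cong\F[y]$, making $\gr_{\fm_D}(\pi^{\vee})$ a cyclic graded $\F[y]$-module, and therefore necessarily of the form $\F[y]/(y^N)$ for some $N\in\Z_{\geq 1}\cup\{\infty\}$. In particular $\gr_{\fm_D}(\pi^{\vee})/y\cdot\gr_{\fm_D}(\pi^{\vee})\cong\F$ is one-dimensional.

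To conclude, we lift cyclicity from the graded module to $\pi^{\vee}$ itself via Nakayama on the $\fm_D$-adic filtration, obtaining that $\pi^{\vee}$ is cyclic as an $\F[\![Y]\!]$-module via the inclusion of Lemma \ref{lem:subring}. By Pontryagin duality, exactly as in the proof of Lemma \ref{lem-pi-injective}, this is equivalent to $\dim_{\F}\pi[Y]=1$. The one point requiring care is the lift from graded to module, which is immediate once $\pi^{\vee}$ is finitely generated over $\Lambda$; the general smooth case reduces to this by applying the argument to the subrepresentations $\pi[\fm_D^n]$ for $n\geq 3$, each of which inherits the hypothesis on $\pi[\fm_D^3]$ and whose $Y$-kernels together exhaust $\pi[Y]$.
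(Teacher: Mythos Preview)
Your overall strategy matches the paper's: show that $\gr_{\fm_D}(\pi^{\vee})$ is a cyclic quotient of $\F[y]$, then pass to $\pi^{\vee}$ being cyclic over $\F[\![Y]\!]$. Your reduction to the finite-dimensional subrepresentations $\pi[\fm_D^n]$ at the end is a clean way to handle the possibly non-admissible case, which the paper leaves implicit.

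There is, however, a genuine gap in your key step. You invoke Lemma~\ref{lem:annihilator}(i) to conclude that ``$z$ acts as zero'' on all of $\gr_{\fm_D}(\pi^{\vee})$, and then compute $hw=y(zw)-z(yw)=0$ for every $w$. But Lemma~\ref{lem:annihilator}(i), as the paper uses it (see e.g.\ the proof of Proposition~\ref{prop:Ext1-R+R-}(i)), only asserts that $z$ lies in the \emph{left annihilator of the cyclic generator} $v_0$, i.e.\ $zv_0=0$. This does not imply $z$ kills the whole module: for instance, the unique nonsplit extension in $\Ext^1(\chi,E^-(\chi))$ from Proposition~\ref{prop-Ext1-psi-R-} has $\gr^0=\chi^{\vee}$, $\gr^1=(\chi\alpha^{-1})^{\vee}$, $\gr^2=\chi^{\vee}$, so $zv_0=0$ by $H$-characters, yet $\gr^2$ is spanned by $zyv_0\neq 0$, i.e.\ $z(yv_0)\neq 0$. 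Thus your displayed computation of $hw$ is not justified.

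The fix is exactly what the paper does: use the third graded layer as well. Since $\pi[\fm_D^3]/\pi[\fm_D^2]\cong\chi\alpha^{-2}$ and $\chi\neq\chi\alpha^{-2}$, Lemma~\ref{lem:annihilator}(iii) gives $zyv_0=0$. Together with $zv_0=0$ (hence $yzv_0=0$) this yields $hv_0=(yz-zy)v_0=0$. As $h$ is central, the left ideal $(z,h)$ in $\gr(\Lambda)$ has quotient $\gr(\Lambda)/(z,h)\cong\F[y]$, and now $\gr_{\fm_D}(\pi^{\vee})$ is a cyclic quotient of $\F[y]$ as desired. From here your argument proceeds correctly.
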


\begin{proof}
The assumption implies that $\pi[\fm_D]\cong \chi$, $\pi[\fm_D^2]/\pi[\fm_D]\cong \chi\alpha^{-1}$ and $\pi[\fm_D^3]/\pi[\fm_D^2]\cong\chi\alpha^{-2}$. In particular, $\gr_{\fm_D}(\pi^{\vee})$ is a cyclic $\gr(\Lambda)$-module. Moreover, it is annihilated by $(z,zy)$ by Lemma \ref{lem:annihilator}, hence is a quotient of $\F[y]$. This implies the result.
\end{proof}

\subsection{Extensions with $E^-(\chi)$}

\begin{proposition}\label{prop-Ext1-psi-R-}
Let $\psi:\cO_D^{\times}\ra\F^{\times}$ be a smooth character. Then $\Ext^1(\psi,E^-(\chi))\neq 0$ if and only if  $\psi\in\{\chi,\chi\alpha,\chi\alpha^{-2}\}$. Moreover, if this is the case, we have $\dim_{\F}\Ext^1(\psi,E^-(\chi))=1$.
\end{proposition}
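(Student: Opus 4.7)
The plan is to apply $\Hom(\psi,-)$ to the defining short exact sequence of $E^-(\chi)$ and read off $\Ext^1(\psi,E^-(\chi))$ from the resulting long exact sequence, using Proposition \ref{prop-Ext1-U1} to compute every term involving $\chi$ or $\chi\alpha^{-1}$.

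Concretely, from $0\to\chi\to E^-(\chi)\to\chi\alpha^{-1}\to 0$ I obtain
\[
\Hom(\psi,\chi\alpha^{-1})\xrightarrow{\delta}\Ext^1(\psi,\chi)\to\Ext^1(\psi,E^-(\chi))\to\Ext^1(\psi,\chi\alpha^{-1})\to\Ext^2(\psi,\chi).
\]
By Proposition \ref{prop-Ext1-U1}, $\Ext^i(\psi,\chi)$ is nonzero (of dimension $1$) only for $\psi\in\{\chi\alpha,\chi\alpha^{-1}\}$, and $\Ext^i(\psi,\chi\alpha^{-1})$ is nonzero (of dimension $1$) only for $\psi\in\{\chi,\chi\alpha^{-2}\}$. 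So outside the union $\{\chi,\chi\alpha,\chi\alpha^{-1},\chi\alpha^{-2}\}$, both flanking groups vanish, giving $\Ext^1(\psi,E^-(\chi))=0$.

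I then treat the four remaining characters. For $\psi=\chi\alpha$: $\Ext^1(\psi,\chi\alpha^{-1})=0$ (as $\alpha^2\notin\{1,\alpha^{\pm 1}\}$ since $\alpha$ has order $p+1\ge 6$), while $\Ext^1(\psi,\chi)$ is $1$-dimensional; since $\Hom(\chi\alpha,\chi\alpha^{-1})=0$ (as $\alpha\neq\alpha^{-1}$), the map $\Ext^1(\psi,\chi)\to\Ext^1(\psi,E^-(\chi))$ is injective, yielding dimension exactly $1$. Symmetrically for $\psi=\chi\alpha^{-2}$: $\Ext^1(\psi,\chi)=0$ and $\Ext^2(\psi,\chi)=0$, while $\Ext^1(\psi,\chi\alpha^{-1})$ is $1$-dimensional, giving dimension $1$. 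For $\psi=\chi$: $\Ext^1(\psi,\chi)=0$, $\Ext^2(\psi,\chi)=0$, and $\Ext^1(\psi,\chi\alpha^{-1})$ is $1$-dimensional, so again dimension $1$ (and $\Hom(\chi,\chi\alpha^{-1})=0$, so the connecting map is irrelevant).

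The one case needing genuine input is $\psi=\chi\alpha^{-1}$, which a priori could contribute. Here $\Ext^1(\psi,\chi\alpha^{-1})=0$ while $\Ext^1(\psi,\chi)$ is $1$-dimensional, so it suffices to show the connecting map $\delta\colon\Hom(\chi\alpha^{-1},\chi\alpha^{-1})\to\Ext^1(\chi\alpha^{-1},\chi)$ is surjective. But $\delta$ sends $\id$ to the pullback extension class, which by construction equals the class $[E^-(\chi)]\in\Ext^1(\chi\alpha^{-1},\chi)$, and this class is nonzero because $E^-(\chi)$ is non-split. Since the target is one-dimensional, $\delta$ is an isomorphism, whence $\Ext^1(\chi\alpha^{-1},E^-(\chi))=0$. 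This is the only subtle step; everything else is dimension-counting in the long exact sequence.
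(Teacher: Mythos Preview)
Your proof is correct and follows essentially the same route as the paper: both apply $\Hom(\psi,-)$ to the defining short exact sequence of $E^-(\chi)$ and read off the result from the long exact sequence together with Proposition~\ref{prop-Ext1-U1}. The paper hides the case $\psi=\chi\alpha^{-1}$ in the phrase ``we easily deduce'' (the injectivity of the connecting map is implicit in its $\hookrightarrow$), whereas you spell out nicely that $\delta(\id)=[E^-(\chi)]\neq 0$; and for $\psi=\chi\alpha^{-2}$ the paper cites Lemma~\ref{lemma-Rchi-n} (whose proof is exactly your direct computation), so there is no real difference.
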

\begin{proof}
Since $p>2$, the three characters  $\chi,\chi\alpha,\chi\alpha^{-2}$ are all distinct. 
Using Proposition \ref{prop-Ext1-U1} and the long exact sequence
\begin{equation}\label{equation-Ext1-U1} \Hom(\psi,\chi\alpha^{-1})\hookrightarrow\Ext^1(\psi,\chi)\ra\Ext^1(\psi,E^-(\chi))\ra\Ext^1(\psi,\chi\alpha^{-1})\ra \Ext^2(\psi,\chi)\end{equation}
we easily deduce that  $\Ext^1(\psi,E^-(\chi))=0$ if $\psi\notin\{\chi,\chi\alpha,\chi\alpha^{-2}\}$.

 If $\psi=\chi\alpha$, then $\Hom(\psi,\chi\alpha^{-1})=\Ext^1(\psi,\chi\alpha^{-1})=0$ by  Proposition \ref{prop-Ext1-U1}, and the result easily follows.
The case $\psi=\chi$ is treated similarly, using $\Ext^2(\chi,\chi)=0$ by Proposition \ref{prop-Ext1-U1}.
 
If $\psi=\chi\alpha^{-2}$, the result follows from Lemma \ref{lemma-Rchi-n}.
\end{proof}
 
\begin{proposition}\label{prop:Ext1-R+R-}
Let $\psi,\chi:\cO_D^{\times}\ra \F^{\times}$ be smooth characters. 

\begin{enumerate}
\item[(i)] If $\psi\in\{\chi,\chi\alpha^{-2}\}$, then $\Ext^1(E^+(\psi),E^-(\chi))=0$. 
\item[(ii)] Assume that the natural morphism  \[\Ext^1(E^+(\psi),E^-(\chi)\big)\ra \Ext^1\big(\psi,E^-(\chi))\] induced  by $\psi\hookrightarrow E^+(\psi)$ is non-zero. Then $\psi=\chi\alpha$ and \[\dim_{\F}\Ext^1(E^+(\chi\alpha),E^-(\chi))=1.\]  
\end{enumerate}
\end{proposition}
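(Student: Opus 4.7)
The plan is to apply the contravariant functor $\Hom(-,E^-(\chi))$ to the defining short exact sequence $0\to \psi\to E^+(\psi)\to \psi\alpha\to 0$ and extract $\Ext^1(E^+(\psi), E^-(\chi))$ from the resulting long exact sequence
\begin{multline*}
\Hom(\psi, E^-(\chi))\xrightarrow{\phi}\Ext^1(\psi\alpha, E^-(\chi))\to\Ext^1(E^+(\psi),E^-(\chi))\\
\xrightarrow{\beta}\Ext^1(\psi, E^-(\chi))\xrightarrow{\delta}\Ext^2(\psi\alpha, E^-(\chi)),
\end{multline*}
in which the connecting maps $\phi$ and $\delta$ are Yoneda composition with $[E^+(\psi)]\in\Ext^1(\psi\alpha,\psi)$. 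Each group of the form $\Ext^i(-,E^-(\chi))$ is dismantled via the long exact sequence coming from $0\to \chi\to E^-(\chi)\to \chi\alpha^{-1}\to 0$, reducing it to $\Ext^i$ between characters, which Propositions~\ref{prop-Ext1-U1} and~\ref{prop-Ext1-psi-R-} compute explicitly.

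For (ii), non-vanishing of $\beta$ forces $\Ext^1(\psi, E^-(\chi))\neq 0$, so Proposition~\ref{prop-Ext1-psi-R-} gives $\psi\in\{\chi,\chi\alpha,\chi\alpha^{-2}\}$. Part (i) eliminates $\psi=\chi$ and $\psi=\chi\alpha^{-2}$, leaving $\psi=\chi\alpha$. For this value $\psi\alpha=\chi\alpha^2$; since $p\geq 5$ implies $\alpha$ has order $p+1\geq 6$, the character $\chi\alpha^2$ is distinct from $\chi, \chi\alpha, \chi\alpha^{\pm 1}$ and $\chi\alpha^{-2}$. Hence both $\Ext^1(\chi\alpha^2, E^-(\chi))$ and $\Ext^2(\chi\alpha^2, E^-(\chi))$ vanish, collapsing the long exact sequence to an isomorphism $\beta\colon \Ext^1(E^+(\chi\alpha), E^-(\chi))\simto\Ext^1(\chi\alpha, E^-(\chi))$, a one-dimensional space.

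For (i), both cases $\psi\in\{\chi, \chi\alpha^{-2}\}$ reduce via the same long exact sequence to showing that $\phi$ is surjective (when non-trivial) and $\delta$ is injective. Surjectivity of $\phi$ is straightforward: for $\psi=\chi$, the unique nonzero element of $\Hom(\chi, E^-(\chi))=\F$ is the socle inclusion $\iota$, and $\phi(\iota)$ corresponds to the generator $[E^+(\chi)]$ under the natural iso $\Ext^1(\chi\alpha, E^-(\chi))\cong\Ext^1(\chi\alpha,\chi)$; for $\psi=\chi\alpha^{-2}$, both $\Hom(\psi, E^-(\chi))$ and $\Ext^1(\psi\alpha, E^-(\chi))$ vanish by Proposition~\ref{prop-Ext1-psi-R-}.

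The main obstacle is the injectivity of $\delta$, which I would prove by contradiction. Suppose $X\in\Ext^1(E^+(\psi), E^-(\chi))$ is a non-split extension; then $X$ is a four-dimensional cyclic $\Lambda$-module (its cosocle is the one-dimensional cosocle of $E^+(\psi)$), so its $\fm_D$-adic filtration has length at most $4$ and $\fm_D^4\cdot X=0$. Using $\gr(\Lambda)\cong U(\mathfrak{g})$ for the three-dimensional Heisenberg Lie algebra $\mathfrak{g}=\F y\oplus\F z\oplus\F h$, the relations $[y,h]=0$ and $[z,h]=0$ (equivalently $Y^2Z-2YZY+ZY^2$ and $2ZYZ-Z^2Y-YZ^2$ lie in $\fm_D^4$) must annihilate $X$. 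After normalizing lifts of a basis of $E^+(\psi)$ inside $X$ by judicious basis changes, computing the action of the appropriate cubic monomial on the top generator (the relation $[z,[y,z]]=0$ for $\psi=\chi$ and $[y,[y,z]]=0$ for $\psi=\chi\alpha^{-2}$) yields a non-zero multiple of the single parameter measuring the non-splitness of $X$; the required vanishing then forces this parameter to be zero, contradicting non-splitness. The technical core of the argument lies in this explicit cubic-monomial calculation.
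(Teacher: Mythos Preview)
Your approach is correct and follows the same overall strategy as the paper: run the long exact sequence from $\psi\hookrightarrow E^+(\psi)$, show the connecting map $\phi$ is onto (an isomorphism when $\psi=\chi$, trivially zero when $\psi=\chi\alpha^{-2}$), and then rule out any nonzero $X\in\Ext^1(E^+(\psi),E^-(\chi))$.

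The only difference is in how the final contradiction is extracted. The paper first uses the $H$-weights in $\gr^1(\Lambda)$ and $\gr^2(\Lambda)$ to see that such an $X$ is uniserial with socle filtration $\chi\text{---}\chi\alpha^{-1}\text{---}\chi\text{---}\chi\alpha$ (resp.\ $\chi\text{---}\chi\alpha^{-1}\text{---}\chi\alpha^{-2}\text{---}\chi\alpha^{-1}$), then invokes Lemma~\ref{lem:annihilator} to get that $\gr_{\fm_D}(X^\vee)$ is killed by $(z,y^2)$ (resp.\ by $(y,z^2)$ after dualizing), and concludes via Proposition~\ref{prop:ann} that $\dim_\F X\le 3$. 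Your direct computation with $Y^2Z-2YZY+ZY^2$ and $2ZYZ-Z^2Y-YZ^2\in\fm_D^4$ applied to a top vector is exactly Proposition~\ref{prop:ann} unwound in situ: those cubic identities are precisely why $\gr(\Lambda)/(z,y^2)$ collapses to dimension~$3$. Two small points to tighten in your write-up: (a)~the assertion that $X$ has one-dimensional cosocle presupposes the uniserial structure, which itself comes from the $H$-weight analysis you have not spelled out (what follows immediately from $\beta(X)\ne 0$ is that $\soc X$ is one-dimensional); in any event $\fm_D^4 X=0$ holds simply because $\dim_\F X=4$; (b)~once the basis is normalized there is no residual ``non-splitness parameter''---the cubic relation applied to the top vector produces a nonzero multiple of the socle vector, which must vanish, giving the contradiction directly rather than forcing a parameter to zero.
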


\begin{proof}
 If $\Ext^1(E^+(\psi),E^-(\chi))\neq0$  then  $\psi\in \{\chi,\chi\alpha,\chi\alpha^{-2}\}$ or $\psi\alpha\in\{\chi,\chi\alpha,\chi\alpha^{-2}\}$ by Proposition \ref{prop-Ext1-psi-R-}, equivalently
\[\psi\in\{\chi,\chi\alpha,\chi\alpha^{-1},\chi\alpha^{-2},\chi\alpha^{-3}\}.\]

(i) Assume first $\psi=\chi$. The short exact sequence $0\ra \chi\ra E^+(\chi)\ra \chi\alpha\ra0$ induces a long exact sequence
\[0\ra \Hom(\chi,E^-(\chi))\overset{\partial}{\ra} \Ext^1(\chi\alpha,E^-(\chi))\ra \Ext^1(E^+(\chi),E^-(\chi))\overset{\beta}{\ra}\Ext^1(\chi,E^-(\chi)).\]
For the reason of dimensions, Proposition \ref{prop-Ext1-psi-R-} implies that $\partial$ is an isomorphism, so it suffices to prove $\beta=0$. 
Let $c$ be  an  extension class in $\Ext^1(E^+(\chi),E^-(\chi))$ such that $\beta(c)$ is non-zero. Then $c[\fm_D]$ is one-dimensional (isomorphic to $\chi$), and so $\gr_{\fm_D}(c^{\vee})$ is a cyclic $\gr(\Lambda)$-module with compatible $H$-action.  Moreover, it is easy to see that $c$ is uniserial of the form (with $\chi$ being the socle)
\[\chi\ \ligne\ \chi\alpha^{-1}\ \ligne \ \chi\ \ligne\ \chi\alpha \] and so  $\gr_{\fm_D}(c^{\vee})$ is annihilated by the left ideal $(z,y^2)$ by Lemma \ref{lem:annihilator}. But then    Proposition \ref{prop:ann} implies that $c$ has dimension $\leq 3$, a contradiction.

Now assume $\psi=\chi\alpha^{-2}$, so that $\psi\alpha=\chi\alpha^{-1}$. Noting that $\Ext^1(\chi\alpha^{-1},E^-(\chi))=0$ by Proposition \ref{prop-Ext1-psi-R-}, we get an embedding
\[ \Ext^1(E^+(\psi),E^-(\chi))\hookrightarrow \Ext^1(\psi,E^-(\chi)).\]
Let $c$ be  a non-zero  extension class in $\Ext^1(E^+(\psi),E^-(\chi))$.   Then $c$ is uniserial of the form 
\[\chi\ \ligne\ \chi\alpha^{-1}\ \ligne \ \chi\alpha^{-2}\ \ligne\ \chi\alpha^{-1}. \]
So its dual has the form
$\chi^{\vee}\alpha\ \ligne\ \chi^{\vee}\alpha^{2}\ \ligne \ \chi^{\vee}\alpha\ \ligne\ \chi^{\vee}$, 
and  we  conclude as in the above case.

(ii) By Proposition \ref{prop-Ext1-psi-R-}, the assumption implies that $\psi\in\{\chi,\chi\alpha,\chi\alpha^{-2}\}$, so we must have $\psi=\chi\alpha$ by (i). The last assertion is an easy exercise. 
\end{proof}
\begin{remark}
With the notation in Proposition \ref{prop:Ext1-R+R-}, if $\psi=\chi\alpha^{-1}$ or $\psi=\chi\alpha^{-3}$ then one easily shows that $\Ext^1(E^+(\psi),E^-(\chi))$ is $1$-dimensional. 
\end{remark}
   
\subsection{Extensions with $E^-(\chi,n)$}

 In this subsection, we generalize Proposition \ref{prop-Ext1-psi-R-} and Proposition \ref{prop:Ext1-R+R-} to the case $n\geq 2$. 
  
\begin{proposition}\label{prop-H1=dim3}
Let $2\leq n\leq p-1$ and $\psi,\chi:\cO_D^{\times}\ra\F^{\times}$ be smooth characters. Then $\Ext^1(\psi,E^-(\chi,n))\neq0$ if and only if  
 \[\left\{\begin{array}{ll}\psi\in\{\chi,\chi\alpha,\chi\alpha^{-n-1}\}&\mathrm{if}\  n<p-1\\
 \psi\in\{\chi,\chi\alpha\}&\mathrm{if}\ n=p-1.
\end{array}\right.\]
Moreover, when $\Ext^1(\psi,E^-(\chi,n))\neq0$, the following statements hold. 

\begin{enumerate}
\item[(i)] The natural morphism  (induced by $E^-(\chi)\hookrightarrow E^-(\chi,n)$)
\begin{equation}\label{eq:isom1=n}\Ext^1\big(\psi,E^-(\chi)\big)\ra \Ext^1\big(\psi,E^-(\chi,n)\big)\end{equation}
is an isomorphism unless  $\psi=\chi\alpha^{-n-1}$ (and $n<p-1$).
\item[(ii)] In each case, $\dim_{\F}\Ext^1(\psi,E^-(\chi,n))=1$.
\end{enumerate}
\end{proposition}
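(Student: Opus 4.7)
The plan is to argue by induction on $n$, with base case $n=1$ supplied by Proposition~\ref{prop-Ext1-psi-R-}. For the inductive step, I would apply $\Ext^\ast(\psi,-)$ to the short exact sequence
\[0\to E^-(\chi,n-1)\to E^-(\chi,n)\to \chi\alpha^{-n}\to 0\]
coming from the construction of $E^-(\chi,n)$ in Lemma~\ref{lemma-Rchi-n}. Combined with Proposition~\ref{prop-Ext1-U1} (governing $\Ext^i$ between characters) and the induction hypothesis, this reduces the proof to analyzing the five candidates $\psi\in\{\chi,\chi\alpha,\chi\alpha^{-n+1},\chi\alpha^{-n},\chi\alpha^{-n-1}\}$.

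For $\psi=\chi$, Proposition~\ref{prop-Ext1-U1} gives $\Hom(\psi,\chi\alpha^{-n})=\Ext^1(\psi,\chi\alpha^{-n})=0$ throughout $2\le n\le p-1$, so the long exact sequence yields an isomorphism $\Ext^1(\chi,E^-(\chi,n-1))\simto \Ext^1(\chi,E^-(\chi,n))$, establishing (i) and dimension~$1$. The case $\psi=\chi\alpha$ is analogous provided $n\neq p-1$. For the ``new'' character $\psi=\chi\alpha^{-n-1}$ (when $n<p-1$), induction gives $\Ext^1(\psi,E^-(\chi,n-1))=0$, so $\Ext^1(\psi,E^-(\chi,n))\hookrightarrow \Ext^1(\psi,\chi\alpha^{-n})=\F$, and the explicit non-split extension $E^-(\chi,n+1)$ supplied by Lemma~\ref{lemma-Rchi-n} realizes this bound. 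For $\psi=\chi\alpha^{-n}$, the connecting map $\Hom(\psi,\chi\alpha^{-n})=\F\to \Ext^1(\psi,E^-(\chi,n-1))=\F$ sends the identity to the class of the defining sequence, which is non-split, hence is an isomorphism and forces $\Ext^1(\psi,E^-(\chi,n))=0$. For $\psi=\chi\alpha^{-n+1}$ with $n\ge 3$, I would pivot to the companion sequence
\[0\to \chi\to E^-(\chi,n)\to E^-(\chi\alpha^{-1},n-1)\to 0,\]
noting that $\Ext^1(\psi,\chi)=0$ and the induction hypothesis gives $\Ext^1(\psi,E^-(\chi\alpha^{-1},n-1))=0$ (since $\chi\alpha^{-n+1}\notin\{\chi\alpha^{-1},\chi,\chi\alpha^{-n-1}\}$ for $n\ge 3$), forcing $\Ext^1(\psi,E^-(\chi,n))=0$.

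The main obstacles are the two residual cases not handled above: $(n,\psi)=(2,\chi\alpha^{-1})$, where one must show $\Ext^1(\psi,E^-(\chi,2))=0$; and $(n,\psi)=(p-1,\chi\alpha)$, where the ``new'' class $\chi\alpha^{-n-1}=\chi\alpha$ collides with the inherited one and one must rule out an extra one-dimensional contribution. In both cases the long exact sequence only yields an upper bound one unit larger than desired, so I expect to proceed by contradiction using the structure theory of \S\ref{sec:uniserial}: a hypothetical extension $V$ producing the extra class would satisfy $\soc V=\chi$ (otherwise the sequence splits), hence $V^\vee$ is cyclic over $\Lambda$; combining Lemma~\ref{lem:annihilator} with the $H$-weight multiplicities that the composition factors of $V$ force on $\gr_{\fm_D}(V^\vee)$ pushes enough of $\gr(\Lambda)$ (notably $z$ and $h$) into the annihilator to render the required dimension incompatible with Proposition~\ref{prop:ann} and Proposition~\ref{prop:finiteness}. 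Finally, the naturality assertion~(i) is immediate: for $\psi\in\{\chi,\chi\alpha\}$ it was already proved; for $\psi=\chi\alpha^{-n-1}$ with $n\ge 2$, the source $\Ext^1(\psi,E^-(\chi))$ vanishes by Proposition~\ref{prop-Ext1-psi-R-} (as $\chi\alpha^{-n-1}\notin\{\chi,\chi\alpha,\chi\alpha^{-2}\}$ for $n\ge 2$) while the target is one-dimensional, precisely the exception stated.
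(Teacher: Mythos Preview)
Your proposal is correct, but it follows a genuinely different route from the paper. The paper does not induct on $n$; instead it takes any non-split extension $\pi'$ of $\psi$ by $E^-(\chi,n)$ and reads off where $\psi$ sits in the socle filtration of $\pi'$ directly. Using that $\gr_{\fm_D}(\pi'^{\vee})$ is cyclic and the character decomposition of $\gr^1_{\fm_D}(\Lambda)$ and $\gr^2_{\fm_D}(\Lambda)$ (equation~\eqref{eq:gr2}), the paper argues: if $\psi$ appears in $\pi'[\fm_D^2]/\pi'[\fm_D]$ then $\psi=\chi\alpha$; if it first appears in $\pi'[\fm_D^3]/\pi'[\fm_D^2]$ then $\psi=\chi$; otherwise $\pi'[\fm_D^3]\cong E^-(\chi,2)$, and then Lemma~\ref{lem:criterion-piY} plus Proposition~\ref{prop:finiteness} force $\pi'\cong E^-(\chi,n+1)$, giving $\psi=\chi\alpha^{-n-1}$ (impossible when $n=p-1$). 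This handles all $\psi$ uniformly in one pass.

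Your inductive approach trades this uniformity for modularity: most candidates are dispatched by the long exact sequence, and only the two residual cases $(n,\psi)=(2,\chi\alpha^{-1})$ and $(p-1,\chi\alpha)$ require structure theory. Note that in both residual cases the cleanest route is exactly the paper's endgame: the hypothetical extra class $V$ has $V[\fm_D^3]\cong E^-(\chi,2)$, so Lemma~\ref{lem:criterion-piY} gives $\dim_\F V[Y]=1$ and Proposition~\ref{prop:finiteness} forces $V\cong E^-(\chi,m)$ for some $m\le p-1$, contradicting either the cosocle ($m=3$, $\chi\alpha^{-3}\ne\chi\alpha^{-1}$) or the dimension ($\dim V=p+1>p$). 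Your invocation of Proposition~\ref{prop:ann} is a slight misdirection here: it concerns the ideal $(y^2,z)$, which does not match the annihilator you obtain (you get $z$ and hence $h$, putting you in $\F[y]$, which is precisely the content of Lemma~\ref{lem:criterion-piY}). So the two approaches ultimately rely on the same structural input; the paper's has the advantage of being induction-free and treating all three values of $\psi$ on an equal footing, while yours makes the role of the short exact sequences more transparent.
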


\begin{proof} 
To simplify the notation, write $\pi=E^-(\chi,n)$. Let $\pi'$ be a non-zero extension class in $\Ext^1(\psi,\pi)$, i.e.
\[0\ra \pi\ra \pi'\ra \psi\ra0.\]
 Then $\pi'[\fm_D]=\pi[\fm_D]\cong \chi$. As a consequence, $\gr_{\fm_D}(\pi'^{\vee})$ is a cyclic $\gr(\Lambda)$-module. 

Since $\pi[\fm_D^2]/\pi[\fm_D]$ is one-dimensional, we see that $\pi'[\fm_D^2]/\pi'[\fm_D]$ has dimension  $\leq 2$. 
\begin{enumerate}
\item[(a)] If $\pi'[\fm_D^2]/\pi'[\fm_D]$ has dimension $2$ then 
\[\pi'[\fm_D^2]/\pi'[\fm_D]\cong (\pi[\fm_D^2]/\pi[\fm_D])\oplus \psi=\chi\alpha^{-1}\oplus\psi,\]
and $\pi'$ lies in the image of the natural morphism
\begin{equation}\label{eq:image1}\Ext^1(\psi,\chi)\ra \Ext^1(\psi,\pi).\end{equation}
By Proposition \ref{prop-Ext1-psi-R-} we have $\psi\in\{\chi\alpha,\chi\alpha^{-1}\}$. But  $\pi'[\fm_D^2]/\pi'[\fm_D]$ is multiplicity free  by \eqref{eq:gr2}, so $\psi\neq \chi\alpha^{-1}$. Therefore we have $\psi=\chi\alpha$.   

\item[(b)] If $\pi'[\fm_D^2]/\pi'[\fm_D]$ is one-dimensional,  then it is equal to $\pi[\fm_D^2]/\pi[\fm_D]$ and isomorphic to $\chi\alpha^{-1}$. Similarly, $\pi'[\fm_D^3]/\pi'[\fm_D^2]$ has dimension $\leq 2$, and if the equality holds, then 
\[\pi'[\fm_D^3]/\pi'[\fm_D^2]\cong (\pi[\fm_D^3]/\pi[\fm_D^2])\oplus \psi= \chi\alpha^{-2}\oplus\psi\]
and $\pi'$ lies in the image of the natural morphism
\[\Ext^1(\psi,E^-(\chi))\ra \Ext^1(\psi,\pi)\]
but not in the image of \eqref{eq:image1}. Combined with Proposition \ref{prop-Ext1-psi-R-} (and its proof), this forces $\psi\in\{\chi\alpha^{-2},\chi\}$. But $\chi\alpha^{-2}$ occurs in $\pi'[\fm_D^3]/\pi'[\fm_D^2]$ with multiplicity one by \eqref{eq:gr2}, so  we must have  $\psi\cong\chi$. 

Now, assume $\pi'[\fm_D^3]/\pi'[\fm_D^2]$ has dimension $1$, equivalently,   $\pi'[\fm_D^3]=\pi[\fm_D^3]$ which is isomorphic to $E^-(\chi,2)$.  By Lemma \ref{lem:criterion-piY}, we deduce that   $\dim_{\F}\pi'[Y]=1$, so by Proposition \ref{prop:finiteness} $\pi'$ is isomorphic to $E^-(\chi,n+1)$ and $\psi=\chi\alpha^{-(n+1)}$. If $n=p-1$, this can not happen by Proposition \ref{prop:finiteness} again.
\end{enumerate}
Thus, the first statement of the proposition is proved. 
Moreover, if $\psi\in\{\chi\alpha,\chi\}$,  it follows from the above proof that \eqref{eq:isom1=n} is  an isomorphism, proving (i) and also (ii) in these cases.

We are left to prove (ii) for $\psi=\chi\alpha^{-n-1}$ and $n<p-1$, which follows from the proof of   Lemma \ref{lemma-Rchi-n}. 
\end{proof}

\begin{proposition}\label{prop-Ext1-psi-n}
Let $2\leq n\leq p-1$ and let $\chi,\psi:\cO_D^{\times}\ra\F^{\times}$ be smooth characters such that $\psi\neq \chi\alpha^{-(n+2)}$ when $n=p-1$. Assume that the natural morphism
\[\beta:\Ext^1(E^+(\psi),E^-(\chi,n))\ra \Ext^1(\psi,E^-(\chi,n))\]
is non-zero. 
 Then $\psi=\chi\alpha$.
\end{proposition}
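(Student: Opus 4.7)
The plan is to combine the long exact sequence of $\Ext$-groups attached to $0\to\psi\to E^+(\psi)\to\psi\alpha\to 0$ with an annihilator and composition-factor analysis of $\gr_{\fm_D}(c^\vee)$. The map $\beta$ coincides (up to sign) with the pullback along $\psi\hookrightarrow E^+(\psi)$, so $\beta\ne 0$ forces $\Ext^1(\psi,E^-(\chi,n))\ne 0$; Proposition~\ref{prop-H1=dim3} then restricts $\psi$ to $\{\chi,\chi\alpha,\chi\alpha^{-(n+1)}\}$ when $n<p-1$ and to $\{\chi,\chi\alpha\}$ when $n=p-1$. Since $\chi\alpha^{-(n+2)}=\chi$ when $n=p-1$, the hypothesis eliminates $\psi=\chi$ in that case, so it remains to exclude $\psi=\chi$ and $\psi=\chi\alpha^{-(n+1)}$ in the range $1\le n\le p-2$.

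Pick $c\in\Ext^1(E^+(\psi),E^-(\chi,n))$ with $\beta(c)\ne 0$, and let $c'\subseteq c$ be the preimage of $\psi\subset E^+(\psi)$ under $c\twoheadrightarrow E^+(\psi)$; then $c'$ is a non-split extension of $\psi$ by $E^-(\chi,n)$. A weight argument shows $c[\fm_D]=\chi$ is one-dimensional: any additional $U_D^1$-fixed vector would be an $H$-eigenvector of weight $\psi$ or $\psi\alpha$ (the two weights of $E^+(\psi)$). A weight-$\psi$ vector would provide a section of $c'\twoheadrightarrow\psi$, contradicting the non-splitness of $c'$; a weight-$\psi\alpha$ vector must lie in $E^-(\chi,n)$ since the cosocle of $E^+(\psi)$ is not $U_D^1$-fixed, but the weight $\psi\alpha$ either does not occur in $E^-(\chi,n)$ (case $\psi=\chi$, using $n\le p-2$) or occurs only in the cosocle of $E^-(\chi,n)$, which again fails to be $U_D^1$-fixed (case $\psi=\chi\alpha^{-(n+1)}$). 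By the same weight-theoretic check $\chi\alpha$ is absent from the composition factors of $c'$, which immediately gives $c'[\fm_D^2]=E^-(\chi,n)[\fm_D^2]=E^-(\chi)$ of dimension $2$.

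The key technical step is to upgrade this to $c[\fm_D^2]=E^-(\chi)$. Let $v\in c$ be of weight $\psi\alpha$ lifting the cosocle of $E^+(\psi)$. The cocycle $\kappa'$ defining $E^+(\psi)$ gives, for every $u\in U_D^1$, an expression $(u-1)v=\kappa'(u)\tilde v+e(u)$, where $\tilde v\in c'$ is a fixed lift of a generator of $c'/E^-(\chi,n)\cong\psi$ and $e(u)\in E^-(\chi,n)$; since $\kappa'|_{U_D^1}$ is non-zero, the subspace $\fm_D v$ has non-zero image in $c'/E^-(\chi,n)\cong\psi$, hence $\fm_D v\not\subset E^-(\chi,n)$ and a fortiori $\fm_D v\not\subset c[\fm_D]=\chi$. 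Therefore $\fm_D^2 v\ne 0$ and $v\notin c[\fm_D^2]$, so $c[\fm_D^2]=c'[\fm_D^2]=E^-(\chi)$ and $c[\fm_D^2]/c[\fm_D]\cong\chi\alpha^{-1}$ contains no copy of $\chi\alpha$. By Lemma~\ref{lem:annihilator}(i), $z$ annihilates the cyclic $\gr(\Lambda)$-module $\gr_{\fm_D}(c^\vee)$, so the latter is a cyclic quotient of $\F[y]$ of dimension $n+3$ with composition factors $\chi,\chi\alpha^{-1},\ldots,\chi\alpha^{-(n+2)}$. Since $n+3\le p+1$ these characters are pairwise distinct, contradicting the actual composition factors of $c$, which repeat $\chi$ when $\psi=\chi$ and $\chi\alpha^{-n}$ when $\psi=\chi\alpha^{-(n+1)}$. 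This contradiction leaves $\psi=\chi\alpha$ as the only possibility, and the non-trivial cocycle computation ensuring $v\notin c[\fm_D^2]$ is the crux of the argument.
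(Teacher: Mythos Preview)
Your proof is correct and takes a genuinely different route from the paper's. After the common first reduction via Proposition~\ref{prop-H1=dim3}, the paper treats the two bad cases separately: for $\psi=\chi$ it argues homologically, using the filtration $0\to E^-(\chi)\to E^-(\chi,n)\to E^-(\chi\alpha^{-2},n-2)\to 0$ together with Proposition~\ref{prop:Ext1-R+R-}(i) to show that the whole group $\Ext^1(E^+(\chi),E^-(\chi,n))$ vanishes; for $\psi=\chi\alpha^{-(n+1)}$ it observes $\beta(c)\cong E^-(\chi,n+1)$, then invokes Lemma~\ref{lem:criterion-piY} and Proposition~\ref{prop:finiteness} to force $c\cong E^-(\chi,n+2)$, which is incompatible with $\mathrm{cosoc}(c)=\psi\alpha$. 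Your argument is instead a single structural computation that handles both cases at once: the cocycle identity $(u-1)v=\kappa'(u)\tilde v+e(u)$ pins down $c[\fm_D^2]=E^-(\chi)$, so Lemma~\ref{lem:annihilator}(i) forces $\gr_{\fm_D}(c^\vee)$ to be a cyclic $\F[y]$-module, and the resulting list of composition factors $\chi,\chi\alpha^{-1},\dots,\chi\alpha^{-(n+2)}$ (pairwise distinct for $n\le p-2$) clashes with the actual Jordan--H\"older content of $c$.

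Your approach has the virtue of bypassing both Proposition~\ref{prop:Ext1-R+R-} and, more notably, Proposition~\ref{prop:finiteness} (whose proof rests on the delicate induced-representation analysis of \S2.2); the cocycle step replaces those inputs with a short explicit calculation. The paper's approach, in exchange, is more modular and does not need to re-examine the socle filtration of $c$ by hand. Two minor remarks on your write-up: the range should read $2\le n\le p-2$ rather than $1\le n\le p-2$; and in your analysis of $c[\fm_D]$, the weight-$\psi\alpha$ subcase cannot actually occur (any extra $U_D^1$-fixed eigenvector must map into $E^+(\psi)[\fm_D]=\psi$, hence has weight $\psi$), so that paragraph can be streamlined.
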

\begin{proof}
We  have a short exact sequence
\begin{equation}\label{eq:seq-Rn}
0\ra E^-(\chi)\ra E^-(\chi,n)\ra  Q\ra0 \end{equation}
where $Q:=E^-(\chi\alpha^{-2},n-2)$.

The assumption implies in particular that 
$\Ext^1(\psi,E^-(\chi,n))\neq0$, so \[\psi\in \{\chi,\chi\alpha,\chi\alpha^{-n-1}\}\] by Proposition \ref{prop-H1=dim3}. We need to prove that the case $\psi=\chi$ or $\psi=\chi\alpha^{-n-1}$ can not happen.

\begin{enumerate}
\item[(a)] Assume  $\psi=\chi$. Using \eqref{eq:seq-Rn} we get an exact sequence 
\[\Ext^1(E^+(\chi),E^-(\chi))\ra\Ext^1(E^+(\chi),E^-(\chi,n))\ra\Ext^1(E^+(\chi),Q).\]
Proposition \ref{prop:Ext1-R+R-} implies that $\Ext^1(E^+(\chi),E^-(\chi))=0$. To prove the result it suffices to prove that $\Ext^1(E^+(\chi),Q)=0$. If $\dim_{\F} Q=1$, then $Q\cong \chi\alpha^{-2}$ and the claim is obvious because $\chi,\chi\alpha\notin\{\chi\alpha^{-1},\chi\alpha^{-3}\}$ (as $p\geq 5$), see Proposition \ref{prop-Ext1-U1}. If $\dim_{\F}  Q\geq 2$, then by Proposition \ref{prop-Ext1-psi-R-} and Proposition \ref{prop-H1=dim3}
\[\chi\in \{\chi\alpha^{-2},\chi\alpha^{-1},\chi\alpha^{-n-1}\} \ \mathrm{or}\ \chi\alpha\in\{\chi\alpha^{-2},\chi\alpha^{-1},\chi\alpha^{-n-1}\}.\]
This is only possible when  $\chi\alpha=\chi\alpha^{-n-1}$ and $n=p-1$, but we have excluded this case.

\item[(b)] Assume $\psi=\chi\alpha^{-n-1}$.  Let $c\in \Ext^1\big(E^+(\psi),E^-(\chi,n)\big)$ be a non-zero class with $\beta(c)\neq0$. Then by Proposition \ref{prop-H1=dim3} and its proof, $\beta(c)$ is a uniserial representation isomorphic to $E^-(\chi,n+1)$. In particular, $c[\fm_D^3]=E^-(\chi,n+1)[\fm_D^3]$ has dimension $3$. Using Lemma \ref{lem:criterion-piY} and  Proposition \ref{prop:finiteness}, we deduce that $\dim_{\F}c[Y]=1$ and $c\cong E^-(\chi,n+2)$. But this implies that $\psi=\chi\alpha^{-n-1}$ \emph{and} $\psi\alpha=\chi\alpha^{-n-2}$, which is impossible.   \qedhere
\end{enumerate}
\end{proof}

We will need the following auxiliary lemma. Recall that, for a character $\chi:\cO_D^{\times}\ra \F^{\times}$, $\chi^{\sigma}$ denotes the character $\chi(\varpi_D^{-1}\cdot\varpi_D)$.

\begin{lemma}\label{lem:split1}
Let $\chi:\cO_D^{\times}\ra \F^{\times}$ be a smooth character  and $1\leq n\leq p-2$. Let $V$ be a smooth representation of $\cO_D^{\times}/Z_D^1$ fitting in an extension
\[0\ra \chi\oplus E^+(\chi^{\sigma},n)\ra V\ra \chi^{\sigma}\alpha^{n+1}\ra0.\]
Assume that $V[\fm^{n+1}_{D}]=\chi\oplus E^+(\chi^{\sigma},n)$ and $\chi^{\sigma}\alpha^{n+1}\neq \chi\alpha^{-1}$.
Then $V\cong\chi\oplus E^+(\chi^{\sigma},n+1)$. 
\end{lemma}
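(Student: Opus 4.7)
The plan is to parametrise $V$ by its extension class over the sub $W:=\chi\oplus V_n$, where $V_n:=E^+(\chi^{\sigma},n)$ and $\psi:=\chi^{\sigma}\alpha^{n+1}$. Using the direct-sum decomposition of $W$, one writes the class as a pair
\[(c_1,c_2)\in\Ext^1(\psi,\chi)\oplus\Ext^1(\psi,V_n),\]
and the goal is to arrange, possibly after applying an automorphism of $W$, that $c_1=0$ and $c_2\neq 0$; the lemma then follows from Lemma \ref{lemma-Rchi+n}.

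First I would check that $c_2\neq 0$. If $c_2=0$, the pushout of $V$ along $W\twoheadrightarrow V_n$ splits, so $V$ itself decomposes as $V_n'\oplus V'$, with $V_n'\cong V_n$ lifting the summand $V_n\hookrightarrow V/\chi$ and $V'$ a two-dimensional preimage of $\psi$ classified by $c_1$. Both summands are killed by $\fm_D^{n+1}$ (the first by Lemma \ref{lemma-Rchi+n}, the second because $\dim V'=2\leq n+1$), which would force $V[\fm_D^{n+1}]=V$, contradicting the hypothesis that $V[\fm_D^{n+1}]$ has dimension $n+2<n+3=\dim V$. Hence $c_2\neq 0$, and the pushout $V/\chi$ is the unique non-trivial extension of $\psi$ by $V_n$, namely $E^+(\chi^{\sigma},n+1)$ by Lemma \ref{lemma-Rchi+n}.

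Next I would kill $c_1$. By Proposition \ref{prop-Ext1-U1}, $\Ext^1(\psi,\chi)$ vanishes unless $\psi\in\{\chi\alpha,\chi\alpha^{-1}\}$. Since the hypothesis excludes $\psi=\chi\alpha^{-1}$, the only case requiring work is $\psi=\chi\alpha$, equivalently $\chi=\chi^{\sigma}\alpha^{n}$. Then $\chi$ is exactly the cosocle character of $V_n$, so there is a non-zero morphism $\phi:V_n\twoheadrightarrow\chi$, unique up to a scalar. The automorphism $(x,y)\mapsto(x+\mu\phi(y),y)$ of $W$ modifies the class to $(c_1+\mu\phi_{*}(c_2),c_2)$, where $\phi_{*}:\Ext^1(\psi,V_n)\to\Ext^1(\psi,\chi)$ is the pushout map. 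I would verify that $\phi_{*}$ is surjective by computing the pushout of the non-trivial class $[E^+(\chi^{\sigma},n+1)]$: its image is the uniserial quotient $E^+(\chi^{\sigma},n+1)/E^+(\chi^{\sigma},n-1)$, which is two-dimensional with successive characters $\chi,\chi\alpha$ and hence isomorphic to $E^+(\chi)$, a non-zero element of the one-dimensional space $\Ext^1(\chi\alpha,\chi)$. Therefore one can choose $\mu$ with $c_1+\mu\phi_{*}(c_2)=0$, after which $V\cong\chi\oplus V/\chi\cong\chi\oplus E^+(\chi^{\sigma},n+1)$.

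The main obstacle is precisely the case $\psi=\chi\alpha$: here $c_1$ can genuinely be non-zero, so the splitting is achieved only after the non-trivial automorphism of $W$, and one has to know that the map $\phi_{*}$ above is non-zero. The hypothesis $\psi\neq\chi\alpha^{-1}$ is what makes this rescue possible, since in the forbidden case $\chi=\chi^{\sigma}\alpha^{n+2}$ does not appear as a character of $V_n$, so $\Hom(V_n,\chi)=0$ and no analogous automorphism exists.
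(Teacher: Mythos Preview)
Your proof is correct. Both you and the paper first show that $c_2\neq 0$ (equivalently, that $V/\chi\cong E^+(\chi^{\sigma},n+1)$) using the hypothesis on $V[\fm_D^{n+1}]$, and both reduce the remaining work to the case $\psi=\chi\alpha$. From there the arguments diverge.

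The paper works with the sequence $0\ra\chi\ra V\ra E^+(\chi^{\sigma},n+1)\ra 0$ directly: it passes to the three-dimensional quotient $V/E^+(\chi^{\sigma},n-1)$, which is an extension of $\chi\alpha$ by $\chi\oplus\chi$, and uses $\dim_{\F}\Ext^1(\chi\alpha,\chi)=1$ to exhibit $\chi$ as a quotient of $V$; a short $\Hom(-,\chi)$ argument then forces the sequence to split. Your approach instead acts on the extension class $(c_1,c_2)$ by an automorphism of $W=\chi\oplus V_n$ built from the surjection $\phi:V_n\twoheadrightarrow\chi$, and you kill $c_1$ by checking that the pushout $\phi_*([E^+(\chi^{\sigma},n+1)])=[E^+(\chi^{\sigma},n+1)/E^+(\chi^{\sigma},n-1)]$ is the non-split class $E^+(\chi)$. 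The two computations are cousins---both ultimately hinge on the uniserial structure of $E^+(\chi^{\sigma},n+1)$ above level $n-1$---but your packaging via automorphisms of $W$ is a bit more systematic and makes transparent exactly why the hypothesis $\psi\neq\chi\alpha^{-1}$ is needed (namely, that $\Hom(V_n,\chi)$ would vanish in the excluded case, so no corrective automorphism exists). The paper's version is slightly more bare-handed but avoids tracking the action of $\Aut(W)$ on $\Ext^1$.
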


\begin{proof}
First note that the assumption on $V[\fm_D^{n+1}]$ implies that $V$ is non-split and, moreover, that the induced extension   $0\ra E^+(\chi^{\sigma},n)\ra V/\chi\ra\chi^{\sigma}\alpha^{n+1} \ra0$ is non-split (this needs  $n\geq 1$). As a consequence, $V/\chi$ is isomorphic to $E^+(\chi^{\sigma},n+1)$ by Lemma \ref{lemma-Rchi+n}, namely
\begin{equation}\label{eq:V/chi}
0\ra \chi\ra V\ra E^+(\chi^{\sigma},n+1)\ra0.\end{equation}

On the other hand, the quotient  $V/E^+(\chi^{\sigma},n)$ gives an extension class $c\in\Ext^1(\chi^{\sigma}\alpha^{n+1},\chi)$. 
\begin{itemize}
\item[$\bullet$] If $c=0$, then it is easy to see that \eqref{eq:V/chi}  splits, and  the result follows. 
\item[$\bullet$] If $c\neq0$, then \[\chi^{\sigma}\alpha^{n+1}\in\{\chi\alpha, \chi\alpha^{-1}\}\] by Proposition \ref{prop-Ext1-U1}. Since $\chi^{\sigma}\alpha^{n+1}\neq \chi\alpha^{-1}$ by assumption, we must have $\chi^{\sigma}\alpha^{n+1}=\chi\alpha$, equivalently $\chi^{\sigma}\alpha^{n}=\chi$. Then the cokernel of $E^+(\chi^{\sigma},n-1)\hookrightarrow E^+(\chi^{\sigma},n)\hookrightarrow V$ satisfies the following exact sequence
\[0\ra \chi\oplus \chi\ra V/E^+(\chi^{\sigma},n-1)\ra \chi\alpha\ra0.\]
Using the fact that $\Ext^1(\chi\alpha,\chi)$ is $1$-dimensional, we see that $V/E^+(\chi^{\sigma},n-1)$  admits a one-dimensional quotient isomorphic to $\chi$. Hence, $V$ also admits a one-dimensional quotient isomorphic to $\chi$. 
Applying $\Hom(-,\chi)$ to \eqref{eq:V/chi} and noting that $\chi^{\sigma}\alpha^{n+1}=\chi\alpha\neq \chi$, we obtain an injection 
\[0\ra \Hom(V,\chi)\ra \Hom(\chi,\chi)\]
which is actually an isomorphism for the reason of dimensions, hence the sequence \eqref{eq:V/chi} splits.
\end{itemize}
This finishes the proof.
\end{proof}

\subsection{A finiteness result}
Recall that $H$ acts on  $\gr(\Lambda)$ via \eqref{eq:H-action}. For a character $\chi$ of $H$, we write $\chi\otimes \gr(\Lambda)$ for the module $\gr(\Lambda)$ twisted by $\chi$.

\begin{theorem}\label{thm:nonexist1}
Let $\chi:\cO_D^{\times}\ra \F^{\times}$ be a smooth character. Assume that $\chi\neq \chi^{\sigma}$ and $\chi\neq \chi^{\sigma}\alpha^p$. Then there does not exist any smooth $D^{\times}/Z_D^1$-representation $\pi$ such that 
\begin{equation}\label{eq:nonexist1}\gr_{\fm_D}(\pi^{\vee})\cong \big(\chi^{\vee}\otimes \F[y]/(y^{p+2})\big)\oplus \big((\chi^{\sigma})^{\vee}\otimes \F[z]/(z^{p+2})\big).\end{equation}
\end{theorem}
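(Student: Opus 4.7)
Assume for contradiction that such $\pi$ exists; then $\dim_{\F}\pi = 2(p+2)$, and by hypothesis $\chi\neq\chi^\sigma$ the $\cO_D^\times$-socle $\pi[\fm_D]\cong\chi\oplus\chi^\sigma$ is two-dimensional with distinct characters. The goal will be to exhibit an $\cO_D^\times$-subrepresentation $\pi_A\subseteq\pi$ of dimension $p+2$ with $\dim_{\F}\pi_A[Y]=1$, which immediately contradicts Proposition~\ref{prop:finiteness}.

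Setting $M:=\pi^\vee$, the cosocle $M/\fm_D M=\chi^\vee\oplus(\chi^\sigma)^\vee$ admits $H$-equivariant lifts $e_1,e_2\in M$ (as $|H|=p^2-1$ is coprime to $p$), and Nakayama gives $M=\Lambda e_1+\Lambda e_2$. The two summands $A:=\chi^\vee\otimes\F[y]/(y^{p+2})$ and $B:=(\chi^\sigma)^\vee\otimes\F[z]/(z^{p+2})$ of $\gr(M)$ are precisely the $\gr(\Lambda)$-submodules generated by $[e_1]$ and $[e_2]$ respectively, so the inclusions $\gr(\Lambda e_1)\supseteq A$ and $\gr(\Lambda e_2)\supseteq B$ are automatic. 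The central claim is that these are equalities. Granted it, a dimension count ($\dim A+\dim B=\dim M=2p+4$) forces the direct sum decomposition $M=\Lambda e_1\oplus\Lambda e_2$, and dualizing yields an $\cO_D^\times$-direct summand $\pi_A:=(\Lambda e_1)^\vee\subseteq\pi$ of dimension $p+2$ with $\gr(\pi_A^\vee)=A$.

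The concluding step is then routine. Since the $y$-cokernel of $A=\F[y]/(y^{p+2})\otimes\chi^\vee$ is one-dimensional (spanned by the degree-$0$ generator), a standard graded-to-filtered argument (cf.\ the proof of Lemma~\ref{lem-pi-injective}) yields $\dim_{\F}\pi_A[Y]=1$. Proposition~\ref{prop:finiteness} then forces $\dim_{\F}\pi_A\leq p$, contradicting $\dim_{\F}\pi_A=p+2$.

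\textbf{Main obstacle.} The entire technical weight sits in the claim that $\gr(\Lambda e_i)$ equals the intended summand; equivalently, no element of $\Lambda e_1$ produces a graded contribution in the $B$-summand of $\gr(M)$. Low-degree character analysis handles the first obstructions: the character $\chi^\vee\alpha^{-1}$ of $Ze_1$ matches $B^1$ only if $\chi=\chi^\sigma$ (ruled out by hypothesis~(a)) and matches $B^2$ only if $\chi^\sigma=\chi\alpha$, equivalently $\chi=\chi^\sigma\alpha^{-1}=\chi^\sigma\alpha^p$ (ruled out by hypothesis~(b)); hence $Ze_1\in\fm_D^3 M$. For degrees $\geq 3$ character matching no longer uniformly excludes $B$-contributions, and one must bootstrap: using the $\gr(\Lambda)$-module structure of $B$ (on which $y$ acts as zero), an inductive splitting of the socle filtration of $\pi$ via Lemma~\ref{lem:split1} and its $\varpi_D$-symmetric analog, and the $D^\times$-symmetry $\varpi_D Y\varpi_D^{-1}=Z$ to transfer statements between the $\chi$- and $\chi^\sigma$-sides. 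The interplay of these inputs is the delicate heart of the proof.
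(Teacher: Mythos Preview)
Your proposal has a genuine gap, and in fact the one concrete computation you carry out is incorrect.

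You claim that the character $\chi^{-1}\alpha^{-1}$ of $Ze_1$ matches $B^2$ only if $\chi^{\sigma}=\chi\alpha$, which you then identify with hypothesis~(b). But $B^2$ is spanned by $z^2[e_2]$ and has $H$-character $(\chi^{\sigma})^{-1}\alpha^{-2}$, so the matching condition $\chi^{-1}\alpha^{-1}=(\chi^{\sigma})^{-1}\alpha^{-2}$ gives $\chi^{\sigma}=\chi\alpha^{-1}$, not $\chi^{\sigma}=\chi\alpha$. Hypothesis~(b), which reads $\chi\neq\chi^{\sigma}\alpha^{p}=\chi^{\sigma}\alpha^{-1}$ (equivalently $\chi^{\sigma}\neq\chi\alpha$), does \emph{not} exclude $\chi^{\sigma}=\chi\alpha^{-1}$. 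So already at degree~$2$ you cannot conclude $Ze_1\in\fm_D^3M$ by character reasons alone, and your plan for launching the induction does not get off the ground. (One can try to absorb this by modifying the lift $e_1$---indeed in the case $\chi^{\sigma}=\chi\alpha^{-1}$ the piece $B^1$ also has character $\chi^{-1}$, so there is a one-parameter freedom---but this is extra work you have not done, and it propagates further complications.)

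More seriously, your ``Main obstacle'' paragraph is not a proof but an admission that the real argument is missing. The paper's proof does use exactly the ingredients you name (Lemma~\ref{lem:split1}, Proposition~\ref{prop-Ext1-psi-n}, the $\varpi_D$-conjugation), but it organizes them quite differently: rather than trying to show the global splitting $\gr(\Lambda e_1)=A$, it inductively constructs uniserial subrepresentations $V_n\cong E^{-}(\chi,n)\subset\pi$ for $n=0,1,\dots,p$, at each step identifying $\pi[\fm_D^{n+1}]=V_n\oplus\varpi_D V_n$ and then analyzing the extension class of $(\soc_{\cO_D^{\times}}Q_n)$ pulled back to $V_n$ via Propositions~\ref{prop-H1=dim3} and~\ref{prop-Ext1-psi-n}. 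The case distinction there (whether this class vanishes or not) is the actual content, and hypothesis~(b) enters only at the very last step $n=p-1$ to rule out $\psi^{\sigma}=\chi$, not at degree~$2$ as you suggest. Your target statement $\gr(\Lambda e_1)=A$ is in fact \emph{stronger} than what the paper proves (it amounts to carrying the induction to $n=p+1$ rather than stopping at $n=p$), so reaching it would require at least the full force of the paper's argument plus one more step.
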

\begin{remark}\label{rem:nongeneric}
We can make the condition on $\chi$ in Theorem \ref{thm:nonexist1} more concrete:   $\chi\neq \chi^{\sigma}$ if and only if $\chi\neq \xi^{s(p+1)}$ for any $s\geq 0$; $\chi\neq \chi^{\sigma}\alpha^p$ if and only if $\chi\neq \xi^{1+s(p+1)}$ for any $s\geq 0$. Here $\xi$ denotes the character $\cO_D^{\times}\twoheadrightarrow \F_q^{\times}\hookrightarrow \F^{\times}$ via  the fixed embedding $\F_q\hookrightarrow \F$. 
\end{remark}

\begin{proof}
Suppose that such a representation as in the statement exists. We prove inductively   the following

\textbf{Claim}. \emph{For any $0\leq n\leq p$, there exists an $\cO_D^{\times}/Z_D^1$-subrepresentation $V_n$ of $\pi|_{\cO_D^{\times}}$ of dimension $n+1$, such that $V_n[Y]$ is one-dimensional and isomorphic to $\chi$.}
\medskip
 
This will finish the proof because $V_{p}$ does not exist  by Proposition \ref{prop:finiteness}. 
\medskip

The claim is clear if $n=0$. For $n=1$, it follows from \eqref{eq:nonexist1} and the following general fact: if $M$ is a  finitely generated $\Lambda$-module with compatible $H$-action annihilated by $\fm_D^2$, then $\gr_{\fm_D}(M)$ is naturally isomorphic to $M$, 
because this is the case for $\Lambda/\fm_D^2$.

 Assume that $V_n$ has been constructed for some  $1\leq n<p$. Then \[V_n\cong E^-(\chi,n)\] by Proposition \ref{prop:finiteness}.  The inclusion $V_n\hookrightarrow \pi$ induces an inclusion $\varpi_DV_n\hookrightarrow \pi$. Since $V_n$ and $\varpi_DV_n$ have  non-isomorphic $\cO_D^{\times}$-socles (as $\chi\neq\chi^{\sigma}$), we obtain an inclusion $V_n\oplus \varpi_DV_n\hookrightarrow \pi$, whose image is identified  with $\pi[\fm_D^{n+1}]$ by \eqref{eq:nonexist1}. 
Set \[Q_n:=\pi/(V_n\oplus \varpi_D V_n).\] 
By the assumption on $\gr_{\fm_D}(\pi^{\vee})$, one checks that 
\begin{equation}\label{eq:nonexist1-Q}\gr_{\fm_D}(Q_n^{\vee})\cong 
\big(\psi^{\vee}\otimes\F[y]/(y^{p+1-n})\big)\oplus \big((\psi^{\sigma})^{\vee}\otimes\F[z]/(z^{p+1-n})\big)\end{equation}
where $\psi:=\chi\alpha^{-(n+1)}$ (the case $\psi=\psi^{\sigma}$ is allowed). In particular, $\soc_{\cO_D^{\times}} Q_n\cong \psi\oplus \psi^{\sigma}$. Pulling back $\psi^{\sigma}$ we obtain a subrepresentation of $\pi|_{\cO_D^{\times}}$ which is an extension of $\psi^{\sigma}$ by $V_n\oplus \varpi_DV_n$, hence induces an extension class in $\Ext^1(\psi^{\sigma},V_n)$ which we denote by $c$. 
We have the following possibilities.
\begin{itemize}
\item[$\bullet$] Assume  $c=0$. Then $\pi$  contains a subrepresentation $V_{n+1}'$ fitting in an extension \[0\ra \varpi_DV_n\ra V_{n+1}'\ra \psi^{\sigma}\ra0.\]
Moreover, $V_{n+1}'$ is uniserial by checking  $V_{n+1}'[\fm_D^{n+1}]
$ which is contained in $\pi[\fm_D^{n+1}]$. We take $V_{n+1}:=\varpi_D V_{n+1}'$ and need to show that $\dim_{\F}V_{n+1}[Y]=1$. Since $\varpi_D^{-1}Y\varpi_D=Z$ (see Definition \ref{defn-X+X-}), equivalently we need to show  
\[\dim_{\F}V_{n+1}'[Z]=1.\]
This is clear if $n<p-1$, because  $V_{n+1}'$ is isomorphic to $E^+(\chi^{\sigma},n+1)$ by the conjugate version of Proposition \ref{prop-H1=dim3} (and its proof). If $n=p-1$, then Proposition \ref{prop-H1=dim3} shows that $V_{n+1}'$ can not be uniserial, i.e. this case can not happen. 

\item[$\bullet$] Assume $c$ is nonzero and $n<p-1$. Then by Proposition \ref{prop-H1=dim3} we have 
$\psi^{\sigma}\in \{\chi,\chi\alpha,\chi\alpha^{-(n+1)}\}$. Using \eqref{eq:nonexist1-Q} and the general fact recalled above, we see that $E^+(\psi^{\sigma})$ embeds in $Q_n$, and induces a natural extension class in $\Ext^1(E^+(\psi^{\sigma}),V_n)$ whose image  in $\Ext^1(\psi^{\sigma},V_n)$ is just $c$, hence is non-zero. 
By Proposition \ref{prop-Ext1-psi-n}, we must have \begin{equation}\label{eq:psi-nonexist}\psi^{\sigma}(=\chi^{\sigma}\alpha^{n+1})=\chi\alpha.\end{equation} Note that $\dim_{\F}\Ext^1(\psi^{\sigma},V_n)=1$ by Proposition \ref{prop-H1=dim3}, and it is easy to see that the natural morphism
\[\Ext^1(\psi^{\sigma},\chi)\ra \Ext^1(\psi^{\sigma},V_n)\]
is an isomorphism for the reason of dimensions. Thus, $c$ comes from the pushout of the unique non-split extension in $\Ext^1(\psi^{\sigma},\chi)$.  We deduce that $\pi|_{\cO_D^{\times}}$ contains a subrepresentation which is an extension of $\psi^{\sigma} (=\chi^{\sigma}\alpha^{n+1})$ by $\chi\oplus \varpi_DV_n$. Since $\varpi_DV_n\cong E^+(\chi^{\sigma},n)$ and $\psi^{\sigma}=\chi\alpha\neq \chi\alpha^{-1}$ by \eqref{eq:psi-nonexist}, we may apply Lemma \ref{lem:split1} to obtain a subrepresentation $V_{n+1}'$ of $\pi$ which is isomorphic to $E^+(\chi^{\sigma},n+1)$. The claim follows by taking $V_{n+1}:=\varpi_DV_{n+1}'$.

\item[$\bullet$] Assume $c$ is nonzero and $n=p-1$. Then by Proposition \ref{prop-H1=dim3} we have 
$\psi^{\sigma}\in \{\chi,\chi\alpha\}$ and again  $E^+(\psi^{\sigma})$ embeds in $Q_n$. In this case, we can not apply Proposition \ref{prop-Ext1-psi-n} to obtain $\psi^{\sigma}=\chi\alpha$. However, since $\psi:=\chi\alpha^{-(n+1)}=\chi\alpha$ (as $n=p-1$), we get $\psi^{\sigma}=\chi^{\sigma}\alpha^p$ and so the assumption on $\chi$ excludes the case $\psi^{\sigma}=\chi$. Therefore,  $\psi^{\sigma}=\chi\alpha$ (in particular $\psi^{\sigma}=\psi$) and we may conclude as in the last case. \qedhere
\end{itemize}
\end{proof}

\begin{theorem}\label{thm:nonexist2}
Let $\chi:\cO_D^{\times}\ra \F^{\times}$ be a smooth character.  Then there does not exist any smooth $D^{\times}$-representation $\pi$ with a central character such that 
\begin{equation}\label{eq:nonexist2}\gr_{\fm_D}(\pi^{\vee})\cong \big(\chi^{\vee}\otimes \F[y]/(y^{p+3})\big)\oplus \big((\chi^{\sigma})^{\vee}\otimes \F[z]/(z^{p+3})\big).\end{equation}
\end{theorem}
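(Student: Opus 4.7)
The plan is to reduce Theorem \ref{thm:nonexist2} to the previously established Theorem \ref{thm:nonexist1}, exploiting the fact that the graded module now has one more layer ($p+3$ rather than $p+2$). Two natural $D^{\times}$-subquotients of $\pi$, namely $\pi[\fm_D^{p+2}]$ and $\pi' := \pi/\soc_{\cO_D^{\times}}\pi$, both satisfy the graded hypothesis of Theorem \ref{thm:nonexist1}, but with parameters $\chi$ and $\chi\alpha^{-1}$ respectively; since for $p\geq 5$ at least one of these two parameters is generic in the required sense, one of the two subquotients will produce the desired contradiction.

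First I would set up the subquotients. Because $\varpi_D$ normalizes $U_D^1$, conjugation by $\varpi_D$ preserves each power $\fm_D^n$, so every $\pi[\fm_D^n]$ is $D^{\times}$-stable and $\soc_{\cO_D^{\times}}\pi = \pi[\fm_D]$ is $D^{\times}$-stable as well. Both subquotients inherit a central character from $\pi$, so $Z_D^1$ acts trivially on them. Truncating $\gr_{\fm_D}(\pi^\vee)$ at degree $p+1$ yields
\[\gr_{\fm_D}\big(\pi[\fm_D^{p+2}]^\vee\big) \cong \big(\chi^\vee \otimes \F[y]/(y^{p+2})\big) \oplus \big((\chi^\sigma)^\vee \otimes \F[z]/(z^{p+2})\big),\]
which is exactly the shape appearing in Theorem \ref{thm:nonexist1} with parameter $\chi$. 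On the other hand, from the identifications $(\pi')^\vee = \fm_D \pi^\vee$ and $\gr_{\fm_D}^n(\fm_D \pi^\vee) = \gr_{\fm_D}^{n+1}(\pi^\vee)$, combined with the $H$-weights $\alpha,\alpha^{-1}$ of $y,z$ from \eqref{eq:H-action} and the relation $\alpha^{p+1}=1$, a short calculation gives
\[\gr_{\fm_D}\big((\pi')^\vee\big) \cong \big((\chi')^\vee \otimes \F[y]/(y^{p+2})\big) \oplus \big(((\chi')^\sigma)^\vee \otimes \F[z]/(z^{p+2})\big),\]
where $\chi' := \chi\alpha^{-1}$ and $(\chi')^\sigma = \chi^\sigma\alpha^{-p} = \chi^\sigma\alpha$.

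It then suffices to show that at least one of $\chi$, $\chi\alpha^{-1}$ is generic in the sense of Theorem \ref{thm:nonexist1}. Writing characters as $\xi^k$ following Remark \ref{rem:nongeneric}, the non-generic locus for $\chi$ is $k \equiv 0, 1 \pmod{p+1}$, while that for $\chi\alpha^{-1}$ is $k \equiv -1, -2 \pmod{p+1}$. For $p \geq 5$ one has $p+1 \geq 6$, so the sets $\{0,1\}$ and $\{-1,-2\}$ are disjoint modulo $p+1$; hence at least one of the two characters is generic, and applying Theorem \ref{thm:nonexist1} to the corresponding subquotient delivers the contradiction.

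The main obstacle I expect is the shift computation for $\gr_{\fm_D}((\pi')^\vee)$, which requires combining the shift of grading $\gr_{\fm_D}^n(\fm_D\pi^\vee) = \gr_{\fm_D}^{n+1}(\pi^\vee)$ with the precise $H$-weights of $y$ and $z$ and the identity $\alpha^{p+1}=1$; once this is in hand, the exhaustiveness of the two-case split reduces to the modular arithmetic check above, which is exactly where the standing hypothesis $p\geq 5$ enters.
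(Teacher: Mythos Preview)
Your proof is correct and follows essentially the same route as the paper: in the generic case you truncate to $\pi[\fm_D^{p+2}]$ (which the paper leaves implicit in its ``follows directly''), and in the non-generic case you pass to $\pi' = \pi/\pi[\fm_D]$ with parameter $\chi' = \chi\alpha^{-1}$, exactly as the paper does. Your explicit disjointness check of the non-generic loci $\{0,1\}$ and $\{-1,-2\}$ modulo $p+1$ is a clean way to verify what the paper asserts as ``direct to check.''
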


\begin{proof}
We use the notation introduced in Remark \ref{rem:nongeneric}. If $\chi\notin\{\xi^{s(p+1)}, \xi^{1+s(p+1)}\}$ for any $s\geq0$, the result follows directly from Theorem \ref{thm:nonexist1} together with Remark \ref{rem:nongeneric}. 

Assume $\chi=\xi^{s(p+1)}$ or $\chi=\xi^{1+s(p+1)}$ for some $s\geq 0$. Note that $\pi[\fm_D]$ is a $D^{\times}$-subrepresentation of $\pi$. Put $\pi'=\pi/\pi[\fm_D]$. Then $\pi'^{\vee}$ is identified with $\fm_D\pi^{\vee}$, so the $\fm_D$-adic filtration on $\pi^{\vee}$ is exactly the induced filtration from $\pi^{\vee}$. We deduce that \[\gr_{\fm_D}(\pi'^{\vee})\cong   \big(\fm_D\gr_{\fm_D}(\pi^{\vee})\big)(1)\cong \big(\chi'^{\vee}\otimes \F[y]/(y^{p+2})\big)\oplus \big((\chi'^{\sigma})^{\vee}\otimes \F[z]/(z^{p+2})\big)\]
where $\chi':=\chi\alpha^{-1}$. It is direct to check that $\chi'$ is \emph{not} of the form $\xi^{s'(p+1)}$ or $\xi^{1+s'(p+1)}$, so 
we  conclude by  Theorem \ref{thm:nonexist1}.  
\end{proof}

\section{A finiteness criterion} \label{sec:criterion}
We keep the notation in  Section \ref{sec:modp}. In this section, we prove a criterion for an admissible unitary Banach space representation of $D^{\times}$  to be topologically of finite length. 

\subsection{A class of Cohen-Macaulay modules} \label{sec:CA}
In this subsection, we consider a special class of graded modules over the graded ring $\F[y,z]/(yz)$. We refer to \cite[\S1.5]{BH-CM} for basic facts about graded rings and modules.

Let $A=\F[y,z]/(yz)$, viewed as a graded ring by setting $\deg y=\deg z=1$. Then $A$ is a \sta local ring with \sta maximal ideal $\fm_A=(y,z)$, see \cite[Exam.~1.5.14(b)]{BH-CM}. Let $\mathcal{M}(A)$ denote the category of graded $A$-modules, with morphisms being graded morphisms of degree $0$. 
Let $\mathcal{M}^{\rm fg}(A)$ denote the subcategory of finitely generated graded $A$-modules. 

If $N\in \mathcal{M}(A)$, let $N(a)$ be the shifted graded module defined by $N(a)_n:=N_{n+a}$. 
If $N\neq0$ is finitely generated, then we can always shift $N$ so that $N_{-1}=0$ and $N_0\neq0$. 

The ring $A$ admits two minimal graded prime ideals, namely $\p_0:=(y)$ and $\p_1:=(z)$. For $N\in \mathcal{M}^{\rm fg}(A)$ we define $\mu(N)$ as 
\[\mu(N):=m_{\p_0}(N)+m_{\p_1}(N)=\mathrm{length}_{A_{\p_0}}(N_{\p_0})+\mathrm{length}_{A_{\p_1}}(N_{\p_1}).\]

We can define the notions of \sta dimension and \sta depth for $N\in\mathcal{M}^{\rm fg}(A)$,  see \cite[\S1.5]{BH-CM}, so we have the notion of Cohen-Macaulay graded 
$A$-modules. For the modules of interest to us, these notions just coincide with the usual ones. For example, $A$ itself is Cohen-Macaulay as a \sta local ring.  

We can also define\footnote{This need to enlarge the set of morphisms, see \cite[\S1.5]{BH-CM}.} ${}\mathrm{{}^*Ext}^i_A(N,N')$ for two objects $N,N'$ in $\mathcal{M}(A)$,  and it is known that  
\begin{equation}\label{eq:*Ext}
\mathrm{{}^*Ext}^i_{A}(N,N')=\Ext^i_A(N,N')\end{equation}
for $N\in\mathcal{M}^{\rm fg}(A)$, where $\Ext^i_A$ denotes the usual $\Ext$ groups for $A$-modules.
We first determine $\Ext^1_{A}(N,N')$ for $N,N'\in \{\F, \F[y],\F[z]\}$, where $\F$ (resp.~$\F[y]$, $\F[z]$) is viewed as a graded $A$-module via $A/\fm_A\cong \F$ (resp.~$A/\p_1\cong \F[y]$, $A/\p_0\cong \F[z]$).

\begin{lemma}\label{lem:Ext-A}
For $N,N'\in\{\F,\F[y],\F[z]\}$, $\Ext^1_A(N,N')$ is given by the following table. \medskip 

{\begin{center} \label{eq:table}
\begin{tabular}{ |c||c|c|c| }
\hline
$\Ext^1_A(N,N')$& $N=\F$ &  $N=\F[y]$ &  $N=\F[z]$    \\
 \hline\hline
$N'=\F$&$\F\oplus \F$ & $0$&$0$  \\
\hline
$N'=\F[y]$&$\F$&$0$&$\F$ \\
\hline
$N'=\F[z]$&$\F$&$\F$&$0$ \\
\hline
\end{tabular}
\end{center}}
\end{lemma}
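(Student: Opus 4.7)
My plan is to establish the table by constructing minimal graded free resolutions of each of $\F$, $\F[y]$, $\F[z]$ as $A$-modules, and then applying $\Hom_A(-,N')$ to read off $\Ext^1$ as the cohomology at position $1$. The computation is essentially mechanical once the resolutions are in place; the crucial input is the identities $\Ann_A(y) = (z)$ and $\Ann_A(z) = (y)$ in $A = \F[y,z]/(yz)$, which follow directly from $yz = 0$ and the fact that $A$ has an $\F$-basis $\{1, y, y^2, \dots\} \cup \{z, z^2, \dots\}$.

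These annihilator identities give the minimal graded free resolutions. For $\F[y] = A/(z)$, the kernel of the augmentation $A \to A/(z)$ is $(z)$, generated by $z$ in degree $1$; the annihilator of $z$ is $(y)$, giving the next map; iterating yields the two-step periodic resolution
\[
\cdots \xrightarrow{y} A(-3) \xrightarrow{z} A(-2) \xrightarrow{y} A(-1) \xrightarrow{z} A \to \F[y] \to 0,
\]
with analogous resolution of $\F[z] = A/(y)$ obtained by swapping $y$ and $z$. For $\F = A/(y,z)$, a similar analysis shows that $\ker\big((y,z):A(-1)^2 \to A\big) = (z,0)A \oplus (0,y)A$, giving
\[
\cdots \xrightarrow{\mathrm{diag}(y,z)} A(-3)^2 \xrightarrow{\mathrm{diag}(z,y)} A(-2)^2 \xrightarrow{(y,z)} A \to \F \to 0,
\]
with the diagonal matrices alternating between $\mathrm{diag}(z,y)$ and $\mathrm{diag}(y,z)$ in subsequent degrees.

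Applying $\Hom_A(-, N')$, each $\Hom_A(A(-n), N')$ is identified with $N'$, and the differentials become multiplication by $y$ or $z$ (or pairs thereof, in the case of $\F$). The key inputs for the last step are then just: multiplication by $y$ vanishes on $\F$ and $\F[z]$ and is injective on $\F[y]$; multiplication by $z$ vanishes on $\F$ and $\F[y]$ and is injective on $\F[z]$. Using these, each entry is computed directly. For example, when $N = \F[y]$ and $N' = \F[z]$, the complex reduces to $\F[z] \xrightarrow{z} \F[z] \xrightarrow{y=0} \F[z]$, giving $\Ext^1 = \F[z]/z\F[z] \cong \F$; when $N = \F[y]$ and $N' = \F[y]$, the complex becomes $\F[y] \xrightarrow{z=0} \F[y] \xrightarrow{y\text{ inj}} \F[y]$, giving $\Ext^1 = 0$; and for $N = N' = \F$ all differentials vanish on $\F^2$, giving $\Ext^1 = \F \oplus \F$ (the two summands coming from the two copies in $A(-1)^2$).

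No step presents a genuine difficulty; the entire proof is bookkeeping of how multiplication by $y$ and $z$ acts on each of the three test modules. The only subtlety is remembering to keep track of grading shifts in the resolution (which determine the degree in which each copy of $\F$ sits inside the graded Ext group) so that the values in the table are interpreted correctly in $\mathcal M(A)$; otherwise the argument is a direct resolution-and-compute.
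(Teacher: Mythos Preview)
Your approach---build the minimal free resolution of each $N$ over $A$ and apply $\Hom_A(-,N')$---is exactly the paper's, and your resolutions are correct.

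One caution, however: if you carry out your own recipe for the two entries you did not write out explicitly, namely $N\in\{\F[y],\F[z]\}$ with $N'=\F$, the complex reads
\[
\F\xrightarrow{\ z=0\ }\F\xrightarrow{\ y=0\ }\F\to\cdots
\]
(since both $y$ and $z$ annihilate $\F$), hence $\Ext^1_A(\F[y],\F)\cong\Ext^1_A(\F[z],\F)\cong\F$, not $0$ as the table asserts. Equivalently, $\F[y]=A/(z)$ has first Betti number $1$; or concretely, the $A$-module $E=\F v\oplus\bigoplus_{n\ge0}\F e_n$ with $ye_n=e_{n+1}$, $ze_0=v$, $ze_n=0$ for $n\ge1$, and $yv=zv=0$ is a non-split extension of $\F[y]$ by $\F$. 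So those two entries of the stated table appear to be misprints and should both read $\F$. Fortunately neither entry is invoked later: only extensions \emph{by} $\F[y]$ or $\F[z]$ (not by $\F$) feed into the next lemma and proposition. Your method is sound and establishes the corrected table.
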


\begin{proof}
This is an easy exercise. For example, $\F$ admits a free $A$-module resolution 
\[\cdots \lra A\oplus A\overset{\smatr{z}{0}{0}{y}}{\lra} A\oplus A\overset{\binom{y}{z}}{\lra} A \ra \F\ra0.\]
Applying $\mathrm{Hom}_A(-,\F)$ induces a complex which computes $\Ext^i_A(\F,\F)$:
\[0\ra\F\overset{(y\ z)}{\lra} \F\oplus \F\overset{\smatr{z}00y}{\lra} \F\oplus \F\ra \cdots\]
which clearly implies that $\dim_{\F}\Ext^1_A(\F,\F)=2$. 
\end{proof}

We deduce from Lemma \ref{lem:Ext-A} the dimension of $\mathrm{{}^*Ext}^1_A(N,N')$ for $N,N'\in\{\F,\F[y],\F[z]\}$ via \eqref{eq:*Ext}. The following  lemma makes (some of) the groups $\mathrm{{}^*Ext}^1_A(N,N')$  more concrete and can be directly checked. 

\begin{lemma}\label{lem:*Ext-A}
(i) A basis of $\mathrm{{}^*Ext}^1_A(\F,\F[y])$ is represented by the extension
\[0\ra \F[y](-1)\overset{y}{\lra} \F[y]\ra \F\ra0.\] 

(ii) A basis of $\mathrm{{}^*Ext}_A^1(\F[z],\F[y])$ is represented by the extension
\[0\ra \F[y](-1)\overset{y}{\lra} A\ra \F[z]\ra0.\]

(iii) Analogous statements hold for $\mathrm{{}^*Ext}^1_A(\F,\F[z])$ and $\mathrm{{}^*Ext}^1_A(\F[y],\F[z])$.
\end{lemma}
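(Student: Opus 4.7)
The plan is to exhibit each displayed sequence as a nonsplit short exact sequence of graded $A$-modules whose maps are homogeneous of degree $0$. Once this is done, the identification \eqref{eq:*Ext} together with Lemma \ref{lem:Ext-A} gives $\dim_{\F}\mathrm{{}^*Ext}^1_A(N,N')=1$ in each of the four cases under consideration, so any single nonsplit extension automatically represents a basis.

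For (i), I will take the map $\F[y](-1)\to \F[y]$ to be multiplication by $y$. The shift $(-1)$ is forced because $y$ raises degree by one, and with the shift the map becomes homogeneous of degree zero. It is injective because $y$ is a nonzerodivisor on $\F[y]$, and its cokernel is $\F[y]/(y)\cong \F$. For (ii), I will take $\F[y](-1)\to A$ to be multiplication by $y$ inside $A$; again the shift makes this a degree-zero injection. Its image equals $yA$, and since $yz=0$ in $A$ we have $yA=y\F[y]$, so the cokernel is $A/(y)\cong \F[z]$.

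The only genuine point to verify is therefore the nonsplitness. In case (i), a graded $A$-linear splitting $s\colon \F\to \F[y]$ would send $1\in \F_0$ to some $c\in \F[y]_0=\F$; composing with the projection $\F[y]\to \F$ forces $c=1$, but then $A$-linearity demands $0=s(y\cdot 1)=y\cdot s(1)=y\neq 0$ in $\F[y]$, a contradiction. In case (ii), a splitting $s\colon \F[z]\to A$ would send $1\mapsto c\in A_0=\F$, again with $c=1$ enforced by the splitting identity modulo $yA$; then $A$-linearity gives $0=s(y\cdot 1)=y\cdot s(1)=y\neq 0$ in $A$, again impossible.

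Part (iii) is obtained from (i) and (ii) by applying the $\F$-algebra involution of $A$ that swaps $y$ and $z$; this interchanges $\F[y]$ and $\F[z]$ and turns the statements of (i) and (ii) into their $z$-analogues. I do not foresee any substantive obstacle beyond the bookkeeping of the degree shift $(-1)$ on the left-hand module.
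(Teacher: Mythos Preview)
Your argument is correct and is exactly the direct verification the paper has in mind; the paper itself offers no proof beyond the remark that the lemma ``can be directly checked,'' and what you have written is precisely that check.
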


\begin{proposition}\label{prop:primefilt}
Let $N\in \mathcal{M}^{\rm fg}(A)$ be a Cohen-Macaulay module of   dimension $1$. 
Then $N$ has a finite filtration by graded submodules
\[(0)=N_0\subset \cdots \subset N_{t-1}\subset N_t=N\]
such that, for each $i\in\{0,\dots,t-1\}$, $N_{i+1}/N_{i}\cong A/\mathfrak{q}_i$ (up to shift) for $\mathfrak{q}_i\in\{\p_0,\p_1\}$.
\end{proposition}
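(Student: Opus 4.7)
The plan is to separate $N$ into its $y$-torsion and $z$-torsion parts using the relation $yz = 0$ together with the Cohen-Macaulay hypothesis, and then refine the resulting two-step filtration. Cohen-Macaulayness of $^*$dimension $1$ means $^*\mathrm{depth}(N) = 1$, so $\Gamma_{\fm_A}(N) = 0$: no nonzero element of $N$ is annihilated by a power of $\fm_A$. The key observation is that $yz = 0$ collapses higher torsion to linear torsion: if $y^2 n = 0$, then $yn$ is killed by $y$ (obviously) and by $z$ (since $z\cdot yn = yzn = 0$), hence by $\fm_A$, hence $yn = 0$ by depth. Inductively, the $y$-power-torsion submodule is simply
\[ T \;:=\; \{n \in N : yn = 0\}, \]
and the analogous statement holds for $z$.

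Next I would determine the structure of $T$ and $N/T$. Since $y$ annihilates $T$, it is naturally a finitely generated graded $A/\p_0 = \F[z]$-module; moreover any $z$-power-torsion element of $T$ would be annihilated by both $y$ and $z$, hence by $\fm_A$, hence would vanish, so $T$ is $z$-torsion-free. A finitely generated torsion-free graded module over the graded PID $\F[z]$ is graded-free, so
\[ T \;\cong\; \bigoplus_i (A/\p_0)(-a_i). \]
For the quotient, the collapse argument also shows $N/T$ has no $y$-torsion: if $y\bar n = 0$ then $y^2 n = 0$, so $yn = 0$, so $n \in T$. If $z\bar n$ were nonzero for some $\bar n \in N/T$, then $y(z\bar n) = yz\bar n = 0$ would provide a $y$-torsion element of $N/T$, a contradiction. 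Hence $z$ annihilates $N/T$, making it a finitely generated graded $A/\p_1 = \F[y]$-module on which $y$ is a non-zerodivisor, hence graded-free:
\[ N/T \;\cong\; \bigoplus_j (A/\p_1)(-c_j). \]

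Finally, ordering the summands of $T$ and pulling back the summand-by-summand filtration of $N/T$, one refines $0 \subset T \subset N$ into a finite filtration whose successive quotients are shifted copies of $A/\p_0$ (below $T$) or $A/\p_1$ (above $T$), as required. The main technical point is really just the collapse of $y$- and $z$-power torsion to linear torsion via $\Gamma_{\fm_A}(N) = 0$ and $yz = 0$; once this is in place, the rest is formal, relying only on the fact that $\F[y]$ and $\F[z]$ are graded PIDs over which finitely generated torsion-free modules are free.
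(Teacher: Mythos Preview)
Your proof is correct and takes a genuinely different route from the paper's. The paper argues by induction on $\mu(N)$: it picks an associated prime $\mathfrak{q}_0\in\{\p_0,\p_1\}$, embeds a shift of $A/\mathfrak{q}_0$ as $N_1\subset N$, and if $N/N_1$ fails to be Cohen--Macaulay it uses the explicit description of $\mathrm{{}^*Ext}^1_A$ (Lemma~\ref{lem:*Ext-A}) to enlarge $N_1$ by absorbing a copy of $\F$, repeating until the quotient is again Cohen--Macaulay of dimension~$1$. Your argument instead exploits the relation $yz=0$ globally: the Cohen--Macaulay hypothesis forces $y$-power torsion to collapse to $y$-torsion, so setting $T=N[y]$ gives a two-step filtration $0\subset T\subset N$ in which $T$ is a torsion-free graded $\F[z]$-module and $N/T$ is a torsion-free graded $\F[y]$-module, both therefore graded-free.

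Your approach is more elementary in that it avoids the $\mathrm{Ext}$ computations of Lemmas~\ref{lem:Ext-A} and~\ref{lem:*Ext-A}, and it yields a slightly sharper conclusion: one may arrange the filtration so that all factors $A/\p_0$ occur first and all factors $A/\p_1$ afterwards (equivalently, $N$ is an extension of a graded-free $\F[y]$-module by a graded-free $\F[z]$-module). The paper's approach, by contrast, is closer to the standard prime-filtration machinery and reuses the $\mathrm{Ext}$ lemmas that are in any case needed for Proposition~\ref{prop:CM-class}; it also makes transparent why the induction terminates via the multiplicity count $\mu(N/N_1)=\mu(N)-1$.
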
 
\begin{proof} 
First observe that the only graded prime ideals of $A$ are $\p_0,\p_1, \fm_A$. Since $N$ is Cohen-Macaulay of dimension $1$, we have $\mathrm{Ass}(N)\subset\{\p_0,\p_1\}$. 

We will prove the result by induction on $\mu(N)$. Choose $\mathfrak{q}_0\in \mathrm{Ass}(N)$. By \cite[Lem.~1.5.6]{BH-CM}, $\mathfrak{q}_0$ is the annihilator of some homogeneous element, so we obtain an embedding $N_1:=(A/\mathfrak{q}_0)(r)\hookrightarrow N$ for some $r\in\Z$.  If the quotient $N/N_1$ is Cohen-Macaulay of dimension $1$, then we may conclude by induction as $\mu(N/N_1)=\mu(N)-1$. Otherwise, $N/N_1$ contains a graded submodule isomorphic to $\F$ (up to shift), say $Q_1$, and pulling back we obtain a submodule $N_1'\subset N$ satisfying
\[0\ra N_1\ra N_1'\ra \F\ra0.\]
This sequence does not split because $\fm_A\notin \mathrm{Ann}(N)$. But by Lemma \ref{lem:*Ext-A}, $N_1'$ is  isomorphic to $(A/\mathfrak{q}_0)(r+1)$, so we may replace $N_1$ by $N_1'$ and continue the argument. It is easy to see that after a finite number of steps we arrive at some graded submodule $N_1$ such that either $N=N_1$, or  $N/N_1$ is still Cohen-Macaulay of dimension $1$, and we conclude by induction.
\end{proof}

\begin{proposition}\label{prop:CM-class}
Let $N\in\mathcal{M}^{\rm fg}(A)$.  Assume that $N$ is Cohen-Macaulay of  dimension $1$ and $\mu(N)\leq 2$. 
\begin{itemize}
\item[(i)] If $\mu(N)=1$, then $N$ is isomorphic to $\F[y](r)$ or $\F[z](r)$ for some $r\in\Z$.
\item[(ii)] If $\mu(N)=2$ and assume $m_{\p_0}(N)=m_{\p_1}(N)=1$, then $N$ is isomorphic to $A(r)$ or $\F[y](r)\oplus \F[z](s)$ for some $r,s\in\Z$. 
\end{itemize} 
\end{proposition}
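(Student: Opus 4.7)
The plan is to invoke Proposition \ref{prop:primefilt} to decompose $N$ into a short filtration with graded quotients among $\{A/\p_0, A/\p_1\} = \{\F[z], \F[y]\}$, and then read off the extension possibilities from the table in Lemma \ref{lem:Ext-A} together with the explicit representatives in Lemma \ref{lem:*Ext-A}.

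For part (i), I would observe that each factor $A/\p_i(r)$ in the filtration contributes exactly $1$ to $\mu$: $A/\p_0 = \F[z]$ localizes to the residue field $\kappa(\p_0) = \F(z)$ at $\p_0$ and to $0$ at $\p_1$, and symmetrically for $A/\p_1$. Hence $\mu(N)=1$ forces the filtration to have length $t=1$, so $N \cong A/\p_i(r)$ for some $i$ and $r$, which is exactly $\F[z](r)$ or $\F[y](r)$.

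For part (ii), the filtration has length $t=2$ with one quotient $A/\p_0(b)$ and the other $A/\p_1(a)$. Depending on the order there are two cases,
\[ 0 \to \F[z](a) \to N \to \F[y](b) \to 0 \quad\text{or}\quad 0 \to \F[y](a) \to N \to \F[z](b) \to 0, \]
and both extension groups are $1$-dimensional by Lemma \ref{lem:Ext-A}. I would then use Lemma \ref{lem:*Ext-A}(ii) and its analog (iii) to identify the unique non-split graded extension in each case with the module $A$, suitably shifted: for instance in the first case, Lemma \ref{lem:*Ext-A}(iii) identifies the non-split class with $0 \to \F[z](-1) \to A \to \F[y] \to 0$, so matching the shifts forces $a = b-1$ and $N \cong A(b)$. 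The split case gives $N \cong \F[z](a) \oplus \F[y](b)$, and the second case is symmetric, yielding either $N \cong A(r)$ or $N \cong \F[y](r) \oplus \F[z](s)$ as asserted.

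The only thing to be careful about is matching the internal degrees across the sequences so that the unique non-split extension is really recognized as $A$ up to a single shift $r$, rather than producing a family of non-isomorphic middle terms. Since $\mathrm{{}^*Ext}^1_A(\F[y],\F[z])$ is concentrated in a single internal degree (by the explicit representative in Lemma \ref{lem:*Ext-A}), this shift bookkeeping is straightforward and essentially forced, so I do not expect any substantive obstacle.
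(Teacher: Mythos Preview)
Your proposal is correct and follows essentially the same route as the paper: invoke Proposition~\ref{prop:primefilt} to obtain a filtration of length $\mu(N)$ with graded pieces among $\{A/\p_0,A/\p_1\}$ (up to shift), observe in (ii) that the hypothesis $m_{\p_0}(N)=m_{\p_1}(N)=1$ forces one factor of each type, and then read off the possibilities from Lemma~\ref{lem:*Ext-A}. Your additional remarks on the degree bookkeeping in the non-split case are slightly more explicit than the paper's treatment but do not change the argument.
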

\begin{proof}
(i) It directly follows from Proposition \ref{prop:primefilt}. 

(ii) By Proposition \ref{prop:primefilt} we may find $N_1\subsetneq N$ such that, up to twist, $N_1\cong A/\mathfrak{q}_0$ and $N/N_1\cong A/\mathfrak{q}_1$ for $\mathfrak{q}_i\in\{\p_0,\p_1\}$. Moreover, the assumption implies that $\{\mathfrak{q}_0,\mathfrak{q}_1\}=\{\p_0,\p_1\}$. Using Lemma \ref{lem:*Ext-A}, we see that $N\cong A(r)$ for some $r$ if the corresponding sequence $0\ra N_1\ra N\ra N/N_1\ra0$ does not split, and $N\cong \F[y](r)\oplus \F[z](s)$ otherwise. 
\end{proof}

\subsection{Pure modules over $\gr_{\fm_D}(\Lambda)$}
Recall that $\Lambda:=\F[\![U_D^1/Z_D^1]\!]$ with maximal ideal $\fm_D$.  
Let $J$ denote the two-sided ideal $(yz,zy)$ of $\gr(\Lambda)$, cf.~\S\ref{sec:uniserial}. 
If $N$ is a finitely generated graded $\gr(\Lambda)$-module annihilated by $J^n$ for some $n\geq 1$ and $\mathfrak{q}$ is a minimal graded prime ideal of $\gr(\Lambda)/J$, we  
  define the multiplicity of $N$ at $\mathfrak{q}$ to be
\begin{equation}
m_{\mathfrak{q}}(N)=\sum_{i=0}^nm_{\mathfrak{q}}(J^iN/J^{i+1}N).
\end{equation}
where $m_{\mathfrak{q}}(-)$ denotes the length of $(-)_{\mathfrak{q}}$ over $(\gr(\Lambda)/J)_{\mathfrak{q}}$. 
By the same proof as \cite[Lem.~3.1.4.3]{BHHMS2}, $m_{\mathfrak{q}}(-)$ is additive with respect to short exact sequences.

Note that $\gr(\Lambda)/J\cong \F[y,z]/(yz)$ is just the ring $A$ considered in \S\ref{sec:CA}; it has two minimal graded prime ideals $\p_0$ and $\p_1$.   For $N$ as above, we set
\[\mu(N)=m_{\p_0}(N)+m_{\p_1}(N).\]

The following lemma is obvious; we refer to \S\ref{sec:Auslander} for the notion of grade.
\begin{lemma}\label{lem:grade-mu}
If $N$ is a finitely generated graded 
$\gr(\Lambda)$-module annihilated by $J^n$ for some $n\geq 1$, then $N$ has grade $\geq 2$. Moreover, it has grade $2$ (resp.~$3$) if and only if $\mu(N)\neq 0$ (resp.~$\mu(N)=0$.)
\end{lemma}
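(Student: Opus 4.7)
The plan is to combine two ingredients: a direct structural identification of $\gr(\Lambda)/J$ with the commutative ring $A=\F[y,z]/(yz)$ studied in \S\ref{sec:CA}, and the Cohen--Macaulay/Auslander regular property of $\gr(\Lambda)$, which for a finitely generated module $M$ equates grade and dimension via $j(M)+d(M)=3$.

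First I would verify that $h$ lies in $J$. Since $\gr(\Lambda)\cong U(\mathfrak{g})$ with the relation $[y,z]=h$ recalled in \S\ref{sec:uniserial}, we get $h=yz-zy\in J$. A PBW argument then identifies $\gr(\Lambda)/J$ with $\F[y,z]/(yz)=A$, a graded commutative ring of Krull dimension $1$.

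Next I would exploit the $J$-adic filtration $N\supseteq JN\supseteq\cdots\supseteq J^nN=0$: each subquotient $J^iN/J^{i+1}N$ is a finitely generated $A$-module, and so has dimension at most $1$ as a $\gr(\Lambda)/J$-module (hence also as a $\gr(\Lambda)$-module). By additivity of the dimension on short exact sequences, $d(N)\leq 1$, and the Cohen--Macaulay formula then immediately yields $j(N)\geq 2$, giving the first assertion.

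To pin down the grade more precisely, I would use the additivity of $m_{\p_0}$ and $m_{\p_1}$ on the same filtration: $\mu(N)=0$ if and only if $\mu(J^iN/J^{i+1}N)=0$ for every $i$, if and only if each of these finitely generated $A$-modules is supported only at the maximal ideal $(y,z)$ of $A$, if and only if each has dimension $0$. The last condition is equivalent to $d(N)=0$, hence to $j(N)=3$; otherwise $\mu(N)\neq 0$ forces $d(N)=1$ and therefore $j(N)=2$. The only nontrivial outside input is the Cohen--Macaulay formula $j+d=3$ for $\gr(\Lambda)$, which is a standard consequence of the Auslander regularity of the enveloping algebra of a finite-dimensional Lie algebra and should be among the facts recalled in \S\ref{sec:app} from \cite{Bjork}; this is presumably why the lemma is declared ``obvious''.
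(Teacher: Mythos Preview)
Your argument is correct and is the natural one; the paper offers no proof, simply declaring the lemma ``obvious''. Two minor points: the dimension you use satisfies $d(N)=\max(d(N'),d(N''))$ on short exact sequences rather than being literally additive, and the formula $j+d=3$ you invoke is the \emph{Auslander--Macaulay} property of $\gr(\Lambda)=U(\mathfrak{g})$ (valid because its PBW-associated graded is a polynomial ring), not a formal consequence of Auslander regularity alone --- though this is indeed standard in \cite{Bjork}, \cite{LiO}, even if \S\ref{sec:app} only records $\dim_\Lambda(M):=d-j_\Lambda(M)$ as a definition.
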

Let $\widetilde{H}$ denote the subgroup of $D^{\times}$: \[\widetilde{H}:=H\rtimes  \varpi_D^{\Z}.\]  
It is easy to see that   $\widetilde{H}\cong D^{\times}/U_D^1$, so $\widetilde{H}$ naturally acts  on $\Lambda$ and $\gr(\Lambda)$.  We say $N$ is a $\gr(\Lambda)$-module with \emph{compatible $\widetilde{H}$-action} if 
\begin{equation}\label{eq:compatible}\tilde{h}(rv)=(\tilde{h}r)(\tilde{h}v),\ \ \ \tilde{h}\in\widetilde{H}, r\in\gr(\Lambda), v\in N.\end{equation}
\begin{lemma}\label{lem:m1=m2}
Let $N$ be a finitely generated graded $\gr(\Lambda)$-module annihilated by $J^n$ for some $n\geq 1$. Assume that $N$ carries  a compatible $\widetilde{H}$-action.  Then 
$m_{\p_0}(N)=m_{\p_1}(N).$  
In particular, $\mu(N)\in 2\Z_{\geq 0}$.
\end{lemma}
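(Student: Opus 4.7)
The plan is to exploit the symmetry induced by conjugation by $\varpi_D \in \widetilde{H}$, which should interchange the two primes $\p_0$ and $\p_1$ of $\gr(\Lambda)/J$ and thereby force the two multiplicities to agree.

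First I would compute explicitly the action of $\varpi_D$ on $\gr(\Lambda)$. Since $\varpi_D$ conjugates $\Q_{p^2} \subset D$ by the nontrivial Frobenius $\sigma$, one has $\varpi_D(1+\varpi_D[\lambda])\varpi_D^{-1} = 1 + \varpi_D[\lambda^p]$. Substituting into the definitions of $Y$ and $Z$ (Definition \ref{defn-X+X-}) and using the change of variables $\mu=\lambda^p$ in $\F_q^\times$, a direct calculation gives
\[
\varpi_D Y \varpi_D^{-1} = Z, \qquad \varpi_D Z \varpi_D^{-1} = Y
\]
in $\Lambda$. Passing to $\gr_{\fm_D}(\Lambda)$, the induced automorphism, which I shall still denote by $\sigma$, is an involution swapping $y \leftrightarrow z$. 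In particular $\sigma$ preserves the two-sided ideal $J=(yz,zy)$ and swaps the two minimal graded primes $\p_0 = (y)$ and $\p_1 = (z)$ of $A := \gr(\Lambda)/J \cong \F[y,z]/(yz)$.

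Next I would translate the compatibility condition \eqref{eq:compatible} for $\tilde{h}=\varpi_D$: the map $\varphi : N \to N$ given by $v \mapsto \varpi_D v$ satisfies $\varphi(rv) = \sigma(r)\varphi(v)$, hence is a $\sigma$-semilinear $\F$-linear bijection. Equivalently, if ${}^{\sigma}N$ denotes $N$ with the action of $\gr(\Lambda)$ twisted through $\sigma^{-1}$, then $\varphi : {}^{\sigma}N \xrightarrow{\sim} N$ is an isomorphism of graded $\gr(\Lambda)$-modules. Because $\sigma(J) = J$, the filtration $J^i N$ is preserved by $\varphi$, so $\varphi$ induces, for every $i$, an isomorphism ${}^{\sigma}(J^i N/J^{i+1}N) \xrightarrow{\sim} J^i N/J^{i+1}N$ of graded $A$-modules.

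It remains to check that the twist $M \mapsto {}^{\sigma}M$ interchanges $m_{\p_0}$ and $m_{\p_1}$ for any finitely generated graded $A$-module $M$. This is formal: the localization $({}^{\sigma}M)_{\p_0}$ is naturally identified, as an $A_{\p_0}$-module via $\sigma$, with $M_{\sigma(\p_0)} = M_{\p_1}$, so $m_{\p_0}({}^{\sigma}M) = m_{\p_1}(M)$. Applying this to $M = J^i N/J^{i+1}N$ and summing over $i$ yields $m_{\p_0}(N) = m_{\p_1}(N)$, and $\mu(N) = 2m_{\p_0}(N) \in 2\Z_{\geq 0}$ follows at once.

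I do not anticipate a serious obstacle: the only point that requires some care is checking that $\sigma$ genuinely swaps the generators $y$ and $z$ (rather than sending them to nonzero scalar multiples that might not respect the prime decomposition), and this is verified by the explicit conjugation computation above.
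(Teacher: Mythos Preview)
Your proof is correct and follows essentially the same approach as the paper: both use that conjugation by $\varpi_D$ swaps $y$ and $z$ in $\gr(\Lambda)$, hence swaps $\p_0$ and $\p_1$, and the compatible $\widetilde{H}$-action then forces the two multiplicities to agree. Your version is more detailed in that you verify the swap $y\leftrightarrow z$ explicitly and work through the $J$-adic filtration (which matches the actual definition of $m_{\mathfrak q}(N)$ as a sum over the graded pieces $J^iN/J^{i+1}N$), whereas the paper simply asserts an isomorphism $N_{\p_0}\simto N_{\p_1}$.
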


\begin{proof}
The action of $\varpi_D$  induces an involution on $\gr(\Lambda)$ which sends $\p_0$ to $\p_1$ and vice versa. Since $N$ carries a compatible $\widetilde{H}$-action, the action of $\varpi_D$  induces an isomorphism $N_{\p_0}\simto N_{\p_1}$, which implies the result. 
\end{proof}

\begin{lemma}\label{lem:CM-mod}
Let $N$ be a pure  (cf.~Definition \ref{def:CM}), finitely generated graded $\gr(\Lambda)$-module annihilated by $J^n$ for some $n\geq 1$. Assume that $N$ carries a compatible $\widetilde{H}$-action and $\mu(N)=2$. Then there exists a character $\chi:H\ra \F^{\times}$ and $r\in\Z$ such that $N(r)\cong (\chi\otimes \F[y])\oplus (\chi^{\sigma}\otimes \F[z])$, or $N(r)\cong \chi\otimes \F[y,z]/(yz)$ in which case $\chi=\chi^{\sigma}$. \end{lemma}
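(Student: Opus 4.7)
The strategy is to reduce to the Cohen-Macaulay classification in Proposition \ref{prop:CM-class}. I will first show that $JN = 0$, so that $N$ becomes a finitely generated graded module over $A = \gr_{\fm_D}(\Lambda)/J = \F[y,z]/(yz)$; then apply Proposition \ref{prop:CM-class}(ii); and finally use the compatible $\widetilde H$-action to pin down the characters appearing.

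The heart of the argument is to prove that both $h$ and $yz$ annihilate $N$. For $h$ first: since $h$ is central in $\gr_{\fm_D}(\Lambda)$, both $hN$ and $N[h]$ are $\gr_{\fm_D}(\Lambda)$-submodules of $N$, and $\widetilde H$-stable because $\widetilde H$ sends $h$ to $\pm h$. Suppose for a contradiction that $hN \neq 0$. As a nonzero submodule of the pure grade-$2$ module $N$, it is itself pure of grade $2$, and by Lemma \ref{lem:m1=m2}, $\mu(hN) \in 2\Z_{>0}$, so $\mu(hN) \geq 2$. Since $N/N[h] \cong hN$ up to a degree shift and $\mu$ is shift-invariant, additivity of $\mu$ yields
\[
\mu(N[h]) \;=\; \mu(N) - \mu(hN) \;\leq\; 2 - 2 \;=\; 0,
\]
hence $\mu(N[h]) = 0$, and by Lemma \ref{lem:grade-mu}, $N[h]$ is finite dimensional over $\F$. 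But any nonzero submodule of $N$ is pure of grade $2$, hence infinite dimensional, so $N[h] = 0$. Thus $h$ acts injectively on $N$, contradicting $h^n \in J^n$ annihilating $N$; hence $hN = 0$.

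Once $hN = 0$, the relations $[y, yz] = -yh$ and $[z, yz] = zh$ imply that $(yz)N$ and $N[yz]$ are $\gr_{\fm_D}(\Lambda)$-submodules of $N$. Moreover $\varpi_D \cdot yz = zy = yz - h$ acts as $yz$ on $N$, so both are $\widetilde H$-stable. The same injective-versus-nilpotent dichotomy, together with the fact that $(yz)^n \in J^n$ annihilates $N$, now forces $(yz)N = 0$, and hence $JN = 0$. Now $N$ is a finitely generated graded $A$-module; purity over $\gr_{\fm_D}(\Lambda)$ together with $\fm_D/J = \fm_A$ rules out $\fm_A$-torsion, so $N$ is Cohen-Macaulay of dimension $1$ over $A$. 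Lemma \ref{lem:m1=m2} gives $m_{\p_0}(N) = m_{\p_1}(N) = 1$, whence Proposition \ref{prop:CM-class}(ii) produces $r \in \Z$ with either $N(r) \cong A$ or $N(r) \cong \F[y](r_1) \oplus \F[z](r_2)$. In the cyclic case, the generator lies in the one-dimensional graded piece $A_0$ of some $H$-character $\chi$; the element $\varpi_D(1)$ also lies in $A_0$ and has character $\chi^\sigma$, so one-dimensionality of $A_0$ forces $\chi = \chi^\sigma$. In the direct sum case, $\varpi_D$ swaps $\p_0$ and $\p_1$, hence swaps the two summands, forcing $r_1 = r_2$ and identifying the $\F[z]$-generator's character as $\chi^\sigma$ when that of the $\F[y]$-generator is $\chi$.

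The main obstacle is showing $JN = 0$. The argument crucially relies on the parity Lemma \ref{lem:m1=m2} (which rules out $\mu(hN) = 1$ or $\mu((yz)N) = 1$) together with purity of $N$ (which turns the finite-dimensional conclusion $\mu(N[h]) = 0$ into an actual vanishing $N[h] = 0$); without either ingredient, the nilpotent-plus-injective step cannot be closed.
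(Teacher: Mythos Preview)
Your proof is correct and follows essentially the same route as the paper: first show $h$ kills $N$ via the purity/parity dichotomy, then similarly for $yz$ (the paper phrases this as ``$(h,yz)$ form a sequence of central elements''), then invoke Proposition~\ref{prop:CM-class} together with the $\widetilde H$-action to pin down the characters and shifts. One cosmetic point: the commutator signs are $[y,yz]=yh$ and $[z,yz]=-zh$, opposite to what you wrote, but this is immaterial since only $[-,yz]\in h\cdot\gr(\Lambda)$ is used.
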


\begin{proof}
We first prove that $N$ is annihilated by $J$. Note that $J=(h,yz)$ where   $h:=yz-zy$, and $(h,yz)$ form a sequence of central elements in $\gr(\Lambda)$.  Consider the exact sequence 
\[0\ra N[h]\ra N\overset{ h}{\ra} N\ra N/hN\ra0.\]
If  $\mathrm{Im}(h)=hN\neq0$, then $\mu(hN)\neq 0$ as $N$ is pure of grade $2$, see Lemma \ref{lem:grade-mu}.  Since $hN$ is  stable under the action of $\widetilde{H}$,  Lemma \ref{lem:m1=m2} implies that $\mu(hN)=2$ (as $\mu(N)=2$), and so \[\mu(N[h])=\mu(N)-\mu(hN)=0.\] This means that $N[h]$ has grade $3$, hence is zero because $N$ is pure. In other words, $h$ is injective on $N$. However, by assumption $N$ is annihilated by  $J^n$, hence also annihilated by  $h^n$, a contradiction.  
Therefore, $\mathrm{Im}(h)=0$ and $N$ is annihilated by $h$. In a similar way one shows that $N$ is annihilated by $yz$, hence by $J$. 

By Lemma \ref{lem:pure=CM}, being pure of grade $2$ over $\gr(\Lambda)$ is the same as being Cohen-Macaulay of grade $2$. Together with the above discussion, we deduce that $N$ is a Cohen-Macaulay module over $\gr(\Lambda)/J\cong A$.  Taking into account the action of $\widetilde{H}$ on $N$ which preserves the degree, the result follows from Proposition \ref{prop:CM-class}.
\end{proof}
 
\begin{corollary}\label{cor:CM-gen}
Let $N$ be as in Lemma \ref{lem:CM-mod}.   Assume moreover that $N_{-1}=0$ and $N_0\neq0$. Then   $N$ is generated by $N_0$ over $\gr(\Lambda)$.
\end{corollary}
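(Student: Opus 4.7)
The plan is to deduce this almost immediately from Lemma~\ref{lem:CM-mod}. That lemma provides an integer $r \in \Z$ and a ``standard form'' $M$ such that $N(r) \cong M$ as graded $\gr(\Lambda)$-modules (with compatible $\widetilde{H}$-action), where $M$ is either $(\chi \otimes \F[y]) \oplus (\chi^{\sigma} \otimes \F[z])$ or $\chi \otimes \F[y,z]/(yz)$.

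Two things remain to check. First, the standard form $M$ is generated over $\gr(\Lambda)$ by $M_0$: since $M$ is annihilated by $J = (yz, zy)$, the action of $\gr(\Lambda)$ factors through $A = \F[y,z]/(yz)$, and every homogeneous basis element of $M$ has the shape $y^k v$ or $z^k v$ for some $v \in M_0$ (where $M_0 = \chi \oplus \chi^{\sigma}$ in the first case and $M_0 = \chi$ in the second). Second, the shift $r$ must equal $0$: both standard modules are supported exactly in non-negative degrees with nonzero degree-$0$ part, so $N(r) \cong M$ translates into $N_j = 0$ for $j < r$ together with $N_r \neq 0$. The hypotheses $N_{-1} = 0$ and $N_0 \neq 0$ then force $r = 0$. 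Combining the two, $N \cong M$ and $N$ is generated by $N_0 = M_0$.

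There is essentially no serious obstacle here; all the genuine work (purity plus the $\widetilde{H}$-symmetry pinning both summands to the same degree) has already been done in Lemma~\ref{lem:CM-mod}, and what remains is just bookkeeping of the shift. The only point worth explicit verification is that both candidate $M$'s have their support beginning exactly in degree $0$, which is clear from their explicit descriptions.
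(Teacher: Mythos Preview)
Your argument is correct and is exactly the intended one: the paper's proof is the single line ``A direct consequence of Lemma~\ref{lem:CM-mod},'' and you have simply spelled out why the two standard forms are generated in degree $0$ and why the shift $r$ must vanish under the hypotheses $N_{-1}=0$, $N_0\neq 0$.
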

\begin{proof}
A direct consequence of Lemma \ref{lem:CM-mod}.
\end{proof}
\subsection{The $\fm_D$-adic filtration vs Gabber filtration} Let $M$ be a  pure finitely generated  $\Lambda$-module. In this subsection we address   the question if $\gr_{\fm_D}(M)$ is still pure as a $\gr(\Lambda)$-module, in other words, if the $\fm_D$-adic filtration on $M$ is a Gabber filtration (cf.~Appendix \S\ref{sec:app}).

Let $F$ be a good filtration on $M$. If $\gr_F(M)$  is annihilated by some power of $J$, then for any good filtration $F'$ on $M$, $\gr_{F'}(M)$ is also annihilated by some power of $J$ and 
$\mu(\gr_F(M))=\mu(\gr_{F'}(M))$, see \cite[Lem.~3.3.4.3]{BHHMS2}.
By abuse of notation, we often write $\mu(M)$ instead of $\mu(\gr_F(M))$ for any choice of good filtration on $M$. 
\begin{proposition}\label{prop:aut-CM}
Assume that 
\begin{itemize}
\item[(i)] $M$ carries  a compatible $\widetilde{H}$-action in a sense similar to \eqref{eq:compatible};
\item[(ii)] $\gr_{\fm_D}(M)$ is annihilated by some power of $J$ and $\mu(\gr_{\fm_D}(M))=2$. 
\end{itemize}  
Then $\gr_{\fm_D}(M)$ is a pure $\gr(\Lambda)$-module and is annihilated by $J$.
\end{proposition}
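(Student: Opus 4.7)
The plan is to reduce to Lemma~\ref{lem:CM-mod} by passing through a Gabber filtration, and then transfer the conclusion back to the $\fm_D$-adic filtration.

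First I would invoke the Gabber filtration theorem from Section~\ref{sec:app}: since $M$ is pure over $\Lambda$, there exists a good filtration $F$ on $M$ for which $\gr_F(M)$ is a pure graded $\gr(\Lambda)$-module. The subgroup $\widetilde{H}$ acts on $\Lambda$ preserving $\fm_D$, so $F$ may be taken $\widetilde{H}$-stable, and $\gr_F(M)$ then carries a compatible $\widetilde{H}$-action as in \eqref{eq:compatible}. Both the property of being annihilated by a power of $J$ and the value of $\mu$ are invariants of $M$ independent of the good filtration chosen (this is \cite[Lem.~3.3.4.3]{BHHMS2}), so $\gr_F(M)$ inherits $\mu(\gr_F(M))=2$ and is killed by some power of $J$.

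Next, Lemma~\ref{lem:CM-mod} applies to $\gr_F(M)$: it is annihilated by $J$ itself and, up to a degree shift, is isomorphic either to $(\chi\otimes\F[y])\oplus(\chi^\sigma\otimes\F[z])$ or to $\chi\otimes\F[y,z]/(yz)$ for some smooth character $\chi$ of $H$. In particular $\gr_F(M)$ is Cohen-Macaulay over $A=\F[y,z]/(yz)$ and its structure is rigid.

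The key and technical step is then to transfer this conclusion to the $\fm_D$-adic filtration and show that $\gr_{\fm_D}(M)$ is itself pure; once this is established, a second application of Lemma~\ref{lem:CM-mod} (now to $\gr_{\fm_D}(M)$, which then satisfies all of its hypotheses) yields the annihilation by $J$. For the purity I would lift a minimal set of generators of $\gr_F(M)$ to elements $m_1$ (and possibly $m_2$) of $M$ via Corollary~\ref{cor:CM-gen}, and use the exact annihilation $J\cdot\gr_F(M)=0$ to write down precise relations satisfied by the $m_i$ in $M$ (in particular, $YZ\cdot m_i$ and $(YZ-ZY)\cdot m_i$ lie in deep enough pieces of the filtration to force them into $\fm_D^3 M$). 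Presenting $M$ as a quotient of $\Lambda^{\oplus 1}$ or $\Lambda^{\oplus 2}$ by these relations then allows one to compute $\gr_{\fm_D}(M)$ and identify it, up to shift, with the same structure as $\gr_F(M)$, which is pure of grade $2$ over $\gr(\Lambda)$ by Lemma~\ref{lem:pure=CM} and Proposition~\ref{prop:CM-class}.

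The main obstacle will be this last comparison: two equivalent good filtrations may have non-isomorphic associated gradeds, so purity of $\gr_F(M)$ does not pass formally to $\gr_{\fm_D}(M)$. The argument has to exploit the \emph{exact} $J$-annihilation on the Gabber side (which, combined with the centrality of $h=[y,z]$ in $\gr(\Lambda)$, rigidifies the lift of the relations to $M$) together with $\widetilde{H}$-equivariance, which pins down the character data and rules out any hidden grade-$3$ contribution to $\gr_{\fm_D}(M)$.
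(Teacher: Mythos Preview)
Your overall architecture matches the paper's: pass to a Gabber filtration $F$, apply Lemma~\ref{lem:CM-mod} there, then transfer back. Two points need attention.

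First, the $\widetilde{H}$-stability of $F$ is not automatic and deserves an argument. The paper uses the \emph{uniqueness} part of Theorem~\ref{thm:bjork}: for $\tilde{h}\in\widetilde{H}$, the translated filtration $F'^nM:=\tilde{h}\cdot F^nM$ is again a Gabber filtration in the same equivalence class (since the $\fm_D$-adic filtration is $\widetilde{H}$-stable), hence $F'=F$.

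Second, and more seriously, your transfer step has a genuine gap. From $J\cdot\gr_F(M)=0$ you get $YZ\cdot m_i\in F^3M$, not $YZ\cdot m_i\in\fm_D^3M$; the inclusion from Theorem~\ref{thm:bjork} is $\fm_D^nM\subseteq F^nM$, which goes the wrong way for what you need. So you cannot read off a presentation of $\gr_{\fm_D}(M)$ from the relations visible in the $F$-filtration. The paper avoids this entirely by proving that $F$ \emph{equals} the $\fm_D$-adic filtration. The argument is short: Theorem~\ref{thm:bjork} gives a nonzero graded map $f:\gr_{\fm_D}(M)\to\gr_F(M)$; since $\gr_{\fm_D}(M)$ is generated in degree $0$, the map $f^0$ is nonzero; since $F^0M=M=\fm_D^0M$, the map $f^0$ is surjective; by Corollary~\ref{cor:CM-gen} (which needs $\gr_F^{-1}(M)=0$ and $\gr_F^0(M)\neq 0$, both just verified), $\gr_F(M)$ is generated in degree $0$; so $f$ is surjective by graded Nakayama, i.e.\ $\fm_D^nM=F^nM$ for all $n$. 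This is the missing idea in your sketch.
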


\begin{proof}
There is an equivalence relation on the family of good filtrations on $M$, see Definition \ref{def:bjork}. 
Consider the equivalence class which contains the $\fm_D$-adic filtration. By Theorem \ref{thm:bjork}, it contains a unique filtration $F$ such that $\gr_F(M)$ is pure. Fix $\tilde{h}\in\widetilde{H}$. Define another filtration $F'$ by $F'^nM:=\tilde{h}\cdot F^nM$. Then $F'$ is also equivalent to the $\fm_D$-adic filtration, because the $\fm_D$-adic filtration is invariant under the action of  $\tilde{h}$. Thus we must have $F=F'$ by Theorem \ref{thm:bjork}, namely $F$ is invariant under the action of $\widetilde{H}$. 
As a consequence, $\gr_F(M)$ carries a compatible action of $\widetilde{H}.$ Since $\mu(\gr_F(M))=\mu(\gr_{\fm_D}(M))=2$, Lemma \ref{lem:CM-mod} implies that $\gr_F(M)$ is annihilated by $J$, so it suffices to prove that $F$ coincides with the $\fm_D$-adic filtration. 

By Theorem \ref{thm:bjork}, we   have $\fm_D^nM\subseteq F^nM$  which induces a \emph{non-zero} morphism of graded modules $f:\gr_{\fm_D}(M)\ra \gr_F(M)$. 
 Since $\gr_{\fm_D}(M)$ is generated by $\gr^0_{\fm_D}(M)$ over $\gr(\Lambda)$,  the map  \[f^0:\gr_{\fm_D}^0(M)\ra \gr_F^0(M)\] is also nonzero. In particular, $\gr_F^0(M)$ is nonzero, i.e. $F^1M\subsetneq F^0M$.\footnote{We only consider descending filtrations; this is different from the convention of \cite{LiO}.}  On the other hand,  
for $n=0$ we have \[M=\fm_D^0M\subseteq F^0M\subseteq M,\] thus $F^0M=M$ and the map $f^0$ is surjective.  In all, we see that $\gr_F(M)$ satisfies the assumption of Corollary \ref{cor:CM-gen}, so $\gr_F(M)$ is generated by $\gr_F^0(M)$ over $\gr(\Lambda)$. As a consequence, since $f^0$ is surjective,  $f$ is also surjective by  Nakayama's lemma (for graded modules over a \sta local ring).   Hence, the inclusion $\fm_D^nM\subseteq F^nM$ is an equality for any $n\geq 0$. 
\end{proof}

\subsection{A lower bound for multiplicity}\label{sec:lowerbound}
Consider the category $\mathcal{C}$ of admissible smooth $\F$-representations $\pi$ of $D^{\times}$ with a central character, such that for some  (equivalently any) good   filtration $F$ on $\pi^{\vee}$  the graded module $\gr_{F}(\pi^{\vee})$ is annihilated by a finite power of $J$.   It is clear that $\mathcal{C}$ is an abelian category and is stable under subquotients and extensions.

For $\pi\in\mathcal{C}$, we set \[\mu(\pi):=\mu(\gr_{\fm_D}(\pi^{\vee}))\]
and call it the \emph{multiplicity} of $\pi$.

\begin{theorem}\label{thm:mu=4}
If $\pi\in\mathcal{C}$  such that   $\mu(\pi)\neq 0$, 
then $\mu(\pi)\geq 4$.
\end{theorem}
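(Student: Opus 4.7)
The strategy is to assume $\mu(\pi)=2$ for contradiction; since $\mu(\pi)\in 2\Z_{\geq 0}$ by Lemma~\ref{lem:m1=m2} applied to $\gr_{\fm_D}(\pi^\vee)$ (which inherits a compatible $\widetilde{H}$-action from the $D^\times$-action on $\pi$), this is the only case to rule out. I would proceed in three moves: purify $\pi^\vee$, classify the associated graded, then truncate to invoke Theorem~\ref{thm:nonexist2}.

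First, $\mu(\pi)=2\neq 0$ forces $\pi^\vee$ to have grade exactly $2$ by Lemma~\ref{lem:grade-mu}. The Auslander dimension filtration then singles out a maximal submodule $T\subseteq \pi^\vee$ of grade $\geq 3$, equivalently the largest $\Lambda$-submodule of $\pi^\vee$ that is finite-dimensional over $\F$; being intrinsically defined, $T$ is $\widetilde{H}$-stable, and $\pi^\vee/T$ is pure of grade $2$. Pontryagin duality turns this into a $D^\times$-subrepresentation $\pi_1\subseteq \pi$ with $\pi_1^\vee=\pi^\vee/T$, still in $\mathcal{C}$ and with $\mu(\pi_1)=2$ (by additivity and $\mu(T)=0$). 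Proposition~\ref{prop:aut-CM} then ensures that $\gr_{\fm_D}(\pi_1^\vee)$ is itself pure and annihilated by $J$, so Lemma~\ref{lem:CM-mod} identifies it (up to a degree shift) with one of
\[\text{(a)}\ (\psi\otimes \F[y])\oplus(\psi^\sigma\otimes \F[z]),\qquad \text{(b)}\ \psi\otimes \F[y,z]/(yz)\ \text{with}\ \psi=\psi^\sigma,\]
for some character $\psi:H\ra \F^\times$.

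The punchline is truncation. In case~(a), the $D^\times$-stable subspace $\pi_1[\fm_D^{p+3}]$ is finite-dimensional (its Pontryagin dual $\pi_1^\vee/\fm_D^{p+3}\pi_1^\vee$ is finitely generated over the Artinian ring $\Lambda/\fm_D^{p+3}$), inherits the central character of $\pi$, and has $\fm_D$-adic associated graded of its dual equal to the truncation
\[(\psi\otimes \F[y]/(y^{p+3}))\oplus(\psi^\sigma\otimes \F[z]/(z^{p+3})),\]
which is precisely the module excluded by Theorem~\ref{thm:nonexist2} with $\chi:=\psi^{-1}$, giving the desired contradiction. For case~(b) I would first pass to $\pi':=\pi_1/\pi_1[\fm_D]$: admissibility of $\pi_1$ makes $\pi_1[\fm_D]=\pi_1^{U_D^1}$ finite-dimensional, so $\pi'\in\mathcal{C}$ with $\pi'^\vee=\fm_D\pi_1^\vee$ still pure of grade $2$ and $\mu(\pi')=2$; the relation $yz=0$ then lets one check directly that $\gr_{\fm_D}(\pi'^\vee)\cong(\psi\alpha\otimes \F[y])\oplus(\psi\alpha^{-1}\otimes \F[z])$, and since $\alpha^{2}\neq 1$ this puts $\pi'$ into case~(a), to which the previous argument applies. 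The main obstacle will be checking that the purity hypothesis of Proposition~\ref{prop:aut-CM} really propagates through the Auslander purification together with the $\widetilde{H}$-action, so that Lemma~\ref{lem:CM-mod} may be invoked cleanly before the final truncation step.
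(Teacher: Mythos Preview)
Your proposal is correct and follows essentially the same route as the paper: purify $\pi^\vee$ (the paper cites Remark~\ref{rem:CMquotient} for this, which is exactly your Auslander dimension filtration argument), invoke Proposition~\ref{prop:aut-CM} and Lemma~\ref{lem:CM-mod} to pin down $\gr_{\fm_D}$, then in case~(a) truncate at $\fm_D^{p+3}$ and contradict Theorem~\ref{thm:nonexist2}, and in case~(b) pass to $\pi/\pi[\fm_D]$ to reduce to case~(a). The only cosmetic difference is that the paper does not re-justify purity after passing to $\pi'$ in case~(b); it simply observes that $\pi'^\vee=\fm_D\pi^\vee$ carries the induced filtration, so $\gr_{\fm_D}(\pi'^\vee)$ is computed directly from $\gr_{\fm_D}(\pi^\vee)$ and already has the shape required by Theorem~\ref{thm:nonexist2}.
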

\begin{proof}
 Write $M=\pi^{\vee}$. By Remark \ref{rem:CMquotient}, $M$ always has a pure quotient which has the same multiplicity as $M$. Thus  we may assume $M$ is pure. 

We assume $\mu(\pi)<4$ for a contradiction. By Lemma \ref{lem:m1=m2}, we must have $\mu(\pi)=2$ as $\mu(\pi)\neq 0$ by assumption. By Proposition \ref{prop:aut-CM}, $\gr_{\fm_D}(\pi^{\vee})$ is automatically a pure $\gr(\Lambda)$-module and is annihilated by $J$. 
We have the following possibilities by Lemma \ref{lem:CM-mod}. 
\begin{itemize}
\item[(a)] $\gr_{\fm_D}(\pi^{\vee})$ has the form $(\chi\otimes \F[y])\oplus (\chi^{\sigma}\otimes\F[z])$  for some smooth character $\chi$ of $\cO_D^{\times}$. Then $\pi[\fm_D^{p+3}]$ would be a representation of $D^{\times}$ satisfying \eqref{eq:nonexist2}, a contradiction with Theorem \ref{thm:nonexist2}.
\item[(b)] $\gr_{\fm_D}(\pi^{\vee})$ has the form $\chi^{\vee}\otimes \F[y,z]/(yz)$
for some character $\chi$ satisfying $\chi=\chi^{\sigma}$. 
 Then $\pi[\fm_D]\cong \chi$ is one-dimensional and is stable under $D^{\times}$. Set $\pi':=\pi/(\pi[\fm_D])$ which is again a representation of $D^{\times}$, and $\pi'[\fm_D]\cong \chi\alpha\oplus \chi\alpha^{-1}$ is two-dimensional.  It is clear that  $\pi'^{\vee}$ is identified with $\fm_D\pi^{\vee}$, so the $\fm_D$-adic filtration on $\pi'^{\vee}$ coincides (up to a shift) with the induced one from  the $\fm_D$-adic filtration on $\pi^{\vee}$.   We deduce that $\gr_{\fm_D}(\pi'^{\vee})$ is as in the case (a), again a contradiction with Theorem \ref{thm:nonexist2}.\qedhere
\end{itemize}
\end{proof}
 
\subsection{A finiteness criterion}

Let $\Pi$ be an admissible unitary Banach space representation of $D^{\times}$  over $E$, with a central character. 
\begin{lemma}\label{lem:lattice-mu}
Assume that $\Theta/\varpi\Theta$ lies in $\mathcal{C}$ for some open bounded $D^{\times}$-invariant lattice $\Theta$ in $\Pi$. Then the same is true for any open bounded $D^{\times}$-invariant lattice in $\Pi$. Moreover, the quantity $\mu(\Theta/\varpi\Theta)$  does not depend on the choice of $\Theta$.
\end{lemma}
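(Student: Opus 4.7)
The plan is to reduce everything to comparing two commensurable lattices $\Theta_2\subseteq \Theta_1$, and then extract the equality from two standard short exact sequences together with additivity of $\mu$. Any two open bounded $D^{\times}$-invariant lattices in $\Pi$ are commensurable, and the sum $\Theta+\Theta'$ of two such lattices is again an open bounded $D^{\times}$-invariant lattice containing both; so by applying the statement to the pairs $(\Theta,\Theta+\Theta')$ and $(\Theta',\Theta+\Theta')$ it suffices to treat the case $\Theta_2\subseteq \Theta_1$.

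Assume $\Theta_2\subseteq \Theta_1$ with $\varpi^N\Theta_1\subseteq \Theta_2$ for some $N\geq 1$. Since $\Pi$ is $E$-torsion free, multiplication by $\varpi$ is injective on $\Theta_1$, giving $\varpi\Theta_1/\varpi\Theta_2\cong \Theta_1/\Theta_2$. The inclusions $\varpi\Theta_2\subseteq \varpi\Theta_1\subseteq \Theta_2\subseteq \Theta_1$ then yield two short exact sequences of smooth $D^{\times}$-representations over $\F$:
\begin{equation}\label{eq:seq-latt-1}
0\to \Theta_2/\varpi\Theta_2\to \Theta_1/\varpi\Theta_2\to \Theta_1/\Theta_2\to 0,
\end{equation}
\begin{equation}\label{eq:seq-latt-2}
0\to \Theta_1/\Theta_2\to \Theta_1/\varpi\Theta_2\to \Theta_1/\varpi\Theta_1\to 0.
\end{equation}

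The key technical point is to check that $\Theta_1/\Theta_2$ lies in $\mathcal{C}$. Because $\varpi^N\Theta_1\subseteq \Theta_2$, the quotient $\Theta_1/\Theta_2$ is a quotient of $\Theta_1/\varpi^N\Theta_1$, and the latter admits a finite filtration
\[
0\subseteq \varpi^{N-1}\Theta_1/\varpi^N\Theta_1\subseteq \cdots \subseteq \varpi\Theta_1/\varpi^N\Theta_1\subseteq \Theta_1/\varpi^N\Theta_1
\]
whose graded pieces are all isomorphic (via multiplication by $\varpi^i$) to $\Theta_1/\varpi\Theta_1$. Since $\Theta_1/\varpi\Theta_1\in \mathcal{C}$ by hypothesis and $\mathcal{C}$ is closed under extensions and subquotients, we get $\Theta_1/\varpi^N\Theta_1\in \mathcal{C}$, hence $\Theta_1/\Theta_2\in \mathcal{C}$. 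Then \eqref{eq:seq-latt-2} forces $\Theta_1/\varpi\Theta_2\in\mathcal{C}$, and \eqref{eq:seq-latt-1} shows $\Theta_2/\varpi\Theta_2\in\mathcal{C}$.

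Finally, applying additivity of $\mu$ on short exact sequences in $\mathcal{C}$ to \eqref{eq:seq-latt-1} and \eqref{eq:seq-latt-2} gives
\[
\mu(\Theta_2/\varpi\Theta_2)+\mu(\Theta_1/\Theta_2)=\mu(\Theta_1/\varpi\Theta_2)=\mu(\Theta_1/\varpi\Theta_1)+\mu(\Theta_1/\Theta_2),
\]
whence $\mu(\Theta_2/\varpi\Theta_2)=\mu(\Theta_1/\varpi\Theta_1)$, as desired. The main obstacle is purely bookkeeping — verifying $\Theta_1/\Theta_2\in \mathcal{C}$ via the $\varpi$-adic devissage — since additivity of $\mu$ and closure of $\mathcal{C}$ under subquotients and extensions are already at hand.
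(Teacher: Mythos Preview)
Your approach is essentially the same as the paper's---reduce to nested commensurable lattices, use d\'evissage to control $\Theta_1/\Theta_2$, and apply additivity of $\mu$---but there are two gaps in the details as written.

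First, the chain $\varpi\Theta_2\subseteq \varpi\Theta_1\subseteq \Theta_2\subseteq \Theta_1$ asserts $\varpi\Theta_1\subseteq \Theta_2$, which is only guaranteed when $N=1$. For general $N$ the quotient $\Theta_1/\Theta_2$ is merely $\varpi^N$-torsion, so it does not lie in $\mathcal{C}$ (a category of $\F$-representations), and your second displayed sequence is not a sequence in $\mathcal{C}$. The fix is either to reduce to $N=1$ by interpolating the chain $\Theta_2+\varpi^k\Theta_1$ for $0\le k\le N$, or, as the paper does, to work with $\cO/\varpi^n$-modules and the four-term exact sequence obtained by tensoring $0\to\Theta\to\Theta'\to\Theta'/\Theta\to 0$ with $\cO/\varpi^n$.

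Second, your reduction via $\Theta+\Theta'$ requires the nested case in \emph{both} directions: starting from $\Theta/\varpi\Theta\in\mathcal{C}$ with $\Theta\subseteq\Theta+\Theta'$, you must first deduce that the \emph{larger} lattice satisfies the condition before passing down to $\Theta'$. But your d\'evissage of $\Theta_1/\Theta_2$ via $\Theta_1/\varpi^N\Theta_1$ presupposes $\Theta_1/\varpi\Theta_1\in\mathcal{C}$, so it only handles the passage from larger to smaller. To go the other way (say with $N=1$), note instead that $\Theta_1/\Theta_2\cong \varpi\Theta_1/\varpi\Theta_2\hookrightarrow \Theta_2/\varpi\Theta_2$, hence $\Theta_1/\Theta_2\in\mathcal{C}$; then $\Theta_1/\varpi\Theta_1$ sits in the sequence $0\to\Theta_2/\varpi\Theta_1\to\Theta_1/\varpi\Theta_1\to\Theta_1/\Theta_2\to 0$ with both outer terms in $\mathcal{C}$.
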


\begin{proof}
Let   $\Theta'$ be another open bounded $D^{\times}$-lattice in $\Pi$.  Then $\Theta$ and $\Theta'$ are commensurable, so we may assume 
$\varpi^n\Theta'\subset \Theta\subset \Theta'$
for some  $n\geq 1$.  
As a consequence, $\Theta'/\Theta$ is killed by $\varpi^n$. The exact sequence
$0\ra \Theta\ra \Theta'\ra \Theta'/\Theta\ra0$ 
 induces, by tensoring with $\cO/\varpi^n$, an exact sequence 
\begin{equation}\label{eq:lattice-mu}0\ra \Theta'/\Theta\ra \Theta/\varpi^n\Theta\ra \Theta'/\varpi^n\Theta'\ra \Theta'/\Theta\ra0.\end{equation}
Recall that the category $\mathcal{C}$ is stable under subquotients and extensions. The assumption implies that $\Theta/\varpi^n\Theta\in \mathcal{C}$, hence  $\Theta'/\Theta\in \mathcal{C}$ too,  and finally $\Theta'/\varpi^n\Theta'\in\mathcal{C}$. This implies the first assertion.
Moreover, we  deduce from \eqref{eq:lattice-mu} that $\mu(\Theta/\varpi^n\Theta)=\mu(\Theta'/\varpi^n\Theta')$ by the additivity of $\mu(\cdot)$, from which the second assertion follows. 
\end{proof}

By abuse of notation, we say $\Pi\in \widehat{\mathcal{C}}$ if $\Pi$ satisfies the condition of Lemma \ref{lem:lattice-mu}. In this case, we  simply write  $\mu(\Pi)$ for $\mu(\Theta/\varpi\Theta)$ by choosing an open bounded $D^{\times}$-invariant lattice $\Theta$. 
Note that $\mu(\cdot)$ is additive with respect to short exact sequences of admissible unitary Banach space representations of $D^{\times}$ (provided that all the terms make sense). 

\begin{corollary}\label{cor:mu=4}
Assume $\Pi\in\widehat{\mathcal{C}}$ and $\mu(\Pi)\neq0$. Then $\mu(\Pi)\geq 4$. 
\end{corollary}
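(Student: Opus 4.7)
The plan is to reduce Corollary \ref{cor:mu=4} directly to Theorem \ref{thm:mu=4} by unfolding the definitions attached to the class $\widehat{\mathcal{C}}$.

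First, I would pick any open bounded $D^\times$-invariant lattice $\Theta$ in $\Pi$ (such a lattice exists because $\Pi$ is unitary admissible) and set $\pi := \Theta/\varpi\Theta$. The hypothesis $\Pi \in \widehat{\mathcal{C}}$, together with Lemma \ref{lem:lattice-mu}, says that this particular choice of $\Theta$ gives $\pi \in \mathcal{C}$, independently of the chosen lattice. Moreover $\pi$ inherits an admissible smooth action of $D^\times$ and a central character from $\Pi$, so the hypotheses of Theorem \ref{thm:mu=4} are in place. By the very definition of $\mu(\Pi)$ we have $\mu(\pi) = \mu(\Theta/\varpi\Theta) = \mu(\Pi)$, so the assumption $\mu(\Pi)\neq 0$ translates into $\mu(\pi)\neq 0$.

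Second, I would invoke Theorem \ref{thm:mu=4} applied to $\pi$ to conclude $\mu(\pi) \geq 4$, which by the identification above gives $\mu(\Pi) \geq 4$.

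There is no genuine obstacle here: the corollary is a direct packaging of Theorem \ref{thm:mu=4} in the Banach-space setting, using that passage from $\Pi$ to the mod $\varpi$ reduction of a lattice carries $\widehat{\mathcal{C}}$ into $\mathcal{C}$ and preserves the multiplicity invariant. All the substantive work (the Cohen--Macaulay analysis over $\gr(\Lambda)/J$, the nonexistence Theorem \ref{thm:nonexist2}, and the passage from the $\fm_D$-adic filtration to a Gabber filtration via Proposition \ref{prop:aut-CM}) has already been absorbed into Theorem \ref{thm:mu=4}.
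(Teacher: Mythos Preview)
Your proposal is correct and matches the paper's approach exactly: the paper's proof is the single sentence ``By the above discussion, it is a direct consequence of Theorem \ref{thm:mu=4},'' where the ``above discussion'' is precisely the unpacking of $\widehat{\mathcal{C}}$ and $\mu(\Pi)$ via Lemma \ref{lem:lattice-mu} that you spelled out.
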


\begin{proof}
By the above discussion, it is a direct consequence of Theorem \ref{thm:mu=4}.
\end{proof}
Denote by $\Pi^{\rm lalg}$ the subspace of locally algebraic vectors in $\Pi$.

\begin{theorem}\label{thm:finitelength}
Keep the above notation.  Assume that 
\begin{itemize}
\item[(a)] $\Pi^{\rm lalg}$ is finite dimensional over $E$.
\item[(b)]   $\Pi\in \widehat{\mathcal{C}}$ and $\mu(\Pi)\leq 4$.
\end{itemize}
Then  $\Pi$ is topologically of finite length.  
\end{theorem}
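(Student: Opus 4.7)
The strategy is to produce a closed infinite-dimensional subrepresentation $\Pi_1\subset\Pi$ of finite topological length, and then exploit the bound $\mu(\Pi)\leq 4$ to force $\Pi/\Pi_1$ to be finite-dimensional over $E$. First, I may assume $\dim_E\Pi=\infty$, as otherwise $\Pi$ is trivially of finite length. Under this assumption, topological Nakayama's lemma forces $\dim_\F(\Theta/\varpi\Theta)=\infty$ for any open bounded $D^\times$-invariant lattice $\Theta$, so $\mu(\Pi)\neq 0$; combined with (b) and Corollary~\ref{cor:mu=4}, this pins down $\mu(\Pi)=4$.

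The construction of $\Pi_1$ is the main technical input. Using condition (a) together with standard duality theory for admissible unitary Banach representations (Schneider--Teitelbaum), I would locate a topologically irreducible closed subrepresentation inside $\Pi$ (or inside a quotient of $\Pi$ by a suitable finite-dimensional closed subrepresentation). The role of (a) is to rule out the pathology in which this irreducible piece is finite-dimensional: since $\Pi^{\rm lalg}$ is finite-dimensional, there is essentially only ``finitely much room'' for finite-dimensional irreducible pieces in $\Pi$, so by bookkeeping one obtains an infinite-dimensional topologically irreducible $\sigma$ and lifts it to a closed subrepresentation $\Pi_1\subset\Pi$ of finite topological length.

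Once $\Pi_1$ is in hand, set $\Theta_1:=\Pi_1\cap\Theta$, which is an open bounded $D^\times$-invariant lattice in $\Pi_1$. The quotient $\Theta/\Theta_1$ is $\cO$-torsion free: if $\varpi\theta\in\Theta_1$ for some $\theta\in\Theta$, then $\theta\in\Pi_1$ because $\Pi_1$ is an $E$-subspace of $\Pi$, and hence $\theta\in\Pi_1\cap\Theta=\Theta_1$. Tensoring $0\to\Theta_1\to\Theta\to\Theta/\Theta_1\to 0$ with $\cO/\varpi$ therefore produces a short exact sequence in $\mathcal{C}$,
\[0\to\Theta_1/\varpi\Theta_1\to\Theta/\varpi\Theta\to(\Theta/\Theta_1)/\varpi\to 0,\]
and the additivity of $\mu$ (cf.~the proof of Lemma~\ref{lem:lattice-mu}) gives $\mu(\Pi)=\mu(\Pi_1)+\mu(\Pi/\Pi_1)$. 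Since $\Pi_1$ is infinite-dimensional, topological Nakayama shows $\mu(\Pi_1)\neq 0$, and so $\mu(\Pi_1)\geq 4$ by Corollary~\ref{cor:mu=4}. Combined with $\mu(\Pi)=4$, this forces $\mu(\Pi/\Pi_1)=0$, so $(\Theta/\Theta_1)/\varpi$ is finite-dimensional over $\F$. Topological Nakayama again makes $\Theta/\Theta_1$ finitely generated over $\cO$, whence $\Pi/\Pi_1=E\otimes_\cO(\Theta/\Theta_1)$ is finite-dimensional over $E$. Since both $\Pi_1$ and $\Pi/\Pi_1$ are topologically of finite length, so is $\Pi$.

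The main obstacle I anticipate is the construction of $\Pi_1$: producing an infinite-dimensional closed subrepresentation of finite topological length genuinely uses the existence of topologically irreducible closed subrepresentations in admissible Banach spaces, together with a careful use of (a) to arrange infinite-dimensionality. The rest of the argument is direct multiplicity bookkeeping, but it crucially depends on the somewhat surprising lower bound $\mu\geq 4$ (rather than the more immediate $\mu\in 2\Z_{>0}$) for infinite-dimensional objects of $\widehat{\mathcal{C}}$ supplied by Corollary~\ref{cor:mu=4}.
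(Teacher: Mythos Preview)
Your strategy matches the paper's proof exactly: reduce to the infinite-dimensional case, produce a finite-length infinite-dimensional closed subrepresentation $\Pi_1$, and use Corollary~\ref{cor:mu=4} together with additivity of $\mu$ to force $\mu(\Pi/\Pi_1)=0$.

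The only point where you are vague---and you correctly flagged it---is the construction of $\Pi_1$. The paper resolves this with a clean input you did not quite articulate: \emph{every} admissible unitary $D^{\times}$-representation on a finite-dimensional $E$-vector space (with central character) is locally algebraic (\cite[Prop.~6.13]{Paskunas-JL}). Consequently $\Pi/\Pi^{\rm lalg}$ has no nonzero finite-dimensional closed subrepresentations at all. One then invokes the fact that a nonzero admissible unitary Banach representation contains a topologically irreducible closed subrepresentation (see the proof of \cite[Thm.~6.13]{Paskunas-JL}); applied to $\Pi/\Pi^{\rm lalg}$, this gives an irreducible $\Pi'$ which is automatically infinite-dimensional, and $\Pi_1$ is its preimage in $\Pi$. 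Your heuristic that ``finite-dimensional pieces have only finitely much room'' is correct but the underlying mechanism is this automatic local algebraicity, not a counting argument. With that in place, the rest of your argument (saturated lattice, additivity of $\mu$, Corollary~\ref{cor:mu=4}) is exactly what the paper does.
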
 
\begin{proof}
We may assume that $\Pi$ itself is infinite dimensional over $E$. As a consequence, we have $\mu(\Pi)>0$.  

By \cite[Prop.~6.13]{Paskunas-JL},  any unitary representation of $D^{\times}$ on a finite dimensional $E$-vector space (with a central character) is locally algebraic. Hence, the quotient $\Pi/\Pi^{\rm lalg}$ contains an irreducible closed subrepresentation of infinite dimension, say $\Pi'$; see the proof of  \cite[Thm.~6.13]{Paskunas-JL}. Let $\Pi_1\subseteq \Pi$  be the pullback of $\Pi'$, i.e. \[0\ra \Pi^{\rm lalg}\ra \Pi_1\ra \Pi'\ra0.\]
Then $\mu(\Pi_1)=\mu(\Pi')>0$.  It suffices to prove that $\Pi/\Pi_1$ is finite dimensional over $E$, equivalently $\mu(\Pi)=\mu(\Pi_1)$. But this is clear,  because $\mu(\Pi)\leq 4$ by Condition (b) and $\mu(\Pi_1)\geq 4$  by Corollary \ref{cor:mu=4}. 
\end{proof}

\begin{remark}
The assumption $\mu(\Pi)\leq 4$ in Theorem \ref{thm:finitelength} is crucial, otherwise it could happen that $\mu(\Pi/\Pi_1)>0$ and one could not exclude the possibility that $ (\Pi/\Pi_1)^{\rm lalg}$ is infinite dimensional (cf.~\cite[Rem.~1.3]{DPS22}).
\end{remark}

\begin{corollary}\label{cor:finitelength}
Keep the assumptions in Theorem \ref{thm:finitelength}. Assume moreover  $\mu(\Pi)=4$ and that there exists an open bounded $D^{\times}$-invariant  lattice $\Theta$ in $\Pi$ such that $(\Theta/\varpi\Theta)^{\vee}$ is Cohen-Macaulay as a finitely generated $\Lambda$-module. Then $\Pi/\Pi^{\rm lalg}$ is topologically irreducible. 
In particular, if $\Pi^{\rm lalg}=0$ then $\Pi$ is topologically irreducible.
\end{corollary}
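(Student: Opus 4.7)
The plan is to strengthen the argument of Theorem~\ref{thm:finitelength} by using the Cohen--Macaulay hypothesis to upgrade ``$\Pi/\Pi_1$ is finite-dimensional'' to ``$\Pi/\Pi_1=0$'', and thereby conclude that $\Pi/\Pi^{\rm lalg}$ itself is topologically irreducible. We may assume $\Pi$ is infinite-dimensional; following the proof of that theorem, we choose an irreducible closed infinite-dimensional subrepresentation $\Pi'\subseteq \Pi/\Pi^{\rm lalg}$ (which exists by \cite[Thm.~6.13]{Paskunas-JL}) and let $\Pi_1\subseteq \Pi$ be its preimage. It suffices to prove $\Pi_1=\Pi$, since then $\Pi/\Pi^{\rm lalg}=\Pi'$ is topologically irreducible and the ``In particular'' assertion is immediate.

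Set $\Theta_1:=\Pi_1\cap\Theta$, which is a closed $D^{\times}$-invariant, $\varpi$-adically complete lattice in $\Pi_1$. The standard verification $\Theta_1\cap\varpi\Theta=\varpi\Theta_1$ yields an injection $\Theta_1/\varpi\Theta_1\hookrightarrow \Theta/\varpi\Theta$ with cokernel $Q$; all three terms lie in $\mathcal{C}$ since that category is closed under subquotients. In particular $\Pi_1\in\widehat{\mathcal{C}}$, and since $\Pi_1$ is infinite-dimensional, Corollary~\ref{cor:mu=4} gives $\mu(\Pi_1)\geq 4$. Combined with $\mu(\Pi)=4$ and the additivity of $\mu$, this forces $\mu(Q)=0$, so $Q$ is finite-dimensional over $\F$.

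Dualizing the short exact sequence gives an embedding $Q^{\vee}\hookrightarrow M:=(\Theta/\varpi\Theta)^{\vee}$ of $\Lambda$-modules with $Q^{\vee}$ finite-dimensional, hence of grade $3$. By Lemma~\ref{lem:grade-mu} the graded module $\gr_{\fm_D}(M)$ has grade $2$, so the same holds for $M$ as a $\Lambda$-module (the grade is preserved by any good filtration). The hypothesis that $M$ is Cohen--Macaulay, combined with the Auslander regularity of $\Lambda$ recalled in Section~\ref{sec:app}, implies that $M$ is pure of grade $2$: every nonzero submodule of $M$ has grade $2$. Since $Q^{\vee}$ has grade $3>2$, we must have $Q^{\vee}=0$, hence $Q=0$; that is, $\Theta=\Theta_1+\varpi\Theta$. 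A topological Nakayama argument (iterating to $\Theta=\Theta_1+\varpi^n\Theta$ for all $n$ and using that $\Theta_1$ is closed in the $\varpi$-adically complete $\Theta$) then gives $\Theta_1=\Theta$, so $\Pi_1\supseteq\Theta$ and consequently $\Pi_1=E\cdot\Theta=\Pi$.

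The main obstacle is the purity step: extracting the absence of nonzero finite-dimensional (grade $3$) submodules of $M$ from the Cohen--Macaulay hypothesis. This is precisely where the extra assumption beyond Theorem~\ref{thm:finitelength} enters and forces the equality $Q=0$ rather than merely $\dim_{\F}Q<\infty$; the rest of the argument is a careful bookkeeping of the construction already performed in Theorem~\ref{thm:finitelength}.
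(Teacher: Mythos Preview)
Your proof is correct and follows essentially the same approach as the paper's. The paper's argument is terser: it simply asserts that the Cohen--Macaulay assumption forces the finite-dimensional cokernel $Q$ of $\Theta_1/\varpi\Theta_1\hookrightarrow\Theta/\varpi\Theta$ to vanish, and hence $\Pi_1=\Pi$. You have unpacked this step by dualizing, invoking purity (Lemma~\ref{lem:pure=CM}) of $M=(\Theta/\varpi\Theta)^\vee$, and then running the topological Nakayama argument explicitly; these are exactly the details one would fill in when expanding the paper's one-line justification.
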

\begin{proof}
Let $\Pi_1$ be as in the proof of Theorem \ref{thm:finitelength}, and $\Theta_1:=\Pi_1\cap \Theta$. Then \[\Theta_1/\varpi\Theta_1\hookrightarrow\Theta/\varpi\Theta\]  whose cokernel is finite dimensional over $\F$, because $\mu(\Theta_1/\varpi\Theta_1)=\mu(\Theta/\varpi\Theta)=4$.  The assumption then implies that  $\Theta_1/\varpi\Theta_1=\Theta/\varpi\Theta$, hence $\Pi_1=\Pi$.  
\end{proof}

\section{Scholze's functor and multiplicity one result}
 \label{sec:multione}

Let $G=\GL_2(\Q_p)$, $K=\GL_2(\Z_p)$ and $B$ be the (upper) Borel subgroup of $G$. For each $n\geq 1$, let
\[K_n:=1+p^n M_2(\Z_p)\]
be the $n$-th principal congruence subgroup.
Let $Z_G$ be the centre of $G$ and $Z_1:=Z_G\cap K_1$.

Let $\Mod_G^{\rm sm}(\cO)$ denote the category of smooth representations of $G$ on $\cO$-torsion modules, and $\Mod_G^{\rm adm}(\cO)$ (resp.~$\Mod_G^{\rm l,adm}(\cO)$) denote the full subcategory of admissible (resp.~locally admissible) representations. If $\zeta:Z_G\ra \cO^{\times}$ is a continuous character, we add 
the subscript $\zeta$ in the above categories to indicate the full subcategory consisting of representations on which $Z_G$ acts by the character $\zeta$. 

The Pontryagin duality $M\mapsto M^{\vee}=\Hom_{\cO}^{\rm cont}(M,E/\cO)$ induces an anti-equivalence between the category 
of discrete $\cO$-modules and the category of compact $ \cO$-modules. 
Let $\Mod_{G}^{\rm pro}(\cO)$ be the category anti-equivalent to   $\Mod_G^{\rm sm}(\cO)$ under this duality. Let  
 $\mathfrak{C}_G(\cO)$ (resp.~$\mathfrak{C}_{G,\zeta}(\cO)$) be the category which is anti-equivalent to $\Mod_{G}^{\rm l,adm}(\cO)$ (resp.~$\Mod_{G,\zeta}^{\rm l,adm}(\cO)$). 
Note that $Z_G$ acts on objects in $\mathfrak{C}_{G,\zeta}(\cO)$ by $\zeta^{-1}$.

For each of the above categories, we replace $\cO$ by $\F$ to indicate the corresponding full subcategory consisting of objects which are killed by $\varpi$. 
\medskip

Finally, we keep the notation on the non-split quaternion algebra $D$ introduced in \S\ref{sec:modp}.

\subsection{Blocks of $G$}\label{ss:block}

Let  $\zeta:\Q_p^{\times}\ra \cO^{\times}$ be a continuous character.   
Recall that a \emph{block} in $\Mod_{G,\zeta}^{\rm sm}(\F)$ is an equivalence class of (absolutely) irreducible objects in $\Mod_{G,\zeta}^{\rm sm}(\F)$, where $\tau\sim \pi$ if and only if there exists a sequence of irreducible representations $\tau=\tau_0$, $\tau_1,\dots,\tau_n=\pi$ such that $\Ext^1_G(\tau_i,\tau_{i+1})\neq 0$ or $\Ext^1_G(\tau_{i+1},\tau_i)\neq 0$ for each $i$.

Let $\pi\in \Mod_{G,\zeta}^{\rm sm}(\F)$ be absolutely  irreducible and let $\sB$ be the block in which $\pi$ lies.  For $p\geq 5$, one of the following holds (cf. \cite[Prop.~5.42]{Pa13}):
\begin{enumerate}
\item[(I)] if $\pi$ is supersingular, then $\sB=\{\pi\}$;
\item[(II)] if $\pi= \Ind_{B}^G\chi_1\otimes\chi_2\omega^{-1}$ with $\chi_1\chi_2^{-1}\neq \ide,\omega^{\pm1}$, then
\[\sB=\big\{\Ind_B^G\chi_1\otimes\chi_2\omega^{-1},\ \Ind_B^G\chi_2\otimes\chi_1\omega^{-1}\big\};\]
\item[(III)] if $\pi=\Ind_B^G\chi\otimes\chi\omega^{-1}$, then $\sB=\{\pi\}$;
\item[(IV)] otherwise, $\sB=\{\chi\circ\det,\Sp\otimes\chi\circ\det, (\Ind_B^G\alpha)\otimes\chi\circ\det\}$, where $\alpha:=\omega\otimes\omega^{-1}$. 
\end{enumerate}

By \cite[Prop.~5.34]{Pa13}, the category $\Mod_{G,\zeta}^{\rm l,adm}(\cO)$ decomposes into a direct sum of subcategories 
\[\Mod_{G,\zeta}^{\rm l,adm}(\cO)=\bigoplus_{\sB}\Mod_{G,\zeta}^{\rm l,adm}(\cO)^{\sB}\]
where the direct sum is taken over all the blocks $\sB$ and the objects of $\Mod_{G,\zeta}^{\rm l,adm}(\cO)^{\sB}$ are representations with all the irreducible subquotients lying in $\sB$. Correspondingly, we  have  a decomposition of categories 
\[\mathfrak{C}_{G,\zeta}(\cO)=\prod_{\sB}\mathfrak{C}_{G,\zeta}(\cO)^{\sB}\]
where $\mathfrak{C}_{G,\zeta}(\cO)^{\sB}$ denotes the dual category of $\Mod_{G,\zeta}^{\rm l,adm}(\cO)^{\sB}$.  
 \medskip

Let $\brho:G_{\Q_p}\ra\GL_2(\F)$ be a continuous representation and $\pi(\brho)$ be the smooth representation of $G$ associated with $\brho$ by the mod $p$ Langlands
correspondence for $G$, cf. \cite{Br03}, \cite{Co}.   We normalize the correspondence in such a way that the central character of $\pi(\brho)$ is equal to $\det(\brho)\omega^{-1}$.  Using the explicit description of $\pi(\brho)$ (see for example \cite[\S4.2]{HW-JL1} for some cases), it is easy to check that\footnote{Here, $\brho^{\rm ss}$ means the semisimplification of $\brho$ and $\JH(\pi(\brho))$ is counted up to multiplicity.}   \[\brho^{\rm ss}\longmapsto \JH(\pi(\brho))\] establishes a bijection between the set of two-dimensional  semisimple $\F$-representations of $G_{\Q_p} $ and the set of blocks  of $G$.

From now on, we fix a representation $\brho:G_{\Q_p}\ra\GL_2(\F)$ and let $\sB$ be the block corresponding to $\brho^{\rm ss}$.  Let $R_{\sB}^{\rm ps,\delta}:=R^{\rm ps,\delta}_{\mathrm{tr}(\brho)}$ denote the universal pseudo-deformation ring of $\brho$ with fixed determinant $\delta:=\zeta\varepsilon$, where $\zeta:\Q_p^{\times}\ra \cO^{\times}$ is a (fixed) continuous character lifting the central character of $\sB$. We have the following important result (recall that $p\geq 5$).
\begin{theorem}[\cite{Pa13}] \label{thm:pas}
Colmez's functor induces an isomorphism between $R^{\rm ps,\delta}_{\sB}$ and the Bernstein centre of $\mathfrak{C}_{G,\zeta}(\cO)^{\sB}$.    In particular, $R^{\rm ps,\delta}_{\sB}$ naturally acts on any object in $\mathfrak{C}_{G,\zeta}(\cO)^{\sB}$ and any morphism in $\mathfrak{C}_{G,\zeta}(\cO)^{\sB}$ is $R^{\rm ps,\delta}_{\sB}$-equivariant.
\end{theorem}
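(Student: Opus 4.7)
The plan is to follow Pa\v{s}k\=unas's block-by-block strategy from \cite{Pa13}. For each block $\sB$, I would first construct a projective generator $\widetilde{P}_{\sB}$ of $\mathfrak{C}_{G,\zeta}(\cO)^{\sB}$ of the form $\bigoplus_{\pi\in\sB} P_\pi$, where $P_\pi$ is a projective envelope (in $\mathfrak{C}_{G,\zeta}(\cO)$) of $\pi^\vee$. The existence of such envelopes follows from the fact that $\mathfrak{C}_{G,\zeta}(\cO)$ has enough projectives (e.g.~injective envelopes on the dual side, built from compact induction of injective envelopes of irreducible $K$-representations carrying the right central character). Setting $E_{\sB}:=\End_{\mathfrak{C}(\cO)}(\widetilde{P}_{\sB})$, standard Morita-type arguments identify the Bernstein centre of $\mathfrak{C}_{G,\zeta}(\cO)^{\sB}$ with $Z(E_{\sB})$, and the functor $\Hom_{\mathfrak{C}(\cO)}(\widetilde{P}_{\sB},-)$ identifies $\mathfrak{C}_{G,\zeta}(\cO)^{\sB}$ with the category of pseudo-compact right $E_{\sB}$-modules.

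Next, I would apply Colmez's Montr\'eal functor $\check{\mathbf{V}}$ to $\widetilde{P}_{\sB}$. Because $\check{\mathbf{V}}$ is exact, preserves inverse limits in the relevant sense, and kills one-dimensional representations only in a controlled way, the $G_{\Q_p}$-module $\check{\mathbf{V}}(\widetilde{P}_{\sB})$ should be a deformation of $\brho^{\rm ss}$ (suitably interpreted: an honest deformation in the irreducible/case~(II) situations, and a pseudo-deformation with fixed determinant $\delta$ in all cases). The $E_{\sB}$-action on $\widetilde{P}_{\sB}$ transports to an action on $\check{\mathbf{V}}(\widetilde{P}_{\sB})$ commuting with $G_{\Q_p}$, which by the universal property of $R^{\rm ps,\delta}_{\sB}$ produces a canonical ring homomorphism
\[
\varphi_{\sB}:R^{\rm ps,\delta}_{\sB}\longrightarrow Z(E_{\sB}).
\]
The target ring acts on every object of $\mathfrak{C}_{G,\zeta}(\cO)^{\sB}$ through $\Hom_{\mathfrak{C}(\cO)}(\widetilde{P}_{\sB},-)$, and any morphism in the category is automatically $Z(E_{\sB})$-linear, giving the second assertion of the theorem once $\varphi_{\sB}$ is known to be an isomorphism.

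The heart of the proof is therefore to show that $\varphi_{\sB}$ is an isomorphism in each of the four cases (I)--(IV) of \S\ref{ss:block}. I would argue case by case: in the generic principal series case (II), deform from the two constituents of $\sB$ separately and recognise $\widetilde{P}_{\sB}$ as essentially a product of two ``atomes'' indexed by ordered pairs of characters, matching Colmez's explicit description of reducible deformations via the pseudo-character ring. Case~(I) (supersingular) is handled by computing $\check{\mathbf{V}}(P_\pi)$ directly and showing it realises the universal framed deformation with fixed determinant, whence $R^{\rm ps,\delta}_{\sB}\simto\End_{G_{\Q_p}}(\check{\mathbf{V}}(P_\pi))=Z(E_{\sB})$. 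Case~(III) reduces to case~(II) by a twist and a careful study of non-split extensions among the single irreducible object. Case~(IV) is the most delicate: here $\sB$ has three constituents and $\widetilde{P}_{\sB}$ has a rich Ext-algebra, but one can still compute $\check{\mathbf{V}}$ explicitly (using the extended Colmez functor on the Steinberg) and match surjectivity/injectivity of $\varphi_{\sB}$ against tangent space dimensions on both sides.

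The main obstacle will be case~(IV), where $\check{\mathbf{V}}$ is no longer exact on the whole block (the character $\chi\circ\det$ has trivial image under $\check{\mathbf{V}}$), so one must replace $\check{\mathbf{V}}$ by the modified functor built from the ``ordinary part'' kernel/cokernel and track carefully what the universal pseudo-deformation ring knows that the modified functor sees. In particular, surjectivity of $\varphi_{\sB}$ in case~(IV) requires identifying a sufficient supply of central endomorphisms of $\widetilde{P}_{\sB}$, while injectivity follows from a computation of Ext-groups combined with flatness of $\widetilde{P}_{\sB}$ over $R^{\rm ps,\delta}_{\sB}$, which itself needs an inductive argument reducing mod $\varpi$ and then mod higher powers of the augmentation ideal.
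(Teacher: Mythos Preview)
The paper does not prove this theorem: it is stated as a result of Pa\v{s}k\=unas \cite{Pa13} and used as a black box, with no argument given beyond the citation. Your proposal is therefore not a comparison candidate but rather a sketch of the proof from the cited reference itself; as such it is broadly accurate in outline (projective generator $\widetilde{P}_{\sB}$, Morita identification of the centre with $Z(E_{\sB})$, the map $\varphi_{\sB}$ via Colmez's functor, and the case-by-case verification), and indeed this is essentially how \cite{Pa13} proceeds. One small correction: in case~(III) the block has a single self-extension and does not literally reduce to case~(II) by a twist; Pa\v{s}k\=unas treats it separately. But for the purposes of this paper none of this detail is needed, since the theorem is simply quoted.
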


For $\pi\in\sB$, let $P_{\pi}$ denote a projective envelope of $\pi^{\vee }$ in the category $\mathfrak{C}_{G,\zeta}(\cO)$. As a consequence of Theorem  \ref{thm:pas}, $R^{\rm ps,\delta}_{\sB}$ acts on $P_{\pi}$. 
Actually, when $\sB$ is not of type (IV), $P_{\pi}$ is a flat module over $R^{\rm ps,\delta}_{\sB}$ by \cite[Cor.~3.12]{Pa13}. 

\begin{lemma}\label{lem:P/m}
(i) If $\sB$ is of type (I), then $\pi$ is the unique object in $\sB$ and \[\big(P_{\pi}\otimes_{R^{\rm ps,\delta}_{\sB}}\F\big)^{\vee}\cong\pi.\]

(ii) If $\sB$ is of type (II), then 
there exists a non-split exact sequence 
\[0\ra \pi\ra \big(P_{\pi}\otimes_{R^{\rm ps,\delta}_{\sB}}\F\big)^{\vee}\ra \pi'\ra0\]
where $\pi'$ is the unique representation such that $\sB=\{\pi,\pi'\}$. 
 \end{lemma}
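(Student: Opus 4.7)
Set $R := R^{\rm ps,\delta}_{\sB}$ with maximal ideal $\fm$, and $M := (P_\pi/\fm P_\pi)^\vee$. The starting point in both parts is three structural observations. First, $\End_G(M) = \F$: by Theorem~\ref{thm:pas}, $\End_{\mathfrak{C}}(P_\pi) = R$, and the projectivity of $P_\pi$ together with a standard lifting argument forces $\End_{\mathfrak{C}}(P_\pi/\fm P_\pi) = R/\fm = \F$. Second, $\soc_G M = \pi$ with multiplicity one, because $P_\pi$ is a projective envelope of $\pi^\vee$, so $\soc_G(P_\pi^\vee) = \pi$ with multiplicity one, and $M$ embeds into $P_\pi^\vee$ as its $\fm$-torsion subrepresentation. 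Third, $\pi$ cannot occur as a quotient of $M$: otherwise the composition $M \twoheadrightarrow \pi \hookrightarrow M$ would be a nonzero nilpotent endomorphism, contradicting the first observation. Part (i) now follows immediately: in type (I) we have $\sB = \{\pi\}$, so all composition factors of $M$ are $\pi$, and since $\pi$ occurs only in the socle, $M \cong \pi$.

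For part (ii), the three observations already show that every constituent of $M$ other than the socle copy of $\pi$ is isomorphic to $\pi'$. To exhibit at least one copy of $\pi'$ in $M$, I would pick a nontrivial extension $0 \to \pi \to E \to \pi' \to 0$, which exists because $\Ext^1_G(\pi,\pi') \neq 0$ in type (II). Since $E$ is indecomposable and $\Hom_G(\pi', \pi) = \Hom_G(\pi, \pi') = 0$, one verifies that $\End_G(E) = \F$; consequently the Bernstein-centre action of $R$ on $E$ factors through $R/\fm = \F$, so $\fm$ annihilates $E$. Hence $E^\vee$ is a quotient of $P_\pi$ killed by $\fm$, i.e.\ a quotient of $P_\pi/\fm P_\pi$, and dualising yields $E \hookrightarrow M$, so $M \supsetneq \pi$. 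Non-splitness of the resulting sequence $0 \to \pi \to M \to \pi' \to 0$ will then follow from $\soc_G M = \pi$, provided $M$ has length exactly two.

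The remaining step, and the main obstacle, is to show that $M/\pi$ is itself irreducible. The structural constraints above do not a priori rule out longer extensions in which $\pi'$ occurs several times. The cleanest route is to compute $\dim_{\F}\Hom_G(M, \pi')$ directly: using that $\dim_{\F}\Ext^1_G(\pi,\pi') = 1$ in the generic type (II) situation, together with the description of the tangent space $\fm/\fm^2$ under Colmez's functor (Theorem~\ref{thm:pas}) and the projectivity of $P_{\pi'}$, one shows this dimension is one, forcing $M/\pi \cong \pi'$. Alternatively, one may invoke Pa\v{s}k\=unas' explicit description in \cite{Pa13}, according to which $P_\pi/\fm P_\pi$ is of length two in type (II), with cosocle $\pi^\vee$ and socle $\pi'^\vee$; dualising yields the desired extension.
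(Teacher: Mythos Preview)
The paper's own proof is a bare citation to \cite[Prop.~6.1, Prop.~8.3]{Pa13}, so you are attempting something more self-contained. Your outline for part (i) is essentially correct, but there are two real gaps.

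First, your claim that $\End_{\mathfrak{C}}(P_\pi) = R$ does \emph{not} follow from Theorem~\ref{thm:pas} as you assert. That theorem identifies $R$ with the Bernstein centre of the block, i.e.\ with the centre of $\End_{\mathfrak{C}}(P_\sB)$ for $P_\sB = \bigoplus_{\tau \in \sB} P_\tau$; it does not say that $\End_{\mathfrak{C}}(P_\pi)$ is commutative. That commutativity is a separate and substantial result established in \cite{Pa13} (indeed it is one of the main structural inputs there). Without it, your deduction that $\End_G(M) = \F$ fails, and your entire argument for (i) rests on this. So even your ``direct'' proof of (i) is already leaning on the same source the paper cites.

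Second, in part (ii) you correctly isolate the crux --- showing $M/\pi$ has length one --- but you do not prove it. Your first route, ``one shows this dimension is one,'' is an assertion, not an argument; carrying it out requires knowing the $R$-module structure of $\Hom_{\mathfrak{C}}(P_{\pi'},P_\pi)$ (e.g.\ that it is free of rank one), which is again a computation from \cite{Pa13}. Moreover, even granting $\dim_\F \Hom_G(M,\pi') = 1$, you only get that $\cosoc(M/\pi) = \pi'$; to conclude $M/\pi \cong \pi'$ you also need $\Ext^1_{G,\zeta}(\pi',\pi') = 0$, which you do not mention. Your second route is literally the citation the paper uses. (Minor point: the extension $0 \to \pi \to E \to \pi' \to 0$ lies in $\Ext^1_G(\pi',\pi)$, not $\Ext^1_G(\pi,\pi')$ as you wrote.)

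In short, the structural observations you list are correct and useful, but the argument is not self-contained: at the two decisive points it either mis-cites Theorem~\ref{thm:pas} for a deeper fact from \cite{Pa13}, or defers to \cite{Pa13} outright --- which is exactly what the paper does in one line.
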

\begin{proof}
The results follow from \cite[Prop.~6.1]{Pa13} and \cite[Prop.~8.3]{Pa13}, respectively.  Remark that $\dim_{\F}\Ext^1_{G,\zeta}(\pi',\pi)=1$  in (ii). 
\end{proof}

\begin{proposition}\label{prop:x}
Assume that $\sB$ is of type (I) or (II) and let $\pi\in\sB$. There exists $x\in R_{\sB}^{\rm ps, \delta}$ such that  $P_{\pi}/x$ is isomorphic to a projective envelope of $(\soc_K\pi)^{\vee}$ in $\Mod_{K,\zeta}^{\rm pro}(\cO)$.
\end{proposition}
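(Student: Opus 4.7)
The plan is to exploit Paskunas' structural description of $P_\pi$ as a module over the Bernstein centre $R_{\sB}^{\rm ps,\delta}$ in types (I) and (II). In these cases $P_\pi$ is flat (in fact essentially free of small rank) over $R_{\sB}^{\rm ps,\delta}$, and restriction to $K$ is compatible with this structure in a controlled way. The element $x$ will be a generator of the Bernstein centre that ``kills the $G$-deformation parameter transverse to $K$'', leaving only a $K$-projective envelope.

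First, let $Q$ denote a projective envelope of $(\soc_K\pi)^{\vee}$ in $\Mod_{K,\zeta}^{\rm pro}(\cO)$. Since $P_\pi$ is projective in $\mathfrak{C}_{G,\zeta}(\cO)$, it is in particular projective over $K$ in $\Mod_{K,\zeta}^{\rm pro}(\cO)$, and by Lemma \ref{lem:P/m} its $K$-cosocle contains (in fact equals, in type (I)) $(\soc_K\pi)^{\vee}$. Projectivity of $P_\pi$ over $K$ gives a surjection $f:P_\pi\twoheadrightarrow Q$ in $\Mod_{K,\zeta}^{\rm pro}(\cO)$. The strategy is then to identify $\ker(f)$ as $xP_\pi$ for a suitable $x\in R_{\sB}^{\rm ps,\delta}$.

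Next, I would invoke Paskunas' explicit computations of $R_{\sB}^{\rm ps,\delta}$ and of $P_\pi$ as a module over it (specifically \cite[\S6]{Pa13} in type (I) and \cite[\S8]{Pa13} in type (II)): in each case $R_{\sB}^{\rm ps,\delta}$ is a power series ring in several variables over $\cO$, and the $K$-endomorphism ring of $Q$ is a quotient of $R_{\sB}^{\rm ps,\delta}$ by a principal ideal. I would take $x$ to be a generator of this kernel. The induced map $R_{\sB}^{\rm ps,\delta}/x\to \End_K(Q)$ makes $f$ factor through an $R_{\sB}^{\rm ps,\delta}$-linear map $\bar f:P_\pi/xP_\pi\twoheadrightarrow Q$.

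Finally, to conclude that $\bar f$ is an isomorphism, I would compare the two sides via their behaviour modulo $\varpi$ and modulo the maximal ideal $\fm$ of $R_{\sB}^{\rm ps,\delta}$: both sides are projective in $\Mod_{K,\zeta}^{\rm pro}(\cO)$ (for $P_\pi/x$ this uses flatness of $P_\pi$ over $R_{\sB}^{\rm ps,\delta}$ together with the fact that $x$ is a non-zerodivisor), and by Lemma \ref{lem:P/m} their common quotient modulo $\fm$ has cosocle identified with $(\soc_K\pi)^{\vee}$; a rank comparison together with Nakayama's lemma then forces $\bar f$ to be an isomorphism. The main technical obstacle is correctly pinpointing $x$ in each block type: in type (II) one must additionally verify that $P_\pi/xP_\pi$ remains indecomposable despite the block $\sB=\{\pi,\pi'\}$ having two constituents, which amounts to checking that the extension appearing in Lemma \ref{lem:P/m}(ii) does \emph{not} survive after quotienting by $x$, i.e.\ that the extra constituent $\pi'$ sits in the ``$x$-direction'' of $P_\pi$.
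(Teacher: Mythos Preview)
The paper does not give a proof of this proposition at all: it simply cites \cite[Thm.~5.2]{Pa15}. Your sketch is therefore an attempt to reconstruct the content of that theorem rather than to compare with anything in the present paper.

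Your overall shape (flatness of $P_\pi$ over $R_{\sB}^{\rm ps,\delta}$, constructing a surjection to $Q$, then Nakayama) is in the spirit of \cite{Pa15}, but two steps are problematic. First, your proposed definition of $x$ as a generator of the kernel of a map $R_{\sB}^{\rm ps,\delta}\to\End_K(Q)$ is circular: there is no natural action of $R_{\sB}^{\rm ps,\delta}$ on $Q$ until \emph{after} one knows $Q\cong P_\pi/x$, so you cannot use such a map to locate $x$. In \cite{Pa15} the element $x$ is produced from the Galois side, not from $\End_K(Q)$.

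Second, and more seriously, your final paragraph contains a genuine misconception. You claim that in type (II) the extension of Lemma~\ref{lem:P/m}(ii) ``does not survive after quotienting by $x$'', i.e.\ that $\pi'$ sits in the $x$-direction and is killed. This is false: since $x$ lies in the maximal ideal of $R_{\sB}^{\rm ps,\delta}$, there is a surjection $P_\pi/x\twoheadrightarrow P_\pi\otimes_{R_{\sB}^{\rm ps,\delta}}\F$, and the Pontryagin dual of the latter \emph{is} the non-split extension $0\to\pi\to\kappa\to\pi'\to 0$. Thus $\pi'$ certainly appears as a subquotient of $(P_\pi/x)^\vee$. The correct reason that $P_\pi/x$ is indecomposable as a $K$-module is that $\soc_K\kappa=\soc_K\pi$ is already irreducible (the copy of $\soc_K\pi'$ inside $\pi'$ glues onto a Jordan--H\"older factor of $\pi|_K$ in the non-split extension), so the $K$-cosocle of $P_\pi/x$ is $(\soc_K\pi)^\vee$ on the nose. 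Indecomposability therefore has nothing to do with $\pi'$ disappearing.
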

\begin{proof}
It is proved in \cite[Thm.~5.2]{Pa15}.  
\end{proof}

 We introduce a genericity condition on $\brho$, following \cite[\S4.3]{HW-JL1}. Let $I_{\Q_p}$ be the inertia subgroup of $G_{\Q_p}$ and $\omega_2$ be Serre's   fundamental character of niveau $2$.

\begin{definition}\label{def:generic}
(i) If $\brho$ is absolutely irreducible and $\brho|_{I_{\Q_p}}\sim \smatr{\omega_2^{r+1}}00{\omega_2^{p(r+1)}}$ (up to twist) with $0\leq r\leq p-1$, we say $\brho$ is \emph{generic} if  $2\leq r\leq p-3$.   

If $\brho$ is reducible  and $\brho|_{I_{\Q_p}}\sim \smatr{\omega^{r+1}}{*}0{1}$ (up to twist) with $0\leq r\leq p-2$, we say $\brho$ is \emph{generic} if $1\leq r\leq p-3$ and if $\brho^{\rm ss}\nsim \omega\oplus \ide$ up to twist. 

(ii) Let $\sB$ be a block of type (I) or (II). We say $\sB$ is \emph{generic} if there exists a generic representation $\brho$, such that $\sB$ corresponds to $\brho^{\rm ss}$.  
\end{definition}

\subsection{Smooth duals}
Let $\pi\in \Mod_{G}^{\rm adm}(\F)$. The Pontryagain dual $\pi^{\vee}$ is a finitely generated module over the Iwasawa algebra $\F[\![K_1]\!]$. It is explained in \cite[\S3]{Ko-dual} that for each $i\geq 0$ the module \[\Ext^i_{\F[\![K_1]\!]}\big(\pi^{\vee},\F[\![K_1]\!]\big)\] carries naturally a compatible action of  $G$ and gives an admissible smooth representation of $G$ by taking dual again. By \cite[Cor.~3.15]{Ko-dual} and \cite[Cor.~5.2]{Ko-dual}, this construction coincides with  $\mathrm{S}^{4-i}_G(\pi)$ 
(as $\dim K_1=4$), where $\mathrm{S}^j_G(-)$ denotes the $j$-th smooth duality functor for $G=\GL_2(\Q_p)$ defined in \cite[Def.~3.12]{Ko-dual}.  

Following \cite{Ko-dual}, we say that $\pi$ is \emph{Cohen-Macaulay} if $\mathrm{S}^j_G(\pi)=0$ for all but one degree $j_0$, and call $j_0$ the \emph{$\delta$-dimension} of $\pi$.   It is easy to see that $\pi$ is Cohen-Macaulay if and only if $\pi^{\vee}$ is Cohen-Macaulay as an $\F[\![K_1]\!]$-module in the sense of Definition \ref{def:CM}.

\begin{theorem}\label{thm:dual-block}
Let $\sB$ be a block of $G$ and $\pi\in\sB$. Then $\mathrm{S}^j_G(\pi)\in \Mod_{G,\zeta^{-1}}^{\rm adm}(\F)^{\sB'}$ where \[\sB':=\sB\otimes \zeta^{-1}\circ\det.\]  
\end{theorem}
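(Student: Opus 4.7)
The plan is to (1) establish the central character part formally, (2) reduce to the case of an irreducible $\pi$ by a d\'evissage, and (3) conclude by case analysis on the four block types (I)--(IV).

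For (1), if $\pi\in\Mod_{G,\zeta}^{\rm adm}(\F)$, then Pontryagin duality endows $\pi^{\vee}$ with a $Z_G$-action through $\zeta^{-1}$. This action is transported functorially to each $\mathrm{Ext}^{4-j}_{\F[\![K_1]\!]}(\pi^{\vee},\F[\![K_1]\!])$, and because $Z_G$ is central it commutes with the full $G$-action constructed in \cite[\S3]{Ko-dual}; the second Pontryagin dualization preserves this central action, so $\mathrm{S}^j_G(\pi)$ has central character $\zeta^{-1}$. Hence $\mathrm{S}^j_G$ restricts to a cohomological $\delta$-functor $\Mod_{G,\zeta}^{\rm adm}(\F)\to \Mod_{G,\zeta^{-1}}^{\rm adm}(\F)$.

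For (2), any $\pi\in \Mod_{G,\zeta}^{\rm adm}(\F)^{\sB}$ is a filtered union of its finite length subrepresentations, all of which remain in $\sB$; since $\mathrm{S}^j_G$ on admissible representations is computed by $\Ext$-groups over the noetherian Iwasawa algebra $\F[\![K_1]\!]$, it commutes in the appropriate sense with such colimits (of finitely generated compact modules under Pontryagin duality), reducing the claim to $\pi$ of finite length. Applying the long exact sequence of the $\delta$-functor $\{\mathrm{S}^j_G\}$ to a composition series of $\pi$, and using that $\Mod_{G,\zeta^{-1}}^{\rm l,adm}(\F)^{\sB'}$ is stable under subquotients and extensions, we further reduce to the case $\pi$ irreducible.

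For (3), we verify the claim for each irreducible $\pi\in\sB$ using Kohlhaase's explicit computations \cite{Ko-dual}: on a character $\chi\circ\det$, the nonzero $\mathrm{S}^j_G(\chi\circ\det)$ is isomorphic to $\chi^{-1}\circ\det$ in a single degree; on an irreducible principal series $\Ind_B^G(\chi_1\otimes\chi_2\omega^{-1})$ one computes $\mathrm{S}^j_G$ from the Bruhat--Tits tree resolution and obtains another principal series whose parameters are read off; on the Steinberg $\Sp\otimes\chi\circ\det$ one uses the short exact sequence $0\to\ide\to\Ind_B^G\ide\to\Sp\to0$ together with the preceding two cases; and on a supersingular $\pi$, $\mathrm{S}^j_G(\pi)$ is again supersingular, hence forms a singleton block, matched by the central character constraint already established in (1). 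In each case, tracking the central character shows that the output lies in the block obtained from $\sB$ by tensoring with $\zeta^{-1}\circ\det$, i.e.\ in $\sB'$. The main obstacle I expect is the careful bookkeeping in type (IV), where the three distinct constituents $\chi\circ\det$, $\Sp\otimes\chi\circ\det$, and $(\Ind_B^G\alpha)\otimes\chi\circ\det$ must each be matched individually with the corresponding element of $\sB'$; one must track precisely which cohomological degrees contribute and how the connecting maps in the $\delta$-functor long exact sequence behave, so that every irreducible subquotient of $\mathrm{S}^j_G(\pi)$ ends up in a single target block.
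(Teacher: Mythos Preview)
Your approach in step (3) is essentially the paper's own proof: the paper simply cites Propositions 5.4, 5.7 and Theorem 5.13 of \cite{Ko-dual} (and \cite{Witthaus} for type (I)), which are exactly the case-by-case computations you outline. So on the core content you are aligned with the paper.

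However, your step (2) is based on a misreading of the statement. The hypothesis is $\pi\in\sB$, and a block $\sB$ is by definition an equivalence class of \emph{irreducible} objects; so $\pi$ is already irreducible and no d\'evissage is needed. You appear to be aiming at the stronger Corollary \ref{cor:sm-dual} (where $\pi\in\Mod_{G,\zeta}^{\rm adm}(\F)^{\sB}$ is arbitrary), but for that your colimit argument is not solid: $\mathrm{S}^j_G$ is defined via $\Ext^{4-j}_{\F[\![K_1]\!]}(\pi^{\vee},\F[\![K_1]\!])$ followed by Pontryagin duality, and there is no reason this should commute with filtered colimits in $\pi$ (on the dual side one has an inverse limit of finitely generated modules, and $\Ext$ into a fixed module does not in general convert this into a direct limit). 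The paper handles the general case quite differently, by taking a finite $G$-equivariant injective resolution $0\to\pi\to I^\bullet$ in $\Mod_{K,\zeta}^{\rm sm}(\F)$ (using the Breuil--Pa\v{s}k\=unas construction) and reducing to the injective case via Proposition \ref{prop:sm-dual}. If you want the general statement, that is the route to take; for the theorem as written, simply drop step (2).
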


\begin{proof}
This follows from Proposition 5.4, Proposition 5.7 and Theorem 5.13 of \cite{Ko-dual}. See also \cite[Thm.~1.1]{Witthaus} when $\sB$ is of type (I).
\end{proof}

We  generalize Theorem \ref{thm:dual-block} to any object  in $\Mod_{G,\zeta}^{\rm adm}(\F)^{\sB}$. For this we first treat those  objects which are injective when restricted to $K/Z_1$.

\begin{lemma}\label{lem:inj-dual}
Let $0\neq \Omega\in \Mod_{G,\zeta}^{\rm adm}(\F)$  such that $\Omega|_{K}$ is  injective as a representation of $K/Z_1$. 
Then $\Omega$ is Cohen-Macaulay of $\delta$-dimension $3$ and $\mathrm{S}^3_G(\Omega)|_K$ is again  injective  as a representation of $K/Z_1$. 
\end{lemma}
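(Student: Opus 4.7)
The plan is to pass from $\Omega$ to its Pontryagin dual $\Omega^\vee$ and to compute the $\mathrm{S}^\bullet_G$ via the recalled formula $\mathrm{S}^{4-i}_G(\Omega)^\vee \cong \Ext^i_{\F[\![K_1]\!]}(\Omega^\vee,\F[\![K_1]\!])$. First, since $\zeta$ takes values in $\cO^\times$ and $Z_1$ is pro-$p$, the reduction $\zeta\bmod\varpi$ is trivial on $Z_1$, so $\Omega^\vee$ is naturally a pseudocompact $\F[\![K/Z_1]\!]$-module, and the injectivity hypothesis translates to saying that $\Omega^\vee$ is projective over $\F[\![K/Z_1]\!]$. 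Because $\F[\![K/Z_1]\!]$ is free of finite rank $|\GL_2(\F_p)|$ as a left $\F[\![K_1/Z_1]\!]$-module, $\Omega^\vee$ is also projective over $\F[\![K_1/Z_1]\!]$; and since $K_1/Z_1$ is pro-$p$ (so $\F[\![K_1/Z_1]\!]$ is local pseudocompact), $\Omega^\vee$ is in fact a \emph{free} $\F[\![K_1/Z_1]\!]$-module.

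Second, I will evaluate the Ext groups by means of the length-one free resolution
\[0\to \F[\![K_1]\!]\xrightarrow{t}\F[\![K_1]\!]\to \F[\![K_1/Z_1]\!]\to 0,\]
where $t$ generates the augmentation ideal of the central subring $\F[\![Z_1]\!]\subset \F[\![K_1]\!]$. Since $K_1$ is a uniform pro-$p$ group (using $p\geq 5$), $\F[\![K_1]\!]$ is a domain and $t$ is a non-zero-divisor. The resulting change-of-rings identity reads $\Ext^i_{\F[\![K_1]\!]}(\Omega^\vee,\F[\![K_1]\!])\cong \Ext^{i-1}_{\F[\![K_1/Z_1]\!]}(\Omega^\vee,\F[\![K_1/Z_1]\!])$ for $i\geq 1$, while the $i=0$ term vanishes because $\Omega^\vee$ is $t$-torsion. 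Freeness of $\Omega^\vee$ over $\F[\![K_1/Z_1]\!]$ then forces all these Ext groups to vanish except at $i=1$, which simultaneously proves $\mathrm{S}^j_G(\Omega)=0$ for $j\neq 3$ and furnishes the explicit identification
\[\mathrm{S}^3_G(\Omega)^\vee\cong \Hom_{\F[\![K_1/Z_1]\!]}(\Omega^\vee,\F[\![K_1/Z_1]\!]).\]

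For the second assertion I will use that $\F[\![K_1/Z_1]\!]\subset \F[\![K/Z_1]\!]$ is a Frobenius extension: since $K/K_1\cong \GL_2(\F_p)$ is finite, the larger Iwasawa algebra is free of finite rank on each side over the smaller one, and the standard Frobenius isomorphism for finite-index group algebra extensions yields a bimodule identification $\Hom_{\F[\![K_1/Z_1]\!]}(\F[\![K/Z_1]\!],\F[\![K_1/Z_1]\!])\cong \F[\![K/Z_1]\!]$, possibly up to a Nakayama twist which is harmless for preservation of projectivity. It follows formally that the functor $D(-):=\Hom_{\F[\![K_1/Z_1]\!]}(-,\F[\![K_1/Z_1]\!])$ sends projective left $\F[\![K/Z_1]\!]$-modules to projective left $\F[\![K/Z_1]\!]$-modules. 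Applied to the projective module $\Omega^\vee$, this shows that $\mathrm{S}^3_G(\Omega)^\vee$ is projective over $\F[\![K/Z_1]\!]$, equivalently that $\mathrm{S}^3_G(\Omega)|_K$ is injective as a $K/Z_1$-representation.

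The main obstacle will be the compatibility of $K$-module structures in the last step: I must verify that the $K$-action on $\mathrm{S}^3_G(\Omega)^\vee$ coming from Kohlhaase's construction---in which $K$ acts on the coefficient module $\F[\![K_1]\!]$ by conjugation through the normal subgroup $K_1$---agrees, after the identification with $D(\Omega^\vee)$, with the natural $\F[\![K/Z_1]\!]$-module structure to which the Frobenius-extension argument applies. Once this matching is in place, the rest is formal.
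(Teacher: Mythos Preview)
Your argument for the Cohen--Macaulay claim is essentially the paper's: both dualize to $\Omega^\vee$, observe it is free over $\F[\![K_1/Z_1]\!]$, and use the length-one resolution $0\to\F[\![K_1]\!]\xrightarrow{\gamma-1}\F[\![K_1]\!]\to\F[\![K_1/Z_1]\!]\to 0$ to compute the Ext groups. The paper phrases this via direct summands of $C^0(K_1/Z_1,\F)^{\oplus n}$ and the identification $\Ext^1_{\F[\![K_1]\!]}(\F[\![K_1/Z_1]\!],\F[\![K_1]\!])\cong\F[\![K_1/Z_1]\!]$, but the content is the same.

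For the injectivity of $\mathrm{S}^3_G(\Omega)|_K$, however, the paper takes a shorter and cleaner route than your Frobenius-extension argument. Rather than tracking the full $\F[\![K/Z_1]\!]$-module structure on $\mathrm{S}^3_G(\Omega)^\vee$ through the Ext isomorphism, the paper only reads off that $\mathrm{S}^3_G(\Omega)$ is injective as a representation of $K_1/Z_1$ (immediate from the identification of its dual with a free $\F[\![K_1/Z_1]\!]$-module), and then invokes \cite[Lem.~8.8(ii)]{ST03} as a black box to upgrade this to injectivity over $K/Z_1$. This completely sidesteps the compatibility issue you flag as the ``main obstacle'': over $K_1$ the module structure on $\Ext^1$ is just the standard one on $\Ext^i_{R}(-,R)$, with no ambiguity. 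Your Frobenius-extension route is not wrong, but the compatibility verification you leave open is precisely where the work lies; the paper's approach trades that verification for a citation. If you want a fully self-contained proof, you could instead reprove the Schneider--Teitelbaum lemma directly, which is less delicate than matching Kohlhaase's $K$-action with the bimodule structure.
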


\begin{proof}
The restriction of $\Omega$ to $K_1/Z_1$ is again injective, so it is isomorphic to a direct summand of $ C^0(K_1/Z_1,\F)^{\oplus n}$ for some $n\geq 1$, where $C^0(K_1/Z_1,\F)$ is the space of continuous $\F$-valued functions on $K_1/Z_1$. We claim that $C^0(K_1/Z_1,\F)$ is Cohen-Macaulay of $\delta$-dimension $3$. Dually we need to prove that $\F[\![K_1/Z_1]\!]$ has grade $1$ as an $\F[\![K_1]\!]$-module (cf.~\S\ref{sec:app} for the notion of grade). Since $Z_1\cong 1+p\Z_p$ is pro-cyclic (as $p>2$) and noting that $\F[\![K_1]\!]$ is a domain,  we obtain a short exact sequence 
\begin{equation}\label{eq:gamma-K1}0\ra \F[\![K_1]\!]\overset{\gamma-1}{\lra} \F[\![K_1]\!]\ra \F[\![K_1/Z_1]\!]\ra0\end{equation}
by choosing a topological generator $\gamma\in 1+p\Z_p$. The claim easily follows from this. Moreover, we deduce an isomorphism \begin{equation}\label{eq:K/Z1}\Ext^1_{\F[\![K_1]\!]}\big(\F[\![K_1/Z_1]\!],\F[\![K_1]\!]\big)\cong \F[\![K_1/Z_1]\!].  \end{equation}

The claim implies that $\Omega$ is Cohen-Macaulay of $\delta$-dimension $3$, and  $\mathrm{S}^3_G(\Omega)$ is injective when restricted to $K_1/Z_1$ by using \eqref{eq:K/Z1}. By \cite[Lem.~8.8(ii)]{ST03}, $\mathrm{S}^3_G(\Omega)$ is also injective as a representation of $K/Z_1$.
\end{proof}

\begin{proposition}\label{prop:sm-dual}
Let $\sB$ be a block of $G$. Let $0\neq \Omega\in \Mod_{G,\zeta}^{\rm adm}(\F)^{\sB}$  such that $\Omega|_{K}$ is  injective as a representation of $K/Z_1$. 
 Then   $\mathrm{S}^3_G(\Omega)\in \Mod_{G,\zeta^{-1}}^{\rm adm}(\F)^{\sB'} $ where $\sB':=\sB\otimes\zeta^{-1}\circ\det$.
\end{proposition}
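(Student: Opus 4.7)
The plan is to verify three properties for $\mathrm{S}^3_G(\Omega)$: admissibility, that the central character is $\zeta^{-1}$, and that every irreducible subquotient lies in $\sB'$. The first is already provided by Lemma \ref{lem:inj-dual}: since $\Omega$ is Cohen-Macaulay of $\delta$-dimension $3$, $\mathrm{S}^j_G(\Omega)$ vanishes for $j\neq 3$ and $\mathrm{S}^3_G(\Omega)$ is admissible with $K/Z_1$-injective restriction. The central character claim is immediate from the explicit description $\mathrm{S}^3_G(\Omega)^{\vee}\cong \Ext^1_{\F[\![K_1]\!]}(\Omega^{\vee},\F[\![K_1]\!])$ together with the $G$-action of \cite[\S3]{Ko-dual}, under which $Z_G$ acts through its inverse on $\Omega$.

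The substantive point is the block assertion. My strategy is to reduce to Theorem \ref{thm:dual-block} (which handles the irreducible case) by exhausting $\Omega$ by finite-length pieces. Since $\Omega^{\vee}\in \mathfrak{C}_{G,\zeta}(\cO)^{\sB}$ is finitely generated over $\F[\![K_1]\!]$, Theorem \ref{thm:pas} gives an action of $R^{\rm ps,\delta}_{\sB}$ on $\Omega^{\vee}$, and the support $\Omega^{\vee}$ is closed in $\mathrm{Spec}(R^{\rm ps,\delta}_{\sB})$. Setting $\Omega_n:=\Omega[\fa^n]$ for $\fa$ the intersection of the maximal ideals of $R^{\rm ps,\delta}_{\sB}$ in the support, we have $\Omega=\bigcup_n \Omega_n$, and each $\Omega_n$ is an admissible $G$-representation of finite length in $\sB$ (its dual $\Omega^{\vee}/\fa^n\Omega^{\vee}$ is finitely generated over the Artinian quotient $R^{\rm ps,\delta}_{\sB}/\fa^n$).

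For each $\Omega_n$, one induces on the $G$-length: given a short exact sequence $0\to Y\to \Omega_n\to \pi\to 0$ with $\pi$ irreducible in $\sB$, the long exact sequence
\[\cdots\to \mathrm{S}^{j-1}_G(Y)\to \mathrm{S}^j_G(\pi)\to \mathrm{S}^j_G(\Omega_n)\to \mathrm{S}^j_G(Y)\to \mathrm{S}^{j+1}_G(\pi)\to\cdots\]
combined with Theorem \ref{thm:dual-block} (applied to $\pi$) and the induction hypothesis (applied to $Y$) forces every irreducible subquotient of $\mathrm{S}^j_G(\Omega_n)$ to lie in $\sB'$. Finally, since Pontryagin duality converts $\Omega=\bigcup \Omega_n$ into $\Omega^{\vee}=\varprojlim \Omega_n^{\vee}$, finite generation of $\Omega^{\vee}$ over $\F[\![K_1]\!]$ yields $\Ext^1_{\F[\![K_1]\!]}(\Omega^{\vee},\F[\![K_1]\!])\cong \varinjlim \Ext^1_{\F[\![K_1]\!]}(\Omega_n^{\vee},\F[\![K_1]\!])$, so $\mathrm{S}^3_G(\Omega)=\bigcup_n \mathrm{S}^3_G(\Omega_n)$ and the assertion follows.

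The main obstacle I expect is justifying the reduction to finite length, specifically that the filtration $\Omega_n$ exhausts $\Omega$ and that each $\Omega_n$ has finite $G$-length. This requires a careful use of the $R^{\rm ps,\delta}_{\sB}$-module structure on $\Omega^{\vee}$ (which, being finitely generated over $\F[\![K_1]\!]$, is in particular finitely generated over the larger ring obtained by adjoining the Bernstein centre action); and the passage of Ext through the resulting inverse limit requires verifying the relevant Mittag-Leffler/finiteness conditions.
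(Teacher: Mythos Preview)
Your reduction to finite-length pieces breaks at the final step. If $\Omega_n$ has finite $G$-length, then every irreducible subquotient of $\Omega_n$ has $\delta$-dimension at most $1$ (indeed the infinite-dimensional irreducibles of $G$ are Cohen--Macaulay of $\delta$-dimension $1$), so $\Omega_n^{\vee}$ has grade $\geq 3$ over $\F[\![K_1]\!]$ and hence
\[
\mathrm{S}^3_G(\Omega_n)\;=\;\Ext^1_{\F[\![K_1]\!]}(\Omega_n^{\vee},\F[\![K_1]\!])^{\vee}\;=\;0.
\]
Thus your direct limit $\varinjlim \mathrm{S}^3_G(\Omega_n)$ vanishes, while $\mathrm{S}^3_G(\Omega)$ is nonzero (its $K$-restriction is injective by Lemma~\ref{lem:inj-dual}). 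The asserted isomorphism $\Ext^1_{\F[\![K_1]\!]}(\Omega^{\vee},-)\cong \varinjlim_n\Ext^1_{\F[\![K_1]\!]}(\Omega_n^{\vee},-)$ simply fails: finite generation of $\Omega^{\vee}$ does not let you pass $\Ext^1$ through an inverse limit in the first variable. (There is also a secondary gap: you assert that $\Omega^{\vee}/\fa^n\Omega^{\vee}$ has finite $G$-length, but $\Omega^{\vee}$ is not known to be finitely generated over $R^{\rm ps,\delta}_{\sB}$, only over $\F[\![K_1]\!]$.)

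The paper's argument avoids this obstruction by never exhausting $\Omega$ by finite-length subobjects. Instead, after reducing to the case $\soc_G\Omega$ irreducible, it invokes \cite[Prop.~5.20]{HuJEMS} to produce a regular sequence $x_1,x_2\in\End_G(\Omega^{\vee})$ (coming from a formally smooth $2$-dimensional subring $A\cong\F[\![x_1,x_2]\!]$ over which $\Omega^{\vee}$ is flat with finite-length special fibre). Because $\Omega^{\vee}$ is Cohen--Macaulay of grade $1$, quotienting by each $x_i$ raises the grade by one, and the resulting short exact sequences give
\[
\EE^1(\Omega^{\vee})/x_1 \cong \EE^2(\Omega^{\vee}/x_1),\qquad \EE^2(\Omega^{\vee}/x_1)/x_2 \cong \EE^3\bigl(\Omega^{\vee}/(x_1,x_2)\bigr).
\]
Now $\Omega^{\vee}/(x_1,x_2)$ \emph{does} have finite length, so Theorem~\ref{thm:dual-block} applies to $\EE^3$, and one climbs back up using Nakayama-type arguments (the block decomposition is preserved under extensions). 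The point is that the regular sequence lets you transport information about $\EE^3$ of a finite-length module to $\EE^1$ of the big module via honest short exact sequences, rather than via a limit that does not commute with the functor.
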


We need some preparations to prove Proposition \ref{prop:sm-dual}. 
 
\begin{lemma}\label{lem:criterion-inj}
Let $\Omega\in \Mod^{\rm adm}_{G,\zeta}(\F)$. Then $\Omega$ is   injective as a representation of $K/Z_1$   if and only if $\soc_K\Omega\subsetneq \soc_K\Omega'$ for any extension\footnote{Here we only consider extensions with central character.} $\Omega'\in\Ext^1_G(\pi',\Omega)$  and any irreducible $\pi'\in \Mod_{G,\zeta}^{\rm sm}(\F)$.
\end{lemma}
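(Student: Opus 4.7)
The forward direction is a direct consequence of the definition of injectivity. Given $\Omega|_K$ injective as a $K/Z_1$-representation and a $G$-extension $0\to\Omega\to\Omega'\to\pi'\to 0$ with $\pi'$ nonzero irreducible, the sequence splits upon restriction to $K$, so $\soc_K\Omega'=\soc_K\Omega\oplus\soc_K\pi'$; admissibility of $\pi'\neq 0$ forces $\soc_K\pi'\neq 0$, giving the strict inclusion $\soc_K\Omega\subsetneq\soc_K\Omega'$.

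For the converse I would argue by contrapositive: assuming $\Omega$ is not $K/Z_1$-injective, the aim is to produce a $G$-extension by some irreducible $\pi'$ whose $K$-socle coincides with $\soc_K\Omega$. Failure of $K$-injectivity yields an irreducible $K/Z_1$-representation $\sigma$ and a non-zero class in $\Ext^1_{K/Z_1}(\sigma,\Omega)$. By derived Frobenius reciprocity,
\[
\Ext^1_{G,\zeta}\bigl(\cInd_{KZ_G}^G\wt\sigma,\Omega\bigr)\cong\Ext^1_{K/Z_1}(\sigma,\Omega),
\]
where $\wt\sigma$ extends $\sigma$ to $KZ_G$ via $\zeta$, so this class lifts to a $G$-extension $0\to\Omega\to E\to\cInd_{KZ_G}^G\wt\sigma\to 0$ whose pullback along the canonical $K$-embedding $\sigma\hookrightarrow\cInd_{KZ_G}^G\wt\sigma$ recovers the non-split $K$-extension $0\to\Omega\to\wt\Omega\to\sigma\to 0$, and in particular $\soc_K\wt\Omega=\soc_K\Omega$.

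The remaining step is to descend from the non-admissible $\cInd_{KZ_G}^G\wt\sigma$ to an irreducible $\pi'$ while preserving the $K$-socle equality. My plan is to exploit the action of the spherical Hecke algebra $\mathcal{H}=\End_G(\cInd_{KZ_G}^G\wt\sigma)\cong\F[T]$ on the finite-dimensional $\F$-vector space $V:=\Ext^1_{K/Z_1}(\sigma,\Omega)$ (finite dimensionality coming from admissibility of $\Omega$ and finite dimensionality of $\sigma$). After replacing the class by a non-zero $T$-eigenvector with eigenvalue $\lambda$ (enlarging $\F$ if necessary), the long exact sequence attached to multiplication by $T-\lambda$ on $\cInd_{KZ_G}^G\wt\sigma$ shows that this eigenvector lifts to $\Ext^1_{G,\zeta}(\pi_\lambda,\Omega)$, where $\pi_\lambda:=(\cInd_{KZ_G}^G\wt\sigma)/(T-\lambda)$. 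Using the Barthel--Livn\'e description of $\pi_\lambda$ (irreducible supersingular when $\lambda=0$, a possibly reducible principal series otherwise), one isolates an irreducible subquotient $\pi'$ of $\pi_\lambda$ in which the canonical copy of $\sigma$ occurs in the $K$-socle, yielding the desired $G$-extension $0\to\Omega\to\Omega'\to\pi'\to 0$ with $\soc_K\Omega'=\soc_K\Omega$.

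The main obstacle will be ensuring that the canonical $\sigma$ genuinely survives in the irreducible subquotient $\pi'$ and that the associated $K$-sub-extension remains non-split after this descent; the delicate case is when $\pi_\lambda$ is a reducible principal series and $\sigma$ could a priori be absorbed into a proper subrepresentation, which is ruled out by the Barthel--Livn\'e classification of $K$-socles of such principal series.
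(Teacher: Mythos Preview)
Your forward direction is fine, and the strategy for the converse---lift a non-split $K$-extension by a weight $\sigma$ to a $G$-extension via $\cInd_{KZ_G}^G\wt\sigma$, then cut down by a Hecke eigenvalue---is natural. But the final descent has a genuine gap.

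The condition you must produce is $\soc_K\Omega'=\soc_K\Omega$, which is strictly stronger than ``the $K$-extension pulled back along $\sigma\hookrightarrow\pi'$ is non-split.'' For \emph{every} weight $\tau$ in $\soc_K\pi'$, the copy of $\tau$ contributes to $\soc_K\Omega'$ exactly when the corresponding $K$-extension $0\to\Omega\to\,?\to\tau\to 0$ splits. Your construction controls only the single weight $\sigma$ you started from; it says nothing about the others. For $\GL_2(\Q_p)$ this bites in two places. First, if $\lambda=0$ then $\pi'=\pi_\lambda$ is supersingular, and for $\sigma=\Sym^r\F^2$ with $0<r<p-1$ one has $\soc_K\pi'\cong\sigma\oplus\sigma'$ with $\sigma'\not\cong\sigma$; nothing prevents $\sigma'$ from lifting to $\soc_K\Omega'$. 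Second, if $\lambda\neq 0$ and $\pi_\lambda$ is reducible, the canonical $\sigma$ generates $\pi_\lambda$ and hence lies in no proper $G$-subrepresentation; so when you pass to an irreducible subquotient $\pi'$, its $K$-socle is typically a \emph{different} weight (e.g.\ $\Sym^{p-1}\F^2$ for the Steinberg), and non-splitness there is not guaranteed by your setup. Your closing sentence asserts that Barthel--Livn\'e rules out the problem, but what their classification gives you is the \emph{identity} of $\soc_K\pi'$, not the vanishing of its image in $\soc_K\Omega'/\soc_K\Omega$.

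The paper bypasses all of this with a single black box: the Breuil--Pa\v{s}k\=unas construction \cite[\S9]{BP} produces an admissible $\widetilde\Omega\in\Mod_{G,\zeta}^{\rm adm}(\F)$ that is injective over $K/Z_1$, together with a $G$-embedding $\Omega\hookrightarrow\widetilde\Omega$ satisfying $\soc_K\widetilde\Omega=\soc_K\Omega$. If $\Omega$ is not $K$-injective then $\Omega\subsetneq\widetilde\Omega$; any irreducible $\pi'\subset\widetilde\Omega/\Omega$ (which exists by admissibility) pulls back to $\Omega'\subset\widetilde\Omega$ with $\soc_K\Omega\subset\soc_K\Omega'\subset\soc_K\widetilde\Omega=\soc_K\Omega$. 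This handles all weights of $\soc_K\pi'$ at once, with no case analysis.
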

\begin{proof}
The condition is clearly necessary. For the sufficiency, we note that by a construction of Breuil and Pa\v{s}k\=unas \cite[\S9]{BP}, there exists  $\widetilde{\Omega}\in\Mod_{G,\zeta}^{\rm adm}(\F)$ which is injective as a representation of $K/Z_1$,  together with a $G$-equivariant embedding $\Omega\hookrightarrow \widetilde{\Omega}$ such that  $\soc_K\Omega=\soc_K\widetilde{\Omega}$. If $\Omega$ is not injective, i.e. $\Omega\subsetneq\widetilde{\Omega}$, then by choosing an irreducible subrepresentation $\pi'$ of $\widetilde{\Omega}/\Omega$ (always possible as $\widetilde{\Omega}/\Omega$ is admissible), we obtain an extension class $\Omega'\in\Ext^1_G(\pi',\Omega)$ with $\soc_K\Omega=\soc_K\Omega'$,  contradicting with the assumption.
\end{proof}

Let $\Omega\in \Mod_{G,\zeta}^{\rm adm}(\F)$. Assume $\Omega\neq 0$ and let $\pi$ be an irreducible subrepresentation of $\Omega$. By Zorn's lemma, there exists a maximal subrepresentation $\Omega_{\pi}\subset\Omega$  such that $\soc_G\Omega_{\pi}=\pi$. By construction, we have
\begin{equation}\label{eq:soc-Omega}\soc_G\Omega=\soc_G\Omega_{\pi}\oplus \soc_G(\Omega/\Omega_{\pi}).\end{equation}

\begin{lemma}\label{lem:sub=inj}
With the above notation, if $\Omega$ is   injective as a representation of $K/Z_1$, then so are $\Omega_{\pi}$ and $\Omega/\Omega_{\pi}$. 
\end{lemma}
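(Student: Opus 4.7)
The strategy is to apply the criterion of Lemma \ref{lem:criterion-inj}. I would suppose for contradiction that $\Omega_\pi$ is not $K/Z_1$-injective: then there exists an irreducible $\pi' \in \Mod_{G,\zeta}^{\rm sm}(\F)$ and a non-split $G$-extension $0 \to \Omega_\pi \to \Omega' \to \pi' \to 0$ with $\soc_K \Omega' = \soc_K \Omega_\pi$. The plan is to use $\Omega'$ to produce a $G$-subrepresentation of $\Omega$ strictly containing $\Omega_\pi$ whose $G$-socle still equals $\pi$, contradicting the maximality of $\Omega_\pi$. The construction goes via the pushout $\Omega'' := \Omega \oplus_{\Omega_\pi} \Omega'$, which sits in the diagram
\[\begin{CD}
0 @>>> \Omega_\pi @>>> \Omega' @>>> \pi' @>>> 0 \\
@. @VVV @VVV @| @. \\
0 @>>> \Omega @>>> \Omega'' @>>> \pi' @>>> 0.
\end{CD}\]
Inside $\Omega''$ one has $\Omega \cap \Omega' = \Omega_\pi$ and $\Omega + \Omega' = \Omega''$, giving a $G$-equivariant direct sum decomposition $\Omega''/\Omega_\pi \cong (\Omega/\Omega_\pi) \oplus \pi'$.

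I would then distinguish two cases according to whether the bottom row splits as a $G$-representation. In the split case $[\Omega']$ lies in the kernel of $\iota_*\colon \Ext^1_G(\pi',\Omega_\pi)\to\Ext^1_G(\pi',\Omega)$; the long exact sequence of $\Hom_G(\pi',-)$ applied to $0\to\Omega_\pi\to\Omega\to\Omega/\Omega_\pi\to 0$ produces a nonzero $G$-map $\phi\colon \pi'\to\Omega/\Omega_\pi$ with $\partial(\phi)=[\Omega']$, and pulling $\Omega$ back along $\phi$ yields a $G$-subrepresentation $\tilde\pi'\subset\Omega$ abstractly isomorphic to $\Omega'$. Computing $\Hom_G(\pi,\tilde\pi')$ from the long exact sequence of $\Hom_G(\pi,-)$ applied to $\Omega'$, Schur's lemma combined with $[\Omega']\neq 0$ forces $\dim_\F\Hom_G(\pi,\tilde\pi')=1$, so $\soc_G\tilde\pi'=\pi$, contradicting the maximality of $\Omega_\pi$. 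In the non-split case, Lemma \ref{lem:criterion-inj} applied to $\Omega$ still gives $\soc_K\Omega\subsetneq\soc_K\Omega''$; for any $v$ in the complement, the hypothesis $\soc_K\Omega'=\soc_K\Omega_\pi\subset\Omega$ forces $v\notin\Omega\cup\Omega'$, and the cocycle $c(k)=k\omega'-\omega'\in Z^1(K/Z_1,\Omega_\pi)$ associated to a lift $\omega'\in\Omega'$ of the $\pi'$-component of $v$ is nonzero in $H^1(K/Z_1,\Omega_\pi)$ but vanishes in $H^1(K/Z_1,\Omega)$ by the $K/Z_1$-injectivity of $\Omega$. Feeding this class through the connecting map $(\Omega/\Omega_\pi)^{K/Z_1}\to H^1(K/Z_1,\Omega_\pi)$ yields a $K$-fixed element of $\Omega/\Omega_\pi$ whose $G$-preimage in $\Omega$ realizes the desired strict enlargement of $\Omega_\pi$ with $G$-socle still equal to $\pi$.

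Once $\Omega_\pi$ is known to be $K/Z_1$-injective, it is a direct summand of $\Omega$ as a $K/Z_1$-representation and the complementary summand is $K/Z_1$-isomorphic to $\Omega/\Omega_\pi$, so the second assertion of the lemma follows from the first. The main obstacle I anticipate is the non-split subcase above: extracting a genuinely $G$-equivariant subrepresentation of $\Omega$ from the $K$-cohomological vanishing requires simultaneously exploiting the direct sum decomposition of $\Omega''/\Omega_\pi$ and the maximality of $\Omega_\pi$, and the delicate step is to verify that the constructed enlargement has $G$-socle exactly $\pi$ rather than accidentally picking up other irreducible components of $\soc_G \Omega$.
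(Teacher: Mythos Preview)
Your split case is correct and is a nice use of the maximality of $\Omega_\pi$. The non-split case, however, has a genuine gap. First, the cocycle $c(k)=k\omega'-\omega'$ is only valued in $\Omega_\pi$ if the $\pi'$-component $\bar v_2$ of $v$ is \emph{fixed} by $K$; all you know is that $\bar v_2$ lies in $\soc_K\pi'$, which is weaker. Second, the connecting map runs \emph{from} $(\Omega/\Omega_\pi)^{K/Z_1}$ \emph{to} $H^1(K/Z_1,\Omega_\pi)$, so a class in $H^1$ cannot be ``fed through'' to produce a $K$-fixed vector. Third, and most fundamentally, even a $K$-fixed element of $\Omega/\Omega_\pi$ only pulls back to a $K$-stable subspace of $\Omega$, never a $G$-subrepresentation; so it cannot contradict the maximality of $\Omega_\pi$ among $G$-subrepresentations with socle $\pi$. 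In fact, in the non-split case the class of $\Omega''$ in $\Ext^1_G(\pi',\Omega)$ is nonzero, so there is \emph{no} $G$-equivariant copy of $\pi'$ inside $\Omega$, and the entire strategy of enlarging $\Omega_\pi$ inside $\Omega$ is doomed.

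The paper avoids the dichotomy altogether: rather than contradicting the maximality of $\Omega_\pi$, it contradicts Lemma~\ref{lem:criterion-inj} for $\Omega$ directly via a dimension count on $K$-socles. Since $\Omega''$ is an extension of $\pi'$ by the injective $\Omega$, one must have $\soc_K\Omega\subsetneq\soc_K\Omega''$. On the other hand, the snake lemma gives $\Omega''/\Omega'\cong\Omega/\Omega_\pi$, so
\[
\dim_\F\soc_K\Omega''\ \leq\ \dim_\F\soc_K\Omega'+\dim_\F\soc_K(\Omega''/\Omega')\ =\ \dim_\F\soc_K\Omega_\pi+\dim_\F\soc_K(\Omega/\Omega_\pi),
\]
and the right-hand side equals $\dim_\F\soc_K\Omega$ by the splitting \eqref{eq:soc-Omega}. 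This handles both of your cases at once, with no need to locate a $G$-subrepresentation of $\Omega$.
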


\begin{proof}
It suffices to prove the injectivity of  $\Omega_{\pi}$. To apply the criterion in Lemma \ref{lem:criterion-inj}, take an irreducible  $\pi'\in\Mod_{G,\zeta}^{\rm sm}(\F)$.   By \eqref{eq:soc-Omega} we have an injection
\[0\ra \Ext^1_G(\pi',\Omega_{\pi})\ra \Ext^1_G(\pi',\Omega).\]
We may assume $\Ext^1_{G}(\pi',\Omega_{\pi})\neq0$;  
choose such a non-zero extension class, say $\Omega_{\pi}'$. It induces 
a commutative diagram \[\xymatrix{0\ar[r]&\Omega_{\pi}\ar[r]\ar@{^(->}[d]&\Omega_{\pi}'\ar[r]\ar@{^(->}[d]&\pi'\ar[r]\ar^{=}[d]&0\\
0\ar[r]&\Omega\ar[r]&\Omega'\ar[r]&\pi'\ar[r]&0}\]
with $\Omega'$ being the pushout of $\Omega_{\pi}'$ and $\Omega$.   We have $\Omega/\Omega_{\pi}\cong \Omega'/\Omega_{\pi}'$ by the snake lemma.
 
Assume for a contradiction that $\soc_K\Omega_{\pi}=\soc_K\Omega_{\pi}'$. Then 
\[\begin{array}{rll}\dim_{\F}\soc_K\Omega'&\leq &\dim_{\F}\soc_K\Omega_{\pi}'+\dim_{\F}\soc_K(\Omega'/\Omega_{\pi}')\\
&=&\dim_{\F}\soc_K\Omega_{\pi}+\dim_{\F}\soc_K(\Omega/\Omega_{\pi})\\
&=&\dim_{\F}\soc_K\Omega
\end{array}\]
where the last equality follows from \eqref{eq:soc-Omega}. In particular, we get 
$\soc_K\Omega=\soc_{K}\Omega'$, 
which contradicts with Lemma \ref{lem:criterion-inj} as $\Omega$ is assumed to be injective. 
\end{proof}
 
\begin{proof}[Proof of Proposition \ref{prop:sm-dual}]
The proof is motivated by that of \cite[Thm.~1.1]{Witthaus}. First, we may assume that $\pi:=\soc_G\Omega$ is irreducible by Lemma \ref{lem:sub=inj} and Lemma \ref{lem:inj-dual}.  Then $\Omega^{\vee}$ is a quotient of $P_{\pi}$, where $P_{\pi} $ is as in \S\ref{ss:block}.   
 By \cite[Prop.~5.20]{HuJEMS}, there exists a commutative local subring  $A\subset \End_{G}(\Omega^{\vee})$, which is formally smooth of dimension $2$ over $\F$,  such that $\Omega^{\vee}$ is flat over $A$ and $\Omega^{\vee}\otimes_A\F$ has finite length. Write $A=\F[\![x_1,x_2]\!]$. Then $x_1,x_2$ form a regular sequence for $\Omega^{\vee}$. Lemma \ref{lem:inj-dual} implies that $
\Omega^{\vee}$ is a Cohen-Macaulay $\F[\![K_1]\!]$-module of grade $1$. Using the short exact sequence
\[0\ra \Omega^{\vee}\overset{x_1}{\ra} \Omega^{\vee}\ra \Omega^{\vee}/x_1\Omega^{\vee}\ra0,\]
one  shows that 
$\Omega^{\vee}/x_1\Omega^{\vee}$ is Cohen-Macaulay of grade $2$, see the proof of \cite[Lem.~A.15]{Gee-Newton}.  As a consequence,  
we obtain a short exact sequence
\begin{equation}\label{eq:x1}0\ra\EE^1(\Omega^{\vee})\overset{x_1}{\ra} \EE^1(\Omega^{\vee})\ra \EE^2(\Omega^{\vee}/x_1\Omega^{\vee})\ra0,\end{equation}
where   we have  written $\EE^i(-):=\Ext^i_{\F[\![K]\!]}(-,\F[\![K]\!])$.

We need to prove  $\EE^1(\Omega^{\vee})\in\mathfrak{C}_{G,\zeta^{-1}}(\F)^{\sB'}$. Using   \eqref{eq:x1},  it suffices to prove  $ \EE^2(\Omega^{\vee}/x_1\Omega^{\vee})\in\mathfrak{C}_{G,\zeta^{-1}}(\F)^{\sB'}$.  
Similarly, we are reduced to proving the statement for  $\EE^3\big(\Omega^{\vee}/(x_1,x_2)\big)$. Now $\Omega^{\vee}/(x_1,x_2)$ has finite length, so the result follows from Theorem \ref{thm:dual-block}.
\end{proof}

\begin{corollary}\label{cor:sm-dual}
Let $\sB$ be a block of $G$ and $\pi\in \Mod_{G,\zeta}^{\rm adm}(\F)^{\sB}$. Then  $\mathrm{S}^i_G(\pi)\in \Mod_{G,\zeta^{-1}}^{\rm adm}(\F)^{\sB'} $ for any $i\geq 0$,  where $\sB'=\sB\otimes\zeta^{-1}\circ\det$.
\end{corollary}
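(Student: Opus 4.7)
The plan is to reduce to Proposition \ref{prop:sm-dual} by embedding $\pi$ into a resolution that stays inside the block $\sB$. First I would produce an embedding $\pi \hookrightarrow \Omega^{0}$ with $\Omega^{0}$ admissible, in block $\sB$, and injective when restricted to $K/Z_1$. The Breuil-Pa\v{s}k\=unas construction of \cite[\S9]{BP}, recalled in the proof of Lemma \ref{lem:criterion-inj}, produces $\pi\hookrightarrow\widetilde{\pi}$ with $\widetilde{\pi}$ admissible and $\widetilde{\pi}|_K$ injective as a $K/Z_1$-representation. Since every admissible representation is locally admissible, the block decomposition of \S\ref{ss:block} applies to give $\widetilde{\pi}=\bigoplus_{\mathscr{C}}\widetilde{\pi}_{\mathscr{C}}$, indexed by the blocks of $G$. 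Setting $\Omega^{0}:=\widetilde{\pi}_{\sB}$, the embedding $\pi\hookrightarrow\widetilde{\pi}$ factors through $\Omega^{0}$ because $\pi$ lies entirely in $\sB$; and as a direct summand of $\widetilde{\pi}$, the object $\Omega^{0}$ inherits both admissibility and $K/Z_1$-injectivity.

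Setting $\pi_{1}:=\Omega^{0}/\pi\in\Mod_{G,\zeta}^{\rm adm}(\F)^{\sB}$ and iterating, I obtain a long exact sequence
\[0\ra \pi\ra \Omega^{0}\ra \Omega^{1}\ra \Omega^{2}\ra \cdots\]
with every $\Omega^{j}$ admissible, $K/Z_1$-injective, and in block $\sB$. By Lemma \ref{lem:inj-dual} each $\Omega^{j}$ satisfies $\mathrm{S}^{i}_{G}(\Omega^{j})=0$ for $i\neq 3$, and by Proposition \ref{prop:sm-dual} each $\mathrm{S}^{3}_{G}(\Omega^{j})$ already lies in $\sB'$. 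I would then apply the $\delta$-functor $\mathrm{S}^{\bullet}_{G}$ to the short exact sequences $0\ra \pi_{j}\ra \Omega^{j}\ra \pi_{j+1}\ra 0$ (with $\pi_{0}:=\pi$): the vanishing of $\mathrm{S}^{i}_{G}(\Omega^{j})$ outside degree $3$ produces isomorphisms $\mathrm{S}^{i}_{G}(\pi_{j})\cong \mathrm{S}^{i\pm 1}_{G}(\pi_{j+1})$ in most degrees and a four-term exact sequence in the critical range, so that each $\mathrm{S}^{i}_{G}(\pi)$ is exhibited as a subquotient of $\mathrm{S}^{3}_{G}(\Omega^{j})$ for some $j$ after finitely many iterations. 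Since $\Mod_{G,\zeta^{-1}}^{\rm adm}(\F)^{\sB'}$ is closed under subquotients, this forces $\mathrm{S}^{i}_{G}(\pi)\in\sB'$ for every $i\geq 0$.

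The main obstacle, and the only place where something non-trivial needs to be checked, is that the auxiliary $\widetilde{\pi}$ is compatible with the block decomposition, i.e.~that $\widetilde{\pi}_{\sB}$ remains both admissible and $K/Z_1$-injective. Both statements reduce to the preservation-under-direct-summand properties inside $\Mod_{G,\zeta}^{\rm l,adm}(\F)$: direct summands of admissible (resp.~$K/Z_1$-injective) objects are again admissible (resp.~$K/Z_1$-injective), which are immediate from the definitions. The remaining bookkeeping with the long exact sequences, including the eventual termination of the iteration (which uses that $\F[\![K_1/Z_1]\!]$ has cohomological dimension $3$, so that $\mathrm{S}^{i}_{G}$ vanishes outside a bounded range of degrees), is routine.
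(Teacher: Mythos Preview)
Your proposal is correct and follows essentially the same strategy as the paper: resolve $\pi$ by admissible $K/Z_1$-injective objects lying in the block $\sB$, then invoke Lemma~\ref{lem:inj-dual} and Proposition~\ref{prop:sm-dual}. The only cosmetic differences are that the paper ensures the resolution stays in $\sB$ by taking a \emph{minimal} $K$-injective resolution (rather than projecting onto the block component as you do), and packages the iterated long exact sequences as an $E_1$-spectral sequence.
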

\begin{proof}
Recall that $\F[\![K/Z_1]\!]$ has finite global dimension by \cite[Thm.~2.36]{Ven} (as $K/Z_1$ has no $p$-torsion). Thus, we may find a minimal injective resolution  of finite length of $\pi|_K$  in the category $\Mod_{K,\zeta}^{\rm sm}(\F)$:
\[0\ra \pi\ra I^{\bullet}.\]   
Using a construction of Breuil and Pa\v{s}k\=unas \cite[\S9]{BP}, we may equip a compatible $G$-action on $I^{\bullet}$ so that the above resolution is $G$-equivariant. Moreover, the minimality of the resolution implies that $I^j\in \Mod_{G,\zeta}^{\rm adm}(\F)^{\sB}$ for each $j$. Applying the smooth duality functors $\mathrm{S}^i_G$ to the resolution, we obtain an $E_1$-spectral sequence computing $\mathrm{S}^i_G(\pi)$ in terms of $\mathrm{S}^i_G(I^j)$. Together with Lemma \ref{lem:inj-dual} and Proposition \ref{prop:sm-dual}, this implies the result.  
\end{proof}

\subsection{Scholze's functor}\label{sec:Scholze}

Recall that (in this special setting), to any  admissible smooth $\cO/\varpi^n$-representation $\pi$ of $G,$ Scholze \cite{Scholze} associates a Weil-equivariant sheaf $\cF_{\pi}$ on the \'etale site of the adic space $\bP^{1}_{\C_p}.$  For $i\geq 0,$ denote
\[ \cS^i(\pi):=H^i_{\mathrm{\acute{e}t}}(\bP^{1}_{\C_p}, \cF_{\pi}).\] It is proved in \cite[Thm.~1.1]{Scholze} that $\cS^i(\pi)$ carries a continuous action of $ G_{\Q_p} \times D^{\times}$ and is admissible smooth as a representation of $D^{\times}$.

\medskip

The following theorem collects some properties of $\cS^i$.

\begin{theorem}\label{thm:Scholze}
Let $\pi$  be an admissible smooth representation of $G$ over $\F.$

\begin{itemize}
\item[(i)] For $i> 2,$ $\cS^i(\pi) = 0.$   If $\pi|_K$ is an injective representation of $K$, then $\cS^2(\pi) = 0.$ 

\item[(ii)] The inclusion $\pi^{\SL_2(\Q_p)}\into \pi$ induces an isomorphism $\cS^0(\pi^{\SL_2(\Q_p)}) \simeq \cS^0(\pi).$ In particular, if $\pi^{\SL_2(\Q_p)} = 0$ then $\cS^0(\pi) = 0.$

\item[(iii)] If $\pi$ carries a central character, then the centre of $D^{\times}$ acts on $\cS^i(\pi)$ by the same character.  

\item[(iv)] If $\pi =\Ind_B^G\chi$, where $\chi:B\to \F^\times$ is a smooth character, then $\cS^2(\pi) = 0.$ 

\item[(v)] If  $\pi$ is supersingular and is generic, in the sense that the block $\{\pi\}$ is generic (cf.~Definition \ref{def:generic}), then $\cS^2(\pi) = 0.$ 
\end{itemize}
\end{theorem}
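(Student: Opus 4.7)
The proof splits naturally along the five items, with very different levels of difficulty. I will outline the strategy item by item, saving the real work for (v).

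For (i), the first assertion is a general fact: $\bP^1_{\C_p}$ is a one-dimensional adic space, so $p$-power torsion étale cohomology vanishes in degrees $>2$ (Huber). For the second assertion, the natural plan is to reduce to Scholze's analysis of the situation where $\pi|_K$ is injective. Concretely, by Lemma~\ref{lem:inj-dual} such a $\pi$ is a summand of a direct sum of copies of $C^0(K_1/Z_1,\F)$, and Scholze proves in \cite{Scholze} (Cor.~7.3 and surrounding results) that $\cS^i$ on these explicit test objects vanishes in top degree via a computation on the Lubin–Tate tower at infinite level, where $\cF_\pi$ becomes a local system and the top-degree cohomology is killed by Poincaré duality.

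For (ii), I would compute $\cS^0(\pi)$ directly as sections of $\cF_\pi$ on the (connected) space $\bP^1_{\C_p}$. The description of the sheaf in \cite{Scholze} identifies such sections with those vectors of $\pi$ whose orbit maps factor through $G/\SL_2(\Q_p)$; since $G = \SL_2(\Q_p) \cdot Z_G \cdot \mathrm{diag}$ acts with dense orbit on $\bP^1(\C_p)$ via $\SL_2(\Q_p)$, only the $\SL_2(\Q_p)$-fixed vectors survive, giving the claimed inclusion-induced isomorphism. Item (iii) is immediate from functoriality of $\pi \mapsto \cF_\pi$: the centre $Z_G$ acts on $\cF_\pi$ by $\zeta$, and the identification $Z_D \cap \cO_D^\times \cdot \varpi_D^\Z \cong Z_G$ via the reduced norm transports this to the $D^\times$-action on $\cS^i(\pi)$.

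For (iv), the principal series $\pi = \Ind_B^G \chi$ is handled by exploiting the Bruhat decomposition $G/B = \bP^1(\Q_p)$: Scholze's sheaf $\cF_\pi$ is then lisse on $\bP^1_{\C_p}$ with explicit monodromy given by $\chi$, and Poincaré duality for the compact smooth adic space $\bP^1_{\C_p}$ yields $\cS^2(\pi) \cong \cS^0(\pi\otimes\chi^{\pm 1}\text{-twist})^\vee$, which vanishes by (ii) once one checks that the relevant twist has no $\SL_2(\Q_p)$-invariants (which follows from the non-triviality of $\chi_1\chi_2^{-1}$, or can be bypassed by using $\pi^{\SL_2(\Q_p)} = 0$ for a non-trivial principal series).

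Item (v) is the main obstacle, and it is where the generic hypothesis and the machinery of Pa\v{s}k\=unas enter. My plan is as follows. Let $\pi$ be generic supersingular and set $\sB = \{\pi\}$; let $P_\pi$ denote its projective envelope in $\mathfrak{C}_{G,\zeta}(\cO)^\sB$, which is flat over $R^{\mathrm{ps},\delta}_\sB$ by Theorem~\ref{thm:pas} and \cite{Pa13}. By Proposition~\ref{prop:x}, there is an element $x \in R^{\mathrm{ps},\delta}_\sB$ such that $P_\pi/x$ is Pontryagin dual to a $K$-injective representation, to which (i) applies to give $\cS^2 = 0$. I would then choose a presentation of $\pi^\vee$ as a quotient of iterated flat base changes of $P_\pi$ along a regular sequence in $R^{\mathrm{ps},\delta}_\sB$ containing $x$, use the long exact sequences of $\{\cS^i\}$ and the vanishing $\cS^3 = 0$ from (i), and propagate the vanishing of $\cS^2$ down the tower. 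The genericity hypothesis is precisely what guarantees that all the subquotients appearing in the devissage satisfy the hypothesis of Proposition~\ref{prop:x} (i.e.~their socles are controlled by weights in the generic Serre set), and hence stay within the $K$-injective regime. The hard part will be showing that the connecting maps in the long exact sequences land in the correct kernels; this is where I would invoke the computation of \cite{CDN22} on the Drinfeld tower to identify $\cS^2$ on the relevant test objects with a known vanishing cohomology, closing the argument.
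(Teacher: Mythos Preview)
The paper's proof of this theorem is entirely by citation: (i)--(ii) are results of Scholze \cite{Scholze}, (iii) is from \cite{DPS22}, (iv) is Ludwig's theorem \cite{Ludwig}, and (v) is the main theorem of the authors' own previous paper \cite{HW-JL1}. Your proposal instead attempts to reconstruct these proofs, and while your sketches for (i)--(iii) are broadly in the right spirit, there are genuine gaps in (iv) and (v).

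For (iv), your claim that $\cF_\pi$ is lisse on $\bP^1_{\C_p}$ for a principal series $\pi$ is incorrect. The sheaf $\cF_\pi$ is an ind-object built from the infinite-level Lubin--Tate tower and is not constructible; in particular there is no Poincar\'e duality of the kind you invoke. Ludwig's actual argument is geometric and quite different: she constructs a perfectoid quotient of the Lubin--Tate tower by the unipotent radical of $B$ and shows it is affinoid, which forces the top cohomology to vanish. There is no shortcut via ``explicit monodromy''.

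For (v), your d\'evissage has a structural problem. Vanishing of $\cS^2$ propagates along the long exact sequence to \emph{quotients}, not to subobjects: from $0 \to \pi \to \Omega_{\brho} \to \Omega_{\brho}/\pi \to 0$ with $\cS^2(\Omega_{\brho}) = 0$ you obtain only that $\cS^2(\pi)$ is a quotient of $\cS^1(\Omega_{\brho}/\pi)$ via the connecting map, and there is no a priori reason this map is zero. Iterating along a regular sequence in $R_{\sB}^{\mathrm{ps},\delta}$ does not help, since at each step you pass to a sub (on the representation side) rather than a quotient. Showing that these connecting maps vanish is precisely the content of \cite[Thm.~1.2]{HW-JL1}, and the argument there is substantially more involved --- it uses global input (patched modules $N_\infty$) together with a careful analysis of the $\cO_D^\times$-module structure of $\cS^1$, not a purely local d\'evissage. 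Your invocation of \cite{CDN22} at the end does not close this gap: that paper computes $\cS^1$ in characteristic zero, not the mod~$p$ $\cS^2$ you need to control.
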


\begin{proof}
(i) and (ii) are special cases of \cite[Thm.~3.2,~Prop.~4.7]{Scholze}. (iii) is proved in \cite[Lem.~7.3]{DPS22}. (iv) is proved in  \cite[Thm.~4.6]{Ludwig}.  (v) is \cite[Thm.~1.2]{HW-JL1}.
\end{proof}

\begin{proposition}\label{prop:S1-injective}
Let $\sB$ be a block which is not of type (IV). Let $\Omega\in \Mod_{G,\zeta}^{\rm adm}(\F)^{\sB}$ and assume that $\Omega$ is injective as a representation of $K/Z_1$. Then  $\cS^1(\Omega)$ is   injective as a  reprersentation of $\cO_D^{\times}/Z_D^1$.
\end{proposition}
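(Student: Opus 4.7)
\emph{Proof plan.} The aim is to show that $\cS^1(\Omega)^{\vee}$ is a projective (equivalently free) module over $\F[\![\cO_D^{\times}/Z_D^1]\!]$, which by Pontryagin duality is the assertion of the proposition. Since $H$ has order prime to $p$, it suffices to establish freeness of $\cS^1(\Omega)^{\vee}$ over the local ring $\Lambda=\F[\![U_D^1/Z_D^1]\!]$, for which we will use Nakayama and produce a presentation of $\Omega$ on the $G$-side whose image under $\cS^1$ can be analyzed.

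\emph{Step 1 (reduction to irreducible socle).} Using Lemma \ref{lem:sub=inj} and the decomposition \eqref{eq:soc-Omega}, we obtain a short exact sequence $0\to \Omega_\pi\to \Omega\to \Omega/\Omega_\pi\to 0$ in which both $\Omega_\pi$ and $\Omega/\Omega_\pi$ lie in $\Mod_{G,\zeta}^{\rm adm}(\F)^{\sB}$ and are injective as $K/Z_1$-representations. By Theorem \ref{thm:Scholze}(i) we have $\cS^2(\Omega_\pi)=0$, so Scholze's $\delta$-functor produces the short exact sequence $0\to \cS^1(\Omega_\pi)\to \cS^1(\Omega)\to \cS^1(\Omega/\Omega_\pi)\to 0$. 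Iterating and passing to an appropriate inverse/direct limit, we reduce to the case where $\pi:=\soc_G\Omega$ is absolutely irreducible (and in particular $\Omega^{\vee}$ is a quotient of a projective envelope $P_\pi$ in $\mathfrak{C}_{G,\zeta}(\cO)^{\sB}$).

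\emph{Step 2 (presentation on the $G$-side).} Since $\sB$ is not of type (IV), $P_\pi$ is flat over $R_{\sB}^{\rm ps,\delta}$ by \cite[Cor.~3.12]{Pa13}. Combined with Proposition \ref{prop:x}, this allows us to produce a two-term presentation
\[ P_\pi^{\oplus J}\lra P_\pi^{\oplus I}\lra \Omega^{\vee}\lra 0 \]
whose terms correspond, under Pontryagin duality, to $K/Z_1$-injective admissible $G$-representations in $\sB$ (this is the point where $K/Z_1$-injectivity of $\Omega$ is used: it implies that the kernel of the surjection $P_\pi^{\oplus I}\to \Omega^{\vee}$ is again of the form prescribed by Proposition \ref{prop:x} modulo a regular sequence $x\in R_\sB^{\rm ps,\delta}$).

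\emph{Step 3 (apply Scholze).} Dualizing and applying $\cS^1$, the vanishing $\cS^2=0$ on $K/Z_1$-injective objects (Theorem \ref{thm:Scholze}(i)) turns the above presentation into an exact sequence
\[ 0\lra \cS^1(\Omega)\lra \cS^1(P_\pi^{\vee})^{\oplus I}\lra \cS^1(P_\pi^{\vee})^{\oplus J} \]
upon taking a compatible system of pro-objects. Dualizing again, $\cS^1(\Omega)^{\vee}$ appears as a cokernel of a map between copies of the $\cO_D^{\times}$-module $\cS^1(P_\pi^{\vee})^{\vee}$, which by the genericity built into non-type-(IV) blocks is a free module over a completed group algebra (this is where results of \cite{HW-JL1} and \cite{CDN22} come in: they identify $\cS^1$ applied to a projective envelope with a free module on the $D^{\times}$-side).

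\emph{Step 4 (projectivity).} To finish, we verify that $\cS^1(\Omega)^{\vee}$ is free over $\Lambda$. By Nakayama it suffices to show $\Tor_1^{\Lambda}(\cS^1(\Omega)^{\vee},\F)=0$, i.e.\ $H^1(U_D^1/Z_D^1,\cS^1(\Omega))=0$, which follows from the above presentation together with the freeness of $\cS^1(P_\pi^{\vee})^{\vee}$. The hardest step in the whole argument is Step 3: namely, ensuring that the projectivity of $P_\pi$ over $R_\sB^{\rm ps,\delta}$ transfers through Scholze's functor to a freeness statement for $\cS^1(P_\pi^{\vee})^{\vee}$ over $\F[\![\cO_D^{\times}/Z_D^1]\!]$. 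This is where we need to invoke the multiplicity-one structure of $\JL(\brho)$ (Theorem \ref{thm:intro-one}) together with the compatibility between Scholze's and Colmez's functors as established in \cite{CDN22}.
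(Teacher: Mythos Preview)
Your proposal has a fundamental circularity problem. In Step~4 you invoke the multiplicity-one structure of $\JL(\brho)$ (Theorem~\ref{thm:intro-one}, i.e.\ Theorem~\ref{thm:multione}), but in this paper that theorem is proved \emph{using} Proposition~\ref{prop:S1-injective}: the proof of Proposition~\ref{prop:d0<2}, which is the heart of the multiplicity-one argument, explicitly appeals to Proposition~\ref{prop:S1-injective} in order to know that $\cS^1(\Omega_{\brho})$ is injective over $\cO_D^{\times}/Z_D^1$ (see the step ``$d_1=r_1$''). So as written your argument is circular.

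Even setting this aside, several steps do not go through. Proposition~\ref{prop:x} is stated only for blocks of type (I) or (II), so your Step~2 does not cover type (III). The objects $P_\pi^{\vee}$ are only locally admissible, not admissible, so one must work with $\check{\cS}^1(P_\pi)$ rather than $\cS^1(P_\pi^{\vee})$; and the freeness of $\check{\cS}^1(P_\pi)$ over $\F[\![\cO_D^{\times}/Z_D^1]\!]$ that you assert in Step~3 is not proved in \cite{HW-JL1} or \cite{CDN22}. Indeed the structure results that \emph{are} available (e.g.\ Theorem~\ref{thm:gr-pi}) show that $\JL(\brho)^{\vee}$ is Cohen--Macaulay of grade $2$ over $\Lambda$, hence certainly not free, so there is no reason to expect your freeness input to hold.

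The paper's proof is entirely different and does not pass through any presentation of $\Omega$. It applies the duality theorem of Hansen--Mann \cite[Prop.~4.5]{H-M}: since $\Omega$ is Cohen--Macaulay of $\delta$-dimension $3$ (Lemma~\ref{lem:inj-dual}) and $\cS^0(\Omega)=\cS^2(\Omega)=0$ (Theorem~\ref{thm:Scholze}(i),(ii), using that $\sB$ is not of type (IV)), one obtains
\[
\mathrm{S}_{D^{\times}}^i(\cS^1(\Omega))\cong \cS^{i-2}(\mathrm{S}^3_G(\Omega)).
\]
One then checks $\mathrm{S}_{D^{\times}}^i(\cS^1(\Omega))=0$ for $i\neq 3$: the case $i=4$ follows from the central character, and for $i\leq 2$ it reduces to $\cS^0(\mathrm{S}^3_G(\Omega))=0$, which holds because $\mathrm{S}^3_G(\Omega)$ stays in a non-type-(IV) block by Proposition~\ref{prop:sm-dual}. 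Thus $\cS^1(\Omega)$ is Cohen--Macaulay of $\delta$-dimension $3$ as a $D^{\times}$-representation, and the Auslander--Buchsbaum formula over $\F[\![U_D^1/Z_D^1]\!]$ yields the injectivity.
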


\begin{proof}
This is a consequence of \cite[Prop.~4.5]{H-M}.  Lemma \ref{lem:inj-dual}  implies that $\Omega$ is Cohen-Macaulay of $\delta$-dimension $3$, and $\cS^0(\Omega)=\cS^2(\Omega)=0$ by  (i), (ii) of Theorem \ref{thm:Scholze} (using that $\Omega^{\SL_2(\Q_p)}=0$ as $\sB$ is not of type (IV)), so the conditions of \cite[Prop.~4.5]{H-M} are satisfied with $c=1$. We deduce an isomorphism 
\begin{equation}\label{eq:H-M}\mathrm{S}_{D^{\times}}^i(\cS^1(\Omega))\cong \cS^{i-2}(\mathrm{S}^3_G(\Omega))\end{equation}
for any $i\geq 0$, where $\mathrm{S}^i_{D^{\times}}(-)$ denotes the $i$-th smooth duality functor for the group $D^{\times}$ (see \cite[Def.~3.12]{Ko-dual}). 

We prove that $\mathrm{S}_{D^{\times}}^i(\cS^1(\Omega))=0$ except when $i=3$. On the one hand, since  $\cS^1(\Omega)$ carries a central character by  Theorem \ref{thm:Scholze}(iii),  it has $\delta$-dimension $\leq 3$ and so  
\[\mathrm{S}_{D^{\times}}^4(\cS^1(\Omega))=0.\]
On the other hand, 
by Proposition \ref{prop:sm-dual} and the assumption on $\sB$, we have   $\cS^0(\mathrm{S}^3_G(\Omega))=0$, thus via \eqref{eq:H-M}
\[\mathrm{S}^i_{D^{\times}}(\cS^1(\Omega))=0\]
for any $i\leq 2$. 
 
The above discussion implies that $\cS^1(\Omega)$ is Cohen-Macaulay of $\delta$-dimension $3$ (as a representation of $D^{\times}$). Recall that $\cS^1(\Omega)$ carries a central character. Using an analog of \eqref{eq:gamma-K1} with respect to $\F[\![U_D^1]\!]$ and $\F[\![U_D^1/Z_D^1]\!]$, one shows that $\cS^1(\Omega)$ is again Cohen-Macaulay of $\delta$-dimension $3$ when viewed as a representation of $U_D^1/Z_D^1$. Since $\F[\![U_D^1/Z_D^1]\!]$ is an Auslander regular ring of global dimension $3$,  the Auslander-Buchsbaum formula (cf. \cite[Thm.~6.2]{Ven}) implies that $\cS^1(\Omega)$ is injective as a representation of $U_D^1/Z_D^1$, hence also injective as a representation of $\cO_D^{\times}/Z_D^1$ by \cite[Lem.~8.8(ii)]{ST03}. 
\end{proof}

\medskip

Next we recall a result of \cite{CDN22} which relates $\cS^1(\pi)$ with   the cohomology of the infinite-level perfectoid space associated to the Drinfeld tower. We introduce some notation. 
\begin{itemize}
\item[$\bullet$] $\{\sM_{n,\breve{\Q}_p}, n\geq 0\}$ denotes the Drinfeld tower  defined over $\breve{\Q}_p$, which is equipped with commuting actions of $W_{\Q_p}$ (the Weil group of $\Q_p$), $G$ and $D^{\times}$; $\sM_{n,\breve{\Q}_p}$ is finite \'etale over $\sM_{0,\breve{\Q}_p}$ with Galois group isomorphic to $\cO_D^{\times }/(1 + \varpi^n_D \cO_D)$.  
\item[$\bullet$] $\sM_{n}$ denotes the base change of $\sM_{n,\breve{\Q}_p}$ to $\C_p$. 
\item[$\bullet$] $\sM_{\infty} \sim \varprojlim_n\sM_n$ is the infinite level perfectoid space associated to the Drinfeld tower, which is isomorphic to the infinite level Lubin-Tate tower by Faltings' isomorphism (\cite[Thm.~E]{Scholze-Weinstein}). 
\item[$\bullet$]  $\sM_{n}^{p}$ (resp.~$\sM_{\infty}^p$) is the quotient of $\sM_{n}$ (resp.~$\sM_{\infty}$) by $\smatr{p}00p\in G$. There is a rigid analytic space $\sM_{n,\Q_p}^{p}$ over $\Q_p$ such that $\sM_{n}^{p} \cong \sM_{n,\Q_p}^{p}\times_{\Q_p} \C_p.$
\end{itemize}
\begin{theorem}\label{thm:CDN-S1}
Let $\pi$ be an admissible smooth representation of $G$ on an $\cO/\varpi^n$-module. Assume that  $\pi$ does not admit any subquotient   lying in  blocks of type (IV). Then  there is an isomorphism 
\[\cS^1(\pi)\cong \Hom_{G}^{\rm cont}\big(\pi^{\vee},H^1_{\et}(\sM_{\infty},\cO/\varpi^n)\big).\]
If $p^{\Z}$ acts trivially on $\pi$, then 
\[\cS^1(\pi)\cong \Hom_G^{\rm cont}\big(\pi^{\vee},H^1_{\et}(\sM_{\infty}^{p},\cO/\varpi^n)\big).\]
\end{theorem}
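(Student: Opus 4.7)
The proof would follow the approach of \cite{CDN22}. The starting point is Scholze's construction: $\cS^i(\pi) = H^i_{\et}(\bP^1_{\C_p}, \cF_\pi)$, where the sheaf $\cF_\pi$ is the descent, along the Gross--Hopkins period map $\pi_{GH} : \sM_\infty^{p} \to \bP^1_{\C_p}$, of the constant sheaf with value $\pi$. The map $\pi_{GH}$ is a pro-\'etale $G/p^{\Z}$-torsor over its image, and (in a suitable category of pro-\'etale sheaves) one has $\pi_{GH}^{*}\cF_\pi \cong \underline{\pi}$. I would first reduce to the second isomorphism: the first follows from the natural covering $\sM_\infty \to \sM_\infty^{p}$ (a $p^{\Z}$-torsor) together with a Hochschild--Serre argument, using that the continuous $\Hom$ on the $G$-side absorbs the $p^{\Z}$-action already built into $\pi$.

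The main step is the Cartan--Leray / Leray spectral sequence for the torsor $\pi_{GH}$, which after unwinding the descent description of $\cF_\pi$ yields a convergent spectral sequence
\[ E_2^{p,q} = \Ext^p_{G/p^{\Z}}\bigl(\pi^\vee,\, H^q_{\et}(\sM_\infty^{p}, \cO/\varpi^n)\bigr) \;\Longrightarrow\; \cS^{p+q}(\pi), \]
where the $\Ext$-groups are taken in the category of pseudocompact $\cO/\varpi^n\llbracket G/p^{\Z}\rrbracket$-modules (so $\Hom$ is automatically the continuous one). Setting this spectral sequence up carefully, with the necessary admissibility/finiteness on $\pi^\vee$ and pro-\'etale compatibility on the geometric side, is the technical heart of \cite{CDN22}.

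Given the spectral sequence, the second isomorphism in the theorem is the statement that the edge map $\cS^1(\pi) \to E_\infty^{0,1}$ is an isomorphism onto $E_2^{0,1}$. From the five-term exact sequence
\[ 0 \to E_2^{1,0} \to \cS^1(\pi) \to E_2^{0,1} \xrightarrow{d_2} E_2^{2,0} \to \cS^2(\pi), \]
it suffices to show $E_2^{p,0} = \Ext^p_{G/p^{\Z}}(\pi^\vee, \cO/\varpi^n) = 0$ for $p \in \{1, 2\}$, since $H^0_{\et}(\sM_\infty^{p}, \cO/\varpi^n) = \cO/\varpi^n$ with trivial $G$-action (connectedness of the geometric fibers combined with the $p^{\Z}$-quotient). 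This is where the hypothesis on blocks enters: the trivial (and its twists by characters of $G/\SL_2(\Q_p)$) live precisely in type (IV) blocks, so by the block decomposition of \S\ref{ss:block}, the assumption that $\pi$ has no type (IV) subquotients forces the vanishing of $\Ext^p_{G,\zeta}(\pi^\vee, \chi)$ for all characters $\chi$ of $G/\SL_2(\Q_p)$, and in particular for the trivial one. Alternatively, one can invoke Theorem~\ref{thm:Scholze}(ii) devissage-style: every subquotient satisfies $\pi'^{\,\SL_2(\Q_p)} = 0$.

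The main obstacle, rather than the computation above, is constructing and justifying the spectral sequence itself. This requires a careful setup of Scholze's sheaf $\cF_\pi$ on the pro-\'etale site of $\bP^1_{\C_p}$, a projection formula of the type $R\Gamma(\bP^1,\cF_\pi) \cong R\Hom^{\rm cont}_{G/p^{\Z}}(\pi^\vee, R\Gamma_{\et}(\sM_\infty^{p}, \cO/\varpi^n))$, and a verification that all pro-systems involved are Mittag--Leffler so that derived limits and the spectral sequence converge. These are exactly the inputs supplied by the framework of \cite{CDN22}, which we would cite for these technical statements rather than reprove.
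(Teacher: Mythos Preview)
Your approach via the Cartan--Leray spectral sequence for the Gross--Hopkins torsor is exactly what \cite{CDN22} does (their Thm.~3.2 and Cor.~3.13), and the paper's proof is simply a citation to that argument. However, there is a logical issue with your order of reduction. You propose to prove the second isomorphism (which assumes $p^{\Z}$ acts trivially on $\pi$) and then deduce the first from it via the covering $\sM_\infty \to \sM_\infty^{p}$. But the first isomorphism carries no such hypothesis on $\pi$, so it cannot be derived from the second in general; in particular, if $p^{\Z}$ does not act trivially then $\pi$ is not a $G/p^{\Z}$-representation and your spectral sequence with $\Ext_{G/p^{\Z}}$ on the $E_2$-page does not even make sense. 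The paper (following \cite{CDN22}) proceeds in the opposite direction: the spectral sequence is set up for the full $G$-torsor $\sM_\infty \to \bP^1_{\C_p}$, which yields the first isomorphism for arbitrary $\pi$, and the second is then deduced from the first via the $p^{\Z}$-covering $\sM_\infty \to \sM_\infty^{p}$ as in Step~3 of \cite[Lem.~4.12]{CDN22}.

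Two smaller points. First, $\sM_\infty$ is not geometrically connected---its components are indexed by $\Z$ with $G$ acting through $\mathrm{val}\circ\det$---so $H^0$ is not simply $\cO/\varpi^n$ with trivial action; but your fallback observation that the action factors through characters of $G/\SL_2(\Q_p)$ (hence lies in type (IV) blocks) is the correct point and is what actually gives the vanishing of $E_2^{1,0}$ and $E_2^{2,0}$. Second, the paper notes that Fust's result \cite{Fust} on continuous group cohomology is needed to handle the $\Ext$-groups for general (not necessarily irreducible) smooth $\pi$, which is the adaptation required to remove the irreducibility hypothesis present in \cite[Cor.~3.13]{CDN22}.
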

\begin{proof}
The first isomorphism  follows from the same proof as \cite[Cor.~3.13]{CDN22} (via \cite[Thm.~3.2]{CDN22}). Remark that the  irreducibility assumption on $\pi$  is not necessary in \emph{loc.~cit.} and Fust's result  \cite{Fust} applies to any smooth representation of $G$.  To deduce the second isomorphism, we argue as  Step $3$ of the proof of \cite[Lem.~4.12]{CDN22}. 
\end{proof}
\medskip
 
We will also need to extend Scholze's functor to $\mathrm{Ban}^{\rm adm}_{G}(E)$, the category of admissible unitary Banach space representations of $G.$ Following \cite[\S3.3]{Paskunas-JL}, we define a covariant homological $\delta$-functor $\{\check{\cS}^i\}_{i\geq 0}$ on $\mathfrak{C}_G(\cO)$ by 
\[\check{\cS}^i:\mathfrak{C}_{G}(\cO)\ra \mathfrak{C}_{G_{\Q_p}\times D^{\times}}(\cO),\ \ M\mapsto H^i_{\et}(\mathbb{P}^1_{\C_p},\mathcal{F}_{M^{\vee}})^{\vee}.\]
Given $\Pi\in\mathrm{Ban}^{\rm adm}_{G}(E)$, we choose an open bounded $G$-invariant lattice $\Theta$ in $\Pi$. Its Schikhof dual $\Theta^d:=\Hom_{\cO}^{\rm cont}(\Theta,\cO)$ lies in $\mathfrak{C}_G(\cO)$. We define
\[\check{\cS}^i(\Pi):=\check{\cS}^i(\Theta^d)^d\otimes_{\cO}E,\]
which   does not depend on the choice of $\Theta$. By \cite[Lem.~3.4]{Paskunas-JL}, $\check{\cS}^i(\Pi)$ is an admissible unitary  Banach space representation of $D^{\times}$ with an open bounded lattice given by $\check{\cS}^i(\Theta^d)^d$. 

\begin{proposition}\label{prop:S1-Theta}
Assume that $\cS^0(\Theta/\varpi\Theta)=\cS^2(\Theta/\varpi\Theta)=0$.  Then there is an isomorphism
\begin{equation}\label{eq:S1check}
\check{\cS}^1(\Theta^d)^d\cong \varprojlim_{n\geq 1}\cS^1(\Theta/\varpi^n\Theta)
\end{equation}
and the sequence 
\begin{equation}\label{eq:S1exact-seq}
0\ra \check{\cS}^1(\Theta^d)^d\overset{\varpi}{\lra} \check{\cS}^1(\Theta^d)^d\ra \cS^1(\Theta/\varpi\Theta)\ra0
\end{equation}
is exact. 
\end{proposition}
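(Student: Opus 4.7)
The plan is to apply the homological $\delta$-functor $\check{\cS}^\bullet$ on $\mathfrak{C}_G(\cO)$ to the short exact sequence
\[0 \to \Theta^d \xrightarrow{\varpi^n} \Theta^d \to \Theta^d/\varpi^n\Theta^d \to 0,\]
where the quotient is identified via Pontryagin duality with $(\Theta/\varpi^n\Theta)^\vee$, and then to take Schikhof duals. The first preparatory step is to upgrade the hypotheses to $\cS^0(\Theta/\varpi^n\Theta) = \cS^2(\Theta/\varpi^n\Theta) = 0$ for every $n \geq 1$. This follows by a straightforward induction on $n$ using the short exact sequences $0 \to \Theta/\varpi\Theta \xrightarrow{\varpi^n} \Theta/\varpi^{n+1}\Theta \to \Theta/\varpi^n\Theta \to 0$, the long exact sequence for $\{\cS^i\}$, and the vanishing $\cS^i = 0$ for $i > 2$ from Theorem \ref{thm:Scholze}(i).

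Next, apply $\check{\cS}^\bullet$ to the displayed sequence. Using $\check{\cS}^i(\Theta^d/\varpi^n\Theta^d) = \cS^i(\Theta/\varpi^n\Theta)^\vee$ together with the vanishing above, the long exact sequence collapses to
\[0 \to \check{\cS}^1(\Theta^d) \xrightarrow{\varpi^n} \check{\cS}^1(\Theta^d) \to \cS^1(\Theta/\varpi^n\Theta)^\vee \to \check{\cS}^0(\Theta^d) \xrightarrow{\varpi^n} \check{\cS}^0(\Theta^d) \to 0.\]
A separate verification shows $\check{\cS}^0(\Theta^d) = 0$: its Pontryagin dual equals $\cS^0((\Theta^d)^\vee) = \cS^0(\varinjlim_n \Theta/\varpi^n\Theta)$, which equals $\varinjlim_n \cS^0(\Theta/\varpi^n\Theta) = 0$ because $\cS^0$ commutes with filtered colimits of $\cO$-torsion smooth representations (as $\bP^1_{\C_p}$ is quasi-compact and quasi-separated). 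Set $N := \check{\cS}^1(\Theta^d)$; in particular $N$ is $\varpi$-torsion free.

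Now apply the Schikhof dual $\Hom^{\rm cont}_\cO(-,\cO)$ to the resulting short exact sequence $0 \to N \xrightarrow{\varpi^n} N \to \cS^1(\Theta/\varpi^n\Theta)^\vee \to 0$, equivalently apply $\Hom^{\rm cont}_\cO(N, -)$ to $0 \to \cO \xrightarrow{\varpi^n} \cO \to \cO/\varpi^n\cO \to 0$. Torsion-freeness of $N$ yields the vanishing of the relevant continuous $\Ext^1$, and the $\F$-linear dual of $\cS^1(\Theta/\varpi^n\Theta)^\vee$ recovers $\cS^1(\Theta/\varpi^n\Theta)$ by Pontryagin double-duality for smooth admissible representations. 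We obtain the exact sequence
\[0 \to N^d \xrightarrow{\varpi^n} N^d \to \cS^1(\Theta/\varpi^n\Theta) \to 0,\]
compatibly in $n$. Taking $n=1$ gives \eqref{eq:S1exact-seq}; passing to the inverse limit, and using that $N^d$ is $\varpi$-adically separated and complete as the Schikhof dual of a compact $\cO$-module, yields \eqref{eq:S1check}.

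The main technical obstacles lie in the bookkeeping of the various dualities: the identification $\Theta^d/\varpi^n\Theta^d \cong (\Theta/\varpi^n\Theta)^\vee$, the vanishing of the continuous $\Ext^1_\cO(N,\cO)$ needed for exactness of the Schikhof-dual sequence, and the commutation of $\cS^0$ with the filtered colimit $\varinjlim_n \Theta/\varpi^n\Theta$ in the setting of Scholze's sheaves. Each of these is essentially formal, but needs to be checked carefully in order for the long-exact-sequence machinery to apply cleanly.
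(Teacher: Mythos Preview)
Your argument is correct, but its organization differs from the paper's. The paper works entirely at finite levels: for the identification \eqref{eq:S1check} it simply invokes \cite[Lem.~3.3]{Paskunas-JL} together with the vanishing $\cS^0(\Theta/\varpi\Theta)=0$, and for \eqref{eq:S1exact-seq} it applies the cohomological $\delta$-functor $\{\cS^i\}$ to the short exact sequence $0\to\Theta/\varpi^n\Theta\xrightarrow{\varpi}\Theta/\varpi^{n+1}\Theta\to\Theta/\varpi\Theta\to0$, obtains $0\to\cS^1(\Theta/\varpi^n\Theta)\to\cS^1(\Theta/\varpi^{n+1}\Theta)\to\cS^1(\Theta/\varpi\Theta)\to0$ from the vanishing at degrees $0$ and $2$, observes Mittag--Leffler, and passes to the limit. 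Your route instead applies the homological $\delta$-functor $\{\check{\cS}^i\}$ directly to $0\to\Theta^d\xrightarrow{\varpi^n}\Theta^d\to\Theta^d/\varpi^n\Theta^d\to0$ and then Schikhof-dualizes, so that both conclusions drop out of a single long exact sequence. The gain is that your argument is self-contained (no appeal to the external lemma) and yields \eqref{eq:S1check} and \eqref{eq:S1exact-seq} simultaneously; the cost is exactly the duality bookkeeping you flag. Two of those points deserve one more word: for $\check{\cS}^0(\Theta^d)=0$ you can avoid the colimit argument altogether, since the long exact sequence with $n=1$ already shows $\varpi$ is surjective on $\check{\cS}^0(\Theta^d)$, whose Pontryagin dual lies in $\Mod_{G_{\Q_p}\times D^\times}^{\rm l,adm}(\cO)$ and is therefore $\cO$-torsion; and for the Schikhof-dual step, the surjectivity of $N^d\to\Hom^{\rm cont}_\cO(N,\cO/\varpi^n)$ is most cleanly justified via the structure $N\cong\prod_I\cO$ of a $\varpi$-torsion-free compact $\cO$-module rather than an abstract $\Ext^1$-vanishing.
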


\begin{proof}
The isomorphism \eqref{eq:S1check} follows from \cite[Lem.~3.3]{Paskunas-JL} and the vanishing of $\cS^0(\Theta/\varpi\Theta).$ Consider the short exact sequence $0\to \Theta/\varpi^n \Theta \To{\varpi} \Theta/\varpi^{n+1} \Theta \to \Theta/\varpi \Theta \to 0. $ The vanishing of $\cS^0(\Theta/\varpi\Theta)$ and $\cS^2(\Theta/\varpi\Theta)$ implies that the sequence
\begin{equation}\label{eq:exactseq-S1-torsion}
0\to \cS^1( \Theta/\varpi^n \Theta) \To{\varpi} \cS^1(\Theta/\varpi^{n+1} \Theta) \to \cS^1(\Theta/\varpi \Theta )\to 0
\end{equation}
is exact and the inverse system $\{\cS^1( \Theta/\varpi^n \Theta)\}_{n\geq 1}$ is Mittag-Leffler. We deduce \eqref{eq:S1exact-seq} by taking limit of \eqref{eq:exactseq-S1-torsion}.
\end{proof}

Under the assumption of Proposition \ref{prop:S1-Theta}, it is more convenient to put
\[\cS^1(\Theta):=\varprojlim_{n\geq 1}\cS^1(\Theta/\varpi^n\Theta ) \cong\check{\cS}^{1}(\Theta^d)^d.\]

\begin{lemma}\label{lem:S1-Theta}
Assume that $\cS^0(\Theta/\varpi\Theta)=\cS^2(\Theta/\varpi\Theta)=0$ and $\Theta/\varpi\Theta$ does not admit any subquotient lying in blocks of type (IV). Assume further that $p^{\Z}$ acts trivially on $\Theta$. Then there is an isomorphism
\[\cS^1(\Theta)\cong \varprojlim_{n\geq 1}\Hom_G^{\rm cont}\big(\Theta^d,H^1_{\et}(\sM_{\infty}^p,\cO/\varpi^n)\big).\]
\end{lemma}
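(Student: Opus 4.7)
The plan is to combine Proposition \ref{prop:S1-Theta} with the second isomorphism of Theorem \ref{thm:CDN-S1}, applied to each finite torsion quotient $\Theta/\varpi^n\Theta$, and then to identify the Pontryagin dual of $\Theta/\varpi^n\Theta$ with $\Theta^d/\varpi^n\Theta^d$.

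First, I would verify that both hypotheses needed for Theorem \ref{thm:CDN-S1} propagate from $n=1$ to all $n\geq 1$. The vanishing $\cS^0(\Theta/\varpi^n\Theta)=\cS^2(\Theta/\varpi^n\Theta)=0$ follows by induction on $n$: the short exact sequence $0\to\Theta/\varpi\Theta\overset{\varpi^n}{\to}\Theta/\varpi^{n+1}\Theta\to\Theta/\varpi^n\Theta\to0$ together with $\cS^i=0$ for $i>2$ (Theorem \ref{thm:Scholze}(i)) gives a long exact sequence from which the vanishing is inherited. Similarly, since $\Theta/\varpi^n\Theta$ is an iterated extension of copies of $\Theta/\varpi\Theta$, the assumption that $\Theta/\varpi\Theta$ has no subquotients in blocks of type (IV) implies the same for $\Theta/\varpi^n\Theta$. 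The condition that $p^{\Z}$ acts trivially on $\Theta/\varpi^n\Theta$ is inherited directly from $\Theta$.

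Next, applying the second part of Theorem \ref{thm:CDN-S1} to $\pi=\Theta/\varpi^n\Theta$ yields
\[\cS^1(\Theta/\varpi^n\Theta)\cong\Hom_G^{\rm cont}\big((\Theta/\varpi^n\Theta)^{\vee},H^1_{\et}(\sM_\infty^p,\cO/\varpi^n)\big).\]
Schikhof duality (together with $\Theta$ being $\varpi$-adically complete and torsion-free) gives a canonical $G$-equivariant isomorphism $(\Theta/\varpi^n\Theta)^{\vee}\cong\Theta^d/\varpi^n\Theta^d$, obtained from $\Hom_{\cO}(\Theta/\varpi^n\Theta,\varpi^{-n}\cO/\cO)\cong\Hom_{\cO/\varpi^n}(\Theta/\varpi^n\Theta,\cO/\varpi^n)$ by multiplication by $\varpi^n$. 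Since the target $H^1_{\et}(\sM_\infty^p,\cO/\varpi^n)$ is annihilated by $\varpi^n$, any continuous $G$-map from $\Theta^d$ factors uniquely through $\Theta^d/\varpi^n\Theta^d$, so
\[\Hom_G^{\rm cont}(\Theta^d/\varpi^n\Theta^d,H^1_{\et}(\sM_\infty^p,\cO/\varpi^n))\cong\Hom_G^{\rm cont}(\Theta^d,H^1_{\et}(\sM_\infty^p,\cO/\varpi^n)).\]

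Finally, taking the inverse limit over $n$ of the resulting isomorphisms and invoking \eqref{eq:S1check} of Proposition \ref{prop:S1-Theta} yields the claim. The routine part is the dévissage in the first step; the only point requiring care is the identification of the two kinds of dual in the second step, but this is standard once one writes down the Schikhof pairing explicitly. No serious obstacle is anticipated.
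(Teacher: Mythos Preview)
Your proposal is correct and follows essentially the same route as the paper's proof, which simply cites Proposition \ref{prop:S1-Theta}, Theorem \ref{thm:CDN-S1}, and the isomorphism $\Theta^d/\varpi^n\Theta^d\cong(\Theta/\varpi^n\Theta)^{\vee}$. One minor remark: the vanishing of $\cS^0$ and $\cS^2$ is not among the hypotheses of Theorem \ref{thm:CDN-S1} (only the block condition and the $p^{\Z}$-triviality are), and Proposition \ref{prop:S1-Theta} already works from the $n=1$ vanishing alone, so your propagation argument for $\cS^0,\cS^2$ is harmless but unnecessary.
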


\begin{proof}
This follows from Propostion \ref{prop:S1-Theta}, Theorem \ref{thm:CDN-S1} and  the isomorphism  $\Theta^d/\varpi^n\Theta^d\cong (\Theta/\varpi^n\Theta)^{\vee}.$
\end{proof}

\subsection{The multiplicity one theorem}
Let $\brho:G_{\Q_p}\ra \GL_2(\F) $ be a continuous representation and let $\sB$ be the block of $G$ corresponding to $\brho^{\rm ss}$. We can associate to $\brho$ a set of characters of $\cO_D^{\times}$, denoted by $W_D(\brho)$ and called \emph{quaternionic Serre weights} of $\brho,$ see \cite{Gee-Geraghty} or \cite[\S6.1]{HW-JL1} for more details. 
 
The main result of this section is the following multiplicity one result.
 
\begin{theorem}\label{thm:multione}
Assume that $\sB$ is of type (I) or (II), and is generic if $\sB$ is of type (I).   
Then\footnote{Here $\brho(-1)$ denotes the  twist of $\brho$ by $\omega^{-1}$.}  
$\cS^1(\pi(\brho))\cong \brho(-1)\otimes \JL(\brho)$
 for some admissible smooth representation $\JL(\brho)$ of $D^{\times}$ satisfying 
\[\soc_{\cO_D^{\times}}\JL(\brho)\cong \bigoplus_{\chi\in W_D(\brho)}\chi.\]
 In particular, the $\cO_D^{\times}$-socle of $\JL(\brho)$ is multiplicity free.
 \end{theorem}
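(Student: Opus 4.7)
The plan is to combine the Drinfeld-tower identification of $\cS^1$ from Theorem \ref{thm:CDN-S1} with the injectivity statement of Proposition \ref{prop:S1-injective}, and then to read off the socle via the classical mod $p$ Jacquet--Langlands matching at the residue level $\cO_D^{\times}/U_D^1\cong \F_q^{\times}$.

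First I would verify that Scholze's functor is concentrated in degree $1$ on $\pi(\brho)$. For $\sB$ of type (I) or (II) one has $\pi(\brho)^{\SL_2(\Q_p)}=0$, so $\cS^0(\pi(\brho))=0$ by Theorem \ref{thm:Scholze}(ii); while $\cS^2(\pi(\brho))=0$ by Theorem \ref{thm:Scholze}(iv) in type (II) and by Theorem \ref{thm:Scholze}(v) together with the genericity hypothesis in type (I). The same vanishing applies to any $K/Z_1$-injective envelope $\Omega$ of $\pi(\brho)$ inside $\Mod_{G,\zeta}^{\rm adm}(\F)^{\sB}$ (produced by the Breuil--Pa\v{s}k\=unas construction \cite{BP}), so Proposition \ref{prop:S1-injective} makes $\cS^1(\Omega)$ injective over $\cO_D^{\times}/Z_D^1$. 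The embedding $\pi(\brho)\hookrightarrow\Omega$ gives $\cS^1(\pi(\brho))\hookrightarrow\cS^1(\Omega)$, so the $\cO_D^{\times}$-socle of $\cS^1(\pi(\brho))$ sits inside that of an injective $\cO_D^{\times}/Z_D^1$-module.

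For the tensor decomposition $\cS^1(\pi(\brho))\cong \brho(-1)\otimes \JL(\brho)$, I would set $\JL(\brho):=\Hom_{G_{\Q_p}}(\brho(-1),\cS^1(\pi(\brho)))$ and prove that the natural evaluation $\brho(-1)\otimes \JL(\brho)\to\cS^1(\pi(\brho))$ is an isomorphism. Via Theorem \ref{thm:CDN-S1} this amounts to identifying the Galois action on the $\pi(\brho)^{\vee}$-isotypic part of $H^1_{\et}(\sM_\infty^p,\F)$; this action is controlled by the pseudo-deformation ring through Theorem \ref{thm:pas}, and once specialised at $\tr(\brho)$ it can only involve the Jordan--H\"older constituents of $\brho$, so a dimension count in the Colmez--Dospinescu--Nizio\l\ framework forces it to factor as $\brho(-1)$.

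For the socle computation, I would compute $\cS^1(\pi(\brho))^{U_D^1}$ with its $\F_q^{\times}$-action using Theorem \ref{thm:CDN-S1} at finite level $\sM_0^p$, where the $\cO_D^{\times}/U_D^1$-action is explicit as the Galois action of a finite \'etale cover. The $\F_q^{\times}$-eigendecomposition is governed by the classical mod $p$ Jacquet--Langlands correspondence relating Serre weights on the $\GL_2(\F_p)$-side to characters of $\F_q^{\times}$ on the $D$-side, and $W_D(\brho)$ is defined as the image of the Serre weights $W(\brho)$ under precisely this matching. Combined with the fact that $\soc_K\pi(\brho)$ realises $W(\brho)$ multiplicity-freely in the generic cases considered here (and with Proposition \ref{prop:x} linking projective envelopes of $\pi(\brho)^{\vee}$ and of $(\soc_K\pi(\brho))^{\vee}$), this should identify $\soc_{\cO_D^{\times}}\JL(\brho)$ with $\bigoplus_{\chi\in W_D(\brho)}\chi$ as sets of constituents. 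The main obstacle is the multiplicity-free assertion: the upper bound (no extraneous characters, each with multiplicity at most one) should follow from injectivity of $\cS^1(\Omega)$ together with the multiplicity-freeness of $\soc_K\pi(\brho)$, but the lower bound (every $\chi\in W_D(\brho)$ actually appears) requires a genuine non-vanishing at level zero of the Drinfeld tower. In type (II) this is visible from the explicit principal series form, while in type (I) it depends crucially on the genericity hypothesis, which simultaneously guarantees $\cS^2$-vanishing and prevents Serre weights from colliding under the Jacquet--Langlands matching.
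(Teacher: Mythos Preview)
Your proposal has the right scaffolding (vanishing of $\cS^0$ and $\cS^2$, embedding into an injective $\Omega$, Proposition \ref{prop:S1-injective}, and appeal to the Drinfeld tower via Theorem \ref{thm:CDN-S1}), but it misidentifies where the real work lies and leaves the crucial computation undone.

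First, you have the two socle bounds reversed. The \emph{lower} bound $\bigoplus_{\chi\in W_D(\brho)}\chi\subseteq \soc_{\cO_D^{\times}}\JL(\brho)$ is the known part, coming from the definition of $W_D(\brho)$ and prior work on quaternionic Serre weights; it does not require any Drinfeld-tower non-vanishing argument. The \emph{upper} bound $\dim_{\F}\cS^1(\pi(\brho))^{U_D^1}\leq 2|W_D(\brho)|$ is the new content, and your suggested mechanism for it does not work: knowing that $\cS^1(\Omega)$ is injective over $\cO_D^{\times}/Z_D^1$ tells you nothing about the size of its socle, and there is no a priori functoriality relating $\soc_K$ on the $G$-side to $\soc_{\cO_D^{\times}}$ on the $D^{\times}$-side through $\cS^1$. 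Multiplicity-freeness of $\soc_K\pi(\brho)$ cannot be transported in this way.

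The paper's actual argument for the upper bound lifts to characteristic zero: one replaces $\Omega_{\brho}$ by the $\cO$-lattice $\Theta_{\brho}=(P_{\brho}/x)^d$ (using Proposition \ref{prop:x}), shows via Lemma \ref{lem:dim=rank} that $\dim_{\F}\cS^1(\Omega_{\brho})^{U_D^1}=\mathrm{rank}_{\cO}\cS^1(\Theta_{\brho})^{U_D^1}$, and then identifies the latter with $\mathrm{rank}_{\cO}\Hom_G^{\rm cont}\big(P_{\brho}/x,H^1_{\et}(\sM_1^p,\cO)\big)$ (level $\sM_1^p$, not $\sM_0^p$; the Galois group of $\sM_{\infty}^p/\sM_1^p$ is $U_D^1$). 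This rank is then computed via the Colmez--Dospinescu--Nizio\l\ structure theorem (Theorem \ref{thm:CDN-main}) for $H^1_{\et}(\sM_{1,\bQp}^p,E(1))$, which decomposes it into pieces indexed by cuspidal types $M$ of level $\leq 1$. The final count uses the Breuil--M\'ezard conjecture (Lemma \ref{lem:types}) to show there are exactly $\frac{1}{2}|W_D(\brho)|$ relevant types, each contributing a factor of $4$. None of this characteristic-zero lifting, type enumeration, or Breuil--M\'ezard input appears in your outline.

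For the tensor decomposition $\cS^1(\pi(\brho))\cong\brho(-1)\otimes\JL(\brho)$, the paper does not argue directly via the mod $p$ Galois action on $H^1_{\et}(\sM_\infty^p,\F)$ as you suggest; instead it quotes \cite{HW-JL1}, where the key input is that $\cS^1(\pi(\brho))$ is Cohen--Macaulay of $\delta$-dimension $1$ (proved using $P_{\brho}$ and the vanishing of $\cS^2$) combined with local-global compatibility \`a la Emerton. Your pseudo-deformation sketch is not obviously wrong, but it is too vague to assess and is not the route taken.
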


\begin{remark}
(i) Recently, Andrea Dotto proved an analog of \eqref{eq:multione} in a more general setting, see \cite[Thm.~1.2]{Dotto22}.  

(ii) If $\sB$ is of type (III),  the situation seems to be more complicated, and we don't expect $\soc_{\cO_D^{\times}}\JL(\brho)$ to be multiplicity free. If $\sB$ is of type (IV), then $\cS^1(\pi(\brho))$ is not always $\brho(-1)$-isotypic, and the definition of $\JL(\brho)$ is  subtler, see \cite[\S8.3]{HW-JL1} for a detailed study of this case. 
\end{remark}
\medskip

We can determine the structure of $\gr_{\fm_D}(\JL(\brho)^{\vee})$ by combining Theorem \ref{thm:multione} and Theorem \ref{thm:mu=4}.  
The following definition is a reformulation of \cite[Def.~6.13]{HW-JL1}.
\begin{definition}\label{def:fa}
For each $\chi\in W_D(\brho)$, we define a graded ideal $\mathfrak{a}(\chi)$ of $A=\F[y,z]/(yz)$ (see \S\ref{sec:CA}) as follows. 

\begin{itemize}
\item If $\chi\alpha^{-1}\in W_D(\brho)$, then $\mathfrak{a}(\chi):=\p_0=(y)$; if $\chi\alpha\in W_D(\brho)$, then $\mathfrak{a}(\chi):=\p_1=(z)$. \vspace{1mm}
\item If neither of $\chi\alpha$, $\chi\alpha^{-1}$ lies in $W_D(\brho)$, then $\mathfrak{a}(\chi):=(0)$.
\end{itemize}
 \end{definition}

The following result improves \cite[Thm.~6.14]{HW-JL1}. Remark that, contrary to Theorem \ref{thm:multione}, we need to further assume $\sB$ is \emph{generic} when it is of type (II), because this condition is needed to apply \cite[Thm.~6.14]{HW-JL1}.
\begin{theorem}\label{thm:gr-pi}
 Assume that $\sB$ is of type of (I) or (II), and  is generic. 
Then there is an isomorphism of graded modules \begin{equation}\label{eq:gr-str}\bigoplus_{\chi\in W_D(\brho)}\chi^{\vee}\otimes A/\mathfrak{a}(\chi)\cong \gr_{\fm_D}(\JL(\brho)^{\vee}).\end{equation}
 In particular,  $\mu(\JL(\brho))=4$. 
\end{theorem}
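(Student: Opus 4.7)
The strategy is to bootstrap the structural description in \cite[Thm.~6.14]{HW-JL1} using two new inputs from this paper: the multiplicity-free $\cO_D^\times$-socle of $\JL(\brho)$ from Theorem~\ref{thm:multione}, and the universal lower bound $\mu\geq 4$ from Theorem~\ref{thm:mu=4}. The earlier theorem left an undetermined multiplicity at the socle, and the role of the new ingredients is precisely to pin it down.

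First I would construct a natural map
\[\Phi:\bigoplus_{\chi\in W_D(\brho)}\chi^\vee\otimes A/\mathfrak{a}(\chi)\lra \gr_{\fm_D}(\JL(\brho)^\vee).\]
For each $\chi\in W_D(\brho)$, Theorem~\ref{thm:multione} selects a unique (up to scalar) $\chi$-isotypic socle vector, which dualizes to a degree-zero generator $\bar v_\chi$ of $\gr_{\fm_D}(\JL(\brho)^\vee)$. Using the $H$-eigencharacter computation on $\gr^1_{\fm_D}$ together with Lemma~\ref{lem:annihilator}, and reading off from Definition~\ref{def:fa} which of $\chi\alpha^{\pm 1}$ appear as neighbors of $\chi$ in $W_D(\brho)$, one checks that the cyclic $\gr(\Lambda)$-submodule generated by $\bar v_\chi$ is annihilated exactly by $J+\mathfrak{a}(\chi)$. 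Assembling these gives $\Phi$, which is injective in degree zero by multiplicity-freeness.

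To upgrade $\Phi$ into an isomorphism, one invokes the Cohen-Macaulay machinery of \S\ref{sec:CA}. Applying Proposition~\ref{prop:aut-CM} to a pure quotient of $\JL(\brho)^\vee$ shows that $\gr_{\fm_D}(\JL(\brho)^\vee)$ is pure of grade $2$, annihilated by $J$, and carries a compatible $\widetilde{H}$-action, so it is a Cohen-Macaulay module over $A$. A direct count using the structure of $W_D(\brho)$ in the generic type (I) and type (II) cases yields $\mu(\text{source of }\Phi)=4$ (four weights paired by $\alpha$, contributing two copies of $\F[y]$ and two of $\F[z]$). Combined with the upper bound $\mu(\JL(\brho))\leq 4$ implicit in \cite[Thm.~6.14]{HW-JL1} and the opposite bound from Theorem~\ref{thm:mu=4}, this forces $\mu(\JL(\brho))=4$ and hence the cokernel of $\Phi$ has $\mu=0$. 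Purity then forces the cokernel to vanish, and Proposition~\ref{prop:CM-class} classifies $\Phi$ as an isomorphism.

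The main obstacle is the construction of $\Phi$ as a genuine morphism of graded $\gr(\Lambda)$-modules, not merely a degree-zero comparison. This requires aligning Definition~\ref{def:fa} with the Ext vanishing results of \S\ref{sec:modp}---in particular Propositions~\ref{prop-H1=dim3} and~\ref{prop-Ext1-psi-n}---to ensure that each cyclic submodule generated by $\bar v_\chi$ has annihilator exactly $J+\mathfrak{a}(\chi)$, neither smaller (so that $A/\mathfrak{a}(\chi)$ really maps in) nor larger (so that the map is not forced to factor further through a proper quotient).
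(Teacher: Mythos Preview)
Your overall strategy---combine the multiplicity-free socle from Theorem~\ref{thm:multione}, the lower bound $\mu\geq 4$ from Theorem~\ref{thm:mu=4}, and the structural result \cite[Thm.~6.14]{HW-JL1}---is the right one, and matches the paper's. But the execution has a genuine gap in how you promote $\Phi$ to an isomorphism.

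The problem is your invocation of Proposition~\ref{prop:aut-CM}. That proposition has as a hypothesis that $\mu(\gr_{\fm_D}(M))=2$; here $\mu(\JL(\brho))\geq 4$ by Theorem~\ref{thm:mu=4}, so it simply does not apply, and you have no argument that $\gr_{\fm_D}(\JL(\brho)^\vee)$ is pure. Without purity of the target, ``cokernel of $\Phi$ has $\mu=0$'' does not force the cokernel to vanish. You also have not established surjectivity of $\Phi$: injectivity in degree zero plus an equality of multiplicities does not give surjectivity unless you already know the target is generated in degree zero, which again requires structural control you have not justified.

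The paper sidesteps both issues by running the argument in the other direction. Rather than building $\Phi$ by hand and analyzing its cokernel, it quotes \cite[Thm.~6.14]{HW-JL1} directly: that theorem already produces a \emph{surjection} $\iota:N\twoheadrightarrow \gr_{\fm_D}(\JL(\brho)^\vee)$, where $N$ is your left-hand side and the multiplicity parameter $m$ in \emph{loc.~cit.} may be taken equal to $1$ precisely because of Theorem~\ref{thm:multione}. Then $\mu(N)=4$ (a direct check on $W_D(\brho)$) together with $\mu(\JL(\brho))\geq 4$ forces $\mu(\ker\iota)=0$. Now one uses Cohen-Macaulayness of the \emph{source} $N$---which is manifest from its explicit form---to conclude $\ker\iota=0$. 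No purity statement about the target is needed at all; purity of $\gr_{\fm_D}(\JL(\brho)^\vee)$ is a \emph{consequence} of the isomorphism, not an input.
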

 \begin{proof}
Since $\cS^1(\pi(\brho))$ is infinite dimensional by \cite[Thm.~7.8]{Scholze},  so is $\JL(\brho)$, thus  $\mu(\JL(\brho))\geq 4$ by Theorem \ref{thm:mu=4}.  

 Remark that when $\sB$ is of type (II), we have $\JL(\brho)=\JL(\brho^{\rm ss})$ by \cite[Thm.~8.13]{HW-JL1} and also $W_D(\brho)=W_D(\brho^{\rm ss})$ by \cite[Thm.~6.3]{HW-JL1}, so it suffices to prove the result for $\brho$ non-split.  Since $\sB$ is assumed to be generic, we may further assume $\brho $ is generic, cf.~Definition \ref{def:generic}(ii).

Let $N$ denote the left-hand side of \eqref{eq:gr-str}.  It is direct to check that $\mu(N)=4$ using the explicit description of $W_D(\brho)$ (see \cite[Thm.~6.3]{HW-JL1}). By \cite[Thm.~6.14]{HW-JL1}, there is   a surjective morphism $\iota: N\twoheadrightarrow \gr_{\fm_D}(\JL(\brho)^{\vee})$; here we may take $m=1$ in \emph{loc. cit.} as $\JL(\brho)^{U_D^1}$ is multiplicity free by Theorem \ref{thm:multione}.  Hence, we get $\mu(\JL(\brho))=4$ by the above discussion.  Since $N$ is Cohen-Macaulay and   $\mu(N)=\mu(\gr_{\fm_D}(\JL(\brho)^{\vee}))$, $\iota$ must be  injective.
\end{proof} 

\medskip

Now we turn to the proof of Theorem \ref{thm:multione}. Actually we will prove a slightly stronger statement which, combined with Theorem \ref{thm:Scholze}(v), implies Theorem \ref{thm:multione}.
\begin{theorem}\label{thm:multione-bis}
Assume that $\sB$ is of type (I) or (II). If $\sB$ is of type (I), we assume that $\cS^2(\pi(\brho))=0$. Then  \begin{equation}\label{eq:rho-isotypic}\cS^1(\pi(\brho))\cong \brho(-1)\otimes \JL(\brho)\end{equation} for some admissible smooth representation $\JL(\brho)$ of $D^{\times}$ satisfying  
 \begin{equation}\label{eq:multione}\soc_{\cO_D^{\times}}\JL(\brho)\cong \bigoplus_{\chi\in W_D(\brho)}\chi.\end{equation}
\end{theorem}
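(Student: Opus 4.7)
The plan is to combine the Drinfeld tower description of $\cS^1$ in Theorem \ref{thm:CDN-S1} with the compatibility of Colmez's functor and the universal pseudo-deformation ring provided by Theorem \ref{thm:pas}, together with a socle calculation on the $\cO_D^{\times}$-side.

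First I would verify that $\cS^0(\pi(\brho)) = \cS^2(\pi(\brho)) = 0$. Since $\sB$ is of type (I) or (II), $\pi(\brho)^{\SL_2(\Q_p)} = 0$, so $\cS^0(\pi(\brho)) = 0$ by Theorem \ref{thm:Scholze}(ii). The vanishing of $\cS^2$ is the standing hypothesis in type (I), and in type (II) it follows from Theorem \ref{thm:Scholze}(iv) applied to the Jordan--H\"older constituents of $\pi(\brho)$ (all of which are irreducible principal series in $\sB$) together with the long exact sequence for $\cS^i$. By Theorem \ref{thm:CDN-S1} we then obtain
\[
\cS^1(\pi(\brho)) \cong \Hom_G^{\rm cont}\bigl(\pi(\brho)^\vee,\, H^1_{\et}(\sM_\infty, \F)\bigr),
\]
with a continuous $G_{\Q_p} \times D^{\times}$-action.

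For the $\brho(-1)$-isotypic decomposition \eqref{eq:rho-isotypic}, I would invoke Theorem \ref{thm:pas}: the universal pseudo-deformation ring $R_{\sB}^{\rm ps,\delta}$ acts on $\cS^1(\pi(\brho))$ through the Bernstein centre, and since $\pi(\brho)^\vee$ is annihilated by the maximal ideal $\fm_{\brho} \subset R_{\sB}^{\rm ps,\delta}$ corresponding to $\brho$, so is $\cS^1(\pi(\brho))$. Under Colmez's functor, the $R_{\sB}^{\rm ps,\delta}$-action on the Drinfeld tower side corresponds to the pseudo-deformation of the natural Galois action on $H^1_{\et}(\sM_\infty, \F)$, so specialising at $\brho$ forces the $G_{\Q_p}$-action on $\cS^1(\pi(\brho))$ to factor through $\brho$. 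Tracking the cyclotomic twist (from the comparison between local Langlands and Scholze's sheaf construction) yields $\cS^1(\pi(\brho)) \cong \brho(-1) \otimes \JL(\brho)$ for a smooth admissible $D^{\times}$-representation $\JL(\brho)$, which is \eqref{eq:rho-isotypic}.

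For the socle assertion \eqref{eq:multione}, I would compute $\soc_{\cO_D^{\times}}\JL(\brho)$ by relating it to $\soc_K \pi(\brho)$. Decomposing $\soc_K \pi(\brho) = \bigoplus_{\sigma \in W(\brho^{\rm ss})} \sigma$ into Serre weights and using the bijection $W(\brho^{\rm ss}) \leftrightarrow W_D(\brho)$ from the mod $p$ Jacquet--Langlands correspondence on Serre weights (as in \cite{Gee-Geraghty}), each $\chi \in W_D(\brho)$ manufactures a non-zero map $\chi \hookrightarrow \JL(\brho)$ via Frobenius reciprocity applied to the finite-level Drinfeld cover $\sM_n \to \sM_0$, giving the inclusion ``$\supseteq$''. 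The main obstacle is proving that these exhaust the socle and each appears exactly once: one must show $\dim_{\F} \Hom_{\cO_D^{\times}}(\chi, \JL(\brho)) = 1$ for every $\chi \in W_D(\brho)$ and rule out any additional $\cO_D^{\times}$-characters. I would approach this by computing the dimension of $\pi(\brho)^{K_n, \sigma}$ for small $n$ and matching it with the corresponding multiplicity in $H^1_{\et}(\sM_n, \F)$ on the Drinfeld side via Theorem \ref{thm:CDN-S1}, using that the $K$-socle of $\pi(\brho)$ is multiplicity free. In type (II), where $\pi(\brho)$ can be reducible and its $K$-socle richer than in the supersingular case, one must bookkeep carefully using the block decomposition and Lemma \ref{lem:P/m}(ii), potentially reducing to the projective envelope $P_{\pi(\brho)}$ and Proposition \ref{prop:x} to isolate the contribution of each $K$-socle constituent cleanly.
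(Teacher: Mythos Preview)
Your argument for the isotypic decomposition \eqref{eq:rho-isotypic} has a genuine gap. Knowing that the Bernstein-centre action of $R_{\sB}^{\rm ps,\delta}$ on $\cS^1(\pi(\brho))$ factors through the residue field only tells you that the \emph{pseudo-character} of the $G_{\Q_p}$-action equals $\mathrm{tr}(\brho)$; it does not imply $\brho$-isotypicity. For instance, when $\brho$ is reducible non-split, a $G_{\Q_p}$-module with that pseudo-character could well be $\brho^{\rm ss}$-isotypic without being $\brho$-isotypic, or could contain copies of $\brho^{\vee}$, etc. The paper does not argue purely locally here: it first shows $\cS^1(\pi(\brho))$ is Cohen--Macaulay of $\delta$-dimension $1$ (this is where $\cS^2(\pi(\brho))=0$ enters, via an argument with the projective envelope $P_{\pi(\brho)}$), and then invokes a local-global compatibility result \`a la Emerton from \cite{HW-JL1} to identify the Galois action. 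So a global input is genuinely needed at this step.

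Your plan for the socle bound is too vague to succeed as stated. Matching $\pi(\brho)^{K_n}$ against $H^1_{\et}(\sM_n,\F)$ directly in characteristic $p$ is not feasible because the mod $p$ cohomology of $\sM_n$ is not controlled. The paper's route is quite different and more elaborate: one first embeds $\cS^1(\pi(\brho))$ into $\cS^1(\Omega_{\brho})$ with $\Omega_{\brho}=(P_{\brho}/(x,\varpi))^{\vee}$ and shows the latter is \emph{injective} as an $\cO_D^{\times}/Z_D^1$-representation (Proposition \ref{prop:S1-injective}), so that $U_D^1$-invariants behave well under reduction mod $\varpi$. This lets one lift the count to characteristic $0$, replacing $\Omega_{\brho}$ by the $\cO$-lattice $\Theta_{\brho}=(P_{\brho}/x)^d$. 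One then descends from infinite level to level $1$ via the Hochschild--Serre spectral sequence, and finally computes $\Hom_G^{\rm cont}(P_{\brho}/x,H^1_{\et}(\sM^p_{1,\bQp},E))$ using the Colmez--Dospinescu--Nizio\l\ structure theorem (Theorem \ref{thm:CDN-main}); the answer $2|W_D(\brho)|$ comes from counting cuspidal types $M$ of level $1$ with $R_{\sB,M}^+\neq 0$ via the Breuil--M\'ezard conjecture (Lemma \ref{lem:types}). None of the lifting-to-characteristic-$0$, injectivity, or type-counting ingredients appears in your sketch, and without them the upper bound on $\dim_{\F}\JL(\brho)^{U_D^1}$ is not established.
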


The isomorphism \eqref{eq:rho-isotypic} can be proved as in  \cite[Prop.~8.35(i)]{HW-JL1}. Indeed, the case when $\sB$ is of type (II) is already treated there. When $\sB$ is of type (I) and  assuming $\cS^2(\pi(\brho))=0$, a similar argument as in \cite[Prop.~7.4]{HW-JL1}, using $P_{\pi(\brho)}$ instead of $N_{\infty}$ in \emph{loc. cit.}, shows that  $\cS^1(\pi(\brho))$ is Cohen-Macaulay of $\delta$-dimension $1$ as a representation of $D^{\times}$.  We   then conclude as in \cite[Prop.~8.35]{HW-JL1}   via the local-global compatibility \`a la Emerton, see \cite[Thm.~7.6]{HW-JL1}.

For the socle of $\JL(\brho),$ we note that 
\[\oplus_{\chi\in W_D(\brho)}\chi\subseteq \soc_{\cO_D^{\times}}\JL(\brho)=\JL(\brho)^{U_D^1}.\]
Thus, to prove \eqref{eq:multione} it suffices to prove that 
$\dim_{\F}\JL(\brho)^{U_D^1}\leq  |W_D(\brho)|$. Using \eqref{eq:rho-isotypic}, we are reduced to proving the following result.
 
\begin{proposition}\label{prop:d0<2}
Keep the assumptions in Theorem \ref{thm:multione-bis}. Then 
 \begin{equation}\label{eq:inequality} \dim_{\F}\cS^1(\pi(\brho))^{U_D^1}\leq 2  |W_D(\brho)|.\end{equation}
\end{proposition}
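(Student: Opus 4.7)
The plan is to reduce \eqref{eq:inequality} via \eqref{eq:rho-isotypic} to a bound on $\dim_{\F}\JL(\brho)^{U_D^1}$, and then compute this on the $\GL_2(\Q_p)$-side using Theorem \ref{thm:CDN-S1} together with Proposition \ref{prop:x}. Since $\brho$ is $2$-dimensional, it suffices to show $\dim_{\F}\JL(\brho)^{U_D^1}\leq|W_D(\brho)|$. As $U_D^1$ is pro-$p$ and contains $Z_D^1$, the $U_D^1$-invariants coincide with $\soc_{\cO_D^\times}\JL(\brho)$; decomposing along characters of $H$, the already-established inclusion $\bigoplus_{\chi\in W_D(\brho)}\chi\subseteq\soc_{\cO_D^\times}\JL(\brho)$ reduces the proposition to the two statements: (a) only characters in $W_D(\brho)$ can appear in the $\cO_D^\times$-socle, and (b) each such character appears with multiplicity $\leq 1$.

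Next I would use Theorem \ref{thm:CDN-S1} (applicable since $\sB$ is not of type (IV)), combined with a Hochschild--Serre argument for the pro-\'etale $U_D^1$-cover $\sM_\infty\to\sM_1$. The relevant vanishings $\cS^0(\pi(\brho))=\cS^2(\pi(\brho))=0$ come from Theorem \ref{thm:Scholze}(ii), Theorem \ref{thm:Scholze}(iv) in the type (II) case, and the standing hypothesis in the type (I) case. This should yield an identification
\[\cS^1(\pi(\brho))^{U_D^1}\ \cong\ \Hom_G^{\rm cont}\bigl(\pi(\brho)^\vee,\ H^1_{\et}(\sM_1,\F)\bigr),\]
whose $\chi$-eigenspace (for $H\cong\cO_D^\times/U_D^1$ acting on the target) is again a Hom-space of admissible $G$-representations. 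Now invoke Proposition \ref{prop:x} to find $x\in R_{\sB}^{\rm ps,\delta}$ with $P_{\pi(\brho)}/xP_{\pi(\brho)}$ a projective envelope of $(\soc_K\pi(\brho))^\vee$ in $\Mod_{K,\zeta}^{\rm pro}(\cO)$. Since the target space is $\brho(-1)$-isotypic by \eqref{eq:rho-isotypic}, Theorem \ref{thm:pas} should force $x$ to act by a known scalar (depending on $\brho$) on the target, so after reducing mod $x$ the Hom-space is computed by $K$-equivariant maps from a specific $K$-socle constituent $\sigma_\chi$ of $\pi(\brho)$ into $\pi(\brho)$ itself. The bound $\leq 1$ then follows from the multiplicity-one property of $\soc_K\pi(\brho)$ established in \cite{BP}, and the matching of $K$-Serre weights with $\cO_D^\times$-Serre weights under classical Jacquet--Langlands (as in \cite{Gee-Geraghty}) enforces $\chi\in W_D(\brho)$.

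The main obstacle will be the reduction modulo $x$. One must genuinely verify that $x$ acts by a scalar on the $\chi$-isotypic of $H^1_{\et}(\sM_1,\F)\cap \Hom(\pi(\brho)^\vee,-)$; this is a compatibility between the Galois action on $\cS^1(\pi(\brho))$ (which is $\brho(-1)$) and the Bernstein-centre action of $R_\sB^{\rm ps,\delta}$ on $P_{\pi(\brho)}$ from Theorem \ref{thm:pas}. A secondary technical point is the Hochschild--Serre identification at finite level, which in principle follows from the perfectoid nature of $\sM_\infty$ together with higher vanishing of $U_D^1$-cohomology with coefficients in an admissible smooth $\F$-representation, but needs careful formulation to commute with the Hom-functor into $\pi(\brho)^\vee$.
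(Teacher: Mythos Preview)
Your Hochschild--Serre step is fine and is essentially the same as the paper's argument for $r_1=r_2$: applying $\Hom_G^{\rm cont}(-, -)$ to the $E_2^{\bullet,0}$-terms and using that $\sB$ is not of type (IV) does give
\[\cS^1(\pi(\brho))^{U_D^1}\ \cong\ \Hom_G^{\rm cont}\bigl(\pi(\brho)^\vee,\ H^1_{\et}(\sM_1^p,\F)\bigr).\]
The genuine gap is in the next step, where you claim that ``after reducing mod $x$ the Hom-space is computed by $K$-equivariant maps from a specific $K$-socle constituent $\sigma_\chi$ of $\pi(\brho)$ into $\pi(\brho)$ itself.'' This does not follow: the target of the Hom is $H^1_{\et}(\sM_1^p,\F)$, not $\pi(\brho)$, and there is no mod~$p$ structure theorem for $H^1_{\et}(\sM_1^p,\F)$ as a $G$-representation that would let you identify its $\sB$-part with anything tractable. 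Knowing that $x$ acts by $0$ on $\pi(\brho)^\vee$ (which is automatic since $x$ lies in the maximal ideal of $R_{\sB}^{\rm ps,\delta}$) only gives you an injection into $\Hom_G^{\rm cont}(P_{\brho}/x,\,H^1_{\et}(\sM_1^p,\F))$, and the $K$-projectivity of $P_{\brho}/x$ would at best convert this into $\Hom_K(\sigma^\vee,\,H^1_{\et}(\sM_1^p,\F))$ for $\sigma=\soc_K\pi(\brho)$, \emph{not} into a Hom into $\pi(\brho)$. You are then stuck: you would need to bound the $\sigma$-multiplicity in $H^1_{\et}(\sM_1^p,\F)$, which is exactly the kind of information one does not have in characteristic~$p$.

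The paper circumvents this by lifting to characteristic~$0$. The embedding $\pi(\brho)\hookrightarrow\Omega_{\brho}:=(P_{\brho}/(x,\varpi))^\vee$ gives the easy inequality $\dim_\F\cS^1(\pi(\brho))^{U_D^1}\leq\dim_\F\cS^1(\Omega_{\brho})^{U_D^1}$; since $\Omega_{\brho}$ is $K/Z_1$-injective, Proposition~\ref{prop:S1-injective} forces $\cS^1(\Omega_{\brho})$ to be $\cO_D^\times/Z_D^1$-injective, so $\dim_\F\cS^1(\Omega_{\brho})^{U_D^1}$ equals the $\cO$-rank of $\cS^1(\Theta_{\brho})^{U_D^1}$ with $\Theta_{\brho}=(P_{\brho}/x)^d$. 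Now your Hochschild--Serre argument, run integrally, identifies this rank with $\mathrm{rank}_\cO\Hom_G^{\rm cont}(P_{\brho}/x,\,H^1_{\et}(\sM_1^p,\cO))$, and \emph{this} can be computed after inverting~$p$ using the Colmez--Dospinescu--Nizio\l{} structure theorem (Theorem~\ref{thm:CDN-main}) for $H^1_{\et}(\sM_{1,\bQp}^p,E(1))$: only the cuspidal types $M$ of level $1$ with $R_{\sB,M}^+\neq 0$ contribute, there are exactly $\frac{1}{2}|W_D(\brho)|$ of them by Lemma~\ref{lem:types}, and each contributes $4$. The passage to characteristic~$0$ is the missing idea in your proposal.
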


We will prove Proposition \ref{prop:d0<2}, hence Theorem \ref{thm:multione-bis},   in the next subsection.

\subsection{Proof of Proposition \ref{prop:d0<2}}
Arguing  as in the second paragraph of the proof of Theorem \ref{thm:gr-pi}, we may and do assume $\End_{G_{\Q_p}}(\brho)=\F$; this excludes the case $\brho$ is reducible split. Then  $\pi(\brho)$ is  either irreducible (when $\brho$ is irreducible), or a non-split extension of the   two principal series in $\sB$ (when $\brho$ is reducible non-split). We write
\[P_{\brho}:=P_{\soc_G\pi(\brho)}.\]
Since $\pi(\brho)$ is non-split, $P_{\brho}$ is also a projective envelope of $\pi(\brho)^{\vee}$ in $\mathfrak{C}_{G,\zeta}(\F).$ By Lemma \ref{lem:P/m},
\begin{equation}\label{eq:Prho/m}(P_{\brho}\otimes_{R_{\sB}^{\rm ps,\delta}}\F)^{\vee}\cong \pi(\brho). \end{equation}
Recall the element $x\in R_{\sB}^{\rm ps,\delta}$ constructed in Proposition \ref{prop:x}. Let \[\Omega_{\brho}:= (P_{\brho}/(x,\varpi)P_{\brho})^{\vee} \] 
which is admissible  and injective when restricted to $K/Z_1$. \medskip

We divide the proof of \eqref{eq:inequality}  into several steps. Precisely,  below we prove   the following relations
\[\dim_{\F}\cS^1(\pi(\brho))^{U_D^1}\leq d_1=r_1=r_2=2 |W_D(\brho)|, \]
where 
\begin{itemize}
\item[$\bullet$] $d_1:=\dim_{\F}\cS^1(\Omega_{\brho})^{U_D^1}$, 
\item[$\bullet$] $r_1:=\mathrm{rank}_{\cO}\cS^1(\Theta_{\brho})^{U_D^1}$, where $\Theta_{\brho}:=(P_{\brho}/x)^d$,
\item[$\bullet$] $r_2:=\mathrm{rank}_{\cO}\Hom_{G}^{\rm cont}\big(P_{\brho}/x, H^1_{\et}(\mathscr{M}_{1}^p,\cO)\big)$.
\end{itemize}
Remark that $\cS^1(\Theta_{\brho})$ is $\cO$-flat by Proposition \ref{prop:S1-Theta}, and $\Hom_{G}^{\rm cont}\big(P_{\brho}/x, H^1_{\et}(\mathscr{M}_{1}^p,\cO)\big)$ is also $\cO$-flat as $H^1_{\et}(\mathscr{M}_{1}^p,\cO)$ is.

Up to twist, we may and do assume that $p^{\Z}$ acts trivially on the objects of $\sB$. 

\subsubsection{Proof of $\dim_{\F}\cS^1(\pi(\brho))^{U_D^1}\leq d_1$} 

By definition of $\Omega_{\brho}$ and \eqref{eq:Prho/m}, there is a $G$-equivariant inclusion $\pi(\brho)\hookrightarrow \Omega_{\brho}$. Since $\sB$ is not of type (IV), by Theorem \ref{thm:Scholze}(ii) we have $\cS^0(\Omega_{\brho}/\pi(\brho))=0$, 
hence an embedding \[\cS^1(\pi(\brho))\hookrightarrow \cS^1(\Omega_{\brho}). \]The result follows by taking $U_D^1$-invariants.
 
\subsubsection{Proof of $d_1=r_1$}

Since $\Omega_{\brho}$ is injective as a representation of $K/Z_1$,  we deduce from Proposition \ref{prop:S1-injective} that $\cS^1(\Omega_{\brho})$ is    injective as a representation of $\cO_D^{\times}/Z_{D}^1$.  On the other hand,  we note $\Theta_{\brho}/\varpi\Theta_{\brho}\cong\Omega_{\brho}$, and by Proposition \ref{prop:S1-Theta} we have  
\[\cS^1(\Theta_{\brho})\otimes_{\cO}\F\cong \cS^1(\Omega_{\brho}).\]
Taking $U_D^1$-invariants, we obtain the assertion $d_1=r_1$ by the following lemma.

\begin{lemma}\label{lem:dim=rank}
Let $H$ be a compact $p$-adic analytic group. Let $M$ be a topological  $\cO[H]$-module. Assume that $M$ is $\cO$-flat and $M\otimes_{\cO}\F$ is admissible and injective as a smooth representation of $H$. Then for any open subgroup $H_1<H$, we have 
\begin{equation}\label{eq:dim=rank}
\dim_{E}(M\otimes_{\cO}E)^{H_1}=\mathrm{rank}_{\cO}M^{H_1}=\dim_{\F}(M\otimes_{\cO}\F)^{H_1}.\end{equation}
\end{lemma}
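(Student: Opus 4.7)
The plan is to reduce both equalities to the single cohomological vanishing $H^1_{\rm cts}(H_1, M) = 0$. The equality $\dim_E(M\otimes_\cO E)^{H_1} = \mathrm{rank}_\cO M^{H_1}$ is formal: any $H_1$-fixed vector in $M\otimes_\cO E$ has the form $\varpi^{-n}w$ with $w\in M$, and the $\cO$-flatness of $M$ forces $(h-1)w = 0$ in $M$, so $(M\otimes_\cO E)^{H_1} = M^{H_1}\otimes_\cO E$. For the second equality, the long exact sequence of continuous $H_1$-cohomology applied to $0\to M\xrightarrow{\varpi} M \to M\otimes_\cO \F \to 0$ produces the isomorphism $M^{H_1}/\varpi M^{H_1}\xrightarrow{\sim}(M\otimes_\cO \F)^{H_1}$ as soon as $H^1_{\rm cts}(H_1, M)$ is $\varpi$-torsion free, in particular if it vanishes; finite-dimensionality of the right-hand side (admissibility of $M\otimes_\cO\F$) then forces $M^{H_1}$ to be finitely generated and $\cO$-free (as a submodule of the $\cO$-flat module $M$), of the claimed rank.

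To establish $H^1_{\rm cts}(H_1, M) = 0$, I would start at the residual level. Since $H_1$ is open in the compact group $H$, the coset space $H/H_1$ is finite, so smooth induction $\Mod^{\rm sm}_{H_1}(\F)\to \Mod^{\rm sm}_H(\F)$ is exact and its right adjoint, restriction, preserves injectives. Hence $M\otimes_\cO \F$ is still injective as a smooth $H_1$-representation. By Pa\v{s}k\=unas' Pontryagin duality, $(M\otimes_\cO \F)^\vee$ is projective as a finitely generated compact $\F[\![H_1]\!]$-module, giving $H^i_{\rm cts}(H_1, M\otimes_\cO \F) = 0$ for all $i > 0$ (e.g.\ via the identification $H^i_{\rm cts}(H_1, \pi) = \Tor_i^{\F[\![H_1]\!]}(\F, \pi^\vee)^\vee$). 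A straightforward induction on $n$, using the short exact sequences $0\to M\otimes_\cO\F\to M/\varpi^{n+1}M\to M/\varpi^n M\to 0$, then extends the vanishing to $H^i_{\rm cts}(H_1, M/\varpi^n M) = 0$ for all $i > 0$ and all $n\geq 1$.

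The final step, which I expect to be the main technical obstacle, is to lift this vanishing from the finite-level torsion quotients to $M$ itself. Under the natural completeness hypothesis $M = \varprojlim_n M/\varpi^n M$ on a topological $\cO[H]$-module of the type considered here (satisfied in the application via Proposition \ref{prop:S1-Theta}), the vanishing of $H^1$ at each finite level makes the inverse system $\{(M/\varpi^n M)^{H_1}\}_n$ Mittag-Leffler with surjective transition maps, and the Milnor exact sequence then yields $H^1_{\rm cts}(H_1, M) = \varprojlim_n H^1_{\rm cts}(H_1, M/\varpi^n M) = 0$. All remaining manipulations are formal once this cohomological vanishing is in hand; the delicate point is precisely the interaction between continuous cohomology of the inverse limit and the limit of the cohomologies, which is why the $\varpi$-adic completeness of $M$ and the Mittag-Leffler property coming from the finite-level vanishing are both essential.
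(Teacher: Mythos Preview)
Your argument is correct, but the paper takes a shorter route that avoids the inverse-limit machinery entirely. Both proofs begin the same way: the first equality is formal from $\cO$-flatness, and the injectivity of $M\otimes_{\cO}\F$ as an $H_1$-representation gives $H^1(H_1,M\otimes_{\cO}\F)=0$. The divergence is in how to deduce $H^1_{\rm cont}(H_1,M)=0$. You climb up through the tower $M/\varpi^n M$ and then pass to the limit via a Milnor sequence, which forces you to add the completeness hypothesis $M=\varprojlim_n M/\varpi^n M$ (not part of the stated lemma, though satisfied in the application). The paper instead writes down the single long exact sequence coming from $0\to M\xrightarrow{\varpi}M\to M\otimes_{\cO}\F\to 0$, so that the vanishing of $H^1(H_1,M\otimes_{\cO}\F)$ makes multiplication by $\varpi$ \emph{surjective} on $H^1_{\rm cont}(H_1,M)$; it then cites \cite[Lem.~3.13]{Scholze} to see that $H^1_{\rm cont}(H_1,M)$ is a finitely generated $\cO$-module, and Nakayama's lemma finishes immediately. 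This is cleaner: one exact sequence, one finiteness input, and one application of Nakayama, with no need to control the passage to the inverse limit or to invoke completeness of $M$ separately.
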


\begin{proof}
The admissibility of $M\otimes_{\cO}\F$ implies that the quantities in \eqref{eq:dim=rank} are all finite. The first equality in \eqref{eq:dim=rank} is clear by the $\cO$-flatness of $M$. 

The assumption implies that $H^1(H_1,M\otimes_{\cO}\F)=0$. Hence,
the sequence $0\ra M\overset{\varpi}{\ra}M\ra M\otimes_{\cO}\F\ra0$ induces an exact sequence  
\[0\ra M^{H_1} \ra M^{H_1}\ra (M\otimes_{\cO}\F)^{H_1}\ra H^1_{\rm cont}(H_1,M) \ra H^1_{\rm cont}(H_1,M)\ra 0.\]
It is easy to see that $H^1_{\rm cont}(H_1,M)$ is a finite $\cO$-module, using for example \cite[Lem.~3.13]{Scholze}. By Nakayama's lemma we deduce $H^1_{\rm cont}(H_1,M)=0$ and the second equality follows.
\end{proof}

\subsubsection{Proof of  $r_1=r_2$} 
It is proved in Step 2 of the proof of \cite[Lem.~4.12]{CDN22} that there is a  convergent spectral sequence\footnote{The result in \emph{loc.~cit.} is stated for the cohomology with $\F$-coefficients, but it clearly remains true for torsion coefficients.}
\[E_{2}^{i,j}=H^i_{\rm cont}(U_D^1, H^j_{\et}(\mathscr{M}_{\infty}^p, \cO/\varpi^n)\Rightarrow H^{i+j}_{\et}(\mathscr{M}_1^{p},\cO/\varpi^n).\]
In particular, we obtain an exact sequence
\begin{equation}\label{eq:E2-short}0\ra E_{2}^{1,0}\ra H^1_{\et}(\sM_{1}^p,\cO/\varpi^n)\ra H^1_{\et}(\sM_{\infty}^p,\cO/\varpi^n)^{U_D^1}\ra E_{2}^{2,0}.\end{equation}
Since the action of $G$ on $H^0_{\et}(\sM_{\infty}^p,\cO/\varpi^n)$ factors through the determinant, the same holds for $E_{2}^{1,0}$ and $E_{2}^{2,0}$. Since $\sB$ is not of type (IV) by assumption, applying $\Hom_G^{\rm cont}(P_{\brho}/x,-)$ to \eqref{eq:E2-short} we obtain an isomorphism 
\[\Hom_G^{\rm cont}\big(P_{\brho}/x,H^1_{\et}(\sM_1^p,\cO/\varpi^n)\big)\simto \Hom_G^{\rm cont}\big(P_{\brho}/x,H^1_{\et}\big(\sM_{\infty}^p,\cO/\varpi^n)^{U_D^1}\big).\]
Taking inverse limit and noting that  $\Hom_{G}^{\rm cont}(P_{\brho}/x,-)$  and $H^0_{\rm cont}(U_D^1,-)$ commute with taking inverse limit, we deduce by Lemma \ref{lem:S1-Theta}
\[\begin{array}{rll}\Hom_G^{\rm cont}\big(P_{\brho}/x,H^1_{\et}(\sM_1^p,\cO)\big)&\cong& \varprojlim_n \Hom_G^{\rm cont}\big(P_{\brho}/x , H^1_{\et}\big(\sM_{\infty}^p,\cO/\varpi^n)^{U_D^1}\big)\\
&\cong& \left(\varprojlim_n\Hom_G^{\rm cont}\big(P_{\brho}/x,H^1_{\et}(\sM_{\infty}^p,\cO/\varpi^n)\big)\right)^{U_D^1}\\
&\cong&\cS^1(\Theta_{\brho})^{U_D^1}\end{array}\]
 and so $r_2=r_1$.

\subsubsection{Proof of $r_2=2 |W_D(\brho)|$}

We first note that by \cite[Prop.~5.16]{CDN22} and the proof of \cite[Thm.~5.24]{CDN22}, the natural map 
\[\Hom^{\rm cont}_G\big(P_{\brho}/x,H^1_{\et}(\sM_{1,\bQp}^p,\cO)\big)\hookrightarrow \Hom_G^{\rm cont}\big(P_{\brho}/x,H^1_{\et}(\sM_1^p,\cO)\big)\]
is a bijection, where by definition
\[H^1_{\et}\big(\sM_{1,\bQp}^{p},\cO\big):=\varprojlim_n\big(\varinjlim_{[L:\Q_p]<\infty}H^i_{\et}(\sM_{1,\Q_p}^p\times_{\Q_p} L,\cO/\varpi^n)\big).\]
Therefore, we have
\[r_2=\mathrm{rank}_{\cO}\Hom^{\rm cont}_G\big(P_{\brho}/x,H^1_{\et}(\sM_{1,\bQp}^p,\cO)\big).\]
Next, letting $H^1_{\et}\big(\sM_{1,\bQp}^p,E\big):=H^1_{\et}\big(\sM_{1,\bQp}^p,\cO\big)\otimes_{\cO}E$, it is clear that
\[r_2=\dim_{E}\Hom_{G}^{\rm cont}\big(P_{\brho}/x,H^1_{\et}(\sM_{1,\bQp}^p,E)\big).\]
We may further replace $H^1_{\et}(\sM_{1,\bQp}^p,E)$ by $H^1_{\et}(\sM_{1,\bQp}^p,E(1))$ (where $(1)$ denotes the Tate twist) in the above formula as it only changes the action of $G_{\Q_p}$. The reason to do this is that we have the following structure theorem on $H^1_{\et}\big(\sM_{1,\bQp}^p,E(1)\big)$, which is a special case of \cite[Thm.~0.1]{CDN22}.

\begin{theorem}\label{thm:CDN-main}
There is a topological $E[G_{\Q_p}\times G\times D^{\times}]$-isomorphism
\[H^1_{\et}\big(\mathscr{M}_{1,\bQp}^{p},E(1)\big)\cong \bigoplus_{M}\big(\widehat{\oplus}_{\mathscr{B}}\Pi^{*}(\rho_{\mathscr{B},M})\otimes \rho_{\mathscr{B},M}\otimes\check{R}_{\mathscr{B},M}\big)\otimes \mathrm{JL}(M)\]
where $\sB$ runs over all blocks of $G$, $M$ runs over  all  types of level $\leq 1$, and\footnote{If $M$ is special, then the definitions of $R_{\sB,M}$ and $\rho_{\sB,M}$ need to be modified, but this case is excluded in our application.}
\begin{itemize}
\item[$\bullet$] $R_{\sB,M}^+$ is Kisin's ring   parametrizing pseudo-deformations of $\mathrm{tr}(\brho)$ which are potentially semi-stable of Hodge-Tate weights\footnote{Our convention is that the Hodge-Tate weight of $\varepsilon$ is $1$.} $(0,1)$ and of type $M$;
\item   $R_{\sB,M}:=R^+_{\sB,M}[1/p]$ and $\check{R}_{\mathscr{B},M}$ is the $E$-dual of $R_{\sB,M}$; 
\item $\rho_{\sB,M}$ is the universal pseudo-deformation over $R_{\sB,M}$;
\item $\rho_{y,M}$ is the specialization of $\rho_{\sB,M}$ at any maximal ideal $y\in\mathrm{Spm}(R_{\sB,M})$;
\item $\Pi^*(\rho_{\sB,M})$ is an $R_{\sB,M}[G]$-module which specializes to $\Pi(\rho_{y,M})^*$ for $y\in\mathrm{Spm}(R_{\sB,M})$, where $\Pi(\rho_{y,M})$ corresponds to $\rho_{y,M}$ via the $p$-adic Langlands correspondence for $G$, see \cite[Def.~5.14]{CDN22};
\item $\JL(M)$ is the finite dimensional irreducible  smooth $E$-representation of $D^{\times}$ associated to $M$ via the (classical) local Jacquet-Langlands correspondence.
\end{itemize}  
\end{theorem}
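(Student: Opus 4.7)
The plan is to deduce Theorem \ref{thm:CDN-main} as a specialization of \cite[Thm.~0.1]{CDN22}, which gives an analogous decomposition of $H^1_{\et}(\sM_{n,\bQp}^p, E(1))$ at every finite level $n$. Specializing to $n = 1$ restricts the outer sum to types $M$ of level $\leq 1$, and the Hodge-Tate weight range $(0, 1)$ appears through the Tate twist $E(1)$ together with the normalization that $\varepsilon$ has weight $1$ (as recalled in the notation section).

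First I would verify that the normalizations match: in particular, that Colmez's $p$-adic Langlands correspondence $\rho \mapsto \Pi(\rho)$ used in \cite{CDN22} has central character compatible with the action of $R^{\rm ps,\delta}_{\sB}$ introduced in Section \ref{ss:block}, and that the determinant character carried by $\rho_{\sB,M}$ agrees with the fixed determinant $\delta = \zeta\varepsilon$ after restricting to the potentially semi-stable locus of type $M$. This matching is routine but essential, since it lines up the bimodule $\Pi^{*}(\rho_{\sB,M})$ in the statement with Pa\v{s}k\=unas' universal $p$-adic Langlands object.

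Second, I would unpack each summand factor by factor. The factor $\JL(M)$ carries the full $D^{\times}$-action and encodes the classical local Jacquet-Langlands correspondence (providing the only non-trivial $D^{\times}$-representation of each type); the factor $\check{R}_{\sB,M}$ is the $E$-linear dual of the universal pseudo-deformation ring of type $M$ and records the Hecke/center action; and $\Pi^{*}(\rho_{\sB,M}) \otimes \rho_{\sB,M}$ is the $p$-adic Langlands bimodule over $R_{\sB,M}$, carrying the diagonal Galois action of $G_{\Q_p}$ paired with the $G$-action. The hat-direct-sum $\widehat{\oplus}_{\sB}$ is a completed direct sum reflecting the block decomposition $\mathfrak{C}_{G,\zeta}(\cO) = \prod_{\sB}\mathfrak{C}_{G,\zeta}(\cO)^{\sB}$.

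The main difficulty, which is already the content of \cite{CDN22} and not something I would need to redo in the present paper, is constructing the $G_{\Q_p} \times G \times D^{\times}$-equivariant isomorphism itself: one must extract the $p$-adic Langlands factor $\Pi^{*}(\rho_{\sB,M})$ out of the étale cohomology of the Drinfeld/Lubin-Tate tower by matching the geometric $G$-action with Colmez's functor on $\mathfrak{C}_{G,\zeta}(\cO)$, and then control the $p$-adic completion uniformly across blocks. For our purposes, once the level-$n$ statement of \cite{CDN22} is invoked, the only work is to restrict $n$ to $1$ and record the resulting indexing set of types, so the proof reduces to a citation plus a bookkeeping check on the weight and type normalizations.
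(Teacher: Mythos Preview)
Your proposal is correct and matches the paper's approach: the paper simply states this theorem as ``a special case of \cite[Thm.~0.1]{CDN22}'' without any further argument, so your plan to cite that result and specialize to level $n=1$ is exactly what is intended. Your additional bookkeeping on normalizations and the unpacking of the summands is more detail than the paper itself provides, but it is all consistent with the cited source.
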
 

Recall that we have assumed  $\End_{G_{\Q_p}}(\brho)=\F$. So the usual universal deformation ring (\`a la Mazur)  of $\brho$ with determinant $\delta$ exists, which we denote by $R_{\brho}^{\delta}$. Let $R_{\brho,M}^+$ be  the potentially semi-stable deformation ring of $\brho$ of Hodge-Tate weights $(0,1)$ and of type $M$. Passing to the trace of a deformation induces  natural morphisms \begin{equation}\label{eq:ps-def}R_{\sB}^{\rm ps,\delta}\ra R_{\brho}^{\delta},\ \ \ R_{\sB,M}^+\ra R_{\brho,M}^+.\end{equation}

\begin{lemma}\label{lem:Kisin}
 Assume that $M$ is a cuspidal type. Then the natural maps  in \eqref{eq:ps-def} are isomorphisms.
\end{lemma}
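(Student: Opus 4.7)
The plan is to leverage the cuspidal assumption on $M$ to force all lifts of type $M$ to be absolutely irreducible, so that the Nyssen--Rouquier theorem lets us recover the Galois representation from its trace, and then promote this pointwise statement to isomorphisms of the deformation rings themselves.

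The key input I would isolate first is that any potentially semistable $\rho : G_{\Q_p} \to \GL_2(\bQp)$ of Hodge--Tate weights $(0,1)$ and cuspidal type $M$ is absolutely irreducible, since its associated Weil--Deligne parameter has supercuspidal smooth part of $W_{\Q_p}$ and therefore admits no invariant line.

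For the second map $R_{\sB,M}^+ \to R_{\brho,M}^+$, both rings are reduced and $\cO$-flat by Kisin's theory of potentially semistable deformation rings of fixed type, so it will suffice to check that the induced map on generic fibres is an isomorphism of affinoid $E$-algebras. At each closed point of $\mathrm{Spm}(R_{\sB,M}^+[1/p])$ the specialized pseudo-deformation is the trace of an absolutely irreducible potentially semistable representation of type $M$; by Nyssen--Rouquier this representation is uniquely determined by its trace and yields a closed point of $\mathrm{Spm}(R_{\brho,M}^+[1/p])$ mapping to it, and injectivity on closed points is similarly immediate. Reducedness then promotes this bijection to an isomorphism of $E$-affinoid algebras, and $\cO$-flatness upgrades it to an integral isomorphism. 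For the first map $R_{\sB}^{\rm ps,\delta} \to R_{\brho}^\delta$, this is classical Nyssen--Rouquier when $\brho$ is absolutely irreducible; when $\brho$ is reducible non-split (the only remaining case under $\End_{G_{\Q_p}}(\brho) = \F$), the identification follows from Pa\v{s}k\=unas' description of both rings via the Colmez functor, together with the identification of $R^{\rm ps,\delta}_{\sB}$ with the Bernstein centre from Theorem~\ref{thm:pas}.

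The main obstacle I anticipate is the integral step for the second map, namely passing from an isomorphism on generic fibres to an isomorphism of $\cO$-algebras. This hinges on Kisin's structural results that both potentially semistable deformation rings of fixed Hodge--Tate weights and fixed type are $\cO$-flat and reduced, so that the $\cO$-algebra structure is pinned down by its generic fibre; without this the pointwise argument via Nyssen--Rouquier would only yield an isomorphism after inverting $p$.
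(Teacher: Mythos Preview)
Your core observation---that cuspidal type forces absolute irreducibility, so the representation is recovered from its trace---is exactly what the paper uses. But your execution is more laboured than necessary, and one step is not justified as stated.

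For the first map $R_{\sB}^{\rm ps,\delta}\to R_{\brho}^{\delta}$, the paper simply cites \cite[Cor.~1.4.4]{Kisin-FM}, which proves this isomorphism directly under the standing hypothesis $\End_{G_{\Q_p}}(\brho)=\F$ (covering both the irreducible and reducible non-split cases at once via a generalized matrix algebra argument). Your detour in the reducible non-split case through Pa\v{s}k\=unas' Bernstein-centre description is vague: Theorem~\ref{thm:pas} identifies $R_{\sB}^{\rm ps,\delta}$ with the centre of the block, but you do not explain how this yields the comparison with $R_{\brho}^{\delta}$.

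For the second map, your argument has a genuine gap. A bijection on closed points between reduced rigid spaces does not by itself give an isomorphism (think of a normalization, or Frobenius in positive characteristic). What you need, and what the paper implicitly uses, is the first isomorphism: once $R_{\sB}^{\rm ps,\delta}\cong R_{\brho}^{\delta}$, both $R_{\sB,M}^+$ and $R_{\brho,M}^+$ are quotients of the \emph{same} ring, and the question becomes whether the two defining ideals coincide. Since cuspidal-type lifts are irreducible and an irreducible lift is determined by its trace, the two loci agree; the paper refers to \cite[Cor.~1.7.2]{Kisin-FM} for the model of this argument. Reorder your proof so the first isomorphism comes first, and replace the ``reducedness promotes bijection'' step by this comparison of ideals.
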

\begin{proof}
It is proved in \cite[Cor.~1.4.4]{Kisin-FM} that the map $R_{\sB}^{\rm ps,\delta}\ra R_{\brho}^{\delta}$ is an isomorphism. To obtain the second isomorphism, it  suffices to note that any deformation of $\brho$ of cuspidal type is irreducible and that an irreducible deformation of $\brho$ is uniquely determined by its trace. Compare the proof of \cite[Cor.~1.7.2]{Kisin-FM}.
\end{proof}

\begin{lemma}\label{lem:types}
Assume that $\sB$ is a block of type (I) or (II). Then the number of   cuspidal types $M$ of level $1$  such that $R_{\sB,M}^+\neq 0$ is exactly $\frac{1}{2}|W_D(\brho)|$. Moreover, for   such a type $M$, $R_{\sB,M}^+\cong \cO[\![\bar{x}]\!]$ where $\bar{x}$ denotes  the image of $x\in R_{\sB}^{\rm ps,\delta}$ in $R_{\sB,M}^+$.  
\end{lemma}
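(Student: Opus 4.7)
The plan is to deduce both statements from results on potentially semi-stable deformation rings attached to $\brho$, combined with the explicit description of $W_D(\brho)$ and the element $x$ of Proposition \ref{prop:x}.

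First I would reduce to the framed setting: by Lemma \ref{lem:Kisin}, for cuspidal types $M$ the natural map $R^+_{\sB,M}\to R^+_{\brho,M}$ is an isomorphism, so it is enough to count cuspidal types $M$ of level $1$ with $R^+_{\brho,M}\neq 0$ and to identify this ring with $\cO[\![\bar x]\!]$. Cuspidal types of level $1$ for $\GL_2(\Q_p)$ are parametrized by regular characters $\chi:\F_{q}^\times\to\cO^\times$ (where $q=p^2$) modulo the equivalence $\chi\sim\chi^p$: each such class gives the inflation to $\GL_2(\Z_p)$ of the Deligne--Lusztig representation $\Theta(\chi)$, and under the classical local Jacquet--Langlands correspondence one has $\JL(\Theta(\chi))=\chi$ viewed as a character of $\cO_D^\times/U_D^1\cong\F_q^\times$.

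Next, the weight part of Serre's conjecture on the quaternion side gives the non-vanishing criterion: $R^+_{\brho,M}\neq 0$ if and only if $\{\chi,\chi^p\}\cap W_D(\brho)\neq\emptyset$ (with the appropriate normalization used to define $W_D(\brho)$). Using the explicit description of $W_D(\brho)$ under the hypothesis that $\sB$ is of type (I) or (II) (see \cite[Thm.~6.3]{HW-JL1}), one checks directly that $W_D(\brho)$ is stable under $\chi\mapsto\chi^p$ and contains no character fixed by this involution. Hence $W_D(\brho)$ splits into $\tfrac{1}{2}|W_D(\brho)|$ orbits of size $2$, and each orbit contributes exactly one cuspidal type $M$ with $R^+_{\brho,M}\neq 0$, yielding the desired count.

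For the structural statement, I would invoke Kisin's theory of potentially Barsotti--Tate deformation rings: for Hodge--Tate weights $(0,1)$ and a (non-scalar) cuspidal type $M$, the ring $R^+_{\brho,M}$ is formally smooth over $\cO$ of relative dimension $1$, hence abstractly isomorphic to $\cO[\![T]\!]$. It remains to show that $\bar{x}$ can be taken as the parameter $T$, equivalently that $\bar{x}\in\fm_{R^+_{\sB,M}}\smallsetminus(\varpi,\fm_{R^+_{\sB,M}}^2)$. Here I would use Proposition \ref{prop:x}: since $P_{\pi}/x$ is a projective envelope of $(\soc_K\pi)^\vee$ in $\mathrm{Mod}_{K,\zeta}^{\mathrm{pro}}(\cO)$, the element $x$ controls precisely the infinitesimal deformations of the $K$-socle of $\pi$ inside $\mathrm{Spec}\,R^{\mathrm{ps},\delta}_{\sB}$. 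A cuspidal type $M$ of level $1$ contains a unique Serre weight upon reduction mod $p$, so the quotient $R^+_{\sB,M}/(\varpi,\bar{x})$ must be a finite $\F$-algebra of length at most $1$; combined with formal smoothness this forces $R^+_{\sB,M}=\cO[\![\bar x]\!]$.

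The main technical obstacle will be the last step, namely matching the abstract parameter of Kisin's formally smooth ring with the explicit image $\bar{x}$ of the Pa\v{s}k\=unas element. The counting of types is essentially bookkeeping once the structure of $W_D(\brho)$ is known, but establishing that $\bar x$ is a genuine regular parameter (and not, say, an element of $\fm_{R^+_{\sB,M}}^2$ or a unit multiple of $\varpi$) requires a careful compatibility between the mod $p$ $K$-socle description and the crystalline / Breuil--Kisin moduli description of $R^+_{\brho,M}/\varpi$.
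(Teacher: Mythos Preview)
Your counting via orbits of $W_D(\brho)$ under $\chi\mapsto\chi^p$ is a legitimate alternative to the paper's route. The paper works on the $\GL_2$ side instead: it uses the Breuil--M\'ezard conjecture to characterize $R^+_{\brho,M}\neq 0$ by $\JH(\overline{M}^{\rm ss})\cap W(\brho)\neq\emptyset$, computes this intersection explicitly from the known shape of $\overline{M}^{\rm ss}$ and of $W(\brho)$, and only afterwards matches the count against $\tfrac12|W_D(\brho)|$ via Khare's description of the quaternionic Serre weights. Your approach is shorter provided you actually check that $W_D(\brho)$ is $\sigma$-stable with no fixed points and that the non-vanishing criterion in terms of $W_D(\brho)$ holds; both are available from \cite{Gee-Geraghty} and \cite[Thm.~6.3]{HW-JL1}, but you should say so.

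There is, however, a real gap in the structural half. You assert that ``Kisin's theory'' gives $R^+_{\brho,M}$ formally smooth over $\cO$ of relative dimension $1$ for cuspidal $M$. Kisin's general results give $\cO$-flatness, equidimensionality, and regularity of the \emph{generic} fibre; formal smoothness over $\cO$ is equivalent to regularity of the special fibre $R^+_{\brho,M}/\varpi$, which by Breuil--M\'ezard means the Hilbert--Samuel multiplicity $|\JH(\overline{M}^{\rm ss})\cap W(\brho)|$ is exactly $1$. This is precisely what the paper verifies by hand, and it is where the hypothesis on the type of $\sB$ (and, in type (II), the reduction to non-split $\brho$) is used. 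Your supporting claim that ``a cuspidal type $M$ of level $1$ contains a unique Serre weight upon reduction mod $p$'' is false as stated: the semisimplified reduction has \emph{two} Jordan--H\"older factors $\Sym^k\F^2\otimes\det^a$ and $\Sym^{p-3-k}\F^2\otimes\det^{k+1+a}$ (with $\Sym^{-1}:=0$); the relevant and non-automatic fact is that at most one of them lies in $W(\brho)$.

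For identifying $\bar{x}$ as a regular parameter, your sketch via Proposition~\ref{prop:x} points in the right direction but does not close the argument. The paper invokes \cite[Thm.~3.3(iii)]{HP} directly: once $R^+_{\brho,M}/\varpi$ is known to be a discrete valuation ring, that result shows the image of $x$ generates its maximal ideal.
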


\begin{proof}
First, we recall that a cuspidal type of level $1$ always has dimension $p-1$, and its mod $\varpi$ reduction (by choosing a $K$-stable $\cO$-lattice) has semisimplification
\begin{equation}\label{eq:modp-red}(\Sym^{k}\F^2\otimes{\det}^a)\oplus (\Sym^{p-3-k}\F^2\otimes{\det}^{k+1+a}),\end{equation}
for suitable $0\leq k\leq p-2$ and $0\leq a \leq p-2$, where $\Sym^{-1}\F^2:=0$ by convention. Conversely, given a pair $(k,a)$ as above, there exists a unique cuspidal type of level $1$ whose mod $\varpi$ reduction is given by \eqref{eq:modp-red}.

Assume that $\sB$ is of type (I) and write \[\brho|_{I_{\Q_p}}=\matr{\omega_2^{r+1}}00{\omega_2^{p(r+1)}}\otimes\omega^a\] with $0\leq r\leq p-1$ and $0\leq a \leq p-2$. Then  we may replace $R_{\sB,M}^+$ by $R_{\brho,M}^+$ via  Lemma \ref{lem:Kisin}. The Breuil-M\'ezard conjecture,  proved in \cite{Kisin-FM} (see also \cite{Pa15}),  implies that $R_{\brho,M}^{+}\neq 0$ if and only if 
\[\JH(\overline{M}^{\rm ss})\cap W(\brho)\neq \emptyset,\]
and in this case  the Hilbert-Samuel multiplicity  of $R_{\brho,M}^+/\varpi$ is equal to the cardinality of $\JH(\overline{M}^{\rm ss})\cap W(\brho)$. Here, $\overline{M}^{\rm ss}$ is the semisimplication of the mod $\varpi$ reduction of an $\cO$-lattice in $M$ and $W(\brho)$ denotes the set of Serre weights of $\brho$ as in \cite{BDJ}. Explicitly, 
\[W(\brho)=\big\{\Sym^r\F^2\otimes{\det}^{a},\Sym^{p-1-r}\F^2\otimes{\det}^{r+a}\big\}.\]
Using the structure of $\overline{M}^{\rm ss}$ recalled above, one checks that there are two (resp.~one) such types $M$ with $\JH(\overline{M}^{\rm ss})\cap W(\brho)\neq\emptyset$ if $r\notin\{0,p-1\}$ (resp.~if $r\in\{0,p-1\}$). Moreover, in each case the set $\JH(\overline{M}^{\rm ss})\cap W(\brho)$ has cardinality $1$. 
Combined with the explicit description of $W_D(\brho)$ due to Khare \cite{Khare} (see also \cite[Thm.~6.3]{HW-JL1}), we easily deduce the first assertion and also that $R_{\brho,M}^+$ is   formally smooth over $\cO$ of relative dimension $1$. The last assertion follows from the property of $x$, see \cite[Thm.~3.3(iii)]{HP}.

The case when $\sB$ is of type (II) is proved in a similar way. We remark that since $\brho$ is assumed to be non-split, the set $|\JH(\overline{M}^{\rm ss})\cap W(\brho)|$ always  has cardinality $1$. 
\end{proof}
 
\begin{corollary}\label{cor:HP}
Keep the notation in Lemma \ref{lem:types}. Let $M$ be a cuspidal type of level $1$ such that $R_{\sB,M}^+\neq 0$ and let $J_{\sB,M}$ denote the kernel of $R_{\sB}^{\rm ps,\delta}\ra R_{\sB,M}^+$. Then $x\notin J_{\sB,M}$ and \[\fm_{x,M}:=(x)+J_{\sB,M}\subset R_{\sB}^{\rm ps,\delta}\] extends to  a maximal ideal of $R_{\sB}^{\rm ps,\delta}[1/p]$, i.e. defines a point of $\mathrm{Spm}(R_{\sB}^{\rm ps,\delta}[1/p])$. 
\end{corollary}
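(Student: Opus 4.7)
The plan is to deduce this corollary essentially immediately from Lemma \ref{lem:types}, which provides the structural isomorphism $R_{\sB,M}^+\cong \cO[\![\bar{x}]\!]$ realizing the image $\bar{x}$ of $x$ as a formal power-series variable. I do not expect any real obstacle here; the work has already been done in Lemma \ref{lem:types}, and what remains is bookkeeping.

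First I would argue that $x\notin J_{\sB,M}$. If we had $x\in J_{\sB,M}$, then $\bar{x}=0$ in $R_{\sB,M}^+$; but under the identification $R_{\sB,M}^+\cong \cO[\![\bar{x}]\!]$ provided by Lemma \ref{lem:types}, the element $\bar{x}$ is a genuine formal variable and in particular nonzero. Hence $\bar{x}\neq 0$, proving the first claim.

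Next, to see that $\fm_{x,M}$ defines a point of $\mathrm{Spm}(R_{\sB}^{\rm ps,\delta}[1/p])$, I would compute the quotient using the third isomorphism theorem and Lemma \ref{lem:types}:
\[
R_{\sB}^{\rm ps,\delta}/\fm_{x,M}\;=\;R_{\sB}^{\rm ps,\delta}/((x)+J_{\sB,M})\;\cong\; R_{\sB,M}^+/(\bar{x})\;\cong\; \cO[\![\bar{x}]\!]/(\bar{x})\;\cong\;\cO.
\]
Inverting $p$ then gives $\big(R_{\sB}^{\rm ps,\delta}/\fm_{x,M}\big)[1/p]\cong E$, which is a field. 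Thus $\fm_{x,M}$ extends to a maximal ideal of $R_{\sB}^{\rm ps,\delta}[1/p]$ with residue field $E$, as required. Both assertions follow without any further input.
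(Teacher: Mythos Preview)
Your proof is correct and matches the paper's approach: the paper simply says the result follows from Lemma~\ref{lem:types}, and your argument is exactly the unpacking of that citation via the isomorphism $R_{\sB,M}^+\cong \cO[\![\bar{x}]\!]$.
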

\begin{proof}
The result follows from Lemma \ref{lem:types}.  
\end{proof}

Now we prove $r_2=2 |W_D(\brho)|$.  We have seen that  \[r_2=\dim_{E}\Hom_{G}^{\rm cont}\big(P_{\brho}/x,H^1_{\et}(\sM_{1,\bQp}^p,E(1))\big).\]
  Using Theorem \ref{thm:CDN-main} and  Lemma \ref{lem:types}, it suffices to prove that for each cuspidal type $M$ of level $1$ such that $R_{\sB,M}\neq0$,  
\[\dim_E\Hom_{G}^{\rm cont}\big(P_{\brho}/x, H^1_{\et}(\sM_{1,\bQp}^p)[M]\otimes \JL(M)\big)=4,\]
where $H^1_{\et}(\sM_{1,\bQp}^p)[M]:=\Pi^{*}(\rho_{\mathscr{B},M})\otimes \rho_{\mathscr{B},M}\otimes\check{R}_{\mathscr{B},M}$. 
Since $\dim_E\mathrm{JL}(M)=2$, this again reduces to proving that
\[ \Hom_{G}^{\rm cont}\big(P_{\brho}/x, H^1_{\et}(\sM_{1,\bQp}^p)[M]\big)\]
has dimension $2$. 
By \cite[Thm.~5.24(i)]{CDN22}, this space is annihilated by $J_{\sB,M}$ (defined in Corollary \ref{cor:HP}),  hence  is equal to 
\[\Hom_{G}^{\rm cont}\big(P_{\brho}/\fm_{x,M},H^1_{\et}(\sM_{1,\bQp}^p)[M]\big)\]
where $\fm_{x,M}:=(x)+J_{\sB,M}$. By Corollary \ref{cor:HP}, $\fm_{x,M}$ defines a point in $\mathrm{Spm}(R_{\sB}^{\rm ps,\delta}[1/p])$, which actually lies in the closed subspace $\mathrm{Spm}(R_{\sB,M})$. It then  follows  from the proof of \cite[Thm.~5.24(ii)]{CDN22} that 
\[\Hom_G^{\rm cont}\big(P_{\brho}/\fm_{x,M},H^1_{\et}(\sM_{1,\bQp}^p)[M]\big)\cong \Hom_G^{\rm cont}\big(P_{\brho},\Pi(\rho_{x,M})^*\big) \otimes \rho_{x,M}\]
which has dimension $2$  because 
\[\dim_E\Hom_G^{\rm cont}\big(P_{\brho},\Pi(\rho_{x,M})^*\big)=\dim_{\F}\Hom_G^{\rm cont}\big(P_{\brho},\pi(\brho)^{\vee}\big)=1,\]
where the first equality holds by the projectivity of $P_{\brho}$.  

\medskip 
 
This finishes the proof of Proposition \ref{prop:d0<2}, hence  of Theorem \ref{thm:multione-bis}.   
 
\section{The main result} \label{sec:main}

If $\rho:G_{\Q_p}\ra \GL_2(E)$ is a $p$-adic Galois  representation, 
we let $\Pi(\rho)\in \mathrm{Ban}_G^{\rm adm}(E)$ denote the  Banach space representation of $G=\GL_2(\Q_p)$ associated to $\rho$ by the $p$-adic local Langlands correspondence (\cite{Co}),  normalized in such a way that the central character of $\Pi(\rho)$ is equal to $\det(\rho)\varepsilon^{-1}$.

Recall that  Scholze's functor extends to the category $\mathrm{Ban}^{\rm adm}_G(E)$, see \S\ref{sec:Scholze}. Although $\check{\cS}^1(\Pi(\rho))$ is residually of infinite length, we expect that $\check{\cS}^1(\Pi(\rho))$ itself is topologically of finite length. In fact, this has been proven if the difference of the Hodge-Tate-Sen weights of $\rho$ is not a non-zero integer, see \cite[Thm.~1.2]{DPS22}.

We want to apply the criterion Theorem \ref{thm:finitelength} to
\[\JL(\rho):=\Hom_{G_{\Q_p}}\big(\rho(-1),\check{\cS}^1(\Pi(\rho))\big).
\] However, it is not clear to us whether $\JL(\rho)$ is non-zero or $\check{\cS}^1(\Pi(\rho))\cong\rho(-1)\otimes\JL(\rho)$, and even this is the case, we still don't know whether $\JL(\rho)^{\rm lalg}$ is finite dimensional over $E$. Fortunately, both the questions have a positive answer \emph{if $\rho$ has a global origin}.
\medskip 

Let us fix the global setup, closely following \cite{Paskunas-JL}. 
Let $F$ be a totally real number field with a fixed place $\p$ above $p$ such that $F_{\p} \cong \Q_p$. Let $B_0$ be a quaternion algebra with centre $F$, ramified at all the infinite places of $F$ and split at $\p$. Fix a maximal order $\cO_{B_0}$ of $B_0$ and  an isomorphism $(\cO_{B_0})_v\cong M_2(\cO_{F_v})$ for each  place $v\notin \Sigma$, where $\Sigma$ denotes the set of finite places of $F$ at which $B_0$ is ramified.   Let $U=\prod_vU_v$ be a compact open subgroup contained in $\prod_v(\cO_{B_0})_v^{\times}$. Assume that $U_{\p}=\GL_2(\Z_p)$ and that $U_v$ is a pro-$p$ group at other places above $p$. We may assume that $U$ is sufficiently small by shrinking $U_v$ at any place $v\neq \p$. 

Write $U^{p}=\prod_{v\nmid p}U_v$.
Let $S(U^p,\cO)$ be the space of continuous functions 
\[f:B_0^{\times}\backslash (B_0\otimes_F\mathbb{A}_F^f)^{\times}/U^p\ra \cO.\]  
Let $\psi:(\mathbb{A}_F^f)^{\times}/F^{\times}\ra \cO^{\times}$ be a continuous character such that $\psi$ is trivial on $(\mathbb{A}_F^f)^{\times}\cap U^p$. 
Let \[S_{\psi}(U^p,\cO):=\big\{f\in S(U^p,\cO)\ |\   f(gz)=\psi(z)f(g),\ \forall z\in (\mathbb{A}_F^{f})^{\times} \big\}.\] 
If $\lambda$ is a continuous representation of $U_p^{\p}=\prod_{v|p,v\neq \p}U_v$ on a finite free $\cO$-module, denote by \[S_{\psi,\lambda}(U^{\p},\cO):=\Hom_{U_p^{\p}}(\lambda,S_{\psi}(U^p,\cO)).\]
The group $(B_0\otimes_FF_{\p})^{\times}\cong G$ acts continuously on $S_{\psi}(U^p,\cO)$ and $S_{\psi,\lambda}(U^{\p},\cO)$ by right translation.

Let $S$ be a finite set of finite places of $F$ containing $\Sigma$, all the places above $p$, all the places $v$ where $U_v$ is not maximal and all the ramified places of $\psi$. Let $\mathbb{T}_S^{\rm univ}=\cO[T_v,S_v]_{v\notin S}$ be the abstract Hecke algebra, which acts on $S_{\psi}(U^p,\cO)$ and $S_{\psi,\lambda}(U^{\p},\cO)$  in the usual way (as in \cite[\S5]{Paskunas-JL}). 

 Let $G_{F,S}$ be the absolute Galois group of the maximal extension of $F$ in $\overline{F}$ which is unramified outside $S$. Fix a continuous absolutely irreducible odd representation 
\[\overline{r}: G_{F,S}\ra \GL_2(\F).\]
This gives rise to a maximal ideal $\fm$ of $\mathbb{T}^{\rm univ}_{S}$, given as the kernel of the map $\mathbb{T}^{\rm univ}_S\ra \F$ sending $T_v$ to $\mathrm{tr}(\overline{r}(\mathrm{Frob}_v))$ and $S_v$ to $q_v^{-1}\det(\overline{r}(\mathrm{Frob}_v))$, where $\mathrm{Frob}_v \in G_{F,S}$ is a geometric Frobenius element and $q_v$ is the cardinality of the residue field at $v$. \medskip

Let $B$ the quaternion algebra over $F$, which is ramified at $\p$ and splits at a fixed infinite place $\infty_F$, and has the same ramification behaviour as $B_0$ at all the other places. Fix an isomorphism
\[B_0\otimes_F\mathbb{A}_F^{\p,\infty_{F}}\cong B\otimes_F\mathbb{A}_F^{\p,\infty_F}.\]
This allows to view $U^{\p}$ as a  subgroup  of $(B\otimes_{F}\mathbb{A}_F^f)^{\times}$. 
Then $D:=B\otimes_FF_{\p}$ is the non-split quaternion algebra over $F_{\p}\cong \Q_p$. Define
\[\widehat{H}^1(U^{p},\cO):=\varprojlim_{n\geq 1}\varinjlim_{K_{p}}H^1_{\et}(X(K_{p}U^{p})_{\overline{F}},\cO/\varpi^n),\]
where $K_{p}$ runs over compact open subgroups of $\prod_{v|p} (\cO_B)_v^{\times}$ and $X(K_{p}U^{p})$ is the corresponding Shimura curve for $B$ defined over $F$.
For $\psi$ and $\lambda$ as above, we may analogously define 
$\widehat{H}^1_{\psi}(U^{p},\cO)$ and $\widehat{H}^1_{\psi,\lambda}(U^{\p},\cO)$, see \cite[\S6]{Paskunas-JL}. 

The abstract Hecke algebra $\mathbb{T}^{\rm univ}_{S}$ naturally acts on $H^1_{\et}(X(K_{p}U^{p})_{\overline{F}},\cO)$. Let $\mathbb{T}(K_pU^p)$ denote the image of $\mathbb{T}^{\rm univ}_{S}$ in $\End\big(H^1_{\et}(X(K_{p}U^{p})_{\overline{F}},\cO)\big)$ and $\mathbb{T}(K_pU^p)_{\fm}$ be its $\fm$-adic completion. Let
\[\mathbb{T}(U^{p})_{\fm}:=\varprojlim_{K_p}\mathbb{T}(K_pU^p)_{\fm}.\]
It acts  faithfully and continuously on $\widehat{H}^1(U^p,\cO)_{\fm}$ and preserves $\widehat{H}^1_{\psi,\lambda}(U^{\p},\cO)_{\fm}$. Finally we let  $\mathbb{T}(U^{\p})_{\fm}$ be the image of $\mathbb{T}(U^{p})_{\fm}$ in $\End\big(\widehat{H}^1_{\psi,\lambda}(U^{\p},\cO)_{\fm}\big)$. 

One can prove as in \cite[Cor.~7.3]{Scholze} that the Hecke algebra  $\mathbb{T}(U^{\p})_{\fm}$ also acts continuously on $S_{\psi,\lambda}(U^{\p},\cO)_{\fm}.$  Based on \cite[Thm.~6.2]{Scholze}, \cite[Prop.~6.3]{Paskunas-JL} proves that there is an isomorphism of $\mathbb{T}(U^{\p})_{\fm}[G_{F_{\p}}\times D^{\times}]$-modules
\begin{equation}\label{eq:lg-comp}\check{\cS}^1(S_{\psi,\lambda}(U^{\p},\cO)_{\fm}^d)\cong \widehat{H}^1_{\psi,\lambda}(U^{\p},\cO)_{\fm}^d.\end{equation}
As a consequence,  $S_{\psi,\lambda}(U^{\p},\cO)_{\fm}$ is also faithful over $\mathbb{T}(U^{\p})_{\fm}.$

From now on, we assume that $\widehat{H}^1_{\psi,\lambda}(U^{\p},\cO)_{\fm}\neq0$, i.e.~$\overline{r}$ is modular. This is equivalent to $S_{\psi,\lambda}(U^{\p},\cO)_{\fm}\neq 0$ by \eqref{eq:lg-comp} and the proof of \cite[Thm.~6.5]{Paskunas-JL}. It is well-known that there is an associated $2$-dimensional Galois representation
\[r_{\fm}: G_{F,S}\ra \GL_2(\mathbb{T}(U^{\p})_{\fm})\]
deforming $\overline{r}$ (see \cite[Prop.~5.7]{Scholze}). For any $y\in \mathrm{Spm}(\mathbb{T}(U^{\p})_{\fm}[1/p])$, $r_{\fm}$ specializes to a continuous Galois representation $r_{y}:G_{F,S}\ra \GL_2(\kappa(y))$, where $\kappa(y)$ denotes the residue field of $\mathbb{T}(U^{\p})_{\fm}[1/p]$ at $y$.
 
\begin{proposition}\label{prop:isotypic}
Let $y\in \mathrm{Spm}(\mathbb{T}(U^{\p})_{\fm}[1/p])$ be such that  $\rho_y:=r_y(1)|_{G_{F_{\p}}}$ is irreducible. Then 
\begin{equation}\label{eq:n-copy}
\big(S_{\psi,\lambda}(U^{\p},\cO)_{\fm}\otimes_{\cO}E\big)[\fm_{y}]\cong \Pi(\rho_y)^{\oplus n}.\end{equation}
for some integer $n\geq 0$, where $\fm_y\subset \mathbb{T}(U^{\p})_{\fm}[1/p]$ denotes the maximal ideal associated to $y$.
\end{proposition}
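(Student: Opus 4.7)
The plan is to combine Paskunas's $p$-adic local--global compatibility with the uniqueness of the $p$-adic local Langlands correspondence for $\GL_2(\Q_p)$ at absolutely irreducible points. Set
\[
\Pi := \big(S_{\psi,\lambda}(U^{\p},\cO)_{\fm}\otimes_{\cO}E\big)[\fm_{y}];
\]
after enlarging $E$ so that $\kappa(y) = E$, this is an admissible unitary $E$-Banach space representation of $G$ with central character $\det(\rho_y)\varepsilon^{-1}$ (read off from the action of the centre on the eigenspace). If $\Pi = 0$ we take $n = 0$ and we are done; from now on assume $\Pi \neq 0$.

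First I would verify that every topologically irreducible subquotient of $\Pi$ lies in the block $\sB$ of $\brho(1)|_{G_{\Q_p}}^{\rm ss}$. Writing $M := S_{\psi,\lambda}(U^{\p},\cO)_{\fm}^d \in \mathfrak{C}_{G,\zeta}(\cO)$, the trace of $r_{\fm}(1)|_{G_{F_{\p}}}$ defines a pseudo-deformation of $\brho(1)|_{G_{\Q_p}}^{\rm ss}$ and therefore factors through a map $R_{\sB}^{\rm ps,\delta} \to \mathbb{T}(U^{\p})_{\fm}$. By Theorem \ref{thm:pas}, the $R_{\sB}^{\rm ps,\delta}$-action on $M$ agrees with the one obtained from its $\mathbb{T}(U^{\p})_{\fm}$-module structure through this map, placing $M$ (hence $\Pi^d$) in $\mathfrak{C}_{G,\zeta}(\cO)^{\sB}$.

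Next I would apply Colmez's Montreal functor $\check V$ (extended to admissible Banach representations as in \S\ref{sec:Scholze}) to $\Pi$ and invoke the local--global compatibility of \cite[\S6]{Paskunas-JL}: combining \eqref{eq:lg-comp} with the explicit computation of $\check V$ on projective envelopes, one obtains a natural $E[G_{F_{\p}}]$-equivariant isomorphism $\check V(\Pi^d) \otimes_{\cO} E \cong \rho_y \otimes_E V_y$ for a finite dimensional $E$-vector space $V_y$; put $n := \dim_E V_y \geq 1$. Finally, by Colmez--Dospinescu the absolute irreducibility of $\rho_y$ implies that $\Pi(\rho_y)$ is topologically irreducible with $\End_G^{\rm cont}(\Pi(\rho_y)) = E$, and Paskunas's realization of the $p$-adic local Langlands correspondence (\cite{Pa13}) makes $\check V$ fully faithful on admissible unitary Banach representations whose Jordan--H\"older factors lie in $\sB$ and which are annihilated by $\fm_y$. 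The unique such representation whose $\check V$-image is $\rho_y^{\oplus n}$ is $\Pi(\rho_y)^{\oplus n}$, proving the claim.

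The principal obstacle is the second step: producing the local--global compatibility with sharp control on multiplicities. This relies on Paskunas's flatness of the projective envelope $P_{\brho}$ over $R_{\sB}^{\rm ps,\delta}$ (\cite[Cor.~3.12]{Pa13}) for blocks of types (I)--(III), together with the identification of $M$ with a module of the form $P_{\brho} \,\widehat{\otimes}_{R_{\sB}^{\rm ps,\delta}} \mathbb{T}(U^{\p})_{\fm}^{\oplus m}$ that turns specialization at $y$ into the desired isotypic decomposition; blocks of type (IV) require a small separate argument in the manner of \cite[\S6]{Paskunas-JL}.
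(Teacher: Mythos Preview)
Your overall strategy is the right one and matches what actually underlies the result: apply Colmez's functor $\check V$, use local--global compatibility to identify $\check V$ of the global piece with (copies of) $\rho_y$, and then invoke the results of \cite{Pa13} on the $p$-adic Langlands correspondence at absolutely irreducible points to recover $\Pi(\rho_y)^{\oplus n}$. In the paper this proposition is not reproved at all; it is simply cited from \cite[Cor.~5.7]{Paskunas-JL}, with a remark about the normalization shift (their $\Pi$ attached to $\rho$ is our $\Pi(\rho(1))$). What you have written is a sketch of the argument behind that corollary.

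There is, however, a genuine confusion in your second step that you should fix. The isomorphism \eqref{eq:lg-comp} is about Scholze's functor $\check{\cS}^1$ relating the definite side to the Shimura curve side; it has nothing to do with Colmez's $\check V$ and plays no role in proving this proposition. Likewise, \cite[\S6]{Paskunas-JL} concerns the Shimura curve cohomology, not the definite quaternionic forms $S_{\psi,\lambda}(U^{\p},\cO)_{\fm}$. The local--global compatibility you actually need is the Emerton/Pa\v{s}k\=unas one on the \emph{definite} side, established in \cite[\S5]{Paskunas-JL} (building on \cite{Pa13}): it says that $\check V$ applied to $S_{\psi,\lambda}(U^{\p},\cO)_{\fm}^d$ recovers $r_{\fm}(1)|_{G_{F_{\p}}}$, so specializing at $\fm_y$ gives $\rho_y$ with the correct multiplicity. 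Also, your description of $M$ as $P_{\brho}\,\widehat{\otimes}_{R_{\sB}^{\rm ps,\delta}} \mathbb{T}(U^{\p})_{\fm}^{\oplus m}$ is stronger than what is needed or proved; Pa\v{s}k\=unas's argument works directly from projectivity/flatness of $P_{\brho}$ and the action of $R_{\sB}^{\rm ps,\delta}$ via Theorem~\ref{thm:pas}, without requiring that tensor product description of the global module.
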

\begin{proof}
The result is proved in \cite[Cor.~5.7]{Paskunas-JL}. Remark that our normalization  is different from that of \cite[\S2]{Paskunas-JL}; precisely, the Banach space representation corresponding to $\rho$ in  \emph{loc.~cit.} is equal to $\Pi(\rho(1))$. 
\end{proof}

Let $\brho:=\overline{r}(1)|_{G_{F_{\p}}}$ 
and $\sB$ be the block corresponding to $\brho^{\rm ss}$.
 
\begin{proposition}\label{prop:lalg}
Assume that $\sB$ is not of type (IV). Let $y\in \mathrm{Spm}(\mathbb{T}(U^{\p})_{\fm}[1/p])$ be such that  $\rho_y:=r_y(1)|_{G_{F_{\p}}}$ is irreducible and  $\big(S_{\psi,\lambda}(U^{\p},\cO)_{\fm}\otimes_{\cO}E\big)[\fm_{y}]\neq 0.$ Then $\check{\cS}^1(\Pi(\rho_y))^{\rm lalg}$ is finite dimensional and $\check{\cS}^1(\Pi(\rho_y))$ is $\rho_y(-1)$-isotypic, i.e. \begin{equation}\label{eq:isotypic}\check{\cS}^1(\Pi(\rho_y))\cong\rho_y(-1)\otimes \JL(\rho_y)\end{equation}
for some admissible unitary Banach space representation $\JL(\rho_y)$ of $D^{\times}$.
\end{proposition}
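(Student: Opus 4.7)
The plan is to combine the local-global compatibility \eqref{eq:lg-comp} with Proposition \ref{prop:isotypic} and the Eichler--Shimura-type Hecke--Galois compatibility on completed cohomology of Shimura curves for $B$. Set $\widetilde S := S_{\psi,\lambda}(U^\p,\cO)_\fm \otimes_\cO E$ and $\widetilde H := \widehat H^1_{\psi,\lambda}(U^\p,\cO)_\fm \otimes_\cO E$; dualizing \eqref{eq:lg-comp} and inverting $p$ yields a $\mathbb T(U^\p)_\fm[1/p][D^\times \times G_{F_\p}]$-equivariant isomorphism $\check\cS^1(\widetilde S) \cong \widetilde H$ of admissible unitary $E$-Banach representations.

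First, by Proposition \ref{prop:isotypic}, $\widetilde S[\fm_y] \cong \Pi(\rho_y)^{\oplus n}$ for some $n \geq 1$; this is a closed $G$-subrepresentation of $\widetilde S$, and I set $\Pi_3 := \widetilde S/\Pi(\rho_y)^{\oplus n}$. To apply the cohomological $\delta$-functor long exact sequence for $\{\check\cS^i\}$ on Banach representations (obtained from the homological $\delta$-functor on $\mathfrak C_G(\cO)$ by Schikhof duality) to the sequence $0 \to \Pi(\rho_y)^{\oplus n} \to \widetilde S \to \Pi_3 \to 0$, I need $\check\cS^0(\Pi_3) = 0$. For any open bounded $G$-stable lattice $\Theta_3 \subset \Pi_3$, the reduction $\Theta_3/\varpi$ is a quotient of $S_{\psi,\lambda}(U^\p,\F)_\fm$, whose Jordan--H\"older factors all lie in $\sB$; since $\sB$ is not of type (IV) it contains no character, so no JH factor of $\Theta_3/\varpi$ has non-trivial $\SL_2(\Q_p)$-invariants, whence $(\Theta_3/\varpi)^{\SL_2(\Q_p)} = 0$. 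Left-exactness of $(-)^{\SL_2(\Q_p)}$ applied inductively to $0 \to \Theta_3/\varpi \to \Theta_3/\varpi^n \to \Theta_3/\varpi^{n-1} \to 0$ gives $(\Theta_3/\varpi^n)^{\SL_2(\Q_p)} = 0$ for all $n$; Theorem \ref{thm:Scholze}(ii) then yields $\cS^0(\Theta_3/\varpi^n) = 0$, so passing to inverse limits gives $\check\cS^0(\Pi_3) = 0$. The LES now provides a $D^\times \times G_{F_\p}$-equivariant closed injection
\[\check\cS^1(\Pi(\rho_y))^{\oplus n} \hookrightarrow \check\cS^1(\widetilde S) \cong \widetilde H,\]
whose image, by $\mathbb T$-equivariance of $\check\cS^1$, lies in $\widetilde H[\fm_y]$.

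Next, to compute $\widetilde H[\fm_y]$ as a $G_{F_\p}$-module: the $G_{F,S}$-action on $\widetilde H$ coming from the Shimura tower for $B$ is governed by $r_\fm: G_{F,S} \to \GL_2(\mathbb T(U^\p)_\fm)$ through Eichler--Shimura. Absolute irreducibility of $\overline r$ makes $\widetilde H$ into an $r_\fm$-isotypic $\mathbb T(U^\p)_\fm[G_{F_\p}]$-module, so $\widetilde H[\fm_y] \cong \rho_y(-1) \otimes_E \Sigma_y$ for some admissible unitary $D^\times$-Banach representation $\Sigma_y$. Combined with the injection from Step 1 and the absolute irreducibility of $\rho_y$, this forces $\check\cS^1(\Pi(\rho_y))$ to be $\rho_y(-1)$-isotypic as a $G_{F_\p}$-module, and setting $\JL(\rho_y) := \Hom_{G_{F_\p}}(\rho_y(-1), \check\cS^1(\Pi(\rho_y)))$ yields the decomposition \eqref{eq:isotypic}.

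Finally, for finiteness of $\JL(\rho_y)^{\rm lalg}$: by Emerton-style classicality, locally algebraic vectors in $\widetilde H$ come from classical cohomology of the Shimura curves for $B$, realized as a colimit over levels and algebraic types; the $\fm_y$-isotypic piece at any fixed level and type is finite-dimensional by classical multiplicity combined with the classical local Jacquet--Langlands correspondence (which applies because $\rho_y$ irreducible forces the local component at $\p$ to be discrete series). Hence $\widetilde H[\fm_y]^{\rm lalg}$ is finite-dimensional, and transport along the injection yields finiteness of $\check\cS^1(\Pi(\rho_y))^{\rm lalg}$. The main obstacle is the $\check\cS^0$-vanishing underpinning Step 1: without it the passage from the closed embedding into $\widetilde S$ to an injection on $\check\cS^1$ would fail, and this is precisely where the hypothesis that $\sB$ is not of type (IV) enters in an essential way.
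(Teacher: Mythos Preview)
Your proof is correct and follows essentially the same strategy as the paper's. The paper's argument is terser: it directly cites \cite[Prop.~6.15]{Paskunas-JL} for the finite-dimensionality of locally algebraic vectors, and invokes the proof of \cite[Thm.~6.11]{Paskunas-JL} to obtain an \emph{isomorphism} $\check{\cS}^1\big(\widetilde{S}[\fm_y]\big)\cong \widetilde{H}[\fm_y]$ rather than merely an injection. Your long-exact-sequence argument, using the vanishing of $\check{\cS}^0(\Pi_3)$ via Theorem \ref{thm:Scholze}(ii) and the type-(IV) hypothesis, is precisely the mechanism underlying that citation; you have simply unpacked it. The injection you obtain is weaker than the isomorphism, but it suffices for both conclusions of the proposition: isotypicity transports along closed $G_{F_\p}$-equivariant embeddings since $\rho_y(-1)$ is finite-dimensional irreducible, and finite-dimensionality of locally algebraic vectors is inherited by closed subspaces. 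Your sketch of the lalg finiteness via classicality and the classical Jacquet--Langlands correspondence is the same argument as in \cite[Prop.~6.15]{Paskunas-JL}; the key point you identify---that only finitely many weight-type pairs can contribute because $r_y$, being fixed, pins down the Hodge--Tate weights and inertial type at $\p$---is exactly right. One small remark: your claim that all Jordan--H\"older factors of $S_{\psi,\lambda}(U^\p,\F)_\fm$ lie in $\sB$ is correct but deserves a citation to local-global compatibility (e.g.\ Emerton or \cite{Pa13}); the paper avoids this by working directly with the identification from \eqref{eq:lg-comp}.
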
 
 
\begin{proof}
The first assertion is proved  in \cite[Prop.~6.15]{Paskunas-JL}. The running assumption in \emph{loc.~cit.} requires $\brho$ to be reducible, but this is not necessary in the proof.

By the assumption on $\brho$ and using \eqref{eq:lg-comp}, we get an isomorphism of $G_{F_{\p}}\times D^{\times}$-representations
\[\check{\cS}^1\big((S_{\psi,\lambda}(U^{\p},\cO)_{\fm}\otimes_{\cO}L)[\fm_y]\big)\cong \big(\widehat{H}^1_{\psi,\lambda}(U^{\p},\cO)_{\fm}\otimes_{\cO}E\big)[\fm_y],\]
see the proof of \cite[Thm.~6.11]{Paskunas-JL}. On the other hand, combining Propositions 5.3, 5.4, 5.8 of \cite{Scholze}, we obtain that $\widehat{H}^1_{\psi,\lambda}(U^{\p},\cO)_{\fm}$ is $r_{\fm}$-isotypic and $\big(\widehat{H}^1_{\psi,\lambda}(U^{\p},\cO)_{\fm}\otimes_{\cO}E\big)[\fm_y]$ is $r_{y}$-isotypic.  Hence, 
 $\check{\cS}^1(\Pi(\rho_y))$ is also $r_y$-isotypic by Proposition \ref{prop:isotypic}; we need the assumption $\big(S_{\psi,\lambda}(U^{\p},\cO)_{\fm}\otimes_{\cO}E\big)[\fm_{y}]\neq 0$ to ensure that $n>0$ in \eqref{eq:n-copy}. Letting  
 \[\JL(\rho_y):=\Hom_{G_{F_{\p}}}\big(\rho_y(-1),\check{\cS}^1(\Pi(\rho_y))\big),\] 
we obtain  \eqref{eq:isotypic}. 
\end{proof}

\begin{lemma}\label{lem:flat}
Assume that $\sB$ is of type (I) or (II), and is generic if $\sB$ is of type (I). Then both  $S_{\psi,\lambda}(U^{\p},\cO)_{\fm}^d$ and $\widehat{H}^1_{\psi,\lambda}(U^{\p},\cO)_{\fm}^d$ are flat over $\mathbb{T}(U^{\p})_{\fm}.$
\end{lemma}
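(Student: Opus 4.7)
The plan is to reduce the flatness statement to known results via Taylor-Wiles patching, invoking Pa\v{s}k\=unas' flatness theorem on the projective envelope $P_{\brho}$ as the central input. The module $M := S_{\psi,\lambda}(U^{\p},\cO)_{\fm}^d$ naturally lies in $\mathfrak{C}_{G,\zeta'}(\cO)^{\sB}$ for an appropriate character $\zeta'$ coming from $\psi$ and $\lambda$; by Theorem \ref{thm:pas}, the ring $R^{\rm ps,\delta}_{\sB}$ acts on $M$, and this action factors through the surjection $R^{\rm ps,\delta}_{\sB} \twoheadrightarrow \mathbb{T}(U^{\p})_{\fm}$ induced by $\mathrm{tr}(r_{\fm})$.

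Next, I would invoke the Taylor-Wiles patching machinery as set up in this global setting (as in \cite[\S6]{Scholze}, \cite[\S6]{Paskunas-JL}, following Gee-Newton \cite{Gee-Newton}) to obtain a patched module $M_\infty$, carrying commuting actions of a formal power series ring $S_\infty = \cO[\![x_1,\dots,x_h]\!]$ and a ring $R_\infty = R^{\rm ps,\delta}_{\sB}[\![y_1,\dots,y_s]\!]$, such that $M_\infty$ is finite projective over $S_\infty$, $M_\infty / \mathfrak{a}_\infty M_\infty \cong M$ (with $\mathfrak{a}_\infty$ the augmentation ideal of $S_\infty$), and the $R_\infty$-action on this quotient factors through a surjection $R_\infty / \mathfrak{a}_\infty R_\infty \twoheadrightarrow \mathbb{T}(U^{\p})_{\fm}$. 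The crucial input now is Pa\v{s}k\=unas' flatness theorem \cite[Cor.~3.12]{Pa13}: since $\sB$ is not of type (IV) (with the genericity hypothesis ensuring the machinery of \cite{Pa13} applies in the type (I) case), $P_{\brho}$ is flat over $R^{\rm ps,\delta}_{\sB}$, and by the patching construction $M_\infty$ is flat over $R_\infty$.

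To pass from flatness of $M_\infty$ over $R_\infty$ to flatness of $M$ over $\mathbb{T}(U^{\p})_{\fm}$, I would apply the standard miracle-flatness argument: the Taylor-Wiles numerical coincidence gives $\dim R_\infty = \dim S_\infty + \dim \mathbb{T}(U^{\p})_{\fm}$; combined with flatness of $M_\infty$ over both $S_\infty$ and $R_\infty$, the Auslander-Buchsbaum formula (or a direct depth/Tor-dimension computation) yields that $M = M_\infty / \mathfrak{a}_\infty M_\infty$ is flat over $\mathbb{T}(U^{\p})_{\fm}$. This is the step I expect to be the main obstacle, as it requires careful dimension bookkeeping for the patched rings and checking that all hypotheses of miracle flatness are satisfied in our setup.

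Finally, for $\widehat{H}^1_{\psi,\lambda}(U^{\p},\cO)_{\fm}^d$, I would apply the local-global compatibility \eqref{eq:lg-comp}, which identifies it (after Schikhof duality) with $\check{\cS}^1$ applied to $M$. Since $\sB$ is not of type (IV), the vanishing $\cS^0(\overline{M}) = \cS^2(\overline{M}) = 0$ (via Theorem \ref{thm:Scholze} and Proposition \ref{prop:S1-injective}) makes $\check{\cS}^1$ exact on the relevant subcategory, so $\check{\cS}^1$ preserves flatness over the Hecke algebra (whose action commutes with $\check{\cS}^1$). The flatness of $\widehat{H}^1_{\psi,\lambda}(U^{\p},\cO)_{\fm}^d$ over $\mathbb{T}(U^{\p})_{\fm}$ then follows from the flatness of $M$.
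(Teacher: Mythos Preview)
Your approach for $S_{\psi,\lambda}(U^{\p},\cO)_{\fm}^d$ is the same as the paper's---Taylor--Wiles patching together with the miracle flatness criterion, exactly as in \cite[Thm.~A(1)]{Gee-Newton}---but your presentation tangles the logic. In the Gee--Newton argument, miracle flatness is what \emph{establishes} that $M_\infty$ is flat over $R_\infty$: one uses that $M_\infty$ is finite free over $S_\infty$ (hence Cohen--Macaulay), that $R_\infty$ is regular (since $R^{\rm ps,\delta}_{\sB}$ is formally smooth over $\cO$ for blocks of type (I) or (II)), and the Taylor--Wiles dimension match. Pa\v{s}k\=unas' flatness of $P_{\brho}$ over $R^{\rm ps,\delta}_{\sB}$ is not the input here, and it does not by itself yield flatness of $M_\infty$ over $R_\infty$; your sentence ``by the patching construction $M_\infty$ is flat over $R_\infty$'' is precisely the step that miracle flatness supplies. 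Once that is in hand, flatness of $M$ over $\mathbb{T}(U^{\p})_{\fm}$ follows because faithfulness forces the surjection $R_\infty/\mathfrak{a}_\infty\twoheadrightarrow \mathbb{T}(U^{\p})_{\fm}$ to be an isomorphism, not by a second application of miracle flatness.

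For $\widehat{H}^1_{\psi,\lambda}(U^{\p},\cO)_{\fm}^d$ you take a genuinely different route from the paper. The paper simply runs the same patching-plus-miracle-flatness argument directly on the Shimura-curve side, citing \cite[Prop.~8.10]{HW-JL1}. Your alternative---transporting flatness from $S_{\psi,\lambda}^d$ through \eqref{eq:lg-comp} using that $\check{\cS}^1$ is exact on $\mathfrak{C}_{G,\zeta}(\cO)^{\sB}$---is valid and more economical, since it avoids a second patching setup. The exactness you need follows from the vanishing of $\cS^0$ and $\cS^2$ on every object of the block, which in turn comes from Theorem~\ref{thm:Scholze}(ii) for $\cS^0$ and Theorem~\ref{thm:Scholze}(iv),(v) (plus right-exactness of $\cS^2$) for $\cS^2$; your citation of Proposition~\ref{prop:S1-injective} for this is misplaced.
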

\begin{proof}
This is proved by the same argument of \cite[Thm.~A(1)]{Gee-Newton} by using the the miracle flatness criterion (\cite[Prop.~A.30]{Gee-Newton}). See \cite[Prop.~8.10]{HW-JL1} for the case of $\widehat{H}^1_{\psi,\lambda}(U^{\p},\cO)_{\fm}^d.$ The case of  $S_{\psi,\lambda}(U^{\p},\cO)_{\fm}^d$ is similar.
\end{proof}
 
Our main result is the following.
\begin{theorem}\label{thm:gl-main}
Assume that $\sB$ is of type (I) or (II), and is generic.
Let $y\in \mathrm{Spm}(\mathbb{T}(U^{\p})_{\fm}[1/p])$ be such that  $\rho_y:=r_y(1)|_{G_{F_{\p}}}$ is irreducible. Then $\JL(\rho_y)$ is topologically of finite length, and the quotient $\JL(\rho_y)/\JL(\rho_y)^{\rm lalg}$ is topologically irreducible. 
\end{theorem}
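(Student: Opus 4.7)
The plan is to apply the finiteness criterion Theorem \ref{thm:finitelength} and its companion Corollary \ref{cor:finitelength} to the admissible Banach representation $\Pi := \JL(\rho_y)$. Condition (a) of Theorem \ref{thm:finitelength} is already available: since $\sB$ is of type (I) or (II), in particular not of type (IV), Proposition \ref{prop:lalg} gives the isomorphism $\check{\cS}^1(\Pi(\rho_y)) \cong \rho_y(-1) \otimes \JL(\rho_y)$ and asserts that $\JL(\rho_y)^{\rm lalg}$ is finite-dimensional over $E$. The substance of the proof is therefore to verify condition (b), namely to exhibit an open bounded $D^{\times}$-invariant lattice $\Theta$ in $\JL(\rho_y)$ with $\Theta/\varpi\Theta \in \mathcal{C}$ and $\mu(\Theta/\varpi\Theta) \leq 4$, and to verify the Cohen--Macaulay hypothesis required by Corollary \ref{cor:finitelength}.

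The lattice will be constructed from completed cohomology. Set $M := \widehat{H}^1_{\psi,\lambda}(U^{\p},\cO)_{\fm}^d$ and $N := S_{\psi,\lambda}(U^{\p},\cO)_{\fm}^d$. The local-global compatibility \eqref{eq:lg-comp} gives $\check{\cS}^1(N) \cong M$, both carrying a faithful continuous action of $\mathbb{T}(U^{\p})_{\fm}$, and by Lemma \ref{lem:flat} both $M$ and $N$ are flat over $\mathbb{T}(U^{\p})_{\fm}$. Using Proposition \ref{prop:isotypic} together with the $\rho_y$-isotypic decomposition in the proof of Proposition \ref{prop:lalg}, after inverting $p$ one has $(M\otimes_{\cO}E)[\fm_y] \cong \rho_y(-1) \otimes \JL(\rho_y)^{\oplus n}$ for some $n \geq 1$. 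Extracting integrally the $\rho_y(-1)$-isotypic component (via a $G_{F_{\p}}$-equivariant projection after choosing a $G_{F_{\p}}$-stable lattice in $\rho_y(-1)$, and picking out one summand) yields an open bounded $\cO$-lattice $\Theta \subset \JL(\rho_y)$. By construction, $\Theta$ is a subquotient of the specialization $M/\fm_y$ mod $\varpi$, and flatness of $M$ over $\mathbb{T}(U^{\p})_{\fm}$ means that $\Theta/\varpi\Theta$ sits inside the mod-$\varpi$ specialization, which by \eqref{eq:lg-comp} and the multiplicity one theorem (Theorem \ref{thm:multione}) is a subquotient of a direct sum of copies of $\brho(-1)\otimes \JL(\brho)$. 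Projecting to a single $\brho(-1)$-isotypic summand, we obtain an embedding $\Theta/\varpi\Theta \hookrightarrow \JL(\brho)$.

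Since $\mathcal{C}$ is closed under subquotients and $\JL(\brho)\in\mathcal{C}$ by Theorem \ref{thm:gr-pi}, this gives $\Theta/\varpi\Theta \in \mathcal{C}$ with $\mu(\Theta/\varpi\Theta) \leq \mu(\JL(\brho))=4$, verifying (b). Theorem \ref{thm:finitelength} then proves that $\JL(\rho_y)$ is topologically of finite length. For the second assertion, one needs $\mu(\Theta/\varpi\Theta) = 4$ and the Cohen--Macaulay condition of Corollary \ref{cor:finitelength}. The equality $\mu(\Theta/\varpi\Theta) = 4$ follows because $\JL(\rho_y)$ is infinite-dimensional (its locally algebraic part is finite-dimensional while the whole space is not, by Proposition \ref{prop:lalg} combined with \cite[Thm.~7.8]{Scholze}), so Corollary \ref{cor:mu=4} forces $\mu(\Theta/\varpi\Theta) \geq 4$. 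The Cohen--Macaulay property is inherited from $\JL(\brho)$: by Theorem \ref{thm:gr-pi}, $\gr_{\fm_D}(\JL(\brho)^{\vee})$ is Cohen--Macaulay as a module over $A = \F[y,z]/(yz)$, and one checks that the embedding $\Theta/\varpi\Theta \hookrightarrow \JL(\brho)$ has image equal to $\JL(\brho)$, up to a finite-dimensional defect, because their multiplicities coincide and $\JL(\brho)$ has no finite-dimensional quotients (by the structure of its graded module in Theorem \ref{thm:gr-pi}). Corollary \ref{cor:finitelength} then yields the topological irreducibility of $\JL(\rho_y)/\JL(\rho_y)^{\rm lalg}$.

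The main technical obstacle is the careful bookkeeping needed to identify $\Theta/\varpi\Theta$, as a $D^\times$-representation, with a subquotient of $\JL(\brho)$ rather than $\JL(\brho)^{\oplus n}$. The flatness result Lemma \ref{lem:flat} together with the explicit isotypic structure of Proposition \ref{prop:isotypic} should allow one to arrange that the $\rho_y(-1)$-projection on the $E$-level descends to a well-defined $\brho(-1)$-projection on the integral/residual level, yielding a lattice whose reduction embeds into a single copy of $\JL(\brho)$; checking the compatibility with Scholze's functor through $\check{\cS}^1$ and the Pontryagin/Schikhof dualities is the crux of the argument.
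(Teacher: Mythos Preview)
Your high-level strategy is correct: verify conditions (a), (b), and the Cohen--Macaulay hypothesis of Theorem \ref{thm:finitelength} and Corollary \ref{cor:finitelength} for $\JL(\rho_y)$. Condition (a) is handled as you say, though note that Proposition \ref{prop:lalg} requires $(S_{\psi,\lambda}(U^{\p},\cO)_{\fm}\otimes_{\cO}E)[\fm_y]\neq 0$; this follows from the flatness in Lemma \ref{lem:flat}, which you cite but should invoke explicitly here.

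The gap is in your construction of the lattice. You try to extract $\Theta$ from completed cohomology $M = \widehat{H}^1_{\psi,\lambda}(U^{\p},\cO)_{\fm}^d$, and you correctly identify that the multiplicity $n$ in Proposition \ref{prop:isotypic} creates a bookkeeping problem you have not resolved: you only get that $\Theta/\varpi\Theta$ is a subquotient of $\JL(\brho)^{\oplus n}$, and your argument for cutting down to a single copy (``the $\rho_y(-1)$-projection on the $E$-level descends\ldots'') is a hope, not a proof. The paper bypasses this entirely by constructing the lattice on the $\GL_2$-side. Since $\overline{r}$ is absolutely irreducible, $r_y$ has a unique $G_{F,S}$-stable lattice up to homothety; its restriction to $G_{F_{\p}}$ gives a lattice in $\rho_y$ reducing to $\brho$, and by the compatibility between $p$-adic and mod $p$ local Langlands this corresponds to an open bounded $G$-lattice $\Theta \subset \Pi(\rho_y)$ with $\Theta/\varpi\Theta \cong \pi(\brho)$. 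Now Theorem \ref{thm:Scholze} gives $\cS^0(\pi(\brho))=\cS^2(\pi(\brho))=0$, so Proposition \ref{prop:S1-Theta} shows $\check{\cS}^1(\Theta^d)^d$ is a lattice in $\check{\cS}^1(\Pi(\rho_y))$ with mod $\varpi$ reduction exactly $\cS^1(\pi(\brho))\cong \brho(-1)\otimes\JL(\brho)$---a single copy, no multiplicity $n$. Since $\JL(\brho)\in\mathcal{C}$ with $\mu(\JL(\brho))=4$ and $\JL(\brho)^{\vee}$ Cohen--Macaulay by Theorem \ref{thm:gr-pi}, the isotypic decomposition \eqref{eq:isotypic} together with Lemma \ref{lem:lattice-mu} transfers these properties to $\JL(\rho_y)$, and Corollary \ref{cor:finitelength} applies directly.
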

\begin{proof}
 First, Lemma \ref{lem:flat} implies that \[\big(S_{\psi,\lambda}(U^{\p},\cO)_{\fm}\otimes_{\cO}E\big)[\fm_{y}]\neq 0\] for any $y\in \mathrm{Spm}(\mathbb{T}(U^{\p})_{\fm}[1/p])$.
By Proposition \ref{prop:lalg},  $\check{\cS}^1(\Pi(\rho_y))^{\rm lalg}$ is finite dimensional over $E$, and so is $\JL(\rho_y)^{\rm lalg}$ using \eqref{eq:isotypic}.  To conclude, we check that $\JL(\rho_y)$ satisfies the (other) conditions of  Corollary \ref{cor:finitelength}.

Since $\overline{r}:G_F\ra \GL_2(\F)$ is irreducible, there exists a unique (up to homothety) $G_{F,S}$-invariant lattice in $r_y$, which induces a $G_{F_{\p}}$-invariant lattice in $\rho_y$ whose mod $\varpi$ reduction is just $\brho$. This lattice in $\rho_y$ corresponds to an open bounded $G$-invariant lattice in $\Pi(\rho_y)$, which we denote by $\Theta$. By the compatibility between $p$-adic and mod $p$ local Langlands correspondence for $G$ (\cite{Berger}), we have \[\Theta/\varpi\Theta\cong \pi(\brho).\] 
The assumption on $\brho$ implies that $\cS^0(\pi(\brho))=\cS^2(\pi(\brho))=0$, by Theorem \ref{thm:Scholze}. Hence,  Proposition \ref{prop:S1-Theta} implies that $\check{\cS}^1(\Theta^d)^d$ is an open bounded $D^{\times}$-invariant lattice in $\check{\cS}^1(\Pi(\rho_y))$, with mod $\varpi$ reduction $\cS^1(\pi(\brho))$. 

Again by the assumption on $\brho$, 
we have  $\cS^1(\pi(\brho))\cong \brho(-1)\otimes\JL(\brho)$  
by Theorem \ref{thm:multione} and $\cS^1(\pi(\brho))\in\mathcal{C}$ by \cite[Cor.~6.10]{HW-JL1}. Moreover, Theorem \ref{thm:gr-pi} implies that $\JL(\brho)^{\vee}$ is a Cohen-Macaulay $\F[\![U_D^1/Z_D^1]\!]$-module (as its  graded module is) and
\[ \mu(\JL(\brho))=4.\] 
Using \eqref{eq:isotypic} and Lemma \ref{lem:lattice-mu}, all the conditions of Corollary \ref{cor:finitelength} are satisfied for $\JL(\rho_y)$. This finishes the proof.
\end{proof}

\begin{remark}
We hope that Theorem \ref{thm:gl-main} remains true in other global situations. The reason to restrict to the current setup is the easy citation to \cite{Scholze} and \cite{Paskunas-JL}.
\end{remark}

\section{Appendix: Gabber filtrations} \label{sec:app}
In this appendix, we recall from \cite{Bjork} the construction of Gabber filtrations on pure modules over a filtered Auslander regular ring.
\subsection{Auslander regular rings} \label{sec:Auslander}

Let $\Lambda$ be an Auslander regular ring and $M$ be a  finitely generated $\Lambda$-module. We define the \emph{grade} of $M$ as  
\[j_{\Lambda}(M):=\mathrm{inf}\{i\in\N\ | \ \Ext^i_{\Lambda}(M,\Lambda)\neq0\},\]
with the convention $j_{\Lambda}(M):=\infty$ if $M=0$.
Let $d$ be the  global dimension of $\Lambda$. Set \[\dim_{\Lambda}(M):=d-j_{\Lambda}(M)\]
and call it the \emph{dimension} of $M$. 

\begin{definition}\label{def:CM}
We say $M$ is \emph{Cohen-Macaulay} if   $\Ext^i_{\Lambda}(M,\Lambda)=0$ for any $i\neq j_{\Lambda}(M)$, and  $M$ is \emph{pure} if any nonzero submodule has the same grade as $M$. 
\end{definition}

\begin{lemma}\label{lem:pure=CM}
Cohen-Macaulay  $\Lambda$-modules are pure. Conversely, a pure $\Lambda$-module of grade $d-1$ is Cohen-Macaulay.
\end{lemma}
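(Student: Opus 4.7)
The plan is to work with the canonical (dimension) filtration
\[
0 = \tau_{d+1}(M) \subseteq \tau_d(M) \subseteq \cdots \subseteq \tau_{j_\Lambda(M)}(M) = M
\]
on any finitely generated $\Lambda$-module $M$, where $\tau_k(M)$ denotes the maximal submodule of grade $\geq k$. A fundamental result of Björk (cf.~\cite{Bjork}) is that each graded piece is computed, via the bidualization spectral sequence $E_2^{p,q} = \Ext^p_\Lambda(\Ext^q_\Lambda(M,\Lambda),\Lambda) \Rightarrow M$ (abutting to $M$ in bidegrees $p=q$ and to zero otherwise), as $\tau_k(M)/\tau_{k+1}(M) \cong E_\infty^{k,k}$; in particular each such quotient is pure of grade $k$ or zero. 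All routine inputs (Auslander's condition, the $\Ext$-vanishing above the global dimension, and the long exact $\Ext$-sequence) will then do the work.

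For the first implication, assume $M$ is Cohen-Macaulay of grade $j$. Then $\Ext^q_\Lambda(M,\Lambda)$ vanishes for $q\neq j$, so the $E_2$-page has only the column $q=j$ nonzero. Every differential $d_r$ for $r\geq 2$ either leaves or enters a zero column (using both the column support and Auslander's condition $E_2^{p,q}=0$ for $p<q$), so the sequence collapses at $E_2$ and $E_\infty^{k,k}=0$ for $k\neq j$. This forces $\tau_{j+1}(M)=0$. Since any nonzero submodule $N\subseteq M$ automatically satisfies $j_\Lambda(N)\geq j_\Lambda(M)=j$ (from the long exact $\Ext$-sequence of $0\to N\to M\to M/N\to 0$), purity of $M$ follows.

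For the converse, assume $M$ is pure of grade $d-1$. Purity gives $\tau_d(M)=0$, hence $E_\infty^{d,d}=0$. On $E_2$, outgoing differentials from $E_2^{d,d}$ vanish by the global dimension bound, while the only potentially nonzero incoming $d_2$ comes from $E_2^{d-2,d-1}=\Ext^{d-2}_\Lambda(\Ext^{d-1}_\Lambda(M,\Lambda),\Lambda)$, which itself vanishes because $\Ext^{d-1}_\Lambda(M,\Lambda)$ has grade $\geq d-1$ by Auslander. Therefore $E_2^{d,d}=E_\infty^{d,d}=0$, i.e.~$\Ext^d_\Lambda(X,\Lambda)=0$ for $X:=\Ext^d_\Lambda(M,\Lambda)$. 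Now $X$ has grade $\geq d$ by Auslander and $\Ext^i_\Lambda(X,\Lambda)=0$ for $i>d$ by the global dimension, so if $X$ were nonzero it would automatically be Cohen-Macaulay of grade $d$, giving $\Ext^d_\Lambda(X,\Lambda)\neq 0$, a contradiction. Hence $\Ext^d_\Lambda(M,\Lambda)=0$, which together with $\Ext^i_\Lambda(M,\Lambda)=0$ for $i<d-1$ (immediate from $j_\Lambda(M)=d-1$) shows $M$ is Cohen-Macaulay.

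The only nontrivial input, and hence the main obstacle, is the black box: the construction of the canonical filtration and the identification of its graded pieces with the $E_\infty^{k,k}$ of the bidualization spectral sequence. Both depend on the full Auslander regularity of $\Lambda$ and we simply cite \cite{Bjork} for them; once they are in hand, both implications reduce to the short Ext-vanishing checks sketched above.
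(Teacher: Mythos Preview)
Your argument is correct and follows essentially the same route as the paper. The paper dispatches the first implication as ``well-known'' and, for the converse, cites \cite[Chap.~III, Thm.~4.2.6]{LiO} to obtain $\Ext^d_\Lambda(\Ext^d_\Lambda(M,\Lambda),\Lambda)=0$ directly from purity, then derives the same contradiction you do; that cited theorem is itself a consequence of the bidualization spectral sequence, so your proof simply unpacks the black box the paper invokes.
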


\begin{proof}
The first statement is well-known. For the second, 
let $M$ be a pure finitely generated $\Lambda$-module. It is equivalent to prove that $\Ext^{d}_{\Lambda}(M,\Lambda)=0$. If not, then $\Ext^d_{\Lambda}(M,\Lambda)$ has grade $d$. But, since $M$ is pure, it follows from \cite[Chap.~III, Thm.~4.2.6]{LiO} that $\Ext^d_{\Lambda}(\Ext^d_{\Lambda}(M,\Lambda),\Lambda)=0$, a contradiction. \end{proof}

\begin{remark}\label{rem:CMquotient}
Let $M$ be a finitely generated $\Lambda$-module of grade $d-1$. By  \cite[\S3.1]{Ven}, $M$ has a unique submodule $M_d$ whose grade is $d$ (equivalently $M_d$ is finite dimensional over $\F$). Then $M/M_d$ is pure of grade $d-1$, hence  Cohen-Macaulay by Lemma \ref{lem:pure=CM}.
\end{remark}

\subsection{Gabber filtrations} 
Let $R_{\Lambda}$ denote the \emph{Rees ring} associated to $\Lambda$, which is a graded subring of $\Lambda[T,T^{-1}]$ (here $T$ is a formal variable of degree $1$). Given a  good filtration $F$ on $M$ (in the sense of \cite[\S1.5]{LiO}), let $M_F$ denote the associated \emph{Rees module} which is a graded module over $R_{\Lambda}$. It is known that $M_F$ is $T$-torsion-free and we have  a natural isomorphism
\begin{equation}\label{eq:Rees-gr}
M_{F}/TM_{F}\cong \gr_{F}(M). \end{equation}
See \cite[\S4.5]{Bjork} or \cite[Chap.~I, \S4.3]{LiO} for more details.

It is proved in \cite[Chap.~III, Thm.~3.1.7]{LiO} that if $\Lambda$ and $\gr(\Lambda)$ are Auslander regular rings, then so is $R_{\Lambda}$. From now on, we assume that $\Lambda$ and $\gr(\Lambda)$   are both Auslander regular rings. 

Recall the following definition from \cite[Def.~5.22]{Bjork}. 

\begin{definition}\label{def:bjork}
Let $M$ be a pure finitely generated $\Lambda$-module. Two good filtrations $F$ and $F'$ of $M$ are called \emph{tamely close} if $j_{R_{\Lambda}}(M_{F+F'}/M_{F\cap F'})\geq j_{\Lambda}(M)+2$.
\end{definition}
Here, the sum and the intersection of two filtrations are defined to be 
\[(F+F')^nM:=F^nM+F'^nM,\ \ (F\cap F')^nM:=F^nM\cap F'^nM\]
respectively. 
It is easy to see that being tamely close  defines an equivalence relation on the family of good filtrations on $M$. 
\begin{theorem}\label{thm:bjork}
Let $M$ be a pure finitely generated $\Lambda$-module. Then every equivalence class of good filtrations on $M$ contains a unique $F$ such that $\gr_F(M)$ is a pure $\gr(\Lambda)$-module of grade equal to $j_{\Lambda}(M)$.  Moreover, if $F'$ is another good filtration in the equivalence class of $F$, then $F'<F$ in the sense that $F'^nM\subseteq F^nM$ for all $n\in\Z$, and the kernel of the induced  morphism $\gr_{F'}(M)\ra \gr_{F}(M)$ is identified with the largest submodule of $\gr_{F'}(M)$ of grade $>j_{\Lambda}(M)$. In particular, the morphism $\gr_{F'}(M)\ra \gr_F(M)$ is non-zero.
\end{theorem}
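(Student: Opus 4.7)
The plan is to work in the Rees ring framework, following Björk's approach. For any good filtration $F$ on $M$, form the Rees ring $R_\Lambda \subset \Lambda[T, T^{-1}]$ (with $T$ a central formal variable of degree $1$) and the associated Rees module $M_F \subset M[T, T^{-1}]$, a graded $T$-torsion-free $R_\Lambda$-module satisfying $M_F/TM_F \cong \gr_F(M)$ and $M_F[T^{-1}] = M[T, T^{-1}]$. Since $\Lambda$ and $\gr(\Lambda)$ are Auslander regular, so is $R_\Lambda$ by \cite[Chap.~III, Thm.~3.1.7]{LiO}; moreover, purity of $M$ together with $M_F[T^{-1}] = M[T,T^{-1}]$ forces $j_{R_\Lambda}(M_F) = j_\Lambda(M)$, and the short exact sequence $0 \to M_F \xrightarrow{T} M_F \to \gr_F(M) \to 0$ shows that $\gr_F(M)$ has grade $\geq j_\Lambda(M)$ over $\gr(\Lambda)$, with equality if and only if $M_F$ is pure over $R_\Lambda$.

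For existence, I would start from any good filtration $F_0$ and construct $F$ by purification of the Rees module. Concretely, set
\[ M_F := \sum_{N} N, \]
where $N$ ranges over graded $R_\Lambda$-submodules of $M[T, T^{-1}]$ containing $M_{F_0}$ such that $N/M_{F_0}$ has grade $\geq j_\Lambda(M) + 2$. Noetherianity of $R_\Lambda$ makes this sum finitely generated and hence itself a Rees module of a good filtration $F$, tamely close to $F_0$ by construction. Using Auslander duality for $R_\Lambda$ together with the long exact sequences attached to the $T$-multiplication sequence above, one verifies that the maximality of $M_F$ rules out any submodule of $\gr_F(M)$ of grade $> j_\Lambda(M)$, i.e.~$\gr_F(M)$ is pure of grade $j_\Lambda(M)$.

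For uniqueness, given two such $F$ and $F'$ in the same equivalence class, set $F'' := F + F'$. The tameness hypothesis forces $j_{R_\Lambda}(M_{F''}/M_{F \cap F'}) \geq j_\Lambda(M) + 2$, so the two quotients $M_{F''}/M_F$ and $M_{F''}/M_{F'}$ also have grade $\geq j_\Lambda(M) + 2$. The maximality property used in the existence step then gives $M_F = M_{F''} = M_{F'}$, hence $F = F'$. In particular, for any $F'$ in the equivalence class of $F$, applying this to the pair $(F, F + F')$ yields $F' \leq F + F' = F$, proving $F'^n M \subseteq F^n M$ for all $n$. The short exact sequence $0 \to M_{F'} \to M_F \to M_F/M_{F'} \to 0$, in which $M_F/M_{F'}$ has grade $\geq j_\Lambda(M) + 2$, then yields via the $T$-multiplication long exact sequence an identification of the kernel of $\gr_{F'}(M) \to \gr_F(M)$ with the submodule of $\gr_{F'}(M)$ consisting of elements of grade $> j_\Lambda(M)$; non-vanishing of the morphism follows because $\gr_{F'}(M)$ cannot be entirely of grade $> j_\Lambda(M)$ (it has the same multiplicity as $M$).

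The main technical obstacle is the purity verification at the maximality step: the assertion that no enlargement of $M_F$ remains within $M[T,T^{-1}]$ is equivalent to the absence of graded submodules of $\gr_F(M)$ of grade $> j_\Lambda(M)$, and extracting this equivalence requires the careful Auslander-regular spectral sequence comparison between $\Ext^\bullet_{R_\Lambda}(-, R_\Lambda)$ and $\Ext^\bullet_{\gr(\Lambda)}(-, \gr(\Lambda))$ carried out in \cite[Ch.~III]{Bjork}. Since the paper only recalls the statement, I would refer the reader to Björk for these homological details and emphasize only the Rees-module picture above for intuition.
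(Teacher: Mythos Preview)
Your approach is essentially the same as the paper's: both work in the Rees-module framework over $R_\Lambda$ and defer the existence construction to \cite{Bjork} and \cite{LiO}. One worthwhile difference: for uniqueness and for $F' < F$, you invoke the maximality of $M_F$ from your construction, which in turn rests on the equivalence between maximality and purity of $\gr_F$ that you yourself flag as the main technical obstacle. The paper sidesteps this by arguing directly from purity of $\gr_F(M)$: with $Q := M_{F+F'}/M_F$, the snake lemma for multiplication by $T$ on $0 \to M_F \to M_{F+F'} \to Q \to 0$ gives an embedding $\Ker_T(Q) \hookrightarrow \gr_F(M)$; since $j_{\gr(\Lambda)}(\Ker_T(Q)) \geq j_\Lambda(M)+1$ while $\gr_F(M)$ is pure of grade $j_\Lambda(M)$, one gets $\Ker_T(Q) = 0$, and then $Q$ being $T$-torsion (as $M_F[T^{-1}] = M_{F+F'}[T^{-1}]$) forces $Q = 0$. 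This is cleaner because it uses only the defining property of the Gabber filtration rather than its specific construction, and it applies uniformly to any $F'$ in the equivalence class without needing to know that $F'$ itself is maximal. Your kernel identification via the same snake-lemma trick matches the paper exactly. One small point: your non-vanishing argument appeals to ``multiplicity'', which is not defined in this generality; simply note instead that $\gr_{F'}(M)$ has grade exactly $j_\Lambda(M)$ and therefore cannot coincide with its own largest submodule of strictly larger grade.
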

The unique filtration in Theorem \ref{thm:bjork} is referred as a \emph{Gabber filtration}.
\begin{proof} 
The first statement and  the fact $F'<F$ are proved in \cite[Thm.~5.23]{Bjork}.  For the convenience of the reader, we recall the proof. Below we write $n:=j_{\Lambda}(M)=j_{\gr(\Lambda)}(\gr_F(M))$; here the second equality is a standard fact, see \cite[Chap.~III, Thm.~2.5.2]{LiO}. 

First,  the construction of  $F$ in a given equivalence class of good filtrations such that $\gr_F(M)$ is pure is well-known, see \cite[Lem.~5.19, Cor.~5.21]{Bjork}, or \cite[Chap.~III, Thm.~4.2.13(4)]{LiO} for a more detailed presentation. For the uniqueness of $F$, we need to show that \[M_{F}=M_{F+F'}\] for any good filtration $F'$ which is tamely close to $F$. Put $Q=M_{F+F'}/M_{F}$. Then $j_{R_{\Lambda}}(Q)\geq n+2$ by  Definition \ref{def:bjork}. 
Using the $T$-torsion-freeness of $M_{F+F'}$ and \eqref{eq:Rees-gr},  multiplication by $T$ induces an exact sequence
\[0\ra \Ker_T(Q)\ra \gr_{F}(M)\ra \gr_{F+F'}(M)\ra Q/TQ\ra0,\]
where $\Ker_T(Q)$ denotes the kernel of $T:Q\ra Q$. Note that since $R_{\Lambda}/T\cong \gr(\Lambda)$, $\Ker_T(Q)$ has a $\gr(\Lambda)$-module structure. Since $T$ is a central non-zero-divisor in $R_{\Lambda}$, it is easy to see that 
\[j_{\gr(\Lambda)}(\Ker_T(Q))=j_{R_{\Lambda}}(\Ker_T(Q))-1\geq j_{R_{\Lambda}}(Q) -1 \geq n+1,\]
where the first inequality holds as $\Ker_T(Q)\subseteq Q$.  
However, $\gr_F(M)$ is pure of grade $n$ by construction, any non-zero submodule of $\gr_F(M)$ also has grade $n$. This forces \[\Ker_T(Q)=0.\]  
On the other hand, since $F$ and $F+F'$ are both good filtrations on $M$, we have $M_{F}[T^{-1}]=M_{F+F'}[T^{-1}]$ (here $[T^{-1}]$ means taking localization at the multiplicative subset $\{T^{k}, k\geq 0\}$). This implies that $Q$ is a $T$-torsion $R_{\Lambda}$-module. Combined with the fact $\Ker_T(Q)=0$, we get $Q=0$, as required.

The last assertion essentially follows from the above argument. More precisely, since $F'<F$, we may consider the following  short exact sequence
\[0\ra M_{F'}\ra M_{F}\ra M_{F}/M_{F'}\ra0.\]
As above,  multiplication by $T$ induces an exact sequence
\[0\ra \Ker_T(M_{F}/M_{F'})\ra \gr_{F'}(M)\ra \gr_{F}(M).\]
Since  $j_{\gr(\Lambda)}(\Ker_T(M_{F}/M_{F'}))\geq n+1$ and $\gr_{F}(M)$ is pure of grade $n$,   the result easily follows.
\end{proof}

%\bibliography{modp}

\begin{thebibliography}{BHH{\etalchar{+}}21}

\bibitem[BDJ10]{BDJ}
Kevin Buzzard, Fred Diamond, and Frazer Jarvis, \emph{On {S}erre's conjecture
  for mod {$\ell$} {G}alois representations over totally real fields}, Duke
  Math. J. \textbf{155} (2010), no.~1, 105--161. \MR{2730374}

\bibitem[BE90]{Bjork}
J.-E. Bj\"{o}rk and E.~K. Ekstr\"{o}m, \emph{Filtered {A}uslander-{G}orenstein
  rings}, Operator algebras, unitary representations, enveloping algebras, and
  invariant theory ({P}aris, 1989), Progr. Math., vol.~92, Birkh\"{a}user
  Boston, Boston, MA, 1990, pp.~425--448. \MR{1103599}

\bibitem[Ber10]{Berger}
Laurent Berger, \emph{Repr\'{e}sentations modulaires de {${\rm GL}_2(\bold
  Q_p)$} et repr\'{e}sentations galoisiennes de dimension 2}, Ast\'{e}risque
  (2010), no.~330, 263--279. \MR{2642408}

\bibitem[BH93]{BH-CM}
Winfried Bruns and J\"{u}rgen Herzog, \emph{Cohen-{M}acaulay rings}, Cambridge
  Studies in Advanced Mathematics, vol.~39, Cambridge University Press,
  Cambridge, 1993. \MR{1251956}

\bibitem[BHH{\etalchar{+}}20]{BHHMS1}
C.~Breuil, F.~Herzig, Y.~Hu, S.~Morra, and B.~Schraen,
  \emph{{G}elfand-{K}irillov dimension and mod $p$ cohomology for {${\rm
  GL}_2$}}, arXiv:2009.03127 (2020).

\bibitem[BHH{\etalchar{+}}21]{BHHMS2}
\bysame, \emph{Conjectures and results on modular representations of {${\rm
  GL}_n$} of a $p$-adic field ${K}$}, arXiv:2102.06188 (2021).

\bibitem[BP12]{BP}
Christophe Breuil and Vytautas Pa{\v{s}}k\={u}nas, \emph{Towards a modulo {$p$}
  {L}anglands correspondence for {${\rm GL}_2$}}, Mem. Amer. Math. Soc.
  \textbf{216} (2012), no.~1016, vi+114. \MR{2931521}

\bibitem[Bre03]{Br03}
Christophe Breuil, \emph{Sur quelques repr\'{e}sentations modulaires et
  {$p$}-adiques de {${\rm GL}_2(\bold Q_p)$}. {I}}, Compositio Math.
  \textbf{138} (2003), no.~2, 165--188. \MR{2018825}

\bibitem[CDN22]{CDN22}
Pierre Colmez, Gabriel Dospinescu, and Wieslawa Niziol, \emph{{F}actorisation
  de la cohomologie {\'e}tale $p$-adique de la tour de {D}rinfeld},
  arXiv:2201.12922 (2022).

\bibitem[Col10]{Co}
Pierre Colmez, \emph{Repr\'{e}sentations de {${\rm GL}_2(\bold Q_p)$} et
  {$(\phi,\Gamma)$}-modules}, Ast\'{e}risque (2010), no.~330, 281--509.
  \MR{2642409}

\bibitem[Dot21]{Dotto}
Andrea Dotto, \emph{{R}estriction of p-adic representations of
  $\mathrm{GL}_2(\mathbb{Q}_p)$ to parahoric subgroups}, arXiv:2111.12827
  (2021).

\bibitem[Dot22]{Dotto22}
\bysame, \emph{{M}ultiplicity one and {B}reuil-{K}isin cohomology of {S}himura
  curves}, preprint (2022).

\bibitem[DPS22]{DPS22}
Gabriel Dospinescu, Vytautas Pa\v{s}k\={u}nas, and Benjamin Schraen,
  \emph{{G}elfand-{K}irillov dimension and the $p$-adic jacquet-langlands
  correspondence}, arXiv:2204.11214 (2022).

\bibitem[Fus22]{Fust}
Paulina Fust, \emph{Continuous group cohomology and $\mathrm{Ext}$-groups},
  arXiv:2106.04473 (2022).

\bibitem[GG15]{Gee-Geraghty}
Toby Gee and David Geraghty, \emph{The {B}reuil-{M}\'{e}zard conjecture for
  quaternion algebras}, Ann. Inst. Fourier (Grenoble) \textbf{65} (2015),
  no.~4, 1557--1575. \MR{3449190}

\bibitem[GN22]{Gee-Newton}
Toby Gee and James Newton, \emph{{P}atching and the completed homology of
  locally symmetric spaces}, J. Inst. Math. Jussieu \textbf{21} (2022), no.~2,
  395--458. \MR{4386819}

\bibitem[HM22]{H-M}
David Hansen and Lucas Mann, \emph{{$p$}-adic sheaves on classifying stacks,
  and the $p$-adic jacquet-langlands correspondence}, arXiv:2207.04073 (2022).

\bibitem[HP19]{HP}
Yongquan Hu and Vytautas Pa\v{s}k\={u}nas, \emph{On crystabelline deformation
  rings of {${\rm Gal}(\overline{\Bbb Q}_p/\Bbb Q_p)$}}, Math. Ann.
  \textbf{373} (2019), no.~1-2, 421--487, With an appendix by Jack Shotton.
  \MR{3968877}

\bibitem[Hu21]{HuJEMS}
Yongquan Hu, \emph{Multiplicities of cohomological automorphic forms on {${\rm
  GL}_2$} and mod {$p$} representations of {${\rm GL}_2(\Bbb{Q}_p)$}}, J. Eur.
  Math. Soc. (JEMS) \textbf{23} (2021), no.~11, 3625--3678. \MR{4310814}

\bibitem[HW22]{HW-JL1}
Yongquan Hu and Haoran Wang, \emph{On some mod $p$ representations of
  quaternion algebra over $\mathbb{Q}_p$}, arXiv:2201.01464 (2022).

\bibitem[Kha01]{Khare}
Chandrashekhar Khare, \emph{A local analysis of congruences in the {$(p,p)$}
  case. {II}}, Invent. Math. \textbf{143} (2001), no.~1, 129--155. \MR{1802794}

\bibitem[Kis09]{Kisin-FM}
Mark Kisin, \emph{The {F}ontaine-{M}azur conjecture for {${\rm GL}_2$}}, J.
  Amer. Math. Soc. \textbf{22} (2009), no.~3, 641--690. \MR{2505297}

\bibitem[Koh17]{Ko-dual}
Jan Kohlhaase, \emph{{S}mooth duality in natural characteristic}, Adv. Math.
  \textbf{317} (2017), 1--49. \MR{3682662}

\bibitem[Lud17]{Ludwig}
Judith Ludwig, \emph{A quotient of the {L}ubin-{T}ate tower}, Forum Math. Sigma
  \textbf{5} (2017), Paper No. e17, 41. \MR{3680340}

\bibitem[LvO96]{LiO}
Huishi Li and Freddy van Oystaeyen, \emph{Zariskian filtrations},
  $K$-Monographs in Mathematics, vol.~2, Kluwer Academic Publishers, Dordrecht,
  1996. \MR{1420862}

\bibitem[Pa{\v{s}}10]{Pa10}
Vytautas Pa{\v{s}}k{\={u}}nas, \emph{Extensions for supersingular
  representations of {${\rm GL}_2(\mathbb{ Q}_p)$}}, Ast\'{e}risque (2010),
  no.~331, 317--353. \MR{2667891}

\bibitem[Pa{\v{s}}13]{Pa13}
\bysame, \emph{The image of {C}olmez's {M}ontreal functor}, Publ. Math. Inst.
  Hautes \'{E}tudes Sci. \textbf{118} (2013), 1--191. \MR{3150248}

\bibitem[Pa{\v{s}}15]{Pa15}
\bysame, \emph{On the {B}reuil-{M}\'{e}zard conjecture}, Duke Math. J.
  \textbf{164} (2015), no.~2, 297--359. \MR{3306557}

\bibitem[Pa{\v{s}}22]{Paskunas-JL}
Vytautas Pa{\v{s}}k\={u}nas, \emph{On some consequences of a theorem of {J}.
  {L}udwig}, J. Inst. Math. Jussieu \textbf{21} (2022), no.~3, 1067--1106.
  \MR{4404133}

\bibitem[Sch18]{Scholze}
Peter Scholze, \emph{On the {$p$}-adic cohomology of the {L}ubin-{T}ate tower},
  Ann. Sci. \'{E}c. Norm. Sup\'{e}r. (4) \textbf{51} (2018), no.~4, 811--863,
  With an appendix by Michael Rapoport. \MR{3861564}

\bibitem[Ser79]{Se79}
Jean-Pierre Serre, \emph{Local fields}, Graduate Texts in Mathematics, vol.~67,
  Springer-Verlag, New York-Berlin, 1979, Translated from the French by Marvin
  Jay Greenberg. \MR{554237}

\bibitem[ST03]{ST03}
Peter Schneider and Jeremy Teitelbaum, \emph{Algebras of {$p$}-adic
  distributions and admissible representations}, Invent. Math. \textbf{153}
  (2003), no.~1, 145--196. \MR{1990669}

\bibitem[SW13]{Scholze-Weinstein}
Peter Scholze and Jared Weinstein, \emph{Moduli of {$p$}-divisible groups},
  Camb. J. Math. \textbf{1} (2013), no.~2, 145--237. \MR{3272049}

\bibitem[Ven02]{Ven}
Otmar Venjakob, \emph{On the structure theory of the {I}wasawa algebra of a
  {$p$}-adic {L}ie group}, J. Eur. Math. Soc. \textbf{4} (2002), no.~3,
  271--311. \MR{1924402}

\bibitem[Wit18]{Witthaus}
Robin Witthaus, \emph{On cohen-macaulayness of supersingular representations of
  {$\mathrm{GL}_2(\mathbb{Q}_p)$}}, Master thesis (University of
  Duisberg-Essen) (2018).

\end{thebibliography}

%\bibliographystyle{amsalpha}

\newcommand{\etalchar}[1]{$^{#1}$}
\providecommand{\bysame}{\leavevmode\hbox to3em{\hrulefill}\thinspace}
\providecommand{\MR}{\relax\ifhmode\unskip\space\fi MR }
% \MRhref is called by the amsart/book/proc definition of \MR.
\providecommand{\MRhref}[2]{%
  \href{http://www.ams.org/mathscinet-getitem?mr=#1}{#2}
}
\providecommand{\href}[2]{#2}

\bigskip

\noindent Morningside Center of Mathematics, Academy of Mathematics and Systems Science,
 Chinese Academy of Sciences, University of the Chinese Academy of Sciences
Beijing, 100190,
China.\\
{\it E-mail:} {\ttfamily yhu@amss.ac.cn}\\

\noindent

\noindent  Yau Mathematical Sciences Center, Tsinghua University, Beijing, 100084\\
{\it E-mail:} {\ttfamily haoranwang@mail.tsinghua.edu.cn}\\

\end{document}